\definecolor{cite}{rgb}{0.30,0.60,1.00}
\definecolor{url}{rgb}{0.00,0.00,0.80}
\definecolor{link}{rgb}{0.40,0.10,0.20}
\DeclareSymbolFont{cyrletters}{OT2}{wncyr}{m}{n}
\DeclareMathSymbol{\Sha}{\mathalpha}{cyrletters}{"58}
\providecommand*{\Dashv}{%
  \mathrel{%
    \mathpalette\@Dashv\vDash
  }%
}
\newcommand*{\@Dashv}[2]{%
  \reflectbox{$\m@th#1#2$}%
}
\numberwithin{equation}{section}
\theoremstyle{plain}
\newtheorem{proposition}{Proposition}[section]
\newtheorem{corollary}[proposition]{Corollary}
\newtheorem{lem}[proposition]{Lemma}
\newtheorem{theorem}[proposition]{Theorem}
\theoremstyle{definition}
\newtheorem{definition}[proposition]{Definition}
\theoremstyle{remark}
\newtheorem{remark}[proposition]{Remark}
\newtheorem{example}[proposition]{Example}
\renewcommand{\b}[1]{\mathbf{#1}}
\renewcommand{\c}[1]{\mathcal{#1}}
\renewcommand{\d}[1]{\mathbb{#1}}
\newcommand{\f}[1]{\mathfrak{#1}}
\renewcommand{\r}[1]{\mathrm{#1}}
\newcommand{\s}[1]{\mathscr{#1}}
\renewcommand{\sf}[1]{\mathsf{#1}}
\renewcommand{\(}{\left(}
\renewcommand{\)}{\right)}
\newcommand{\res}{\mathbin{|}}
\newcommand{\ol}[1]{\overline{#1}{}}
\newcommand{\wt}[1]{\widetilde{#1}{}}
\newcommand{\ul}{\underline}
\renewcommand{\leq}{\leqslant}
\renewcommand{\geq}{\geqslant}
\newcommand{\bG}{\b G}
\newcommand{\cA}{\c A}
\newcommand{\cB}{\c B}
\newcommand{\cC}{\c C}
\newcommand{\cD}{\c D}
\newcommand{\cH}{\c H}
\newcommand{\cM}{\c M}
\newcommand{\cN}{\c N}
\newcommand{\cO}{\c O}
\newcommand{\cR}{\c R}
\newcommand{\cS}{\c S}
\newcommand{\cU}{\c U}
\newcommand{\cZ}{\c Z}
\newcommand{\dA}{\d A}
\newcommand{\dC}{\d C}
\newcommand{\dQ}{\d Q}
\newcommand{\dR}{\d R}
\newcommand{\dZ}{\d Z}
\newcommand{\fC}{\f C}
\newcommand{\fD}{\f D}
\newcommand{\fT}{\f T}
\newcommand{\fV}{\f V}
\newcommand{\fa}{\f a}
\newcommand{\fb}{\f b}
\newcommand{\fc}{\f c}
\newcommand{\fg}{\f g}
\newcommand{\fh}{\f h}
\newcommand{\ft}{\f t}
\newcommand{\fz}{\f z}
\newcommand{\rI}{\r I}
\newcommand{\rL}{\r L}
\newcommand{\rR}{\r R}
\newcommand{\rU}{\r U}
\newcommand{\rc}{\r c}
\newcommand{\rd}{\,\r d}
\newcommand{\sP}{\s P}
\newcommand{\sR}{\s R}
\newcommand{\sS}{\s S}
\newcommand{\sT}{\s T}
\newcommand{\sfH}{\sf H}
\newcommand{\sfW}{\sf W}
\newcommand{\sfX}{\sf X}
\newcommand{\sfY}{\sf Y}
\newcommand{\tS}{\mathtt{S}}
\newcommand{\tT}{\mathtt{T}}
\newcommand{\bbs}{s}
\newcommand{\ad}{\r{ad}}
\newcommand{\Ad}{\r{Ad}}
\newcommand{\cusp}{\r{cusp}}
\newcommand{\der}{\r{der}}
\newcommand{\expo}{\r{exp}}
\newcommand{\Mul}{\r{Mul}}
\newcommand{\rss}{\r{rs}}
\newcommand{\RE}{\r{Re}\,}
\newcommand{\Temp}{\r{Temp}}
\newcommand{\tr}{\r{tr}}
\DeclareMathOperator{\As}{As}
\DeclareMathOperator{\Aut}{Aut}
\DeclareMathOperator{\BC}{BC}
\DeclareMathOperator{\End}{End}
\DeclareMathOperator{\Gal}{Gal}
\DeclareMathOperator{\GL}{GL}
\DeclareMathOperator{\Ind}{Ind}
\DeclareMathOperator{\Ker}{ker}
\DeclareMathOperator{\Lie}{Lie}
\DeclareMathOperator{\Mat}{Mat}
\DeclareMathOperator{\PGL}{PGL}
\DeclareMathOperator{\Res}{Res}
\DeclareMathOperator{\SL}{SL}
\begin{document}

\title[Isolation of cuspidal spectrum]
{Isolation of cuspidal spectrum, with application to the Gan--Gross--Prasad conjecture}

\author{Rapha\"{e}l Beuzart-Plessis}
\address{Aix Marseille Universit\'{e}, CNRS, I2M, Marseille, France}
\email{raphael.beuzart-plessis@univ-amu.fr}

\author{Yifeng Liu}
\address{Institute for Advanced Study in Mathematics, Zhejiang University, Hangzhou 310058, China}
\email{liuyf0719@zju.edu.cn}

\author{Wei Zhang}
\address{Department of Mathematics, Massachusetts Institute of Technology, Cambridge MA 02139, United States}
\email{weizhang@mit.edu}

\author{Xinwen Zhu}
\address{Division of Physics, Mathematics and Astronomy, California Institute of Technology, Pasadena CA 91125, United States}
\email{xzhu@caltech.edu}

\date{\today}
\subjclass[2010]{11F67, 11F70, 11F72}
\keywords{trace formula, multipliers, isolation of spectrum, cuspidal automorphic representations, Gan--Gross--Prasad conjecture}

\begin{abstract}
  We introduce a new technique for isolating components on the spectral side of the trace formula. By applying it to the Jacquet--Rallis relative trace formula, we complete the proof of the global Gan--Gross--Prasad conjecture and its refinement Ichino--Ikeda conjecture for $\rU(n)\times\rU(n+1)$ in the stable case.
\end{abstract}

\maketitle

\tableofcontents

\section{Introduction}

\subsection{Isolation of cuspidal spectrum}

Let $F$ be a number field and $G$ a connected reductive group over $F$. In this subsection, we describe a general method for annihilating the non-cuspidal spectrum, while keeping certain prescribed representations of $G(\dA_F)$. We fix an open compact subgroup $K$ of $G(\dA_F^\infty)$. Denote by $\cS(G(\dA_F))_K$ the space of bi-$K$-invariant Schwartz test functions on $G(\dA_F)$, that is, we allow the archimedean component to be a Schwartz function rather than just a compactly supported smooth function. Recall that a Schwartz function on $G(F_\infty)$ is a smooth function $f$ such that $Df$ is bounded for every algebraic differential operator $D$ on $G(F_\infty)$.\footnote{Readers should not confuse it with a Harish-Chandra Schwartz function, which is defined by a much weaker condition.} Then $\cS(G(\dA_F))_K$ is an algebra under convolution and acts continuously on the $L^2$-spectrum $L^2(G(F)\backslash G(\dA_F)/K)$ via the right regular representation $\rR$.

\begin{theorem}[see Theorem \ref{th:isolation} for a more general version]\label{th:isolation_pre}
Let $\pi=\otimes_v\pi_v$ be an irreducible admissible representation of $G(\dA_F)$ with unitary automorphic central character $\omega$. Suppose that there does not exist a pair $(P,\sigma)$ where $P$ is a proper parabolic subgroup of $G$ (defined over $F$) and $\sigma$ is a cuspidal automorphic representation of $M(\dA_F)$ with $M$ the Levi quotient of $P$, such that $\pi_v$ is a constituent of $\Ind^G_P(\sigma_v)$ for all but finitely many places $v$ of $F$. Then there is a multiplier $\mu\star$ of the algebra $\cS(G(\dA_F))_K$ such that for every $f\in\cS(G(\dA_F))_K$,
\begin{enumerate}
  \item $\rR(\mu\star f)$ maps $L^2(G(F)\backslash G(\dA_F)/K,\omega)$ into $L^2_\cusp(G(F)\backslash G(\dA_F)/K,\omega)_\pi$, where the latter is the $\pi$-nearly isotypic subspace of $L^2_\cusp(G(F)\backslash G(\dA_F)/K,\omega)$, that is, the direct sum of $K$-invariants of irreducible subrepresentations $\tilde\pi$ such that $\tilde\pi_v\simeq\pi_v$ for all but finitely many places $v$ of $F$;

  \item $\pi(\mu\star f)=\pi(f)$.
\end{enumerate}
\end{theorem}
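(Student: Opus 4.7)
The plan is to build $\mu\star$ as a product of a Hecke-type operator at finitely many finite places and an archimedean Paley--Wiener multiplier acting through the infinitesimal character. By the Langlands--Arthur spectral decomposition, $L^2(G(F)\backslash G(\dA_F)/K,\omega)$ decomposes along pairs $(M,\sigma)$ of cuspidal data, where $M$ is a Levi of some parabolic $P$ and $\sigma$ is a cuspidal automorphic representation of $M(\dA_F)$, with the $(M,\sigma)$-component contained in the closure of Eisenstein contributions from $\Ind_P^G(\sigma\otimes\chi)$ for unitary characters $\chi$ of $A_M$. By the hypothesis on $\pi$, the near-equivalence class of $\pi$ meets no $(M,\sigma)$-piece with $M\neq G$, so $\mu\star$ must annihilate every constituent of every such induced representation, plus every cuspidal representation not near-equivalent to $\pi$, while acting as the identity on $\pi$ itself.

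For the finite-place component I would fix a finite set $S$ of places containing the archimedean ones and the ramification places of $K$, $\omega$, and $\pi$, and choose finitely many auxiliary unramified places $v_1,\ldots,v_r$ outside $S$. At each $v_i$ the Satake isomorphism realizes the spherical Hecke algebra $\cH_{v_i}$ as polynomial functions on the complex Langlands dual torus, so polynomial combinations act on unramified representations by evaluation at their Satake parameters. I would construct $h^{\fin}\in\bigotimes_i\cH_{v_i}$ acting by $1$ on the Satake parameters of $\pi_{v_1},\ldots,\pi_{v_r}$ and vanishing on the Satake parameters of a chosen finite list of competing near-equivalence classes; combined with the archimedean multiplier below and the Harish-Chandra-type finiteness of automorphic representations of level $K$ with bounded archimedean infinitesimal character, this reduces the non-$\pi$ surviving spectrum to a tractable locally finite family.

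The archimedean component is the heart of the argument. By a multiplier theorem of Delorme--Arthur type on the archimedean Schwartz algebra, there is an algebra $\cM_\infty$ of multipliers whose elements act on any admissible representation of $G(F_\infty)$ through its infinitesimal character, realized as $W$-invariant Paley--Wiener--Schwartz functions on the complex dual Cartan $\fh^*_\dC$. The infinitesimal characters that can appear in constituents of $\Ind_P^G(\sigma\otimes\chi)$ with $P$ proper, after imposing the finite-place Hecke constraints above, trace out a locally finite union $\Sigma$ of proper complex-affine subspaces of $\fh^*_\dC$ (indexed by the surviving cuspidal data and the continuous parameter $\chi$); by the hypothesis, the infinitesimal character $\nu_\pi$ of $\pi_\infty$ avoids $\Sigma$. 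I would construct $\mu_\infty\in\cM_\infty$ whose Paley--Wiener realization vanishes on $\Sigma$ and equals $1$ at $\nu_\pi$, for instance by a Weierstrass-type product adapted to the geometry of $\Sigma$. Setting $\mu := \mu_\infty\otimes h^{\fin}$, property (2) is immediate, and property (1) follows because $\mu$ annihilates every constituent of $\Ind_P^G(\sigma\otimes\chi)$ (via $\mu_\infty$) and every cuspidal representation not near-equivalent to $\pi$ (via $h^{\fin}$).

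The main obstacle is the archimedean construction: vanishing on the countable locally finite union $\Sigma$ while keeping $\mu_\infty(\nu_\pi)=1$ and maintaining Paley--Wiener estimates sharp enough to define a multiplier on the \emph{Schwartz} algebra, not merely on compactly supported smooth functions. Controlling the geometry of $\Sigma$ — the directions and accumulation of its affine strata, and their interaction with the Weyl-group symmetry — so that a convergent product-type formula yields such a $\mu_\infty$, and verifying that the resulting convolution $f\mapsto\mu\star f$ is well-defined on $\cS(G(\dA_F))_K$ with the stated spectral behaviour, is the technical crux on which the theorem rests.
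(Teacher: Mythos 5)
The decisive gap is your claim that, after imposing the finite-place Hecke constraints, the hypothesis on $\pi$ forces $\nu_\pi$ to avoid $\Sigma$. It does not. The non-CAP hypothesis only says that for each cuspidal datum $(M,\sigma)$ the local components of $\pi$ disagree with those of $\Ind_P^G(\sigma_s)$ at \emph{infinitely many} places; it says nothing about the archimedean place or about any finite set of auxiliary places $v_1,\dots,v_r$ chosen in advance. For a fixed $(M,\sigma)$ the Satake parameters of $\Ind_P^G(\sigma_s)$ at each $v_i$, and the infinitesimal character $\chi^G_{\sigma_{s,\infty}}$, sweep out continuous families as $s$ varies, and nothing prevents some $s$ from matching $\pi$ simultaneously at $\infty$ and at $v_1,\dots,v_r$ (with the mismatch occurring only at other, unchosen places). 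Your $h^{\fin}$, which vanishes on a \emph{finite list} of competitors, cannot kill such a continuous family either. This is exactly the point where the paper introduces its ``mixed'' multipliers (Proposition \ref{pr:isolation2}): for each remaining $(M,\sigma)$ and each $w\in\sfW$ one picks a place $v[w]$ (archimedean or finite, depending on $w$) where the characters differ, and forms $\nu^\dag_{w,w'}=\nu_w-\nu_w(\chi^G_{\sigma_{\ell\circ w'(-)-\ell(\alpha_\sigma),v[w]}})$, a Hecke element minus a holomorphic function of the infinitesimal character; this vanishes identically along the whole one-parameter family $\{\chi^G_{\sigma_s^\tT}\}_s$ while staying nonzero at $\chi_{\pi^\tT}$. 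Your factorized ansatz $\mu=\mu_\infty\otimes h^{\fin}$ has no analogue of this archimedean--nonarchimedean coupling, so property (1) fails for any Eisenstein component whose family passes through $(\nu_\pi,\text{Satake data of }\pi\text{ at }v_1,\dots,v_r)$.

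A second, related problem is the order of your reduction. The finiteness you invoke (``Harish-Chandra-type finiteness with bounded archimedean infinitesimal character'') does not apply to Eisenstein data, whose infinitesimal characters are unbounded along the continuous parameter; and a priori there are infinitely many competing cuspidal data $(M,\sigma)$, so the Hecke step cannot begin with a ``finite list.'' The paper obtains finiteness the other way around: it first restricts $K_\infty$-types, then observes that the Casimir eigenvalue of the derived group of $M$ is \emph{constant} along the twist $\sigma_s$, invokes Donnelly's theorem to show the set of such eigenvalues (for fixed level and $K$-type) has finite rank, and builds a moderate-growth Weierstrass product killing all but finitely many $(M,\sigma)$; only then are the finitely many survivors handled by the mixed multipliers. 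Your Weierstrass product ``adapted to the geometry of $\Sigma$'' would need precisely this projection to the $M_\der$-Casimir (plus the lattice structure of the central part) to converge with moderate vertical growth, which you flag as an obstacle but do not resolve. The extension of Delorme's multiplier theorem from $C^\infty_c(G)_{(K)}$ to the full Schwartz algebra, which you also defer, is Section 2 of the paper and is genuinely needed because the functions produced are not of exponential type.
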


Recall that a multiplier of a complex linear algebra $\cS$ is a complex linear operator $\mu\star\colon\cS\to\cS$ that commutes with both left and right multiplications. Finding multipliers is most interesting when $\cS$, such as $\cS(G(\dA_F))_K$, is noncommutative and non-unital.

\begin{remark}\label{re:isolation}
We have
\begin{enumerate}
  \item Theorem \ref{th:isolation_pre}, together with its application to the stable case of the Gan--Gross--Prasad conjecture discussed in the next subsection, breaks the longstanding impression that to understand every (non-CAP) cuspidal automorphic representation, one has to understand the full $L^2$-spectrum on the spectral side of the (relative) trace formula.

  \item When $G$ is anisotropic modulo center in Theorem \ref{th:isolation_pre}, $L^2(G(F)\backslash G(\dA_F)/K,\omega)$ coincides with $L^2_\cusp(G(F)\backslash G(\dA_F)/K,\omega)$, which is a Hilbert direct sum of subspaces of the form $L^2_\cusp(G(F)\backslash G(\dA_F)/K,\omega)_{\pi'}$ for irreducible (cuspidal) automorphic representations $\pi'$ of $G(\dA_F)$ with central character $\omega$ and nonzero $K$-invariants up to near equivalence. In this case, our theorem implies that one can use multipliers to modify $f\in\cS(G(\dA_F))_K$ so that the effect is the same as composing the projection map to an arbitrarily given factor $L^2_\cusp(G(F)\backslash G(\dA_F)/K,\omega)_\pi$.

  \item It is crucial that in Theorem \ref{th:isolation_pre}, we work with the algebra $\cS(G(\dA_F))_K$ of Schwartz test functions. The method does not work if one works with compactly supported test functions.

  \item In Theorem \ref{th:isolation_pre}, we do not even require $\pi$ to be cuspidal automorphic. For example, it is possible that $\pi^K\neq\{0\}$ but $L^2_\cusp(G(F)\backslash G(\dA_F)/K,\omega)_\pi=0$. Then the theorem provides a uniform way to modify $f\in\cS(G(\dA_F))_K$, without changing its action on $\pi^K$, but annihilating the entire $L^2$-spectrum.
\end{enumerate}
\end{remark}

In fact, we also obtain a result for isolating general cuspidal components of the $L^2$-spectrum. For simplicity, here we only state the theorem for $G=\Res_{F'/F}\GL_n$ for a finite extension $F'/F$. For such $G$, we have the result on the classification of automorphic representations \cite{JS81}*{Theorem~4.4}.

\begin{theorem}[special case of Theorem \ref{th:isolation_tri}]\label{th:isolation_tri_pre}
In the situation of Theorem \ref{th:isolation_pre}, suppose that $G=\Res_{F'/F}\GL_n$ and that $\pi$ is an irreducible constituent of $\Ind_P^G(\sigma)$ for a parabolic subgroup $P$ of $G$ and a cuspidal automorphic representation $\sigma$ of the Levi quotient of $P$. Then there is a multiplier $\mu\star$ of the algebra $\cS(G(\dA_F))_K$ such that for every $f\in\cS(G(\dA_F))_K$,
\begin{enumerate}
  \item $\rR(\mu\star f)$ maps $L^2(G(F)\backslash G(\dA_F)/K,\omega)$ into $L^2_{(M,\sigma)}(G(F)\backslash G(\dA_F)/K,\omega)$, where the latter is the cuspidal component of $L^2(G(F)\backslash G(\dA_F)/K,\omega)$ associated to $(M,\sigma)$;

  \item $\pi(\mu\star f)=\pi(f)$.
\end{enumerate}
\end{theorem}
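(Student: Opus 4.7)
The idea is to reduce to Theorem~\ref{th:isolation_pre} applied to the Levi $M$ with its cuspidal representation $\sigma$, and then transfer the resulting multiplier on $\cS(M(\dA_F))_{K_M}$ up to a multiplier on $\cS(G(\dA_F))_K$ via parabolic induction on the spectral side.

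First I apply Theorem~\ref{th:isolation_pre} with $(M,\sigma)$ in place of $(G,\pi)$. The Levi quotient $M$ is itself a product of restrictions of scalars of general linear groups, so Theorem~\ref{th:isolation_pre} is available; and since $\sigma$ is cuspidal on $M(\dA_F)$, its hypothesis holds trivially. Choosing a compact open subgroup $K_M\subset M(\dA_F^\infty)$ compatibly with~$K$, I obtain a multiplier $\mu_M\star$ of $\cS(M(\dA_F))_{K_M}$ such that $\rR(\mu_M\star f_M)$ lands in the $\sigma$-nearly isotypic cuspidal subspace of $L^2(M(F)\backslash M(\dA_F)/K_M,\omega_M)$ and such that $\sigma(\mu_M\star f_M)=\sigma(f_M)$. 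By strong multiplicity one for $\GL_n$ \cite{JS81}*{Theorem~4.4}, this near-equivalence class reduces to the $\sigma$-isotypic subspace.

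Next I transfer $\mu_M$ to a multiplier $\mu_G$ of $\cS(G(\dA_F))_K$. This rests on the dictionary between multipliers and spectrally-defined functions: at the archimedean place through the Paley--Wiener theorem of Arthur/Delorme for the Schwartz convolution algebra, at unramified finite places through the Satake isomorphism, and at the finitely many ramified finite places through the Bernstein center. Parabolic induction $\Ind_P^G$ gives a natural map from cuspidal-support data on $M$ (twisted by unramified characters along the imaginary parameter) to spectral parameters on $G$, and I define $\mu_G$ to be the multiplier whose spectral function is the push-forward of that of $\mu_M$ along this map, extended by zero outside the locus coming from $(M,\sigma)$-data. The Langlands decomposition of $L^2(G(F)\backslash G(\dA_F)/K,\omega)$ by associate classes of cuspidal data then gives~(1): $\mu_G$ annihilates every component whose cuspidal datum is not associate to $(M,\sigma)$. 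For~(2): since $\pi$ is an irreducible constituent of $\Ind_P^G(\sigma)$, its spectral parameter is the image of that of $\sigma$, so $\mu_G$ acts on $\pi$ as the identity.

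The main technical obstacle is to verify that the transferred object $\mu_G$ is genuinely a multiplier of the Schwartz convolution algebra, and not merely a spectral projection. Pointwise specification of spectral action does not automatically produce a function satisfying the required Paley--Wiener/Schwartz-type growth estimates, and one has to check these estimates uniformly across the archimedean, unramified, and ramified places. A subtler point concerns Weyl-group compatibility: spectral data on $G$ are parameterized modulo the full Weyl group $W_G$, while the push-forward from $M$ naturally sees only $W_M$-orbits, so $\mu_G$ must be symmetrized over $W_G/W_M$; strong multiplicity one ensures that associate cuspidal data $(M,\sigma)$ and $(wMw^{-1},w\sigma)$ contribute to the same summand $L^2_{(M,\sigma)}$, so the symmetrization preserves the isolation of $\pi$.
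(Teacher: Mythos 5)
There is a genuine gap, and it sits exactly where you flag the ``main technical obstacle'': the transfer step. The multipliers available for $\cS(G(\dA_F))_K$ in this setting have archimedean component a \emph{holomorphic} function on all of $\fh^*_\dC$ (with moderate-growth conditions) tensored with spherical Hecke operators away from a finite set; there is no Bernstein-center component at ramified places, and more importantly a holomorphic function cannot be ``extended by zero outside the locus coming from $(M,\sigma)$-data.'' That locus of infinitesimal characters, $\bigcup_{s\in\fa^*_{M,\dC}}\chi^G_{\sigma_{s,\infty}}$, is a finite union of affine subspaces, and a holomorphic function equal to $1$ there and $0$ elsewhere does not exist; nor does the sharp spectral projection onto $L^2_{(M,\sigma)}$ preserve the Schwartz algebra. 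So your $\mu_G$ is not produced by the proposed recipe, and the first step (applying Theorem \ref{th:isolation_pre} to $M$ itself) cannot be cashed in, because there is no map from multipliers of $\cS(M(\dA_F))_{K_M}$ to multipliers of $\cS(G(\dA_F))_K$ realizing ``push-forward along parabolic induction.''

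The substantive difficulty your plan does not engage is that for a competing cuspidal datum $(M',\sigma')$ the continuous family of infinitesimal characters $\{\chi^G_{\sigma'_{s',\infty}}\}$ can meet, or even largely coincide with, the family attached to $(M,\sigma)$, and likewise for the Hecke eigenvalues at any single unramified place; so no purely archimedean multiplier, and no single local Hecke operator, separates the two components. The paper's proof of Theorem \ref{th:isolation_tri} overcomes this in two stages: first, using Donnelly's discreteness theorem for Casimir eigenvalues of the derived groups of the Levis (Lemmas \ref{le:casimir}--\ref{le:finite3}) to build an archimedean multiplier annihilating all but \emph{finitely many} cuspidal components; second, for each surviving component not containing $\chi$, a ``mixed'' multiplier $\mu_{(M',\sigma')}$ (Proposition \ref{pr:isolation2}) that exploits the rigid correlation between the twist parameter $s$, the archimedean infinitesimal character, and the unramified Hecke eigenvalues along the family $\sigma'_{s}$ --- the point being that the two data determine each other, so a suitable difference $\nu_w-\nu_w(\chi^G_{\sigma'_{\ell\circ w(-)-\ell(\alpha_{\sigma'})},v[w]})$ vanishes identically on the whole $(M',\sigma')$-family while being nonzero at $\chi$. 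Without these two ingredients (or substitutes for them), the proposal does not yield the theorem; Weyl-group symmetrization and strong multiplicity one, which you do mention, are comparatively minor points.
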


In the process of proving Theorem \ref{th:isolation_pre} and Theorem \ref{th:isolation_tri_pre}, we need a sufficient supply for multipliers of the archimedean component of $\cS(G(\dA_F))_K$. Now we temporarily switch the notation so that $G=\ul{G}(\dR)$ for a connected reductive algebraic group $\ul{G}$ over $\dR$. Let $\fh^*$ be the real vector space spanned by the weight lattice of the abstract Cartan group of $\ul{G}_\dC$, and $\sfW$ the Weyl group of $\ul{G}_\dC$, which acts on $\fh^*$. By the Harish-Chandra isomorphism, the infinitesimal character $\chi_\pi$ of an irreducible admissible representation $\pi$ of $G$ gives rise to a $\sfW$-orbit in $\fh^*_\dC$. In Definition \ref{de:multiplier}, we will define a space $\cM_{\theta\cup\{1\}}(\fh_\dC^*)$ of holomorphic functions on $\fh_\dC^*$ satisfying certain growth conditions, stable under the action of $\sfW$. Let $\cS(G)$ be the convolution algebra of Schwartz functions on $G$. The following is our theorem on multipliers of $\cS(G)$.

\begin{theorem}[Theorem \ref{th:multiplier}]\label{th:multiplier_pre}
For every function $\mu\in\cM_{\theta\cup\{1\}}(\fh_\dC^*)^\sfW$, there is a unique linear operator $\mu\star\colon\cS(G)\to\cS(G)$, such that
\[
\pi(\mu\star f)=\mu(\chi_\pi)\cdot\pi(f)
\]
holds for every $f\in\cS(G)$ and every irreducible admissible representation $\pi$ of $G$. In particular, $\mu\star$ is a multiplier of $\cS(G)$.
\end{theorem}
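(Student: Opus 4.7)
The plan is to adapt Arthur's construction of multipliers via Paley--Wiener methods, the subtlety being that the target algebra $\cS(G)$ is the small ``algebraic'' Schwartz space rather than the Harish-Chandra Schwartz space $\cC(G)$. Uniqueness is formal: Harish-Chandra's Plancherel theorem for $\cC(G)$ shows that an element of $\cS(G)\subset\cC(G)$ is determined by its action on irreducible tempered representations, and the infinitesimal characters of such representations are $\sfW$-invariantly dense in $\fh_\dC^*$, so the prescribed identity $\pi(\mu\star f)=\mu(\chi_\pi)\pi(f)$ forces a unique $\mu\star f$.

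For existence, the starting point is the Harish-Chandra isomorphism $\cZ(\fg)\simeq\dC[\fh_\dC^*]^\sfW$, which realizes each Weyl-invariant polynomial as a bi-invariant differential operator on $G$, hence as a multiplier of $\cS(G)$ via right convolution. The goal is to promote this map from polynomials to the algebra $\cM_{\theta\cup\{1\}}(\fh_\dC^*)^\sfW$. I would proceed as follows: (i) for $\mu\in\cM_{\theta\cup\{1\}}(\fh_\dC^*)^\sfW$, use the growth bounds packaged into the definition of $\cM_{\theta\cup\{1\}}$ to write $\mu$ as an inverse Fourier--Laplace transform of a distribution on $\fh_\dR$ whose support behavior is controlled by the set $\theta\cup\{1\}$; (ii) pull this distribution across the Harish-Chandra isomorphism to produce a distribution $\delta_\mu$ on $G$ with controlled asymptotic behavior, such that $\pi(\delta_\mu)=\mu(\chi_\pi)\cdot\id$ for every irreducible admissible $\pi$; (iii) set $\mu\star f\coloneqq f*\delta_\mu$ and verify that $\mu\star f$ lies in $\cS(G)$. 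The eigenvalue identity then reduces, after restricting to $K$-bi-finite $f$ and applying Schur's lemma, to the case of polynomial $\mu$, which is automatic, and is extended to all of $\cM_{\theta\cup\{1\}}(\fh_\dC^*)^\sfW$ by an approximation argument.

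The main obstacle is step (iii). Unlike $\cC(G)$, the space $\cS(G)$ demands that $Df$ remain bounded for every algebraic differential operator $D$, that is, every element of $U(\fg_\dC)\otimes_\dC\dC[\ul{G}]$. Showing that convolution by $\delta_\mu$ preserves this stronger notion of decay requires a quantitative estimate tying together the asymptotics of $\delta_\mu$ on a Cartan subgroup, the polynomial growth of matrix coefficients of $\ul{G}$, and the Leibniz rule for algebraic differential operators under convolution. The growth class $\cM_{\theta\cup\{1\}}$ is presumably tailored precisely so that this estimate holds: the ``$\theta$'' part controls exponential behavior in prescribed directions of $\fh_\dC^*$, while the inclusion of $\{1\}$ builds in enough slack to absorb the polynomial blow-up coming from the algebraic coordinate functions of $\ul{G}$. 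Once this analytic step is verified, the operator $\mu\star$ has all the properties required, and uniqueness from the first paragraph completes the proof.
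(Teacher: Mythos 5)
There is a genuine gap, and it sits exactly where you locate ``the main obstacle.'' Your steps (i)--(iii) presuppose that a function $\mu\in\cM_{\theta\cup\{1\}}(\fh_\dC^*)^\sfW$ can be realized as the Fourier--Laplace transform of a distribution on $\fh$ which can then be ``pulled across'' the Harish-Chandra isomorphism to a convolution kernel $\delta_\mu$ on $G$. That transfer is precisely the content of the Arthur--Delorme multiplier theorems, and those theorems require either compact support of the distribution (equivalently, $\mu$ of exponential type) or, in Delorme's refinement, rapid decay of $\mu$ on vertical strips, and even then they only produce an operator on the bi-$K$-finite algebra $C^\infty_c(G)_{(K)}$. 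The whole point of the class $\cM_{\theta\cup\{1\}}(\fh_\dC^*)^\sfW$ is that its elements are \emph{not} of exponential type and need only have \emph{moderate} growth on vertical strips; for such $\mu$ there is no Paley--Wiener-type statement that yields a distribution $\delta_\mu$ on $G$ with $\pi(\delta_\mu)=\mu(\chi_\pi)\cdot\id$, and asserting that the estimate in (iii) ``presumably'' holds because the growth class was tailored for it is not an argument. So as written the existence half of the proof is missing its central analytic step.

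For comparison, the paper avoids constructing a kernel altogether. It first treats the rapid-decay subclass $\cN_{\theta\cup\{1\}}(\fh_\dC^*)^\sfW$: Delorme's theorem (extended to reductive groups in the appendix) gives $\mu\star f$ for $f\in C^\infty_c(G)_{(K)}$, and the operator is extended to all of $\cS(G)$ by continuity, using that the topology of $\cS(G)$ is generated by the semi-norms $f\mapsto\|Df\|_{L^2}$ for algebraic differential operators $D$. The $L^2$-bound for $D=1$ comes from Harish-Chandra's Plancherel formula together with the fact that tempered infinitesimal characters lie in $\bigcup_{\vartheta}i\fh^*_\vartheta\oplus\sfY^*_\vartheta$, where the $\theta$-conditions guarantee polynomial growth of $\mu$; for $D$ a matrix coefficient of an algebraic representation $\tau$, Kostant's theorem on the centralizer $\cR_\tau$ yields an identity $P(\mu\star f)=\sum_{i,j}S_i(\mu)\star P_{ij}\rL(u_{ij})f$ in which the $S_i$ are difference operators built from the extended affine Weyl group, and the $\{1\}$-condition (moderate growth on vertical strips) is what makes the $S_i$ bounded on the multiplier space. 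Finally, the passage from $\cN$ to $\cM$ is a limit argument: one multiplies $\mu$ by $\exp(-P(\alpha/n))$ for a suitable positive $\sfW$-invariant homogeneous polynomial $P$ and uses the quantitative bounds to show $\{\mu_n\star f\}$ is Cauchy in $\cS(G)$. If you want to salvage your outline, you would need to either reproduce this commutator analysis or supply an independent proof of the decay of $\delta_\mu$ against algebraic differential operators; neither is routine. (Your uniqueness paragraph is essentially fine via Plancherel, although tempered infinitesimal characters are not dense in $\fh_\dC^*$ --- density is not needed, since $\pi(\mu\star f)$ is prescribed for each tempered $\pi$ individually.)
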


Now it is a good time to explain the crucial difference between $\cS(G)$ and $C^\infty_c(G)$. As pointed out in the final remark of the article \cite{Del84}, if $G$ has no compact factors, then the only $\sfW$-invariant holomorphic functions on $\fh_\dC^*$ that give rise to (continuous) multipliers of $C^\infty_c(G)$ are polynomials, that is, elements in the center of the universal enveloping algebra. Even if one considers only the subalgebra $C^\infty_c(G)_{(K)}$ of bi-$K$-finite compactly supported smooth functions for a fixed maximal compact subgroup $K$ of $G$, the $\sfW$-invariant holomorphic functions on $\fh_\dC^*$ that give rise to (continuous) multipliers of $C^\infty_c(G)_{(K)}$ have to be of exponential type, a property not required for $\cS(G)$. The removal of the restriction of being of exponential type will vastly increase the collection of multipliers, making it possible to obtain results like Theorem \ref{th:isolation_pre}, as long as the Casimir eigenvalues of cuspidal automorphic representations are distributed in a certain discrete way, while the latter is indeed fulfilled by a result of Donnelly \cite{Don82}.

\subsection{Application to the Gan--Gross--Prasad conjecture}

In this subsection, we describe the results obtained by applying Theorem \ref{th:isolation_pre} and its variants to the Jacquet--Rallis relative trace formula \cites{JR11,Zyd}. Let $E/F$ be a quadratic extension of number fields, with $\rc$ the Galois involution. Let $n\geq 1$ be an integer.

\begin{definition}\label{de:hermitian}
Let $\Pi$ be an isobaric automorphic representation of $\GL_n(\dA_E)$.
\begin{enumerate}
  \item We say that $\Pi$ is \emph{conjugate self-dual} if its contragredient $\Pi^\vee$ is isomorphic to $\Pi\circ\rc$.

  \item We say that $\Pi$ is \emph{hermitian} if $\Pi$ is an isobaric sum of mutually non-isomorphic cuspidal automorphic representations $\Pi_1,\dots,\Pi_d$ in which each factor $\Pi_i$ is conjugate self-dual and satisfies that $L(s,\Pi_i,\As^{(-1)^n})$ is regular at $s=1$.
\end{enumerate}
\end{definition}

Let $V$ be a (nondegenerate) hermitian space over $E$ of rank $n$ (with respect to the involution $\rc$). Put $G\coloneqq\rU(V)$, which is a reductive group over $F$.

\begin{definition}\label{de:weak}
Let $\pi$ be an irreducible admissible representation of $G(\dA_F)$. We say that an isobaric automorphic representation $\Pi$ of $\GL_n(\dA_E)$ is a \emph{weak automorphic base change} of $\pi$ if for all but finitely many places $v$ of $F$ split in $E$, the (split) local base change of $\pi_v$ is isomorphic to $\Pi_v$. If weak automorphic base change of $\pi$ exists, then it is unique up to isomorphism by \cite{Ram}*{Theorem~A}; and we denote it by $\BC(\pi)$.
\end{definition}

\begin{remark}\label{re:weak}
In most of the literatures, weak automorphic base change of $\pi$ requires the local-global compatibility for all but finitely many places, so in some sense our notion of weak automorphic base change should really be ``very weak automorphic base change,'' though by the endoscopic classification for unitary groups \cites{Art13,Mok15,KMSW}, we now know that these two definitions are equivalent. The reason we use this weaker notion is that we want to make our argument independent of the endoscopic theory for unitary groups. In fact, under our weaker notion of automorphic base change, we can prove, as a byproduct of the Jacquet--Rallis relative trace formulae, the following result:
\begin{itemize}
  \item The weak automorphic base change (in the sense of Definition \ref{de:weak}) of $\pi$ exists as long as there exist infinitely many places $v$ of $F$ split in $E$ such that $\pi_v$ is generic.
\end{itemize}
Our proof is completely free of using endoscopic trace formulae. The method can be used to show the local-global compatibility at all places where $\pi_v$ is unramified as well, but the argument will implicitly relies on \cites{Mok15,KMSW}. See Remark \ref{re:base_change} for more details.
\end{remark}

The following theorem confirms the global Gan--Gross--Prasad conjecture \cite{GGP12} for $\rU(n)\times\rU(n+1)$ in the stable case completely, which improves previous results in \cites{Zha1,Xue,BP1}.

\begin{theorem}\label{th:ggp}
Let $\Pi_n$ and $\Pi_{n+1}$ be hermitian cuspidal automorphic representations of $\GL_n(\dA_E)$ and $\GL_{n+1}(\dA_E)$, respectively. Then the following two statements are equivalent.
\begin{enumerate}
  \item We have $L(\tfrac{1}{2},\Pi_n\times\Pi_{n+1})\neq 0$.

  \item There exist
    \begin{itemize}
      \item a hermitian space $V_n$ over $E$ of rank $n$, which gives another hermitian space $V_{n+1}\coloneqq V_n\oplus E.e$ over $E$ of rank $n+1$ in which $e$ has norm $1$,

      \item for $m=n,n+1$, an irreducible subrepresentation $\pi_m\subseteq\cA_\cusp(G_m)$ of $G_m(\dA_F)$, where $G_m\coloneqq\rU(V_m)$, satisfying $\BC(\pi_m)\simeq\Pi_m$, and a cusp form $\varphi_m\in\pi_m$,
    \end{itemize}
    such that
    \[
    \sP(\varphi_n,\varphi_{n+1})\coloneqq\int_{G_n(F)\backslash G_n(\dA_F)}\varphi_n(h)\varphi_{n+1}(h)\rd h\neq 0
    \]
    where $\r{d}h$ is the Tamagawa measure on $G_n(\dA_F)$.
\end{enumerate}
\end{theorem}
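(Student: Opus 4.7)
My plan is to deduce Theorem \ref{th:ggp} from the comparison of the Jacquet--Rallis relative trace formulae for the pairs $(G_n\times G_{n+1}, G_n^{\graph})$ on the unitary side and $(\Res_{E/F}(\GL_n\times\GL_{n+1}), (\GL_n\times\GL_{n+1})\times\GL_n^{\graph})$ on the general linear side, isolated by the multipliers furnished by Theorems \ref{th:isolation_pre} and \ref{th:isolation_tri_pre}. On the unitary side, I would sum the geometric/spectral identities over isomorphism classes of hermitian spaces $V_n$ of rank $n$ (with the convention $V_{n+1}=V_n\oplus E.e$). Writing the spectral expansion of the Jacquet--Rallis kernel for the unitary side, the near-equivalence class of $(\pi_n,\pi_{n+1})$ determined by $(\Pi_n,\Pi_{n+1})$ contributes the quantity $|\sP(\varphi_n,\varphi_{n+1})|^2$ (integrated against coefficients of matrix coefficients of $\pi_m$), while the analogous component on the general linear side equals, up to local factors and the archimedean constants arising from smooth transfer, the Rankin--Selberg period evaluating $L(\tfrac12,\Pi_n\times\Pi_{n+1})$.

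\textbf{Easy direction.} The implication $(2)\Rightarrow(1)$ is in the existing literature: if a nonzero period exists for some $(\pi_n,\pi_{n+1})$ with prescribed base change, the local-global decomposition of $\sP$ (Ichino--Ikeda-type factorization, cf. \cites{Zha1,BP1,Xue} in the tempered case, plus the observation that $\Pi_m$ hermitian cuspidal forces $\pi_m$ tempered everywhere via the local classification at split places and bounds on the Satake parameters) identifies $|\sP(\varphi_n,\varphi_{n+1})|^2$ with a product of absolutely convergent local factors times $L(\tfrac12,\Pi_n\times\Pi_{n+1})$ divided by nonzero $L$-values, so $L(\tfrac12,\Pi_n\times\Pi_{n+1})\neq 0$.

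\textbf{Hard direction.} For $(1)\Rightarrow(2)$, I would proceed as follows. Given a pure tensor test function $f=\otimes_v f_v\in\cS(G_n(\dA_F)\times G_{n+1}(\dA_F))_K$ that transfers to a Schwartz function $f'$ on the linear side (using Zhang's smooth transfer at $p$-adic places, Yun/Beuzart-Plessis' fundamental lemma at unramified places, and Xue's and Beuzart-Plessis' archimedean transfer), apply Theorem \ref{th:isolation_pre} at all finite places to $f$ to obtain $\mu\star f$ whose spectral expansion is supported in the near-equivalence class of $(\pi_n,\pi_{n+1})$ (noting that by Definition \ref{de:hermitian}\,(2) and the classification \cite{JS81}, no proper Levi of a unitary group can carry a cuspidal representation with the correct base change, so the hypothesis of Theorem \ref{th:isolation_pre} is satisfied). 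Analogously, use Theorem \ref{th:isolation_tri_pre} to isolate the cuspidal component $(M,\sigma)$ corresponding to the isobaric datum defining $(\Pi_n,\Pi_{n+1})$ on the linear side. I would then need to show that these isolation multipliers can be chosen so that $\mu\star f$ and $\mu'\star f'$ remain a transfer pair; this follows from Theorem \ref{th:multiplier_pre} by realizing multipliers via convolution with distributions supported at the identity in the center of the enveloping algebra extended to Schwartz distributions determined by infinitesimal characters, which commute with orbital integrals and hence with transfer. After isolation, the identity of geometric sides of the two RTFs specializes to an identity between a nonzero spectral expression on the linear side (by $(1)$ and the explicit Jacquet--Shalika--Zhang computation of the Rankin--Selberg contribution) and the sum over $V_n$ and over irreducible $(\pi_n,\pi_{n+1})$ with $\BC(\pi_m)\simeq\Pi_m$ of the unitary period $|\sP|^2$. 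Hence at least one summand is nonzero, yielding $(2)$.

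\textbf{Main obstacle.} The chief difficulty is the compatibility of the isolation multipliers with archimedean smooth transfer. Since Schwartz multipliers correspond to holomorphic functions on $\fh_\dC^*$ satisfying the growth condition defining $\cM_{\theta\cup\{1\}}(\fh_\dC^*)^\sfW$, one needs to verify that the operation $f\mapsto\mu\star f$ on the archimedean component transfers to the analogous operation on the linear side under the archimedean transfer of Xue/Beuzart-Plessis. I would do this by expressing $\mu\star$ as a limit of convolutions by compactly supported distributions built from bi-invariant differential operators and verifying the matching at the level of weighted orbital integrals, leveraging that both sides' infinitesimal characters are related through base change by an explicit map $\fh_{G_n\times G_{n+1},\dC}^*\to\fh_{\GL_n\times\GL_{n+1},\dC}^*$ equivariant for the pertinent Weyl group inclusions. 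The remaining secondary obstacle is the weak base change required to even formulate $\pi_m$: this is settled by the bullet point in Remark \ref{re:weak}, applied to generic local components at split places forced by the hermitian cuspidal hypothesis on $\Pi_m$.
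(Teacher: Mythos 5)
Your overall architecture for $(1)\Rightarrow(2)$ — transfer a test function, isolate the relevant cuspidal components on both sides by multipliers, and compare the geometric sides of the two relative trace formulae — is indeed the paper's strategy. But there are two genuine gaps.

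First, your ``easy direction'' $(2)\Rightarrow(1)$ does not work as stated. The claim that $\Pi_m$ hermitian cuspidal forces $\pi_m$ to be everywhere tempered is false: that is the Ramanujan conjecture, which is open, and the hermitian condition (conjugate self-duality with the correct Asai factor) says nothing about temperedness. Moreover, the unconditional Ichino--Ikeda factorization you invoke is Theorem \ref{th:ii} of this paper, which is proved \emph{after} and \emph{using} Theorem \ref{th:ggp} (together with the endoscopic classification), so the route is circular. The paper instead proves $(2)\Rightarrow(1)$ by running the same RTF comparison in the direction unitary $\to$ general linear: the nonvanishing of $\sP$ on $\pi^V$ produces $I_\Pi(\mu'\star f')\neq 0$ on the $\GL$ side after isolation (using Theorem \ref{th:abc}(3) to rule out CAP-ness), and then \eqref{eq:character1} exhibits a nonzero $H'_1$-period on $\Pi$, whence $L(\tfrac12,\Pi_n\times\Pi_{n+1})\neq 0$ by the Rankin--Selberg theory of \cite{JPSS}. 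No refined formula and no temperedness are needed.

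Second, your resolution of what you correctly identify as the main obstacle — compatibility of the multipliers with smooth transfer — would fail. You propose to realize $\mu\star$ as a limit of convolutions by distributions supported at the identity built from bi-invariant differential operators, which ``commute with orbital integrals.'' But such distributions are exactly the elements of $\cZ(\fg)$ (polynomial multipliers), and the entire point of Section \ref{ss:2} is that the multipliers needed to isolate the cuspidal spectrum lie far outside $\cZ(\fg)$ and are not even of exponential type; they cannot be approximated by identity-supported distributions in any topology for which orbital integrals pass to the limit termwise. The paper's actual mechanism (Proposition \ref{pr:transfer3} via Lemma \ref{le:matching}) is purely spectral: using the local Plancherel-type inversion formulae of \cite{BP2} and the local Gan--Gross--Prasad conjecture, matching of orbital integrals at the relevant places is shown to be \emph{equivalent} to matching of relative characters, and multipliers compatible under the base-change map $\sf{bc}^V$ manifestly multiply matching relative characters by the same scalar. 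Without this equivalence your argument has no way to propagate transfer through $\mu\star$. A lesser but related gap: your parenthetical that non-CAP-ness of $\pi^V$ follows immediately from Definition \ref{de:hermitian}(2) and \cite{JS81} presupposes weak base change for cuspidal representations of the unitary factors of proper Levi subgroups, which in the paper is itself established by an induction on rank through the RTF (Proposition \ref{pr:bc} and Theorem \ref{th:abc}), precisely to avoid appealing to the endoscopic classification.
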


Before our current work, the above theorem was known under the restriction that both $\Pi_n$ and $\Pi_{n+1}$ are supercuspidal at some prime of $E$ that is split over $F$. However, removing this (last) local restriction is crucial for arithmetic application to certain motives like symmetric powers of elliptic curves; see \cite{LTXZZ}.

Using the above theorem, we can obtain the following nonvanishing result on central $L$-values, which improves \cite{Zha1}*{Theorem~1.2}.

\begin{theorem}\label{th:nonvanishing}
Let $\Pi_{n+1}$ be a hermitian cuspidal automorphic representation of $\GL_{n+1}(\dA_E)$. Then there exists a hermitian cuspidal automorphic representation $\Pi_n$ of $\GL_n(\dA_E)$ such that
\[
L(\tfrac{1}{2},\Pi_n\times\Pi_{n+1})\neq 0.
\]
\end{theorem}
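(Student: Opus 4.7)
The plan is to combine Theorem~\ref{th:ggp} with the Jacquet--Rallis relative trace formula (RTF), using the new isolation multipliers from Theorems~\ref{th:isolation_pre} and~\ref{th:isolation_tri_pre} in place of the supercuspidal-at-a-split-place hypothesis of~\cite{Zha1}. By Theorem~\ref{th:ggp}, it is enough to produce a hermitian space $V_{n+1}=V_n\oplus E.e$ (with $e$ of norm~$1$), cuspidal representations $\pi_m\subseteq\cA_\cusp(G_m)$ for $G_m\coloneqq\rU(V_m)$, and cusp forms $\varphi_m\in\pi_m$ satisfying $\BC(\pi_{n+1})\simeq\Pi_{n+1}$, $\BC(\pi_n)$ hermitian cuspidal, and $\sP(\varphi_n,\varphi_{n+1})\neq 0$; one then sets $\Pi_n\coloneqq\BC(\pi_n)$ and Theorem~\ref{th:ggp}(1) supplies the desired nonvanishing.

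To produce these data, I would compare the Jacquet--Rallis RTF on $G_n\times G_{n+1}$ (summed over all hermitian pairs $V_n\subset V_{n+1}$) with its matched version on $G'\coloneqq(\Res_{E/F}\GL_n\times\Res_{E/F}\GL_{n+1})(\dA_F)$: for test functions with matching orbital integrals, afforded by the smooth transfer and fundamental lemma of~\cites{Zha1,BP1}, the spectral expansions agree. On the unitary side, each cuspidal pair $(\pi_n,\pi_{n+1})$ contributes a term essentially of the form $|\sP(\varphi_n,\varphi_{n+1})|^2$ times nonzero local factors, while on the linear side each isobaric pair $(\Pi_n,\Pi_{n+1})$ contributes essentially $L(\tfrac{1}{2},\Pi_n\times\Pi_{n+1})$ times nonzero local factors (via the Jacquet--Rallis refinement proven in the main text). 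Now apply Theorem~\ref{th:isolation_tri_pre} to the $\GL_{n+1}$-factor of the linear test function to construct a multiplier isolating the $\Pi_{n+1}$-block, and simultaneously Theorem~\ref{th:isolation_pre} to the $G_{n+1}$-factor of the unitary test function to isolate cuspidal representations near-equivalent to a $\pi_{n+1}$ with $\BC(\pi_{n+1})\simeq\Pi_{n+1}$ while annihilating the remaining (in particular all non-cuspidal) spectrum; the descent of $\Pi_{n+1}$ to a unitary group then arises automatically as a byproduct of matching rather than requiring a separate appeal to endoscopic classification. After these modifications the matched RTF identity collapses, schematically, to
\begin{equation*}
\sum_{\pi_n}|\sP(\varphi_n,\varphi_{n+1})|^2\cdot(\textup{nonzero local factors})=\sum_{\Pi_n}L(\tfrac{1}{2},\Pi_n\times\Pi_{n+1})\cdot(\textup{nonzero local factors}),
\end{equation*}
where the right-hand sum ranges over hermitian cuspidal $\Pi_n$ on $\GL_n(\dA_E)$.

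Finally, I would choose test functions whose matched geometric sides carry at least one nonvanishing regular semisimple relevant orbital integral, which is possible by the standard density of orbital integrals on $G'(\dA_F)$; the isolated identity is then nontrivial, forcing at least one term on the right-hand side to be nonzero and producing the desired $\Pi_n$. The main obstacle, and the reason the argument only goes through now, is the clean isolation of the $\Pi_{n+1}$-block against the entire non-cuspidal spectrum while retaining enough freedom in the test functions to keep the geometric side manipulable: this is precisely what Theorems~\ref{th:isolation_pre} and~\ref{th:isolation_tri_pre} achieve by permitting Schwartz (rather than compactly supported smooth) archimedean components, thereby replacing the supercuspidal-at-a-split-place hypothesis of~\cite{Zha1}.
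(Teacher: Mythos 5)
Your opening reduction is right: it suffices to verify condition (2) of Theorem~\ref{th:ggp} for the given $\Pi_{n+1}$ and then invoke the direction (2)$\Rightarrow$(1). But the mechanism you propose for verifying (2) has a genuine gap, and it is precisely the step where the paper does something different. An RTF comparison can only \emph{propagate} a nonvanishing statement from one side to the other; it cannot manufacture one. After you apply the isolating multipliers, the geometric side of the modified identity is \emph{equal} to the isolated spectral side, so ``the geometric side is nonzero'' is exactly the statement you are trying to prove, not an input you can arrange independently. Your appeal to ``density of orbital integrals'' does not rescue this: exhibiting one nonvanishing regular semisimple orbital integral does not control the full sum over $\gamma\in B'(F)$ (there can be cancellation), and in any case the multiplier $\mu\star$ alters the test function, hence all of its orbital integrals, in a way dictated by the spectrum. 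If no $\pi_n$ with nonvanishing period existed in the isolated block, the modified geometric side would simply vanish no matter how you chose the original test function. The same circularity affects your claim that the descent of $\Pi_{n+1}$ to a unitary group ``arises automatically as a byproduct of matching'': that too presupposes a nonzero spectral term.

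The paper supplies the missing nonvanishing input by entirely different, non-RTF means. First, Lemma~\ref{le:descent} (the Ginzburg--Rallis--Soudry automorphic descent) produces a cuspidal $\pi_{n+1}$ on the quasi-split $\rU(V_{n+1})$ with $\BC(\pi_{n+1})\simeq\Pi_{n+1}$. Then the Burger--Sarnak trick (\cite{Zha1}*{Proposition~2.14}) directly constructs a cuspidal $\pi_n$ on $\rU(V_n)$ such that $\sP$ is nonvanishing on $\pi_n\boxtimes\pi_{n+1}$ \emph{and} such that $\pi_{n,v_0}$ is supercuspidal at some split prime $v_0$; this last condition is what lets Theorem~\ref{th:abc}(2) guarantee that $\BC(\pi_n)$ exists and is cuspidal hermitian (a point your proposal also leaves unaddressed --- without it you cannot even define a hermitian cuspidal $\Pi_n$, short of invoking the endoscopic classification). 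With these data in hand, condition (2) of Theorem~\ref{th:ggp} holds verbatim and the conclusion follows. So the theorem is a short corollary of Theorem~\ref{th:ggp}, Lemma~\ref{le:descent}, Burger--Sarnak, and Theorem~\ref{th:abc}(2); the multiplier machinery enters only inside the proof of Theorem~\ref{th:ggp} itself, not in a fresh RTF comparison at this stage.
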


The last application is the Ichino--Ikeda conjecture \cite{II10}, which is a refinement of the Gan--Gross--Prasad conjecture by giving an explicit formula for $|\sP(\varphi_n,\varphi_{n+1})|^2$. In the case of $\rU(n)\times\rU(n+1)$, it is formulated in \cite{Har14}*{Conjecture~1.2} (see also \cite{Zha2}*{Conjecture~1.1}). The following theorem confirms the Ichino--Ikeda conjecture for $\rU(n)\times\rU(n+1)$ in the stable case completely, which improves previous results in \cites{Zha2,BP1,BP2}.

\begin{theorem}\label{th:ii}
Let the situation be as in (2) of Theorem \ref{th:ggp}. If moreover $\pi_n$ and $\pi_{n+1}$ are both everywhere tempered, then the Ichino--Ikeda conjecture holds for $\pi_n$ and $\pi_{n+1}$.
\end{theorem}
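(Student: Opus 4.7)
The plan is to establish Theorem~\ref{th:ii} along the Jacquet--Rallis relative trace formula (RTF) strategy of \cite{Zha2,BP1,BP2}, replacing their key local hypothesis---supercuspidality of $\Pi_n$ or $\Pi_{n+1}$ at some place split over $F$---by the global multiplier isolation from Theorems~\ref{th:isolation_pre} and~\ref{th:isolation_tri_pre}. First I would set $G\coloneqq G_n\times G_{n+1}$ and $G'\coloneqq\Res_{E/F}(\GL_n\times\GL_{n+1})$, and choose matching test functions $f\in\cS(G(\dA_F))_K$ and $f'\in\cS(G'(\dA_F))_{K'}$ under the Jacquet--Rallis smooth transfer at every place; the transfer and the fundamental lemma are now available in complete generality thanks to \cite{Zha2,BP1} and subsequent work.

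Next I would apply the isolation theorems on each side. On $G$, use Theorem~\ref{th:isolation_pre} to produce a multiplier $\mu$ of $\cS(G(\dA_F))_K$ that annihilates the entire $L^2$-spectrum outside of the $\pi_n\boxtimes\pi_{n+1}$-nearly isotypic component of $L^2_\cusp$, yet acts as the identity on $\pi_n\boxtimes\pi_{n+1}$ itself. On $G'$, use Theorem~\ref{th:isolation_tri_pre} to produce a multiplier $\mu'$ of $\cS(G'(\dA_F))_{K'}$ isolating the cuspidal Levi datum $(M,\Pi_n\boxtimes\Pi_{n+1})$ while fixing $\Pi_n\boxtimes\Pi_{n+1}$. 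The archimedean components $\mu_\infty,\mu'_\infty$, built via Theorem~\ref{th:multiplier_pre}, would be chosen so as to be compatible under the archimedean base change of infinitesimal characters (a linear map of Cartan data modulo Weyl groups), ensuring that $(\mu\star f,\mu'\star f')$ remains a matching pair after transfer.

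Once the multipliers are inserted, the two spectral sides of the RTF collapse. On the $G$ side, one extracts---using multiplicity one within tempered $L$-packets, either via the endoscopic classification of \cite{Mok15,KMSW} or, as in Remark~\ref{re:weak}, via a direct argument exploiting the nonvanishing of $\sP$---a single contribution proportional to $|\sP(\varphi_n,\varphi_{n+1})|^2$. On the $G'$ side, the Jacquet--Rallis--Flicker unfolding yields a contribution involving $L(\tfrac12,\Pi_n\times\Pi_{n+1})$ divided by the Asai $L$-values featured in Definition~\ref{de:hermitian}. Matching the two geometric sides through the smooth transfer produces a global identity from which a local--global factorization argument, carried out exactly as in \cite{BP1,BP2}, extracts the Ichino--Ikeda formula at the chosen vectors. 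The temperedness assumption on $\pi_n,\pi_{n+1}$ is used here to guarantee absolute convergence of the local matrix-coefficient integrals that appear in the explicit formula.

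The main obstacle is the archimedean analysis: one needs the unitary and $\GL$ archimedean multipliers to be simultaneously compatible with the archimedean smooth transfer and strong enough to annihilate the unwanted continuous spectrum, whose infinitesimal characters can lie anywhere in $\fh_\dC^*$. This is precisely where it is essential to work with Schwartz test functions rather than compactly supported ones, as emphasized in the discussion following Theorem~\ref{th:multiplier_pre}: the exponential-type (Paley--Wiener) constraint inherent to the $C^\infty_c$-setting would be far too restrictive to produce multipliers killing Eisenstein contributions of arbitrary infinitesimal character while simultaneously pulling back across base change. A secondary technical point will be carefully controlling the degenerate spectral terms in Zydor's expansion on both sides and verifying that they are swept away by the multiplier action, which relies on the non-cuspidal-annihilating strength of Theorem~\ref{th:isolation_pre}.
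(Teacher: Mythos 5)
Your proposal follows essentially the same route as the paper: continue the trace-formula comparison from the proof of Theorem \ref{th:ggp} with multipliers compatible under base change, invoke the endoscopic classification of \cites{Mok15,KMSW} together with the local Gan--Gross--Prasad conjecture \cite{BP0} to reduce the spectral identity to $I_\Pi(f')=J_{\pi}(f^V)\neq 0$ for matching test functions, and then conclude by the local--global factorization argument of \cite{BP2}. The plan is correct in outline and matches the paper's (very terse) proof.
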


The proofs of Theorem \ref{th:ggp}, Theorem \ref{th:nonvanishing}, and Theorem \ref{th:ii} will be given in Subsection \ref{ss:ggp}.

\begin{remark}\label{re:endo}
The results on isolating components of the $L^2$-spectrum obtained in this article can vastly simplify the computation on the spectral side toward the endoscopic case of the Gan--Gross--Prasad and the Ichino--Ikeda conjectures for $\rU(n)\times\rU(n+1)$ as well. Indeed, on the unitary side, by Theorem \ref{th:isolation} and Theorem \ref{th:abc}, it suffices to understand the $\pi$-nearly isotypic subspace of the $L^2$-spectrum, which is contained in the \emph{cuspidal} spectrum; on the general linear side, by Theorem \ref{th:isolation_tri}, it suffices to understand the \emph{single} cuspidal component of the $L^2$-spectrum that corresponds to $\BC(\pi)$.\footnote{In fact, during the referee process of the current article, the endoscopic case has already been worked out in \cite{BPCZ}.}
\end{remark}

\begin{remark}
Using similar ideas, one can improve the results on the Gan--Gross--Prasad conjecture and the Ichino--Ikeda conjecture for $\rU(n)\times\rU(n)$ previously obtained by Hang Xue \cites{Xue14,Xue16}, based on the relative trace formulae developed in \cite{Liu14}, after establishing analogous results in \cites{BP2,CZ,Zyd}.
\end{remark}

\subsection{Strategy of proofs}
\label{ss:strategy}

The main part of the article has three sections, responsible for three clusters of results: existence of multipliers of Schwartz convolution algebras, isolation of cuspidal components in the $L^2$-spectrum, confirmation of the Gan--Gross--Prasad conjecture in the stable case and related results, respectively. In this subsection, we will briefly explain the strategy of proving these results.

In Section \ref{ss:2}, we aim to prove Theorem \ref{th:multiplier_pre}. First, we give a slightly more precise description of the space $\cM_{\theta\cup\{1\}}(\fh^*_\dC)$. Let $(\sfX^*,\Phi,\sfX_*,\Phi^\vee)$ be the root datum of $\ul{G}_\dC$ so that $\fh^*=\sfX^*\otimes_\dZ\dR$. For every automorphism $\vartheta$ of $(\sfX^*,\Phi,\sfX_*,\Phi^\vee)$ of order at most two, we can formulate two types of growth conditions -- \emph{moderate growth} and \emph{rapid decay} with respect to $\vartheta$ -- for holomorphic functions on $\fh^*_\dC$ (see Definition \ref{de:multiplier} for the accurate definition). In particular, when $\vartheta=1$, they coincide with the usual notions of moderate growth and rapid decay on vertical strips, respectively. The (inner form class) of $\ul{G}$ gives rise to a set $\theta$ of automorphisms of $(\sfX^*,\Phi,\sfX_*,\Phi^\vee)$ of order at most two, stable under $\sfW$-conjugation. Then we define $\cM_{\theta\cup\{1\}}(\fh^*_\dC)$ (resp.\ $\cN_{\theta\cup\{1\}}(\fh^*_\dC)$) to be the space of holomorphic functions on $\fh^*_\dC$ that have moderate growth (resp.\ rapid decay) with respect to all elements in $\theta\cup\{1\}$. It will be clear from the definition (and suggested by the terminology) that $\cN_{\theta\cup\{1\}}(\fh^*_\dC)\subseteq\cM_{\theta\cup\{1\}}(\fh^*_\dC)$. We will first prove Theorem \ref{th:multiplier_pre} for the smaller space $\cN_{\theta\cup\{1\}}(\fh^*_\dC)^\sfW$ (Proposition \ref{pr:multiplier}), and then use a limit process to pass to $\cM_{\theta\cup\{1\}}(\fh^*_\dC)^\sfW$.

For $\cN_{\theta\cup\{1\}}(\fh^*_\dC)^\sfW$, note that $\cN_{\theta\cup\{1\}}(\fh^*_\dC)$ is contained in $\cN_{\{1\}}(\fh^*_\dC)$, the space of holomorphic functions on $\fh^*_\dC$ that have rapid decay on vertical strips. Essentially by a result of Delorme (Proposition \ref{pr:delorme}), elements in $\cN_{\{1\}}(\fh^*_\dC)^\sfW$ will give multipliers for the subalgebra $\cS(G)_{(K)}$ of $\cS(G)$ of bi-$K$-finite Schwartz functions. Therefore, to construct $\mu\star f$ for $\mu\in\cN_{\theta\cup\{1\}}(\fh^*_\dC)^\sfW$ and $f\in\cS(G)$, we may choose a sequence $\{f_n\}\subseteq C^\infty_c(G)_{(K)}$ approaching $f$ in $\cS(G)$, and show that $\{\mu\star f_n\}$ converges. It is a crucial observation that the Fr\'echet topology of $\cS(G)$ is also induced from the semi-norms $f\mapsto\|Df\|_{L^2}$ for all algebraic differential operators $D$ on $G$. Thus, it suffices for us to show that $\|D(\mu\star -)\|_{L^2}$ is continuous on the subspace $C^\infty_c(G)_{(K)}$ with respect to the subspace topology for all $D$. When $D$ is invariant, we have $D(\mu\star f)=\mu\star(Df)$, hence the continuity is clear. It remains to consider the case where $D$ is a polynomial on $G$, or equivalently, a matrix coefficient of a finite dimensional algebraic representation of $G$. In this case, we can show that there are finitely many pairs $(S_i,L_i)$, depending on the matrix coefficient presentation of $D$ only, in which $S_i$ is a linear operator of $\cN_{\theta\cup\{1\}}(\fh^*_\dC)^\sfW$ and $L_i$ is a continuous linear operator of $\cS(G)$ preserving $C^\infty_c(G)_{(K)}$, such that $D(\mu\star f)=\sum_iS_i(\mu)\star L_i(f)$. Thus, we obtain the desired continuity for $\|D(\mu\star -)\|_{L^2}$, hence Theorem \ref{th:multiplier_pre} for the smaller space $\cN_{\theta\cup\{1\}}(\fh^*_\dC)^\sfW$ is confirmed. However, in order to pass to $\cM_{\theta\cup\{1\}}(\fh^*_\dC)^\sfW$, we will need more precise bounds on the linear operators $S_i$ with respect to a certain natural family of semi-norms on $\cN_{\theta\cup\{1\}}(\fh^*_\dC)$, as stated in Lemma \ref{le:multi_3}.

In Section \ref{ss:3}, we prove all results related to the isolation of the $L^2$-spectrum. To make the discussion here more explicit, we restrict ourselves to the number field $F=\dQ$ and the group $G=\PGL_{n,\dQ}$ with $n\geq 2$, and only consider Theorem \ref{th:isolation_pre} with $\pi$ cuspidal automorphic. In particular, $Z=1$ and we will omit the necessarily trivial character $\omega$ in the following notation. For every tuple $\ul{n}=(n_1,n_2,\dots)$ of integers $0<n_1\leq n_2\leq \cdots$ summing up to $n$, we let $M_{\ul{n}}$ be the standard diagonal Levi subgroup of $G$ of block sizes $\ul{n}$ and $\sfW_{\ul{n}}$ the subgroup of the Weyl group stabilizing $M_{\ul{n}}$. Then we have the coarse Langlands decomposition
\[
L^2(G(\dQ)\backslash G(\dA)/K)=\widehat\bigoplus_{(M,\sigma)}L^2_{(M,\sigma)}(G(\dQ)\backslash G(\dA)/K),
\]
where $M=M_{\ul{n}}$ for some $\ul{n}$ and $\sigma$ is a cuspidal automorphic representation of $M(\dA)$ up to twist and $\sfW_{\ul{n}}$-conjugation. Our goal, in view of the strong multiplicity one property, is to find $\mu$ such that for every $f\in\cS(G(\dA))_K$, $\mu\star f$ annihilates all components but $L^2_{(G,\pi)}(G(\dQ)\backslash G(\dA)/K)$ and maintains the action of $f$ on $\pi$.

For the first step, it is not hard to construct a function $\mu_\infty^0\in\cM_{\theta\cup\{1\}}(\fh^*_\dC)^\sfW$ (notation with respect to the real reductive group $G\otimes_\dQ\dR$) satisfying $\mu_\infty^0(\chi_{\pi_\infty})=1$ and the following condition: there exists a finite set $\fT$ of $K_\infty$-types, where $K_\infty$ is the standard maximal compact subgroup of $G(\dR)=\PGL_n(\dR)$, such that for every $f\in\cS(G(\dR))$, $\mu\star f$ annihilates the component $L^2_{(M,\sigma)}(G(\dQ)\backslash G(\dA)/K)$ if it has no $K_\infty$-types in $\fT$. This will exclude all but finitely many components $L^2_{(M,\sigma)}(G(\dQ)\backslash G(\dA)/K)$ with $M\in\{M_{(1,\dots,1)},M_{(n)}\}$. However, for other $M$, there are still infinitely many components remaining.

The second step is to annihilate all but finitely many components $L^2_{(M,\sigma)}(G(\dQ)\backslash G(\dA)/K)$ with a $K_\infty$-type in $\fT$ for every $M$. To explain the idea, we consider the simplest nontrivial case where $M=M_{(1,\dots,1,2)}$ (hence $n\geq 3$). It is easy to see that there exists a finite set $\fT_M$ of $(K_\infty\cap M(\dR))$-types and an open compact subgroup $K_M\subseteq M(\dA^\infty)$ such that if $L^2_{(M,\sigma)}(G(\dQ)\backslash G(\dA)/K)$ has a $K_\infty$-type in $\fT$, then $\sigma$ must have a $(K_\infty\cap M(\dR))$-type in $\fT_M$ and nontrivial $K_M$-invariants. Note that there are infinitely many such $\sigma$ up to twist and $\sfW_{(1,\dots,1,2)}$-conjugation! However, we observe that the Casimir operator for the derived subgroup of $M$, which is simply $\SL_{2,\dQ}$, gives a polynomial function $\lambda$ on $\fh^*_\dC$. It is a well-known result of Harish-Chandra that for any given $\lambda_0\in\dC$, there are only finitely many cuspidal automorphic representations $\sigma$ of $M(\dA)$ up to twist that have a $(K_\infty\cap M(\dR))$-type in $\fT_M$ and nontrivial $K_M$-invariants, such that $\lambda(\chi_{\sigma_\infty})=\lambda_0$. By another well-known result on the distribution of Casimir eigenvalues of $\SL_2$ (or \cite{Don82} for general semisimple groups), one can find a holomorphic function $\nu_M$ on $\fh^*_\dC$ that has moderate growth on vertical strips with zeros exactly $\lambda^{-1}\(\Lambda\setminus\lambda(\chi_{\pi_\infty})\)$, where $\Lambda\subseteq\dC$ is the set of Casimir eigenvalues of cuspidal automorphic representations $\sigma$ of $M(\dA)$ that have a $(K_\infty\cap M(\dR))$-type in $\fT_M$ and nontrivial $K_M$-invariants.\footnote{However, we can not require $\nu_M$ to be of exponential type at the same time.} Using $\nu_M$, it is not hard to construct an element $\mu_\infty^M\in\cM_{\theta\cup\{1\}}(\fh^*_\dC)^\sfW$ such that for every $f\in\cS(G(\dR))$, $\mu_\infty^M\star f$ annihilates all but finitely components $L^2_{(M,\sigma)}(G(\dQ)\backslash G(\dA)/K)$ with given $M=M_{(1,\dots,1,2)}$ and maintains the action of $f$ on $\pi$. In fact, we can achieve this for every $M$.

The last step is to annihilate, without changing $\pi$, every single component $L^2_{(M,\sigma)}(G(\dQ)\backslash G(\dA)/K)$ that is not isomorphic to $\pi$. This is easy if $M=M_{(n)}=G$, since we can use spherical Hecke operators at unramified (nonarchimedean) places. When $M\neq G$, $L^2_{(M,\sigma)}(G(\dQ)\backslash G(\dA)/K)$ is a ``continuous'' space of induced representations. However, there is a secret correlation between (nonarchimedean) Hecke eigenvalues and (archimedean) infinitesimal characters for all representations that contribute to $L^2_{(M,\sigma)}(G(\dQ)\backslash G(\dA)/K)$. This motivates us to construct, for every component $L^2_{(M,\sigma)}(G(\dQ)\backslash G(\dA)/K)$ that is not isomorphic to $\pi$, a multiplier $\mu_{(M,\sigma)}$ that is ``mixed'' from $\cM_{\theta\cup\{1\}}(\fh^*_\dC)^\sfW$ and spherical Hecke operators, so that for every $f\in\cS(G(\dA))_K$, $\mu_{(M,\sigma)}\star f$ annihilates $L^2_{(M,\sigma)}(G(\dQ)\backslash G(\dA)/K)$ and maintains the action of $f$ on $\pi$. We remark that this step is inspired by the work \cite{LV07}.

To conclude the proof of Theorem \ref{th:isolation_pre}, we only need to take the product of $\mu_\infty^0$, $\{\mu_\infty^M\}_M$, and (finitely many) $\mu_{(M,\sigma)}$.

In Section \ref{ss:4}, we prove the Gan--Gross--Prasad conjecture in the stable case and other related results. Here, we will explain how to apply Theorem \ref{th:isolation_pre} to the Jacquet--Rallis relative trace formulae to attack the Gan--Gross--Prasad conjecture. The Jacquet--Rallis relative trace formulae have two sides: the group $G'\coloneqq\Res_{E/F}\GL_{n,E}\times\Res_{E/F}\GL_{n+1,E}$, and the group $G^V\coloneqq\rU(V_n)\times\rU(V_{n+1})$ where $V=(V_n,V_{n+1})$ is a pair of hermitian spaces over $E$ as in Theorem \ref{th:ggp}. The two sides share the same space of orbits, which is an affine variety $B$ over $F$; in other words, we have surjective morphisms $G'\to B\leftarrow G^V$. By Zydor's extension of the Jacquet--Rallis relative trace formulae, we have a relative trace formula on each side: For every $f'\in\cS(G'(\dA_F))$ that annihilates the entire non-cuspidal part of $L^2(G'(F)\backslash G'(\dA_F),1)$, we have the identity
\[
\sum_{\text{$\Pi$ cuspidal}}I_\Pi(f')=\sum_{\gamma\in B(F)}I_\gamma(f')
\]
where $I_\Pi$ and $I_\gamma$ are certain invariant functionals on $\cS(G'(\dA_F))$ associated to a cuspidal automorphic representation $\Pi$ and an element $\gamma\in B(F)$ defined via relative characters and (regularized) relative orbital integrals, respectively. Similarly, for every $f^V\in\cS(G^V(\dA_F))$ that annihilates the entire non-cuspidal part of $L^2(G^V(F)\backslash G^V(\dA_F),1)$, we have the identity
\[
\sum_{\text{$\pi^V$ cuspidal}}J_{\pi^V}(f^V)=\sum_{\delta^V\in B(F)}J_{\delta^V}(f^V).
\]
In practice, we have to consider all pairs $V$ up to isomorphism. The starting point for the comparison of trace formulae is to find a pair of test functions $(f',\{f^V\}_V)$ that have \emph{matching orbital integrals}, so that $I_\gamma(f')=J_{\delta^V}(f^V)$ when $\gamma=\delta^V$. The common strategy of finding $(f',\{f^V\}_V)$ that annihilate non-cuspidal spectra is to take $f'$ so that $f'_v$ annihilates all non-supercuspidal representations at some nonarchimedean place $v$ of $F$ split in $E$, and similarly for $f^V$. However, by doing this, we necessarily annihilate all cuspidal automorphic representations that are nowhere supercuspidal. The new invention, which is enabled by Theorem \ref{th:isolation_pre}, is to modify an arbitrary pair $(f',\{f^V\}_V)$ that have matching orbital integrals by multipliers, so that the resulting pair annihilate non-cuspidal spectra and maintain their actions on any prescribed representations $(\Pi,\{\pi^V\})$ in which $\Pi$ is cuspidal automorphic and is isomorphic to $\BC(\pi^V)$. Thus, it is a natural question to find multipliers $(\mu',\{\mu^V\}_V)$ from Theorem \ref{th:isolation_pre}, such that $(\mu'\star f',\{\mu^V\star f^V\}_V)$ still have matching orbital integrals. The answer turns out to be quite elegant: there is a natural ``base change'' map from those multipliers for $G^V$ (obtained in the way of Theorem \ref{th:isolation_pre}) to those for $G'$; and we show in Proposition \ref{pr:transfer3} that if $\mu'$ is the base change of $\mu^V$ for all $V$, then $(\mu'\star f',\{\mu^V\star f^V\}_V)$ have matching orbital integrals as long as $(f',\{f^V\}_V)$ do. Such multipliers $(\mu',\{\mu^V\}_V)$ are not hard to find. Therefore, we can compare the above two relative trace formulae without sacrificing any prescribed representations $(\Pi,\{\pi^V\})$ as above. The rest of the argument is a standard business in trace formulae approach.

The article has an appendix (Appendix \ref{app}) in which we extend a result of Delorme to reductive groups, which is only used in the proof of Proposition \ref{pr:multiplier}.

\subsection{Notation and conventions}
\label{ss:notation}

\begin{itemize}
  \item For a set $S$, we denote by $\mathbf{1}_S$ the characteristic function of $S$.

  \item In the main text, if we do not specify the base ring of a tensor product $\otimes$, then the base ring is $\dC$.

  \item For a real vector space $U$, we put $U_\dC\coloneqq U\otimes_\dR\dC$, and $iU\coloneqq U\otimes_\dR i\dR$, which is a subspace of the underlying real vector space of $U_\dC$. We have the $\dR$-linear map $\r{Re}\colon U_\dC\to U$ by taking the real part.

  \item For a finite dimensional complex vector space $U$, we denote by $\cO(U)$ the ring of holomorphic functions on $U$, $\cO_\expo(U)\subseteq\cO(U)$ the subring of holomorphic functions of exponential type, and $\dC[U]\subseteq\cO_\expo(U)$ the subring of polynomial functions. A lattice (resp.\ full lattice) of $U$ is a subgroup $L$ of $U$ such that the induced map $L\otimes_\dZ\dC\to U$ is injective (resp. bijective).

  \item For a complex linear algebra $\cS$, we denote by $\Mul(\cS)$ the $\dC$-algebra of multipliers of $\cS$, that is, complex linear operators $\mu\star\colon\cS\to\cS$ satisfying $\mu\star(f\ast g)=(\mu\star f)\ast g=f\ast(\mu\star g)$ for every $f,g\in\cS$, where $\ast$ denotes the multiplication in $\cS$.

  \item By a prime of a number field, we mean a nonarchimedean place. In Sections \ref{ss:3} and \ref{ss:4}, we will encounter various sets of places of a number field $F$. To summarize,
    \begin{itemize}
      \item $\tS$ will always be a finite set consisting of primes;

      \item $\tT$ will always be a (possibly infinite) set consisting of primes;

      \item $\Box$ will always be a finite set containing all archimedean places.
    \end{itemize}

  \item For an algebraic group $G$ over a number field $F$, we put $G_\infty\coloneqq G(F\otimes_\dQ\dR)$ for short.

  \item A subgroup of an algebraic group defined over a field is by default defined over the same field.

  \item Let $P$ be a parabolic subgroup of a reductive group $G$.
     \begin{itemize}
       \item We denote by $N_P\subseteq P$ the unipotent radical.

       \item When $\sigma$ is an admissible representation of $P(R)$ for an appropriate ring $R$ so that admissibility makes sense, we denote by $\rI^G_P(\sigma)$ the normalized parabolic induction as an admissible representation of $G(R)$.

       \item When $\sigma$ is an admissible representation of $M(R)$ for a Levi subgroup $M$ of $P$, we also write $\rI^G_P(\sigma)$ by regarding $\sigma$ as a representation of $P(R)$ through inflation.
     \end{itemize}
\end{itemize}

\subsubsection*{Acknowledgements}

We would like to thank the American Institute of Mathematics for supporting three of us (Y.~L., W.~Z., and X.~Z.) an AIM SQuaREs project \emph{Geometry of Shimura varieties and arithmetic application to L-functions} from 2017 to 2019, where an initial discussion toward this article happened in our last SQuaREs workshop in the spring of 2019. We thank Yichao~Tian and Liang~Xiao, the other two members of our SQuaREs project, since the project itself is a crucial motivation for the current article. We thank Jean-Loup~Waldspurger for reading an early draft of the article and providing several useful comments. We thank the anonymous referee for careful reading and many helpful suggestions.

The project leading to this publication of R.~B.-P. has received funding from Excellence Initiative of Aix-Marseille University-A*MIDEX, a French ``Investissements d'Avenir'' programme. The research of Y.~L. is partially supported by the NSF grant DMS--1702019 and a Sloan Research Fellowship. The research of W.~Z. is partially supported by the NSF grant DMS--1838118 and DMS--1901642. The research of X.~Z. is partially supported by the NSF grant DMS--1902239.

\section{Multipliers of Schwartz convolution algebra}
\label{ss:2}

In this section, we construct sufficiently many multipliers of the convolution algebra of Schwartz functions for a real connected reductive algebraic group.

In Subsection \ref{ss:holomorphic}, we record some lemmas on constructing holomorphic functions on complex vector spaces with special properties. In Subsection \ref{ss:multiplier_functions}, we define various spaces of functions related to the multipliers of Schwartz convolution algebras. In Subsection \ref{ss:multiplier_algebra}, we state our result (Theorem \ref{th:multiplier}) on the existence of sufficiently many multipliers of Schwartz convolution algebras. In Subsection \ref{ss:multiplier_proof}, we prove Theorem \ref{th:multiplier}, following the strategy described in Subsection \ref{ss:strategy}.

\subsection{Preliminaries on holomorphic functions}
\label{ss:holomorphic}

We first review some facts about entire functions on the complex plane. Recall that the order of an entire function $\Psi\colon\dC\to\dC$ is defined as
\[
\inf\{e\in[0,+\infty)\res\exists C_e>0\text{, such that }|\Psi(z)|<C_e\expo(|z|^e)\text{ for all $z\in\dC$}\}.
\]
Here, $|z|=\sqrt{z\overline{z}}$. If the above set is empty, then we say that $\Psi$ has infinite order.

Next, we review the construction of entire functions with prescribed zeroes. Let $\Lambda$ be a subset of $\dC$, and $p\geq 0$ an integer. We define the (formal) Weierstrass product
\[
\Psi_{\Lambda,p}(z)\coloneqq z^\delta\prod_{\substack{\lambda\in\Lambda \\ \lambda\neq 0}}E_p(z/\lambda),
\]
where $E_p(z)\coloneqq(1-z)\expo(z+\cdots+z^p/p)$ is the elementary function, and $\delta=1$ (resp.\ $\delta=0$) if $0$ belongs (resp.\ does not belong) to $\Lambda$.

\begin{definition}\label{de:rank}
Let $p\geq 0$ be an integer. We say that $\Lambda$ has \emph{rank $p$} if
\begin{enumerate}
  \item $\Lambda$ is countable;

  \item $p$ is the least nonnegative integer such that $\sum_{\lambda\in\Lambda,\lambda\neq 0}|\lambda|^{-(p+1)}$ converges.
\end{enumerate}
\end{definition}

\begin{lem}\label{le:standard_function}
Let $\Lambda\subseteq\dC$ be a subset of rank $p$ for some integer $p\geq 0$. Then $\Psi_{\Lambda,p}$ is a well-defined entire function of (finite) order at most $p+1$, with the set of zeroes exactly $\Lambda$.
\end{lem}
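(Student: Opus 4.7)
The plan is to prove the three claims in sequence, relying on the standard estimates for the Weierstrass elementary factors $E_p$. The key analytic input is the pair of bounds
\[
|E_p(w)-1| \leq |w|^{p+1} \text{ for } |w| \leq 1, \qquad \log|E_p(w)| \leq A\cdot \min(|w|^p,|w|^{p+1}) \text{ for all } w \in \dC,
\]
for some constant $A=A_p$, both of which follow from the definition $E_p(w)=(1-w)\expo(w+w^2/2+\cdots+w^p/p)$ by elementary manipulation (series expansion for the first, and splitting $\log|E_p(w)| = \log|1-w| + \RE(w+\cdots+w^p/p)$ for the second).

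First, I would establish that the formal product defining $\Psi_{\Lambda,p}$ converges to an entire function with zero set $\Lambda$. Fix $R>0$. Countability of $\Lambda$ and convergence of $\sum_{\lambda\neq 0}|\lambda|^{-(p+1)}$ force $\Lambda$ to be discrete in $\dC$; in particular, only finitely many $\lambda\in\Lambda$ satisfy $|\lambda|\leq 2R$. For the remaining $\lambda$, one has $|z/\lambda|\leq 1/2$ on the disk $\{|z|\leq R\}$, so the first estimate above gives $|E_p(z/\lambda)-1| \leq |\lambda|^{-(p+1)}R^{p+1}$, a summable majorant. Hence the infinite product converges absolutely and uniformly on compact subsets of $\dC$, defining an entire function. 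Since $E_p(z/\lambda)$ has a simple zero at $z=\lambda$ and no other zero (the exponential factor is nowhere vanishing), and since the prefactor $z^\delta$ contributes a zero at $0$ exactly when $0\in\Lambda$, the zero set of $\Psi_{\Lambda,p}$ is precisely $\Lambda$.

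Second, I would bound the order. Writing $r=|z|$, I split
\[
\log|\Psi_{\Lambda,p}(z)| \leq \delta\log r + \sum_{0<|\lambda|\leq 2r}\log|E_p(z/\lambda)| + \sum_{|\lambda|>2r}\log|E_p(z/\lambda)|.
\]
For $|\lambda|>2r$, the second bound on $E_p$ applied with $|w|=|z/\lambda|\leq 1/2$ yields
\[
\sum_{|\lambda|>2r}\log|E_p(z/\lambda)| \leq A r^{p+1}\sum_{|\lambda|>2r}|\lambda|^{-(p+1)} = O(r^{p+1}),
\]
by the rank condition. For $|\lambda|\leq 2r$, using the same bound with $|w|=|z/\lambda|$ possibly large gives
\[
\sum_{0<|\lambda|\leq 2r}\log|E_p(z/\lambda)| \leq A r^p\sum_{0<|\lambda|\leq 2r}|\lambda|^{-p} \leq 2Ar^{p+1}\sum_{\lambda\neq 0}|\lambda|^{-(p+1)},
\]
where the final inequality uses $|\lambda|^{-p} \leq 2r\cdot |\lambda|^{-(p+1)}$ on this range. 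Both pieces are $O(r^{p+1})$, so $\log|\Psi_{\Lambda,p}(z)|\leq Cr^{p+1}$ for some constant $C$, which exactly says that the order is at most $p+1$.

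The main content is essentially bookkeeping around the two standard estimates for $E_p$; the only mildly subtle step is converting the rank-$p$ hypothesis into the $O(r^{p+1})$ bound for the ``near'' sum $\sum_{|\lambda|\leq 2r}|\lambda|^{-p}$, which is handled by the crude comparison $|\lambda|^{-p}\leq 2r\cdot |\lambda|^{-(p+1)}$ above. This is a standard complex-analysis argument (a special case of Hadamard's theorem), so I do not anticipate any genuine obstacle beyond writing out the two elementary estimates on $E_p$ carefully.
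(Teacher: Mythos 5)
Your argument is the standard Weierstrass/Hadamard computation, which is exactly the ``well-known'' estimate the paper invokes without proof, so the approach is the same; the convergence, the identification of the zero set, and the near/far splitting of $\log|\Psi_{\Lambda,p}|$ are all fine. The one flaw is your second auxiliary estimate: the inequality $\log|E_p(w)|\leq A\min(|w|^p,|w|^{p+1})$ for all $w$ is \emph{false} when $p=0$, since then $\min(|w|^p,|w|^{p+1})=1$ for $|w|\geq 1$ while $\log|E_0(w)|=\log|1-w|$ grows like $\log|w|$; consequently your bound $\sum_{0<|\lambda|\leq 2r}\log|E_p(z/\lambda)|\leq Ar^p\sum_{0<|\lambda|\leq 2r}|\lambda|^{-p}$ reads, for $p=0$, as a bound by a constant times the number of small $\lambda$, which does not dominate the left-hand side. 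The fix is one line: for every $p\geq 0$ one has the cruder but uniform bound $\log|E_p(w)|\leq\log(1+|w|)+\sum_{j=1}^p|w|^j/j\leq A_p|w|^{p+1}$ for $|w|\geq 1/2$ (using $\log(1+|w|)\leq|w|$), and then the near sum is directly $\leq A_pr^{p+1}\sum_{\lambda\neq 0}|\lambda|^{-(p+1)}=O(r^{p+1})$ with no need for the $|w|^p$ refinement or the comparison $|\lambda|^{-p}\leq 2r|\lambda|^{-(p+1)}$. (The sharper $\min(|w|^p,|w|^{p+1})$ bound is only needed for the finer statement that the order equals the exponent of convergence, which the lemma does not claim.) With that correction the proof is complete and agrees with what the paper cites.
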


\begin{proof}
It is well-known that there exist constants $C,C'>0$ such that $|\Psi_{\Lambda,p}(z)|<C\expo(C'|z|^{p+1})$ for $z\in\dC$. Thus, $\Psi_{\Lambda,p}$ is of finite order at most $p+1$ by definition. The set of zeroes is clear from the construction.
\end{proof}

Now we consider a finite dimensional real vector space $U$.

\begin{definition}\label{de:moderate}
Let $\mu$ be a holomorphic function on $U_\dC$. We say that $\mu$ has \emph{moderate vertical growth} if for every $M>0$, there exists $r_M\in\dR$ such that
\[
\sup_{\|\RE z\|<M}|\nu(z)|\cdot(1+\|z\|)^{r_M}<\infty
\]
holds for some, hence every, Euclidean norm $\|\cdot\|$ on $U_\dC$.
\end{definition}

\begin{proposition}\label{pr:rapid}
Let $\lambda\colon U_\dC\to\dC$ be a polynomial function. For every entire function $\Psi$ of finite order, there exists a holomorphic function $\nu$ on $U_\dC$ that has moderate vertical growth, such that the set of zeroes of $\nu$ is exactly the inverse image of the set of zeroes of $\Psi$ along $\lambda$.
\end{proposition}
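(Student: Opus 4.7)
The idea is to look for $\nu$ in the form $(\Psi \circ \lambda)\cdot e^{-P}$, where $P$ is a well-chosen polynomial on $U_\dC$ that acts as a zero-free damping factor. The composition $\Psi \circ \lambda$ is entire on $U_\dC$ and already has the correct zero set, namely the preimage under $\lambda$ of the zero set of $\Psi$; however, combining the polynomial bound $|\lambda(z)|\le C(1+\|z\|)^d$ (with $d = \deg\lambda$) with the finite-order estimate $|\Psi(w)|\le C\exp(|w|^{e'})$ for any $e' > \ord(\Psi)$ yields only
\[
|\Psi(\lambda(z))| \le C\exp\bigl(C'(1+\|z\|)^{de'}\bigr),
\]
which is too large to be of moderate vertical growth. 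Since $e^{-P}$ is nowhere vanishing, multiplying by it preserves the zero set, so the task reduces to constructing $P$ with $\RE P(z)$ eventually dominating $(1+\|z\|)^{de'}$ on each vertical strip $\|\RE z\| < M$.

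To construct $P$, I would fix a basis $e_1,\ldots,e_n$ of $U$ and write $z = \sum z_j e_j$ with $z_j = x_j+iy_j\in\dC$; then choose an even positive integer $N$ with $2N > de'$ and set $P(z) \coloneqq \sum_{j=1}^n z_j^{2N}$. The parity of $N$ is crucial here: since $N$ is even, $(iy_j)^{2N} = y_j^{2N}\ge 0$, so $\RE(z_j^{2N})|_{x_j=0} = y_j^{2N}$. An elementary binomial expansion on $|x_j| < M$ gives $\RE(z_j^{2N}) \ge y_j^{2N} - C_M(1+|y_j|)^{2N-2}$, and summing over $j$ together with the equivalence of norms on $U_\dC$ produces a bound $\RE P(z) \ge c_0 \|z\|^{2N} - c_1$ for $\|\RE z\| < M$ and $\|z\|$ sufficiently large, with constants $c_0,c_1>0$ depending only on $M$ and the basis.

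Combining the two estimates yields
\[
|\nu(z)| \le C\exp\bigl(C'(1+\|z\|)^{de'} - c_0\|z\|^{2N} + c_1\bigr)
\]
on the strip $\|\RE z\|<M$; since $2N > de'$, the negative term dominates for $\|z\|$ large, so $|\nu(z)|$ is in fact rapidly decaying there (hence a fortiori of moderate vertical growth in the sense of Definition \ref{de:moderate}). The zero set is the preimage of the zero set of $\Psi$ because $e^{-P}$ has no zeros. The only substantial point is the construction of $P$ with $\RE P$ positive and growing at the prescribed polynomial rate on vertical strips, which is ensured by the parity choice; once the ansatz $\nu = (\Psi\circ\lambda)e^{-P}$ is adopted, the rest is a direct comparison of exponents.
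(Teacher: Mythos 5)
Your proof is correct and essentially coincides with the paper's: the paper defines $\nu(z)=\exp(z_1^{2q}+\cdots+z_r^{2q})\cdot\Psi(\lambda(z))$ with $q$ an odd integer exceeding $de$, which is the same nowhere-vanishing damping device as your $e^{-P}$ with $P(z)=\sum_j z_j^{2N}$ for $N$ even and $2N>de'$ — the sign and parity conventions differ, but on each vertical strip the exponential factor decays in exactly the same way and dominates the order-$de$ bound for $\Psi\circ\lambda$. Both arguments in fact give rapid (even exponential) decay on vertical strips, which is stronger than the stated moderate vertical growth, as the paper itself observes in the remark following Corollary \ref{co:lattice}.
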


\begin{proof}
We choose an isomorphism $U\simeq\dR^r$ and write elements in $U_\dC$ as $z=(z_1,\dots,z_r)$. Suppose that $\Psi$ is of order $e$ and $\lambda$ has degree $d$. Take an odd integer $q>de$. Define $\nu$ by the formula
\[
\nu(z)\coloneqq\expo(z_1^{2q}+\cdots+z_r^{2q})\cdot\Psi(\lambda(z))
\]
for $z\in U_\dC$, which is holomorphic. It is straightforward to check that $\nu(z)$ has moderate vertical growth. Moreover, we have $\nu(z)=0$ if and only if $\lambda(z)$ is a zero of $\Psi$. The proposition follows.
\end{proof}

\begin{corollary}\label{co:rapid}
Let $\lambda\colon U_\dC\to\dC$ be a polynomial function. For every subset $\Lambda\subseteq\dC$ of finite rank, there exists a holomorphic function $\nu$ on $U_\dC$ that has moderate vertical growth, such that the set of zeroes of $\nu$ is exactly $\lambda^{-1}\Lambda$.
\end{corollary}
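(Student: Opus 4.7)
The plan is essentially to chain together the two results that immediately precede the corollary. The hypothesis that $\Lambda\subseteq\dC$ has finite rank is precisely the hypothesis needed to apply Lemma \ref{le:standard_function}, and the existence of an entire function of finite order with prescribed zero set is precisely the input needed for Proposition \ref{pr:rapid}.

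First I would fix an integer $p\geq 0$ such that $\Lambda$ has rank $p$. Then by Lemma \ref{le:standard_function}, the Weierstrass product $\Psi_{\Lambda,p}$ is a well-defined entire function on $\dC$ of finite order (at most $p+1$) whose zero set is exactly $\Lambda$. Next, I would apply Proposition \ref{pr:rapid} to the polynomial function $\lambda\colon U_\dC\to\dC$ and the entire function $\Psi\coloneqq\Psi_{\Lambda,p}$ to obtain a holomorphic function $\nu$ on $U_\dC$ that has moderate vertical growth and whose zero set is exactly $\lambda^{-1}$ of the zero set of $\Psi$, i.e., $\lambda^{-1}\Lambda$. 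This $\nu$ is the desired function.

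There is no real obstacle here, since both building blocks are already in place; the only thing to verify is that the zero set identity is what we want, which is immediate from the construction in Proposition \ref{pr:rapid} (where $\nu(z)=\exp(z_1^{2q}+\cdots+z_r^{2q})\cdot\Psi(\lambda(z))$, and the exponential factor is nowhere vanishing).
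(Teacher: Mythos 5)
Your proposal is correct and matches the paper's proof exactly: the paper simply cites Lemma \ref{le:standard_function} and Proposition \ref{pr:rapid}, and you have spelled out precisely that composition, with the correct observation that the nowhere-vanishing exponential factor in the construction of $\nu$ guarantees the zero set is exactly $\lambda^{-1}\Lambda$.
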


\begin{proof}
It follows from Lemma \ref{le:standard_function} and Proposition \ref{pr:rapid}.
\end{proof}

\begin{corollary}\label{co:lattice}
Let $L$ be a lattice of $U_\dC$ and $A\subseteq U_\dC$ a finite subset. Then there exists a holomorphic function $\nu$ on $U_\dC$ that has moderate vertical growth, such that $\nu$ vanishes on $L\setminus A$ and is nowhere vanishing on $A$.
\end{corollary}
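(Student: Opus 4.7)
The plan is to establish Corollary \ref{co:lattice} by induction on the rank $r$ of $L$, after slightly strengthening the statement to accommodate affine translates: \emph{for every lattice $L\subseteq U_\dC$, every $c\in U_\dC$, and every finite subset $A\subseteq U_\dC$, there exists a holomorphic function $\nu$ on $U_\dC$ of moderate vertical growth vanishing on $(c+L)\setminus A$ and nonvanishing on $A$.} The corollary is the case $c=0$. The idea is to build $\nu$ as a finite product whose factors come from Corollary \ref{co:rapid}, peeling off one lattice direction at a time.

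For the base case $r=0$ one has $c+L=\{c\}$: take $\nu\equiv 1$ if $c\in A$, and otherwise take $\nu(z)=\ell(z-c)$ for any linear functional $\ell$ on $U_\dC$ not vanishing on the finite set $\{a-c:a\in A\}$, which exists because one needs only to avoid finitely many hyperplanes in the dual space. Such $\nu$ is a polynomial, hence of moderate vertical growth.

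For the inductive step, choose a $\dC$-basis $v_1,\dots,v_d$ of $U_\dC$ whose first $r$ vectors form a $\dZ$-basis of $L$, and write every $z\in U_\dC$ as $z=z_1v_1+\cdots+z_dv_d$. Let $\lambda_1(z)\coloneqq z_1-c_1$, a polynomial of degree one, so that $\lambda_1(c+L)=\dZ$, while $S\coloneqq\lambda_1(A)\cap\dZ$ is finite and hence $\Lambda\coloneqq\dZ\setminus S$ still has rank one in the sense of Definition \ref{de:rank}. Corollary \ref{co:rapid} then supplies a function $\nu_1$ of moderate vertical growth whose zero set equals exactly $\lambda_1^{-1}(\Lambda)$. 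By construction $\nu_1$ vanishes on every point of $c+L$ except on the finitely many affine slices $L_s\coloneqq(c+sv_1)+L'$ for $s\in S$, where $L'\coloneqq\dZ v_2\oplus\cdots\oplus\dZ v_r$ is a lattice of rank $r-1$; and $\nu_1$ is nonvanishing on $A$ because for every $a\in A$ either $\lambda_1(a)\notin\dZ$ or $\lambda_1(a)\in S$, and in either case $\lambda_1(a)\notin\Lambda$. Applying the inductive hypothesis to the triple $(L',\,c+sv_1,\,A)$ yields, for each $s\in S$, a function $\nu_s$ of moderate vertical growth vanishing on $L_s\setminus A$ and nonvanishing on $A$. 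Then $\nu\coloneqq\nu_1\prod_{s\in S}\nu_s$ is a finite product of functions of moderate vertical growth, hence itself of moderate vertical growth; every point of $(c+L)\setminus A$ is killed by at least one factor ($\nu_1$ if its $\lambda_1$-value lies outside $S$, the appropriate $\nu_s$ otherwise), and every point of $A$ is preserved by each factor.

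The main obstacle is the bookkeeping of the inductive setup rather than any deep analytic input. The crucial flexibility is that Corollary \ref{co:rapid} permits the target $\Lambda$ to be $\dZ$ minus an arbitrary finite exceptional subset without losing its rank one property; this is exactly what lets $\nu_1$ annihilate almost all of $c+L$ while simultaneously avoiding every point of $A$. Allowing affine translates $c+L$ in the statement from the outset is also essential, since the residual slices $L_s$ produced at each iteration are cosets rather than sublattices through the origin.
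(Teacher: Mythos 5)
Your argument is correct and rests on the same key input as the paper's proof: pulling back $\dZ$ minus a finite exceptional set along degree-one functionals via Corollary \ref{co:rapid} and multiplying the resulting factors. The only difference is organizational — the paper takes all $r$ coordinate factors at once (so that the product already vanishes on all but finitely many lattice points) and then kills the finitely many survivors with affine linear forms avoiding $A$, whereas you peel off one coordinate at a time by induction on the rank, with your base case playing the role of the paper's affine cleanup step.
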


\begin{proof}
Let $r$ be the dimension of $U$. We may choose linearly independent complex linear maps $\lambda_1,\dots,\lambda_r\colon U_\dC\to\dC$ such that $L$ is contained in $\bigcap_{i=1}^r\lambda_i^{-1}\dZ$. For every $1\leq i\leq r$, the subset $\dZ\setminus\lambda_i(A)\subseteq\dC$ is of finite rank. By Corollary \ref{co:rapid}, we may find a holomorphic function $\nu_i$ on $U_\dC$ that has moderate vertical growth, such that the set of zeroes of $\nu_i$ is exactly $\lambda_i^{-1}(\dZ\setminus\lambda_i(A))$. Put $\nu\coloneqq\prod_{i=1}^r\nu_i$. Then $\nu$ has moderate vertical growth, vanishes on all but finitely many elements in $L$, and is nowhere vanishing on $A$. Let $z_1,\dots,z_s$ be the finitely many elements in $L\setminus A$ at which $\nu$ is nonvanishing. For each $1\leq j\leq s$, we may choose an affine function $l_j\colon U_\dC\to\dC$ such that $l_j(z_j)=0$ and that $l_j$ is nowhere vanishing on $A$. Then the holomorphic function $\nu\cdot\prod_{j=1}^sl_j$ satisfies the requirement in the corollary.
\end{proof}

\begin{remark}
In fact, from the proof of Proposition \ref{pr:rapid}, we may even require $\mu$ in Corollary \ref{co:rapid} and Corollary \ref{co:lattice} to have exponential decay on vertical strips. However, we do not need this in what follows.
\end{remark}

\subsection{Multiplier functions}
\label{ss:multiplier_functions}

In this subsection, we introduce the spaces of multiplier functions that will give multipliers of the convolution algebra of Schwartz functions. We fix a root datum $(\sfX^*,\Phi,\sfX_*,\Phi^\vee)$; and
\begin{itemize}
  \item let $\sfW$ be the Weyl group of $(\sfX^*,\Phi,\sfX_*,\Phi^\vee)$;

  \item let $\Aut(\sfX^*,\Phi,\sfX_*,\Phi^\vee)^\heartsuit$ be the set of automorphisms of $(\sfX^*,\Phi,\sfX_*,\Phi^\vee)$ of order at most $2$, which is finite and stable under the conjugation action of $\sfW$;

  \item put $\fh^*\coloneqq\sfX^*\otimes_\dZ\dR$;

  \item for every $\vartheta\in\Aut(\sfX^*,\Phi,\sfX_*,\Phi^\vee)^\heartsuit$, let $\fh^*_\vartheta$ (resp.\ $\fh^{*-}_\vartheta$) be the $+1$-eigenspace (resp.\ $-1$-eigenspace) of the action of $\vartheta$ on $\fh^*$; and let $\sfY^*_\vartheta$ be the projection of $\rho+\sfX^*\subseteq\fh^*$ onto $\fh^{*-}_\vartheta$, where $\rho$ is the half sum of positive roots in $\Phi$ with respect to an arbitrary base of $\Phi$.\footnote{It is easy to see that $\sfY^*_\vartheta$ does not depend on the choice of the base.}
\end{itemize}
It is clear that $\sfY^*_\vartheta$ is a translation of a discrete subgroup of $\fh^{*-}_\vartheta$. Though $\sfY^*_\vartheta$ is not necessarily a subgroup, in what follows, we will sometimes write $\sfH\oplus\sfY^*_\vartheta$ for a subgroup $\sfH$ of $\fh^*_{\vartheta,\dC}$ as the subset of $\fh^*_\dC$ consisting of elements of the form $\alpha+\varpi$ for $\alpha\in\sfH$ and $\varpi\in\sfY^*_\vartheta$.

We take a subset $\theta\subseteq\Aut(\sfX^*,\Phi,\sfX_*,\Phi^\vee)^\heartsuit$ that is stable under the conjugation action of $\sfW$. In the following definition, we introduce several important spaces of holomorphic functions on $\fh^*_\dC$ that will be related to multipliers of Schwartz convolution algebras.

\begin{definition}\label{de:multiplier}
We define several spaces of holomorphic functions on $\fh^*_\dC$.
\begin{enumerate}
  \item Define $\cM_\theta(\fh_\dC^*)$ to be the space of holomorphic functions $\mu$ on $\fh^*_\dC$ such that for every $\vartheta\in\theta$ and every $M>0$, there exists $r_{\vartheta,M}\in\dR$ such that
      \begin{align}\label{eq:growth1}
      \sup_{\substack{\alpha\in\fh^*_{\vartheta,\dC}, \|\RE\alpha\|<M \\ \varpi\in\sfY^*_\vartheta}}
      |\mu(\alpha+\varpi)|\cdot(1+\|\alpha+\varpi\|)^{r_{\vartheta,M}}<\infty
      \end{align}
      holds for some, hence every, Euclidean norm $\|\cdot\|$ on $\fh_\dC^*$.

  \item Define $\cN_\theta(\fh_\dC^*)$ to be the space of holomorphic functions $\mu$ on $\fh^*_\dC$ such that for every $\vartheta\in\theta$, every $M>0$, and every $r\in\dR$,
      \begin{align*}
      \sup_{\substack{\alpha\in\fh^*_{\vartheta,\dC}, \|\RE\alpha\|<M \\ \varpi\in\sfY^*_\vartheta}}
      |\mu(\alpha+\varpi)|\cdot(1+\|\alpha+\varpi\|)^r<\infty
      \end{align*}
      holds for some, hence every, Euclidean norm $\|\cdot\|$ on $\fh_\dC^*$.

  \item Define $\cM^\sharp_\theta(\fh_\dC^*)$ to be the subspace of $\cM_{\theta\cup\{1\}}(\fh_\dC^*)$ consisting of $\mu$ satisfying that for every $\vartheta\in\theta$, $\mu\res_{\fh^*_{\vartheta,\dC}+\varpi}=0$ for all but finitely many elements $\varpi\in\sfY^*_\vartheta$.
\end{enumerate}
When $\theta=\{1\}$, we suppress it in the subscripts in above notation.
\end{definition}

\if false

\begin{definition}\label{de:multiplier}
We define several spaces of functions on subsets of $\fh^*_\dC$.
\begin{enumerate}
  \item Define $\cM_\theta(\fh_\dC^*)$ to be the space of functions $\mu$ on $\bigcup_{\vartheta\in\theta}\fh^*_{\vartheta,\dC}\times\sfY^*_\vartheta$ such that for every $\vartheta\in\theta$,
      \begin{itemize}
        \item the restriction of $\mu$ on $\fh^*_{\vartheta,\dC}\times\sfY^*_\vartheta$ is holomorphic in the direction $\fh^*_{\vartheta,\dC}$;\footnote{When $\vartheta=1$, this simply means that $\mu$ is a holomorphic function on $\fh^*_\dC$.}

        \item for every $M>0$, there exists $r_M\in\dR$ such that
            \[
            \sup_{\substack{(\alpha,\varpi)\in\fh^*_{\vartheta,\dC}\times\sfY^*_\vartheta\\ \|\RE\alpha\|<M}}
            |\mu(\alpha,\varpi)|\cdot(1+\|(\alpha,\varpi)\|)^{r_M}<\infty
            \]
            holds for some, hence every, Euclidean norm $\|\cdot\|$ on $\fh_\dC^*$.\footnote{When $\vartheta=1$, this simply means that the holomorphic function $\mu$ on $\fh^*_\dC$ has moderate vertical growth (Definition \ref{de:moderate}).}
      \end{itemize}

  \item Define $\cN_\theta(\fh_\dC^*)$ to be the space of functions $\mu$ on $\bigcup_{\vartheta\in\theta}\fh^*_{\vartheta,\dC}\times\sfY^*_\vartheta$ such that for every $\vartheta\in\theta$,
      \begin{itemize}
        \item the restriction of $\mu$ on $\fh^*_{\vartheta,\dC}\times\sfY^*_\vartheta$ is holomorphic in the direction $\fh^*_{\vartheta,\dC}$;

        \item for every $M>0$ and every $r\in\dR$,
            \[
            \sup_{\substack{(\alpha,\varpi)\in\fh^*_{\vartheta,\dC}\times\sfY^*_\vartheta\\ \|\RE\alpha\|<M}}
            |\mu(\alpha,\varpi)|\cdot(1+\|(\alpha,\varpi)\|)^r<\infty
            \]
            holds for some, hence every, Euclidean norm $\|\cdot\|$ on $\fh_\dC^*$.\footnote{When $\vartheta=1$, this is usually referred to that the holomorphic function $\mu$ on $\fh^*_\dC$ has rapid decay on vertical strips.}
      \end{itemize}

  \item Define $\cM^\sharp_\theta(\fh_\dC^*)$ to be the space of holomorphic functions $\mu$ on $\fh^*_\dC$ having moderate vertical growth (Definition \ref{de:moderate}), and satisfying that for every $\vartheta\in\theta$, $\mu\res_{\fh^*_{\vartheta,\dC}\times\{\varpi\}}=0$ for all but finitely many elements $\varpi\in\sfY^*_\vartheta$.
\end{enumerate}
When $\theta=\{1\}$, we suppress it in the subscripts in above notation.
\end{definition}

\fi

\begin{remark}\label{re:multiplier}
We have the following concerning Definition \ref{de:multiplier}.
\begin{enumerate}
  \item In (1) (resp.\ (2)), for every $\vartheta\in\theta$ and every $\varpi\in\sfY^*_\vartheta$, the function $\alpha\mapsto\mu(\alpha+\varpi)$ has moderate vertical growth (Definition \ref{de:moderate}) (resp.\ has rapid decay on vertical strips) on $\fh^*_{\vartheta,\dC}$. On the other hand, for every $\vartheta\in\theta$, if we restrict the growth condition \eqref{eq:growth1} to $i\fh^*_\vartheta\oplus\sfY^*_\vartheta$, then it means that $\mu$ has polynomial growth.

  \item The spaces $\cM_\theta(\fh_\dC^*)$, $\cN_\theta(\fh_\dC^*)$, and $\cM^\sharp_\theta(\fh_\dC^*)$ are all closed under multiplication and the action of $\sfW$.

  \item If $\mu\in\cO(\fh_\dC^*)$ has moderate vertical growth, then $\mu\cdot\cM^\sharp_\theta(\fh_\dC^*)\subseteq \cM^\sharp_\theta(\fh_\dC^*)$. In particular, we have $\cM(\fh_\dC^*)\cdot\cM^\sharp_\theta(\fh_\dC^*)\subseteq \cM^\sharp_\theta(\fh_\dC^*)$. Note that $\cM(\fh_\dC^*)=\cM_{\{1\}}(\fh_\dC^*)$ according to Definition \ref{de:multiplier}.
\end{enumerate}
\end{remark}

The following lemma demonstrates the existence of elements in $\cM^\sharp_\theta(\fh_\dC^*)^\sfW$.

\begin{lem}\label{le:multiplier1}
For every element $\alpha_0\in\fh^*_\dC$, there exists an element $\mu\in\cM^\sharp_\theta(\fh_\dC^*)^\sfW$ such that $\mu(\alpha_0)\neq 0$.
\end{lem}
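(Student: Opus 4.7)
The plan is to build $\mu$ as a $\sfW$-symmetrization of a product $\prod_{\vartheta\in\theta}\mu_\vartheta$, where each factor $\mu_\vartheta$ is the pullback of an auxiliary holomorphic function on $\fh^{*-}_{\vartheta,\dC}$ along the canonical projection $p_\vartheta\colon\fh^*_\dC\twoheadrightarrow\fh^{*-}_{\vartheta,\dC}$ with kernel $\fh^*_{\vartheta,\dC}$. Fix $\vartheta\in\theta$, and write $\sfY^*_\vartheta=\varpi_\vartheta+L_\vartheta$ for a full lattice $L_\vartheta\subseteq\fh^{*-}_\vartheta$ and some $\varpi_\vartheta\in\sfY^*_\vartheta$. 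Applying Corollary \ref{co:lattice} with $U=\fh^{*-}_\vartheta$, the lattice $L_\vartheta$, and the finite subset $A_\vartheta\coloneqq\{p_\vartheta(w\alpha_0)-\varpi_\vartheta\,:\,w\in\sfW\}\subseteq\fh^{*-}_{\vartheta,\dC}$ yields a holomorphic $\nu_\vartheta^0$ on $\fh^{*-}_{\vartheta,\dC}$ of moderate vertical growth, vanishing on $L_\vartheta\setminus A_\vartheta$ and nowhere vanishing on $A_\vartheta$. Setting $\mu_\vartheta(z)\coloneqq\nu_\vartheta^0(p_\vartheta(z)-\varpi_\vartheta)$, the function $\mu_\vartheta$ is holomorphic on $\fh^*_\dC$, vanishes on $\fh^*_{\vartheta,\dC}+\varpi$ for every $\varpi\in\sfY^*_\vartheta$ outside a finite exceptional set, and satisfies $\mu_\vartheta(w\alpha_0)\neq 0$ for every $w\in\sfW$.

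Now define $\mu\coloneqq\prod_{\vartheta\in\theta}\mu_\vartheta$ and verify $\mu\in\cM^\sharp_\theta(\fh^*_\dC)$. The vanishing condition in the definition of $\cM^\sharp_\theta$ is inherited directly from the factors. For the moderate-growth condition of Definition \ref{de:multiplier}(1), the case $\vartheta'=1$ is easy: since $p_\vartheta$ is bounded linear, $\|\RE z\|<M$ implies $\|\RE p_\vartheta(z)\|\leq\|p_\vartheta\|M$, so moderate vertical growth of $\nu_\vartheta^0$ forces each $\mu_\vartheta$, hence the product, to be polynomially bounded on vertical strips of $\fh^*_\dC$. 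For $\vartheta'\in\theta$, I split on the slice parameter $\varpi\in\sfY^*_{\vartheta'}$: if $\varpi$ lies outside the finite exceptional set for $\mu_{\vartheta'}$, the factor $\mu_{\vartheta'}$ vanishes identically on the entire slice $\fh^*_{\vartheta',\dC}+\varpi$ and so does $\mu$; on the remaining finitely many slices, $\varpi$ itself is bounded, so each $p_\vartheta(\alpha+\varpi)$ has bounded real part uniformly in $\|\RE\alpha\|<M$, and moderate vertical growth of $\nu_\vartheta^0$ again gives a polynomial bound. The point to anticipate as the main technical subtlety is that the individual $\mu_\vartheta$ need \emph{not} belong to $\cM_{\theta\cup\{1\}}(\fh^*_\dC)$ on its own, because for $\vartheta'\neq\vartheta$ the real component $p_\vartheta(\varpi)$ of a lattice point $\varpi\in\sfY^*_{\vartheta'}$ can grow unboundedly in the real direction of $\fh^{*-}_{\vartheta,\dC}$ where moderate vertical growth supplies no control; the product structure rescues the situation because $\mu_{\vartheta'}$ annihilates the product on all but finitely many such slices before the bad growth can appear.

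Finally, we symmetrize. The space $\cM^\sharp_\theta(\fh^*_\dC)$ is $\sfW$-stable, since $\theta$ is $\sfW$-invariant and the relations $w\fh^*_{\vartheta,\dC}=\fh^*_{w\vartheta w^{-1},\dC}$ and $w\sfY^*_\vartheta=\sfY^*_{w\vartheta w^{-1}}$ permute the defining slice data; and it is closed under products by Remark \ref{re:multiplier}(2)--(3). Therefore the element
\[
\tilde\mu\coloneqq\prod_{w\in\sfW}(w\cdot\mu)
\]
lies in $\cM^\sharp_\theta(\fh^*_\dC)^\sfW$. The required non-vanishing follows from $\tilde\mu(\alpha_0)=\prod_{w\in\sfW}\mu(w^{-1}\alpha_0)$, and each factor is a product $\prod_{\vartheta\in\theta}\nu_\vartheta^0(p_\vartheta(w^{-1}\alpha_0)-\varpi_\vartheta)$ of nonzero quantities by the $\sfW$-symmetric choice of each $A_\vartheta$.
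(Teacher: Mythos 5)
Your proof is correct and follows essentially the same route as the paper's: for each $\vartheta\in\theta$ produce via Corollary \ref{co:lattice} a moderately growing holomorphic function vanishing on all but finitely many slices $\fh^*_{\vartheta,\dC}+\varpi$ but not at any $w\alpha_0$, multiply over $\theta$, and then symmetrize over $\sfW$. You merely make explicit two points the paper leaves implicit — that the per-$\vartheta$ factor is obtained by pulling back along the projection to $\fh^{*-}_{\vartheta,\dC}$, and that the growth condition for $\vartheta'\neq\vartheta$ is rescued by the factor $\mu_{\vartheta'}$ killing all but finitely many slices — both of which are accurate.
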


\begin{proof}
For each $\vartheta\in\theta$, we can find, by Corollary \ref{co:lattice}, a holomorphic function $\nu_\vartheta$ on $\fh^*_\dC$ that has moderate vertical growth and vanishes on $\fh^*_{\vartheta,\dC}+\varpi$ for all but finitely many elements $\varpi\in\sfY^*_\vartheta$, such that $\nu(w\alpha_0)\neq 0$ for every $w\in\sfW$. Now since $\theta$ is a finite set, we can take the product $\nu\coloneqq\prod_{\vartheta\in\theta}\nu_\vartheta\in\cM^\sharp_\theta(\fh_\dC^*)$, satisfying $\nu(w\alpha_0)\neq 0$ for every $w\in\sfW$. Put $\mu\coloneqq\prod_{w\in\sfW}\nu\circ w$. Then $\mu$ belongs to $\cM^\sharp_\theta(\fh_\dC^*)^\sfW$ and satisfies $\mu(\alpha_0)\neq 0$. The lemma follows.
\end{proof}

\subsection{Multipliers of Schwartz algebra}
\label{ss:multiplier_algebra}

We now consider a connected reductive algebraic group $\ul{G}$ over $\dR$. Let $(\sfX^*,\Phi,\sfX_*,\Phi^\vee)$ be the root datum associated to $\ul{G}_\dC$, namely, $\sfX^*$ and $\sfX_*$ are the weight and coweight lattices of the abstract Cartan group of $\ul{G}_\dC$, with $\Phi$ and $\Phi^\vee$ the subsets of roots and coroots of $\ul{G}_\dC$, respectively. We keep the notation in the previous subsection.

We let $\theta$ be the subset of $\Aut(\sfX^*,\Phi,\sfX_*,\Phi^\vee)^\heartsuit$ consisting of elements of the form $w\vartheta$ in which $w\in\sfW$ and $\vartheta\in\Aut(\sfX^*,\Phi,\sfX_*,\Phi^\vee)^\heartsuit$ is an element that induces the real form $\ul{G}$ of $\ul{G}_\dC$. Then $\theta$ is stable under the conjugation action of $\sfW$.

Denote by $\fg$ the complex Lie algebra of $\ul{G}_\dC$, and $\cU(\fg)$ the universal enveloping algebra of $\fg$ with the center $\cZ(\fg)$. By the Harish-Chandra isomorphism $\cZ(\fg)\simeq\dC[\fh^*_\dC]^\sfW$ \cite{HC51}, we obtain a character $\chi_\alpha$ of $\cZ(\fg)$ for every element $\alpha$ of $\fh^*_\dC$. Conversely, every character $\chi$ of $\cZ(\fg)$ gives rise to a $\sfW$-orbit in $\fh^*_\dC$, hence $\mu(\chi)$ is well-defined for an element $\mu\in\cO(\fh_\dC^*)^\sfW$.

Put $G\coloneqq\ul{G}(\dR)$. We fix a maximal compact subgroup $K$ of $G$, and a Haar measure $\r{d}g$ on $G$. Denote by $\dC[G]$ and $\cD[G]$ the complex algebras of algebraic functions and algebraic differential operators on $G$, respectively.

By an \emph{admissible representation} of $G$, we mean a smooth admissible Fr\'{e}chet representation of moderate growth of $G$ in the sense of Casselman--Wallach (\cite{Cas89}, \cite{Wal92}*{Section~11}). The category of admissible representations of $G$ is equivalent to the category of Harish-Chandra $(\fg,K)$-modules by the functor of taking $K$-finite vectors. For an irreducible admissible representation $\pi$ of $G$, we denote by $\chi_\pi\colon\cZ(\fg)\to\dC$ its infinitesimal character, which is identified with a $\sfW$-orbit in $\fh^*_\dC$.

We recall some definitions and facts from \cite{Wal83}*{Sections~2.5~\&~2.6}. A \emph{Schwartz function} on $G$ is a smooth function $f$ such that $Df$ is bounded for every $D\in\cD(G)$. Let $\cS(G)$ be the convolution algebra of Schwartz functions on $G$, equipped with its natural Fr\'{e}chet topology, under which the convolution product $\ast$, defined by the formula
\[
(f_1\ast f_2)(g)\coloneqq\int_{G}f_1(gh^{-1})f_2(h)\rd h,
\]
is continuous. For every admissible representation (or more generally Fr\'{e}chet representation of moderate growth) $(\pi,V_\pi)$ of $G$, the expression
\[
\pi(f)v\coloneqq\int_Gf(g)\pi(g)v\rd g
\]
is absolutely convergent for every $f\in\cS(G)$ and $v\in V_\pi$, hence defines a continuous operator $\pi(f)\in\End(V_\pi)$.

\begin{remark}
We have $\cS(G)\subseteq\bigcap_{p>0}\cC^p(G)$ where $\cC^p(G)$ denotes the convolution algebra of Harish-Chandra $L^p$-Schwartz functions on $G$; the inclusion is an equality if and only if the center of $G$ is compact.
\end{remark}

\begin{remark}\label{re:sobolev}
Let $L^2(G)$ be the $L^2$-space of $G$. Using the Sobolev lemma, it is easy to see that $\cS(G)$ is also the space of $f\in L^2(G)$ such that $Df\in L^2(G)$ for every $D\in\cD(G)$, where $Df$ is understood in the sense of distributions. Moreover, the Fr\'echet topology of $\cS(G)$ is also induced from the semi-norms $f\mapsto\|Df\|_{L^2}$ for all $D\in\cD(G)$.
\end{remark}

The following theorem provides many multipliers of the algebra $\cS(G)$.

\begin{theorem}\label{th:multiplier}
For every element $\mu\in\cM_{\theta\cup\{1\}}(\fh_\dC^*)^\sfW$, there is a unique linear operator
\[
\mu\star\colon\cS(G)\to\cS(G),
\]
such that
\[
\pi(\mu\star f)=\mu(\chi_\pi)\cdot\pi(f)
\]
holds for every $f\in\cS(G)$ and every irreducible admissible representation $\pi$ of $G$. In particular, $\mu\star\in\Mul(\cS(G))$ is a multiplier of $\cS(G)$.
\end{theorem}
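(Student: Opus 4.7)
The plan is to prove this in two stages: first establish the theorem for the smaller space $\cN_{\theta\cup\{1\}}(\fh^*_\dC)^\sfW$ of rapidly-decaying multiplier functions, then pass to the full moderate-growth space $\cM_{\theta\cup\{1\}}(\fh^*_\dC)^\sfW$ by a limiting procedure. In both stages, uniqueness of $\mu\star$ is automatic because $K$-finite vectors are dense in every admissible representation, so the prescribed identity $\pi(\mu\star f)=\mu(\chi_\pi)\pi(f)$ already determines the action on the dense bi-$K$-finite subalgebra $\cS(G)_{(K)}$ (and hence on $\cS(G)$ once continuity is known).

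For the $\cN$ case, the starting point is Delorme's multiplier theorem (recorded as Proposition~\ref{pr:delorme} in the appendix): since $\cN_{\theta\cup\{1\}}(\fh^*_\dC)\subseteq\cN_{\{1\}}(\fh^*_\dC)$, every $\mu\in\cN_{\theta\cup\{1\}}(\fh^*_\dC)^\sfW$ already gives a well-defined operator $\mu\star$ on $\cS(G)_{(K)}$ satisfying the required identity. To extend it to $\cS(G)$, I would approximate an arbitrary $f\in\cS(G)$ by a sequence $f_n\in C^\infty_c(G)_{(K)}$ and show $\{\mu\star f_n\}$ is Cauchy. By Remark~\ref{re:sobolev}, the Fr\'echet topology of $\cS(G)$ is induced by the $L^2$-Sobolev seminorms $f\mapsto\|Df\|_{L^2}$ for $D\in\cD(G)$, so it suffices to control each such seminorm of $\mu\star f_n$. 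Every $D\in\cD(G)$ is a finite sum of operators of the form $\varphi\cdot U$ with $U\in\cU(\fg)$ left-invariant and $\varphi\in\dC[G]$ a matrix coefficient of a finite-dimensional algebraic representation. The invariant part is harmless: $U(\mu\star f)=\mu\star(Uf)$, so continuity is inherited from that of $f\mapsto Uf$ on $\cS(G)$. The genuinely new ingredient is a commutation identity
\[
\varphi\cdot(\mu\star f)=\sum_{i=1}^N S_i(\mu)\star L_i(f),
\]
where the $L_i$ are continuous endomorphisms of $\cS(G)$ preserving $C^\infty_c(G)_{(K)}$, and the $S_i$ are linear operators on multiplier functions. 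Such a formula is natural: multiplication by $\varphi$ amounts, after applying $\pi$, to tensoring with a finite-dimensional representation $\tau$; the infinitesimal characters of the constituents of $\pi\otimes\tau$ are obtained from $\chi_\pi$ by shifts along the weights of $\tau$, so the scalar $\varphi\cdot\mu(\chi_\pi)$ is expressible as a finite $\dC$-linear combination of values of $\mu$ at lattice translates of $\chi_\pi$, and the $S_i$ are the corresponding translation-difference operators. Granting this, each $S_i(\mu)$ again lies in $\cN_{\theta\cup\{1\}}(\fh^*_\dC)^\sfW$, so the Cauchy property propagates and we obtain the extension $\mu\star\colon\cS(G)\to\cS(G)$.

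For the passage from $\cN_{\theta\cup\{1\}}$ to $\cM_{\theta\cup\{1\}}$, given $\mu\in\cM_{\theta\cup\{1\}}(\fh^*_\dC)^\sfW$, I would choose a sequence $\nu_n\in\cN_{\theta\cup\{1\}}(\fh^*_\dC)^\sfW$ converging to the constant function $1$ in an appropriate sense (for instance, using $\sfW$-symmetrized Gaussian cutoffs, whose existence is guaranteed by the construction of Lemma~\ref{le:multiplier1} combined with Proposition~\ref{pr:rapid}). Then $\mu\nu_n\in\cN_{\theta\cup\{1\}}(\fh^*_\dC)^\sfW$, so $(\mu\nu_n)\star f$ is already defined by the first stage; I would then show that the sequence converges in $\cS(G)$, with limit independent of the choice of $\nu_n$, and define $\mu\star f$ to be this limit. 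Checking convergence requires quantitative control of $\|D((\mu\nu_n-\mu\nu_m)\star f)\|_{L^2}$ through the commutation formula, which in turn demands bounds on the operators $S_i$ relative to the seminorms defining $\cM_{\theta\cup\{1\}}$ and $\cN_{\theta\cup\{1\}}$; these are precisely the estimates of Lemma~\ref{le:multi_3}. The defining identity $\pi(\mu\star f)=\mu(\chi_\pi)\pi(f)$ then passes to the limit since $\nu_n(\chi_\pi)\to 1$ and $\pi$ is continuous.

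The principal obstacle is the commutation formula for $\varphi\cdot(\mu\star f)$ together with the \emph{quantitative} bounds on the operators $S_i$ required to make the $\cN\to\cM$ limit converge. This is the step where Schwartz test functions become essential rather than compactly supported ones: as noted after the statement of Theorem~\ref{th:multiplier_pre}, on $C^\infty_c(G)$ one is forced into multipliers of exponential type, whereas the moderate-growth condition in $\cM_{\theta\cup\{1\}}$ is exactly matched by the $L^2$-Sobolev control available on $\cS(G)$. Producing $S_i$ and verifying the right bounds is essentially a bookkeeping exercise on finite-dimensional representations and their weights, but the bookkeeping has to be done with the growth exponents $r_{\vartheta,M}$ tracked uniformly, which is where the combinatorial heart of the argument lies.
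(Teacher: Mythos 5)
Your proposal follows essentially the same route as the paper's proof: first establish the result for $\cN_{\theta\cup\{1\}}(\fh^*_\dC)^\sfW$ via Delorme's theorem, density of $C^\infty_c(G)_{(K)}$, the $L^2$-Sobolev description of the topology on $\cS(G)$, and a commutation formula $P(\mu\star f)=\sum_i S_i(\mu)\star L_i(f)$ for polynomial differential operators (the paper realizes the $S_i$ via Kostant's theorem on $\cR_\tau$ and the algebra $\sS$, with the quantitative bounds of Lemma \ref{le:multi_3}), and then pass to $\cM_{\theta\cup\{1\}}$ by multiplying with rapidly decaying cutoffs tending to $1$ (the paper uses $\exp(-P(\alpha/n))$ for a carefully chosen $\sfW_\theta$-invariant homogeneous polynomial $P$ with positive real part on the relevant strips, rather than a literal Gaussian). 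The structure, the key lemmas invoked, and the identification of where the real work lies all match the paper.
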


The proof of this theorem will be given in the next subsection.

\begin{remark}\label{re:theta}
The subset $\theta$ defined above only determines the inner form class of $\ul{G}$. We may define a refined invariant $\theta_{\ul{G}}$ associated to $\ul{G}$ to be the subset of $\Aut(\sfX^*,\Phi,\sfX_*,\Phi^\vee)^\heartsuit$ of elements induced from all maximal tori $\ul{T}$ of $\ul{G}$ in the way described in Lemma \ref{le:multi_0} below, which is contained in $\theta$ and stable under the conjugation action of $\sfW$. Note that
\begin{itemize}
  \item $\theta_{\ul{G}}=\theta$ if and only if $\ul{G}$ is quasi-split;

  \item $\theta_{\ul{G}}$ contains $1$ if and only if $\ul{G}$ is split;

  \item $\theta_{\ul{G}}$ contains $-1$ if and only if $\ul{G}$ admits an anisotropic maximal torus;

  \item $\theta_{\ul{G}}=\{-1\}$ if and only if $\ul{G}$ is anisotropic.
\end{itemize}
\end{remark}

\begin{example}\label{ex:multiplier}
Suppose that $\ul{G}_\dC=\SL_{2,\dC}$. In this case, we may identity $\sfX^*$ with $\dZ$ under which $\Phi=\{2,-2\}$, hence $\sfW=\Aut(\sfX^*,\Phi,\sfX_*,\Phi^\vee)=\{\pm1\}$. There are two cases of $\ul{G}$ up to isomorphism.
\begin{enumerate}
  \item When $\ul{G}$ is split, we have $\theta=\theta_{\ul{G}}=\{\pm 1\}$. In this case, $\cM_{\theta\cup\{1\}}(\fh_\dC^*)^\sfW=\cM_{\theta_{\ul{G}}}(\fh_\dC^*)^\sfW=\cM_{\{\pm1\}}(\fh_\dC^*)^\sfW$, which consists of even holomorphic functions on $\dC$ that have moderate vertical growth, and have polynomial growth on the subset $\dZ\subseteq\dC$.

  \item When $\ul{G}$ is anisotropic, we have $\theta_{\ul{G}}=\{-1\}\subseteq\theta=\{\pm 1\}$. In this case, $\cM_{\theta_{\ul{G}}}(\fh_\dC^*)^\sfW=\cM_{\{-1\}}(\fh_\dC^*)^\sfW$, which consists of even holomorphic functions on $\dC$ that have polynomial growth on the subset $\dZ\subseteq\dC$.
\end{enumerate}
\end{example}

\begin{remark}\label{re:conjecture}
It is natural to ask when a holomorphic function $\mu$ on $\fh^*_\dC$ gives a multiplier as in Theorem \ref{th:multiplier}. We conjecture that $\mu$ does this if and only if it belongs to $\cM_{\theta_{\ul{G}}}(\fh_\dC^*)^\sfW$; in particular, the conjecture gives rise to a homomorphism $\cM_{\theta_{\ul{G}}}(\fh_\dC^*)^\sfW\to\Mul(\cS(G))$ of complex algebras. By a result of Harish-Chandra, the subset of $\fh_\dC^*$ of all infinitesimal characters of irreducible admissible representations of $G$ is the union $\bigcup_{\vartheta\in\theta_{\ul{G}}}\fh^*_{\vartheta,\dC}\oplus\sfY^*_\vartheta$. Thus, the kernel of the previous homomorphism consists exactly of those $\mu$ that vanish on $\bigcup_{\vartheta\in\theta_{\ul{G}}}\fh^*_{\vartheta,\dC}\oplus\sfY^*_\vartheta$.

This conjecture can be easily checked when $\ul{G}$ is anisotropic, that is, when $G$ is compact. On the other hand, when $\ul{G}$ is split, Theorem \ref{th:multiplier} implies the existence of the homomorphism $\cM_{\theta_{\ul{G}}}(\fh_\dC^*)^\sfW\to\Mul(\cS(G))$, as in this case $\theta\cup\{1\}=\theta=\theta_{\ul{G}}$ (Remark \ref{re:theta}). We also remark that when $\ul{G}=\SL_{2,\dR}$, this conjecture was known as a consequence of the work \cite{Bar88}.
\end{remark}

\begin{remark}\label{re:multiplier_map}
Theorem \ref{th:multiplier} together with Definition \ref{de:multiplier}(3) provide us with a homomorphism
\[
\cM^\sharp_\theta(\fh_\dC^*)^\sfW\to\Mul(\cS(G))
\]
of complex algebras.
\end{remark}

To end this subsection, we record a property for elements in $\cM^\sharp_\theta(\fh_\dC^*)^\sfW$ that will be used later. As usual, by a \emph{$K$-type}, we mean an isomorphism class of irreducible smooth representations of $K$.

\begin{lem}\label{le:multiplier0}
Let $\mu$ be an element in $\cM^\sharp_\theta(\fh_\dC^*)^\sfW$. Then there is a finite set $\fT(\mu)$ of $K$-types such that for every irreducible admissible representation $\pi$ of $G$ satisfying $\mu(\chi_\pi)\neq 0$, we have that $\pi\res_K$ contains some member from $\fT(\mu)$.
\end{lem}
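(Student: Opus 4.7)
The plan is to reduce the statement to a finiteness result for the pairs $(M,\sigma)$ that can appear in the Langlands classification of admissible representations $\pi$ with $\mu(\chi_\pi)\neq 0$, and then bound the lowest $K$-types in terms of $(M,\sigma)$ using Vogan's theory.

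First I would unpack Definition \ref{de:multiplier}(3): for every $\vartheta\in\theta$ there is a finite subset $Y_\vartheta\subseteq\sfY^*_\vartheta$ such that $\mu$ vanishes identically on $\fh^*_{\vartheta,\dC}+\varpi$ whenever $\varpi\in\sfY^*_\vartheta\setminus Y_\vartheta$; since $\mu$ is $\sfW$-invariant we may assume each $Y_\vartheta$ is stable under the stabilizer of $\vartheta$ in $\sfW$. Thus whenever $\mu(\chi_\pi)\neq 0$, the ``discrete part'' of $\chi_\pi$ is forced into the finite set $Y_\vartheta$ for some $\vartheta\in\theta_{\ul{G}}$.

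Next I would invoke the Langlands classification: every irreducible admissible representation $\pi$ of $G$ is the unique irreducible quotient of a standard module $\rI^G_P(\sigma\otimes e^\nu)$ where $P=MAN$ is a cuspidal parabolic of $\ul{G}$, $\sigma$ is a relative discrete series of $M$, and $\nu\in\fa_\dC^*$. Under the usual identification of stable conjugacy classes of Cartan subgroups of $G$ with $\sfW$-orbits in $\theta_{\ul{G}}$, the fundamental Cartan of $M$ corresponds to some $\vartheta\in\theta_{\ul{G}}$, yielding $\fa_\dC^*\cong\fh^*_{\vartheta,\dC}$ and identifying the Harish-Chandra parameter $\chi_\sigma$ of $\sigma$ with an element of $\sfY^*_\vartheta$. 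The infinitesimal character then decomposes as $\chi_\pi=\nu+\chi_\sigma$, so the vanishing property from the previous paragraph forces $\chi_\sigma\in Y_\vartheta$ up to $\sfW$-conjugacy. Combined with the finiteness (up to $G$-conjugacy) of cuspidal parabolics of $\ul{G}$ and Harish-Chandra's finiteness theorem for relative discrete series with a prescribed Harish-Chandra parameter, this constrains $(M,\sigma)$ to lie in a finite set $\Sigma$ depending only on $\mu$.

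Finally I would apply Vogan's theory of lowest $K$-types: every irreducible admissible representation contains at least one of its lowest $K$-types, and the lowest $K$-types of the Langlands quotient $J(P,\sigma,\nu)$ are obtained from the lowest $(K\cap M)$-types of $\sigma$ via an explicit $\nu$-independent $\rho_N$-shift formula. As $(M,\sigma)$ ranges over $\Sigma$, only finitely many $K$-types arise in this way; taking $\fT(\mu)$ to be this finite collection completes the argument. The principal technical point to verify, and the step where the argument genuinely uses Harish-Chandra module theory rather than just formal properties of $\mu$, is that the lowest $K$-types of $J(P,\sigma,\nu)$ truly depend only on $(M,\sigma)$ and not on $\nu$; this is the classical content of Vogan's parameterization.
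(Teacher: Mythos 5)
Your argument is correct and follows essentially the same route as the paper: Definition \ref{de:multiplier}(3) plus Harish-Chandra's description of the infinitesimal characters of (relative) discrete series confines the inducing data $(M,\sigma)$ to a finite set up to conjugation and unramified twist, and then the Langlands classification together with the fact that the Langlands quotient inherits the minimal (lowest) $K$-types of the standard module — which are independent of the continuous parameter $\nu$ — yields the finite set $\fT(\mu)$. The paper cites \cite{Kna01}*{Theorem~15.10} where you invoke Vogan's lowest $K$-type theory, but this is the same technical point.
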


\begin{proof}
By Definition \ref{de:multiplier}(3) together with Harish-Chandra's description of the infinitesimal characters of discrete series of Levi subgroups of $G$ \cite{HC66}, there are only finitely many pairs $(M,\sigma)$ where $M$ is a Levi subgroup of $G$ and $\sigma$ is an irreducible discrete series representation of $M$, up to conjugation and unramified twists, such that $\mu$ is nonvanishing on the infinitesimal character of the parabolic induction of $(M,\sigma)$. By the Langlands classification and the fact that for a standard module, the Langlands quotient inherits its minimal $K$-types \cite{Kna01}*{Theorem~15.10}, we can take $\fT(\mu)$ to be the minimal $K$-types of the parabolic induction of those finitely many pairs $(M,\sigma)$.
\end{proof}


\subsection{Proof of Theorem \ref{th:multiplier}}
\label{ss:multiplier_proof}

In this subsection, we prove Theorem \ref{th:multiplier}.

We fix a $\sfW$-invariant Euclidean norm $\|\cdot\|$ on $\fh_\dC^*$. For $\vartheta\in\Aut(\sfX^*,\Phi,\sfX_*,\Phi^\vee)^\heartsuit$, $M>0$, and $r\in\dR$, we put
\[
p_{\vartheta,M,r}(\mu)\coloneqq\sup_{\substack{\alpha\in\fh^*_{\vartheta,\dC}, \|\RE\alpha\|<M \\ \varpi\in\sfY^*_\vartheta}}
|\mu(\alpha+\varpi)|\cdot(1+\|\alpha+\varpi\|)^r
\]
for $\mu\in\cO(\fh_\dC^*)$. Clearly, we have $p_{\vartheta,M,r}(\mu)\leq p_{\vartheta,M',r}(\mu)$ if $M\leq M'$ and $p_{\vartheta,M,r}(\mu)\leq p_{\vartheta,M,r'}(\mu)$ if $r\leq r'$.

We start from some discussion on maximal tori of $\ul{G}$. Let $H$ be the abstract Cartan group of $\ul{G}_\dC$, whose weight lattice is $\sfX^*$. Recall that the abstract Cartan group $H$ of $\ul{G}_\dC$ represents the presheaf on the opposite category of complex tori whose value on (a complex torus) $S$ is the set of collections $\{q_B\colon B\to S\}$ of homomorphisms for every Borel subgroup $B$ of $\ul{G}_\dC$ satisfying that $q_B=q_{B'}\circ\alpha_{B,B'}$ for every inner automorphism $\alpha_{B,B'}$ of $\ul{G}_\dC$ mapping $B$ into $B'$. In particular, for every maximal torus $T$ of $\ul{G}$, there is a canonical $\sfW$-conjugacy class of isomorphisms $\ul{T}_\dC\xrightarrow{\sim}H$ from the universal object, which we refer as universal isomorphisms.

Let $\ul{T}$ be a maximal torus of $\ul{G}$. Put $T\coloneqq\ul{T}(\dR)$. Then $T$ admits a decomposition $T=A_TT_c$ in which $A_T$ and $T_c$ are the maximal split and the anisotropic (analytic) sub-tori, respectively; it induces a decomposition $\ft=\fa_T\oplus\ft_c$ for the corresponding Lie algebras.

\begin{lem}\label{le:multi_0}
For every universal isomorphism $\ul{T}_\dC\xrightarrow{\sim}H$, which induces a decomposition $\fh^*=\fa^*_T\oplus i\ft_c^*$, there is a unique element $\vartheta\in\theta$ such that $\fa_T^*=\fh^*_\vartheta$ and $i\ft_c^*=\fh^{*-}_\vartheta$.
\end{lem}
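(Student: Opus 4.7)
The plan is to construct $\vartheta$ from the real structure of $\ul{T}_\dC$ and then verify that its eigenspaces are the claimed subspaces. Since $\ul{T}$ is defined over $\dR$, complex conjugation $\sigma$ is an algebraic involution of $\ul{T}_\dC$. Transporting this involution along the given universal isomorphism $\ul{T}_\dC\xrightarrow{\sim}H$ yields an automorphism of $H$ of order at most $2$, which preserves the root datum and hence defines an element $\vartheta\in\Aut(\sfX^*,\Phi,\sfX_*,\Phi^\vee)^\heartsuit$. By construction $\vartheta$ is ``an element that induces the real form $\ul{G}$'' in the sense of the definition of $\theta$, so $\vartheta\in\theta$. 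Uniqueness is immediate since any involution of $\fh^*$ is determined by its $\pm 1$-eigenspaces.

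It remains to identify the eigenspaces. The real split--compact decomposition $\ul{T}=A_T\times T_c$ induces $\ft_\dC=\fa_{T,\dC}\oplus\ft_{c,\dC}$ and dually $\fh_\dC^*=\fa_{T,\dC}^*\oplus\ft_{c,\dC}^*$, and correspondingly $\sfX^*\cong X^*(\ul{T}_\dC)$ decomposes as $X^*(A_{T,\dC})\oplus X^*(T_{c,\dC})$. Since $A_T$ is split, every algebraic character of $A_{T,\dC}$ is defined over $\dR$; passing to differentials, $X^*(A_{T,\dC})\otimes_\dZ\dR$ embeds in $\fh_\dC^*$ as the real subspace $\fa_T^*$. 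For the compact factor, each algebraic character $\chi$ of $T_{c,\dC}$ satisfies $\chi(\sigma(t))=\overline{\chi(t)}=\chi(t)^{-1}$ for $t\in T_c(\dR)$, since the values lie in $\rU(1)\subseteq\dC^\times$; passing to differentials, such $\chi$ take purely imaginary values on $\ft_c$, so $X^*(T_{c,\dC})\otimes_\dZ\dR$ embeds in $\fh_\dC^*$ as $i\ft_c^*$. Taken together, this yields the decomposition $\fh^*=\fa_T^*\oplus i\ft_c^*$ that the lemma calls ``induced''.

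Finally, restricting $\vartheta$ to each summand: on $\fa_T^*$ the characters are $\sigma$-fixed, so $\vartheta$ acts as $+1$; on $i\ft_c^*$ the identity $\sigma^*\chi=-\chi$ (written additively) shows $\vartheta$ acts as $-1$. Hence $\fa_T^*=\fh^*_\vartheta$ and $i\ft_c^*=\fh^{*-}_\vartheta$, as required.

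The main step requiring care is the sign computation on the compact factor: it is exactly because compact characters exponentiate to $\rU(1)$ and satisfy $\chi\circ\sigma=\chi^{-1}$ that the $-1$-eigenspace is the \emph{imaginary} subspace $i\ft_c^*$ rather than $\ft_c^*$; everything else is formal bookkeeping with the abstract Cartan and the definition of $\theta$ as a $\sfW$-orbit.
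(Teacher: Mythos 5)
Your argument is correct and is essentially the paper's proof: both define $\vartheta$ via the action of complex conjugation on the character lattice of $\ul{T}$ (the paper merely packages this as the algebraic automorphism $\ol{\iota_{\ul{T}}}^{-1}\circ\iota_{\ul{T}}$ attached to a splitting $\iota_{\ul{T}}$ of $\ul{T}_\dC$, which induces the same automorphism of $\sfX^*$) and then check that it acts by $+1$ on $\fa_T^*$ and by $-1$ on $i\ft_c^*$. One notational slip: for $t\in T_c(\dR)$ one has $\sigma(t)=t$, so the identity you need is $({}^\sigma\chi)(t)=\ol{\chi(\sigma(t))}=\chi(t)^{-1}$ for the Galois-twisted character, rather than $\chi(\sigma(t))=\ol{\chi(t)}$; the conclusion that conjugation acts by inversion on $X^*(T_{c,\dC})$, and hence by $-1$ on $i\ft_c^*$, is unaffected.
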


\begin{proof}
We define an automorphism $\vartheta_{\ul{T}}$ of $\ul{T}$ as follows: Let $d$ be the rank of $\ul{T}$ and fix an isomorphism $\iota_{\ul{T}}\colon\ul{T}_\dC\xrightarrow{\sim}\bG_{m,\dC}^d$. Then we set $\vartheta_{\ul{T}}\coloneqq\ol{\iota_{\ul{T}}}^{-1}\circ\iota_{\ul{T}}$, where $\ol{\iota_{\ul{T}}}$ denotes the complex conjugate of $\iota_{\ul{T}}$.

It is easy to see that $\vartheta_{\ul{T}}$ does not depend on the choice of $\iota_{\ul{T}}$, and moreover that on the Lie algebra $\ft$ of $\ul{T}$, $\vartheta_{\ul{T}}$ acts by $+1$ on $\fa_T$ and by $-1$ on $\ft_c$. Take a universal isomorphism $\ul{T}_\dC\xrightarrow{\sim}H$, which gives rise to a decomposition $\fh^*=\fa^*_T\oplus i\ft_c^*$. Then $\vartheta_{\ul{T}}$ induces an element $\vartheta\in\Aut(\sfX^*,\Phi,\sfX_*,\Phi^\vee)$ that belongs to $\theta$, which satisfies $\fa_T^*=\fh^*_\vartheta$ and $i\ft_c^*=\fh^{*-}_\vartheta$. The lemma is proved as the uniqueness is clear.
\end{proof}

\if false

\begin{lem}\label{le:multi_1}
There exists a $\dC$-linear involution $\vartheta$ of $\fh_\dC^*$ that normalizes the action of $\sfW$, such that the following hold:
\begin{enumerate}
  \item For every maximal torus $\ul{T}$ of $\ul{G}_\theta$ and every $\theta$-admissible decomposition $\fh_\dC^*=\fa_{T,\dC}^*\oplus\ft_{c,\dC}^*$, there exists an element $w\in\sfW$ with $(w\vartheta)^2=1$ such that $\fa_{T,\dC}^*=\Ker(w\vartheta-1)$ and $\ft_{c,\dC}^*=\Ker(w\vartheta+1)$.

  \item For every element $w\in\sfW$ with $(w\vartheta)^2=1$, we can find a maximal torus $\ul{T}$ of $\ul{G}_\theta$ and an $\theta$-admissible decomposition $\fh_\dC^*=\fa_{T,\dC}^*\oplus\ft_{c,\dC}^*$ under which $\fa_{T,\dC}^*=\Ker(w\vartheta-1)$ and $\ft_{c,\dC}^*=\Ker(w\vartheta+1)$.
\end{enumerate}
\end{lem}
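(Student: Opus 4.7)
The plan is to take $\vartheta$ to be a specific representative of the set $\theta$ that induces the real form $\ul{G}$, then derive part (1) from Lemma~\ref{le:multi_0} and part (2) from the classical Cartan--Kostant--Matsuki parameterization of real maximal tori by twisted Weyl involutions.

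First I would fix $\vartheta\in\Aut(\sfX^*,\Phi,\sfX_*,\Phi^\vee)^\heartsuit$ to be an element that induces the real form $\ul{G}$ of $\ul{G}_\dC$; such a $\vartheta$ exists by the very definition of $\theta$ given in Subsection~\ref{ss:multiplier_algebra}. Because $\vartheta$ arises from a Galois automorphism of the based root datum (after possibly composing with an inner automorphism preserving a pinning), it normalizes the action of $\sfW$ on $\fh^*_\dC$. By construction we then have tautologically $\theta=\{w\vartheta\mid w\in\sfW,\ (w\vartheta)^2=1\}$, so the target element of (1) and the parameter of (2) live in the same $\sfW$-coset of $\vartheta$.

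For part (1), given a maximal torus $\ul{T}$ of $\ul{G}$ together with a $\theta$-admissible decomposition $\fh_\dC^*=\fa_{T,\dC}^*\oplus\ft_{c,\dC}^*$ (coming from a universal isomorphism $\ul{T}_\dC\xrightarrow{\sim}H$), I would apply Lemma~\ref{le:multi_0} to obtain a unique $\vartheta_T\in\theta$ whose $+1$-eigenspace is $\fa_{T,\dC}^*$ and whose $-1$-eigenspace is $\ft_{c,\dC}^*$. Writing $\vartheta_T=w\vartheta$ with $w\in\sfW$, the involutivity $\vartheta_T^2=1$ translates directly to $(w\vartheta)^2=1$, yielding the desired element. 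For part (2), given $w\vartheta$ with $(w\vartheta)^2=1$, I would lift $w$ to an element $\dot{w}\in N_{\ul{G}(\dC)}(H)$ and use it to twist the Galois action on a reference split maximal torus of the quasi-split inner form of $\ul{G}_\dC$; this produces a new $\dR$-structure whose real form embeds into $\ul{G}$ as a maximal torus $\ul{T}$, and one verifies by Lemma~\ref{le:multi_0} that the $\theta$-admissible decomposition attached to $\ul{T}$ returns precisely $w\vartheta$.

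The main obstacle will be the surjectivity claim in part (2): making rigorous that the twisted torus actually lives inside the given real form $\ul{G}$ rather than some other inner form with the same complexification. This is controlled by the definition of $\theta$ as the $\sfW$-orbit of involutions realizing the inner form class of $\ul{G}$, together with the standard vanishing of the relevant Galois cohomology for tori: every $w\vartheta$ with $(w\vartheta)^2=1$ lies in $\theta$, and the Kostant--Matsuki classification guarantees that such a twisted involution is realized by an actual maximal torus of $\ul{G}$. A compatibility check via the construction in Lemma~\ref{le:multi_0} then identifies the eigenspaces, closing the argument.
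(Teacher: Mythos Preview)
Your treatment of part~(1) is essentially the paper's: both arguments construct the involution $\vartheta$ as $\vartheta_{\ul{S}}$ for a fixed reference maximal torus $\ul{S}$ of $\ul{G}_\theta$, and then for an arbitrary $\ul{T}$ conjugate $\ul{T}_\dC$ to $\ul{S}_\dC$ by some $g\in\ul{G}_\theta(\dC)$ to read off the Weyl element $w$ from $\iota\circ\ol{\iota}^{-1}$ (this is exactly what Lemma~\ref{le:multi_0} packages).

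For part~(2) the approaches diverge. The paper does not invoke any classification theorem: it observes that $(w\vartheta)^2=1$ is equivalent to $w\ol{w}=1$, hence $w$ composed with complex conjugation defines a real structure on $\fh_\dC$; one then picks a regular $X\in\fh_\dC$ with $\ol{X}=w^{-1}X$, so that the conjugacy class of $X$ is defined over $\dR$, and applies Steinberg's rationality theorem (\emph{Regular elements of semisimple algebraic groups}, Theorem~9.8) to produce $g\in\ul{G}_\theta(\dC)$ with $Y\coloneqq\ad(g)^{-1}X\in\Lie\ul{G}_\theta$. The centralizer of $Y$ is the desired torus, and the computation from part~(1) recovers $w\vartheta$.

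Your route via twisting and a Matsuki-type classification can be made to work, but two points need tightening. First, the phrase ``standard vanishing of the relevant Galois cohomology for tori'' is not correct: $H^1(\dR,T)$ does not vanish for real tori, and the relevant obstruction is whether the class of your lifted cocycle $\dot{w}$ in $H^1(\dR,N(\ul{S}))$ dies in $H^1(\dR,\ul{G}_\theta)$ --- precisely the content of Steinberg's theorem for quasi-split groups, not a cohomology vanishing. Second, you slide between $\ul{G}$ and $\ul{G}_\theta$; the statement is set in the quasi-split inner form $\ul{G}_\theta$, and it is exactly quasi-splitness that makes \emph{every} twisted involution realizable (cf.\ Remark~\ref{re:theta}: $\theta_{\ul{G}}=\theta$ iff $\ul{G}$ is quasi-split). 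If you cite Matsuki, you must cite the version asserting surjectivity onto all $\sfW$-classes of twisted involutions for the quasi-split form, and then pass from the class to the specific $w\vartheta$ by adjusting the universal isomorphism --- a step you should make explicit.
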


\begin{proof}
Let $\ul{T}$ be an algebraic torus defined over $\dR$. We define an automorphism $\vartheta_{\ul{T}}$ of $\ul{T}$ as follows: let $d$ be the rank of $\ul{T}$ and fix an isomorphism $\iota_{\ul{T}}\colon\ul{T}_\dC\xrightarrow{\sim}\bG_{m,\dC}^d$. Then we set $\vartheta_{\ul{T}}\coloneqq\ol{\iota_{\ul{T}}}^{-1}\circ\iota_{\ul{T}}$, where $\ol{\iota_{\ul{T}}}$ denotes the complex conjugate of $\iota_{\ul{T}}$. It is easy to see that $\vartheta_{\ul{T}}$ does not depend on the choice of $\iota_{\ul{T}}$, and moreover that on the Lie algebra $\ft_\dC$, $\vartheta_{\ul{T}}$ acts trivially on $\fa_{T,\dC}$ and by $-1$ on $\ft_{c,\dC}$.

To prove the lemma, we may fix a maximal torus $\ul{S}$ of $\ul{G}_\theta$ and identify $\sfX^*$ with the weight lattice of $\ul{S}_\dC$. We let $\vartheta$ be the induced action of $\vartheta_{\ul{S}}$ on $\fh_\dC^*$. As $\vartheta\circ w=\overline{w}\circ\vartheta$ for every $w\in\sfW$, where $w\mapsto\ol{w}$ is the complex conjugation on $\sfW$, we see that $\vartheta$ normalizes the action of $\sfW$.

For (1), let $\ul{T}$ be a maximal torus of $\ul{G}_\theta$ and choose an element $g\in\ul{G}_\theta(\dC)$ such that $\Ad(g)(\ul{T}_\dC)=\ul{S}_\dC$. Put $\iota\coloneqq\Ad(g)\colon\ul{T}_\dC\xrightarrow{\sim}\ul{S}_\dC$. Then there exists an element $w\in\sfW$ such that the actions of $w$ and $\iota\circ\overline{\iota}^{-1}$ on $\ul{S}_\dC$ coincide. Choose an isomorphism $\iota_{\ul{T}}\coloneqq\ul{T}_\dC\xrightarrow{\sim}\bG_{m,\dC}^d$ and put $j_{\ul{T}}\coloneqq\iota_{\ul{T}}\circ\iota^{-1}$. We have
\[
\iota\circ\vartheta_{\ul{T}}\circ\iota^{-1}=\iota\circ\overline{\iota_{\ul{T}}}^{-1}\circ\iota_{\ul{T}}\circ\iota^{-1}
=\iota\circ\overline{\iota}^{-1}\circ\overline{j_{\ul{T}}}^{-1}\circ j_{\ul{T}}=w\circ\vartheta_{\ul{S}},
\]
which implies (1).

For (2), the condition $(w\vartheta)^2=1$ is equivalent to $w\overline{w}=1$, hence $w$ composed with the complex conjugation induces a real structure on the complex vector space $\fh_\dC$. In particular, there exists a regular element $X\in\fh_\dC$ such that $\overline{X}=w^{-1}X$. This implies that the conjugacy class of $X$ is defined over $\dR$, hence by \cite{Ste65}*{Theorem~9.8}, there exists an element $g\in\ul{G}_\theta(\dC)$ such that $Y\coloneqq\ad(g)^{-1}X$ belongs to $\Lie\ul{G}_\theta$. Let $\ul{T}$ be the centralizer of $Y$ in $\ul{G}_\theta$, which is a maximal torus of $\ul{G}_\theta$. Then $\Ad(g)$ sends $\ul{T}_\dC$ to $\ul{S}_\dC$; and if we put $\iota\coloneqq\Ad(g)\colon\ul{T}_\dC\xrightarrow{\sim}\ul{S}_\dC$, then we have $\iota_*Y=X$ and $\overline{\iota}_*Y=w^{-1}X$. As $X$ is regular, this further implies that the actions of $w$ and $\iota\circ\overline{\iota}^{-1}$ on $\ul{S}_\dC$ coincide. By the above discussion for (1), we obtain (2).

The lemma is proved.
\end{proof}

\fi

As we have explained in Subsection \ref{ss:strategy}, the key step is to bound $\|D(\mu\star f)\|_{L^2}$ when $D\in\cD(G)$ is a polynomial, which boils down to the study of certain linear operators on $\cN_{\theta\cup\{1\}}(\fh_\dC^*)^\sfW$. Those operators turn out to be from the algebra $\sS$ below, which we study now.

Denote by $\wt\sfW\coloneqq\sfX^*\rtimes\sfW$ the extended affine Weyl group, and $\wt\sfW^\heartsuit\subseteq\wt\sfW$ the subset of reflections. Recall that an element $s\in\wt\sfW$ is a \emph{reflection} if the locus $H_s$ of fixed points of $s$ on $\fh_\dC^*$ is an affine hyperplane. For every $s\in\wt\sfW^\heartsuit$, we fix an affine function $\ell_s$ on $\fh_\dC^*$ with zero locus $H_s$.

For every $w\in\wt\sfW$, we denote by $t_w$ the operator on $\dC[\fh_\dC^*]$ given by $(t_wP)(\alpha)=P(w^{-1}\alpha)$. Let $\sR$ be the algebra of endomorphisms of $\dC[\fh_\dC^*]$ generated by $t_w$ for all $w\in\wt\sfW$ and multiplications by elements of $\dC[\fh_\dC^*]$. We consider $\sR$ as a left $\dC[\fh_\dC^*]$-module in the obvious way. Put
\begin{align*}
\sS&\coloneqq\dC[\fh_\dC^*][\ell_s^{-1}\res s\in\wt\sfW^\heartsuit]\otimes_{\dC[\fh_\dC^*]}\sR, \\
\sT&\coloneqq\dC(\fh_\dC^*)\otimes_{\dC[\fh_\dC^*]}\sR,
\end{align*}
where $\dC(\fh_\dC^*)$ denotes the function field of $\dC[\fh_\dC^*]$. We have natural maps $\sR\to\sS\to\sT$ of algebras. The algebra $\sS$ is independent of the choices of $\{\ell_s\res s\in\wt\sfW^\heartsuit\}$. The algebra $\sT$ can be considered as the algebra of endomorphisms of $\dC(\fh_\dC^*)$ generated by $t_w$ for all $w\in\wt\sfW$ and multiplications by elements of $\dC(\fh_\dC^*)$.

The following lemma tells us when an element $S\in\sS$ extends to a linear operator on $\cN_{\theta\cup\{1\}}(\fh_\dC^*)$ and how it behaves with respect to the family of semi-norms $\{p_{\vartheta,M,r}\}$.

\begin{lem}\label{le:multi_3}
Let $S\in\sS$ be an element satisfying $S\dC[\fh_\dC^*]\subset\dC[\fh_\dC^*]$. Then $S$ extends uniquely to a continuous endomorphism of $\cO(\fh_\dC^*)$ with respect to the topology of uniform convergence on compact subsets, which we still denote by $S$. Moreover, there exist $M_S>0$ and $r_S>0$ such that for every $M>0$ and every $r\in\dR$, there exists $C_{S,M,r}>0$ such that
\begin{align*}
p_{1,M,r}(S\mu)&\leq C_{S,M,r}\cdot p_{1,M+M_S,r+r_S}(\mu),\\
\max_{\vartheta\in\theta}p_{\vartheta,M,r}(S\mu)&\leq C_{S,M,r}\cdot\max_{\vartheta\in\theta}p_{\vartheta,M+M_S,r+r_S}(\mu)
\end{align*}
hold for every $\mu\in\cO(\fh_\dC^*)$. In particular,
\begin{enumerate}
  \item $S$ preserves the subspace $\cN_{\theta\cup\{1\}}(\fh_\dC^*)$;

  \item if $S\dC[\fh_\dC^*]\subset\dC[\fh_\dC^*]^\sfW$, then $S$ preserves the subspace $\cN_{\theta\cup\{1\}}(\fh_\dC^*)^\sfW$.
\end{enumerate}
\end{lem}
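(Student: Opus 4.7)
The plan is to factor $S$ and reduce to easier estimates on a subalgebra without denominators. Specifically, any $S \in \sS$ has a presentation $S = \sum_{w \in \wt\sfW} f_w t_w$ with $f_w \in \dC[\fh_\dC^*][\ell_s^{-1} : s \in \wt\sfW^\heartsuit]$ and only finitely many $f_w$ nonzero. Choosing a common denominator $Q = \prod_s \ell_s^{n_s}$ such that $Q f_w \in \dC[\fh_\dC^*]$ for every $w$, the product $S' := QS = \sum_w (Q f_w) t_w$ lies in $\sR$. The operator $S'$ manifestly defines a continuous endomorphism of $\cO(\fh_\dC^*)$ (in the topology of uniform convergence on compact subsets), and its action on the seminorms admits a direct estimate: writing $w = (\lambda, u) \in \sfX^* \rtimes \sfW = \wt\sfW$, the translation $t_w$ sends the region defining $p_{\vartheta, M, r}$ into the region defining $p_{\vartheta', M + M_0, r}$ for $\vartheta' = u^{-1}\vartheta u \in \theta$ (where the preservation $\sfY^*_\vartheta \mapsto \sfY^*_{\vartheta'}$ uses $u^{-1}\rho - \rho \in \sfX^*$ applied to the projection defining $\sfY^*$); then the polynomial multiplier $Q f_w$ of degree $D = \max_w \deg(Q f_w)$ contributes a factor $(1 + \|z\|)^D$. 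Setting $r_S = D$ and appropriately enlarging $M_0 \to M_S$, one obtains a bound of shape $p_{\vartheta, M, r}(S'\mu) \leq C \cdot \max_{\vartheta' \in \theta} p_{\vartheta', M + M_S, r + r_S}(\mu)$, and analogously for $\vartheta = 1$.

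Next, I would verify that for every $\mu \in \cO(\fh_\dC^*)$, the formal expression $S\mu := (S'\mu)/Q$ extends uniquely from $\{Q \neq 0\}$ to a holomorphic function on all of $\fh_\dC^*$. The hypothesis $S\dC[\fh_\dC^*] \subset \dC[\fh_\dC^*]$ says precisely that $Q$ divides $S'P$ in the polynomial ring for every $P \in \dC[\fh_\dC^*]$. To promote this to holomorphic $\mu$, I would exploit the fact that polynomials are dense in $\cO(\fh_\dC^*)$ (via uniform approximation by Taylor partial sums around the origin on any polydisc). Given a polydisc $K \subset \fh_\dC^*$ and a slightly larger concentric polydisc $K'$ whose distinguished boundary $\partial_0 K'$ avoids $\{Q = 0\}$ (achievable by generic perturbation of the radii, since $\{Q = 0\}$ is a finite union of affine hyperplanes), choose polynomials $P_N \to \mu$ uniformly on a neighborhood of $K'$. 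Each $SP_N = S'P_N / Q$ is a polynomial, hence holomorphic on $K'$; the uniform convergence of $S'P_N \to S'\mu$ on $\partial_0 K'$ together with the positive lower bound on $|Q|$ there yields uniform boundedness of $\{SP_N\}$ on $\partial_0 K'$, hence on $K'$ by the maximum modulus principle. Montel's theorem then extracts a holomorphic subsequential limit, which coincides with $S'\mu/Q$ on $K \cap \{Q \neq 0\}$; by the identity principle this limit is unique and furnishes the desired holomorphic extension $S\mu$ on $K$, and hence globally.

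The main technical obstacle is the seminorm bound for $S\mu$ itself, since dividing the bound for $S'\mu$ by the potentially vanishing polynomial $Q$ is not legitimate pointwise. The strategy is to use the Cauchy integral representation on a contour designed to avoid $\{Q = 0\}$. For $\beta = \alpha + \varpi$ with $\alpha \in \fh^*_{\vartheta, \dC}$, $\|\RE\alpha\| < M$, and $\varpi \in \sfY^*_\vartheta$, pick a direction $v \in \fh_\dC^*$ with $\ell_s^0(v) \neq 0$ for all $s$ (where $\ell_s^0$ denotes the linear part of $\ell_s$), and express
\[
S\mu(\beta) = \frac{1}{2\pi i} \oint_{|t| = R} \frac{S'\mu(\beta + tv)}{t \cdot Q(\beta + tv)} \, dt
\]
for a radius $R \in [1, 2]$ chosen so that $|Q(\beta + tv)| \geq c_Q > 0$ on the contour, uniformly in $\beta$. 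Existence of such an $R$ follows from a Fubini/measure-theoretic argument: the zeros in $t$ of the polynomial $t \mapsto Q(\beta + tv)$ are finitely many, and for any $\delta > 0$ the set of bad radii in $[1, 2]$ (those for which the circle comes within distance $\delta$ of some zero) has Lebesgue measure at most $C\delta$, so a good $R$ exists for $\delta$ fixed depending only on $S$. With $v$ further chosen so that $\beta + tv$ continues to lie in the region $\fh^*_{\vartheta', \dC} + \sfY^*_{\vartheta'}$ for $\vartheta'$ an appropriate element of $\theta$ (using the multi-factor structure afforded by the $\sfW$-stability of $\theta$ to handle the degenerate case $\fh^*_{\vartheta, \dC} = 0$), the bound on $S'\mu$ transferred from the first paragraph yields the desired inequality, with $M_S$ enlarged by the bounded torus radius and $r_S$ unchanged. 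This is the step I expect to require the most care. Assertion (1) follows immediately by letting $r \to \infty$ in the inequalities. For (2), assume $\mu \in \cN_{\theta \cup \{1\}}(\fh_\dC^*)^\sfW$, approximate $\mu$ by $\sfW$-invariant polynomials obtained from $\sfW$-averaging the Taylor polynomials; each $SP_N$ is $\sfW$-invariant by the strengthened hypothesis, and continuity of $S$ on $\cO(\fh_\dC^*)$ transfers $\sfW$-invariance to the limit $S\mu$.
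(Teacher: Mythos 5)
Your overall architecture is the same as the paper's: write $S=\Theta^{-1}S'$ with $S'\in\sR$ and $\Theta$ a product of the $\ell_s$, estimate $S'$ directly (translations permute the tubes $\fh^*_{\vartheta,\dC}\oplus\sfY^*_\vartheta$ among conjugate $\vartheta$'s at the cost of enlarging $M$, polynomial factors cost a power of $(1+\|z\|)$), and then control the division by the denominator via a Cauchy integral on a contour avoiding the zero locus. Your first paragraph and your holomorphic-extension argument (generic polydisc radii, maximum modulus on the distinguished boundary, Montel) are fine, and (1), (2) are handled correctly.

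The gap is in the division step, and it is exactly the point where the paper's Lemma \ref{le:reflection} does its real work. Your contour $\oint_{|t|=R}S'\mu(\beta+tv)\,(t\,Q(\beta+tv))^{-1}\,dt$ requires bounding $S'\mu(\beta+tv)$ for \emph{all} $t$ on the circle, and the seminorms $p_{\vartheta',M',r'}$ only control $\mu$ (hence $S'\mu$) on the tubes $\fh^*_{\vartheta',\dC}\oplus\sfY^*_{\vartheta'}$; off these tubes a general $\mu\in\cN_{\theta\cup\{1\}}(\fh^*_\dC)$ is completely unconstrained. Since each $\sfY^*_{\vartheta'}$ is discrete in the $\fh^{*-}_{\vartheta'}$ direction, the segment $\{\beta+tv\}$ stays inside a tube for a full circle of $t$ only if $v\in\fh^*_{\vartheta',\dC}$ for some $\vartheta'$ whose tube already contains $\beta$. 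You therefore need a single $v$ lying in such a $\fh^*_{\vartheta',\dC}$ with $\ell_s^\circ(v)\neq 0$ for \emph{every} $s$ dividing $Q$ simultaneously; this fails whenever some $\ell_s^\circ$ vanishes identically on $\fh^*_{\vartheta',\dC}$ (e.g.\ always when $\vartheta'=-1$, the anisotropic case, and more generally for any $s$ whose reflection hyperplane direction contains $\fh^*_{\vartheta',\dC}$), and the relevant $\vartheta'$ may differ from one $s$ to another. Your parenthetical about ``the multi-factor structure'' and ``$\fh^*_{\vartheta,\dC}=0$'' gestures at this but does not resolve it. The paper's fix is to divide by one $\ell_s$ at a time: in the degenerate case it first uses that $\ell_s$ takes values in a discrete subset of $\dC$ on the $\vartheta$-tube (so $|\ell_s|$ is bounded below off $H_s$ there), and then, on the part of the tube inside $H_s$, passes to $\vartheta'=w_s\vartheta\in\theta$ — whose tube contains that part and on whose ``$+$'' space $\ell_s^\circ$ is nonvanishing — before running the Cauchy argument; this is why the estimate for $\vartheta\in\theta$ is only stated as a bound of $\max_{\vartheta}$ by $\max_{\vartheta'}$. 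You would need to incorporate this one-factor-at-a-time case analysis (or an equivalent substitute) for your argument to close.
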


\begin{proof}
Note that for the topology of uniform convergence on compact subsets, the subspace $\dC[\fh_\dC^*]$ is dense in $\cO(\fh_\dC^*)$, hence the uniqueness of the extension is clear.

Now we show the existence of the extension, with the estimate on $p_{\vartheta,M,r}(S\mu)$ together. By definition, $S$ is of the form $\Theta^{-1}S'$, where $S'\in\sR$ and $\Theta$ is a finite product (possibly with multiplicities) of $\ell_s$ for $s\in\wt\sfW^\heartsuit$. It is immediate that the action of $S'$ on $\dC[\fh_\dC^*]$ extends by continuity to an action on $\cO(\fh_\dC^*)$ with the following property: there exist $M_{S'}>0$ and $r_{S'}>0$ such that for every $M>0$ and every $r\in\dR$, there exists $C_{S',M,r}>0$ such that
\begin{align*}
p_{1,M,r}(S'\mu)&\leq C_{S',M,r}\cdot p_{1,M+M_{S'},r+r_{S'}}(\mu),\\
\max_{\vartheta\in\theta}p_{\vartheta,M,r}(S'\mu)&\leq C_{S',M,r}\cdot\max_{\vartheta\in\theta}p_{\vartheta,M+M_{S'},r+r_{S'}}(\mu)
\end{align*}
hold for every $\mu\in\cO(\fh_\dC^*)$. In particular, $S'$ preserves the subspace $\cN_{\theta\cup\{1\}}(\fh_\dC^*)$. Now the similar properties for $S$ follow from Lemma \ref{le:reflection} below.

For the last two claims, (1) is an immediate consequence of the estimate on $p_{\vartheta,M,r}(S\mu)$; and (2) follows from the uniqueness of the extension.
\end{proof}

\begin{lem}\label{le:reflection}
For every $s\in\wt\sfW^\heartsuit$, there exists $M_s>0$ such that for every $M>0$ and every $r\in\dR$, there exists $C_{s,M,r}>0$ such that
\begin{align*}
p_{1,M,r}(\ell_s^{-1}\mu)&\leq C_{s,M,r}\cdot p_{1,M+M_s,r}(\mu),\\
\max_{\vartheta\in\theta}p_{\vartheta,M,r}(\ell_s^{-1}\mu)&\leq C_{s,M,r}\cdot\max_{\vartheta\in\theta}p_{\vartheta,M+M_s,r}(\mu)
\end{align*}
hold for every $\mu\in\cO(\fh_\dC^*)$ that vanishes on $H_s$. In particular, for a (holomorphic) function $\mu\in\cN_{\theta\cup\{1\}}(\fh_\dC^*)$ that vanishes on $H_s$, the function $\ell_s^{-1}\mu$ belongs to $\cN_{\theta\cup\{1\}}(\fh_\dC^*)$ as well.
\end{lem}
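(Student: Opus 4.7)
The plan is to bound $g\coloneqq\ell_s^{-1}\mu$ pointwise via Cauchy's integral formula along a direction $v\in\fh_\dC^*$ with $\ell_s^{\r{lin}}(v)=1$, where $\ell_s^{\r{lin}}$ denotes the linear part of the affine function $\ell_s$. Since $\mu$ vanishes on $H_s$, the function $t\mapsto g(z_0+tv)=\mu(z_0+tv)/(\ell_s(z_0)+t)$ is entire in $t$. A case split on whether $|\ell_s(z_0)|$ is larger or smaller than $1$, combined with the maximum modulus principle on a circle of radius $R$ chosen to stay bounded away from the pole, yields a uniform bound
\[
|g(z_0)|\,\leq\,C_R\,\sup_{|t|\leq R}|\mu(z_0+tv)|,
\]
with $R$ depending only on $s$. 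The task reduces to bounding this supremum by the appropriate seminorms of $\mu$.

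For the first inequality, choose $v\in\fh^*$ (a real direction) with $\ell_s^{\r{lin}}(v)=1$; the estimate $\|\RE(z_0+tv)\|\leq\|\RE z_0\|+R\|v\|$ together with the elementary comparison $(1+\|z_0+tv\|)^{-r}\leq C(1+\|z_0\|)^{-r}$ valid for $|t|\leq R$ gives the claim with $M_s=R\|v\|$. For the second inequality, fix $\vartheta\in\theta$ and a point $z_0=\lambda+\varpi$ with $\lambda\in\fh^*_{\vartheta,\dC}$, $\|\RE\lambda\|<M$, and $\varpi\in\sfY^*_\vartheta$; split according to whether $\ell_s^{\r{lin}}|_{\fh^*_\vartheta}$ is nonzero. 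In \emph{Case A} ($\ell_s^{\r{lin}}|_{\fh^*_\vartheta}\neq 0$), pick $v\in\fh^*_\vartheta$ with $\ell_s^{\r{lin}}(v)=1$: the translated point $\lambda+tv$ remains in $\fh^*_{\vartheta,\dC}$, so $(\lambda+tv)+\varpi$ still lies in $\fh^*_{\vartheta,\dC}+\sfY^*_\vartheta$ and the bound falls within the scope of $p_{\vartheta,M+R\|v\|,r}(\mu)$.

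\emph{Case B}, $\ell_s^{\r{lin}}|_{\fh^*_\vartheta}=0$, is equivalent to $\vartheta\alpha=-\alpha$, where $\alpha$ is the root appearing in $\ell_s=\langle\cdot,\alpha^\vee\rangle-n$. Here $\ell_s|_{\fh^*_{\vartheta,\dC}+\varpi}$ is the constant $\ell_s(\varpi)$, and the image $\{\ell_s(\varpi'):\varpi'\in\sfY^*_\vartheta\}$ is a discrete subset of $\dR$. If $\ell_s(\varpi)\neq 0$ (\emph{Case B1}), discreteness yields $|\ell_s(\varpi)|\geq\epsilon_s>0$, so $|g(\lambda+\varpi)|\leq\epsilon_s^{-1}|\mu(\lambda+\varpi)|$ is directly dominated by $\epsilon_s^{-1}p_{\vartheta,M,r}(\mu)$. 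If $\ell_s(\varpi)=0$ (\emph{Case B2}), then $\langle\varpi,\alpha^\vee\rangle=n$ is pinned at a value independent of $\varpi$; I introduce the involution $\vartheta'\coloneqq\vartheta s_\alpha$, which belongs to $\theta$ by its $\sfW$-stability and is of order two owing to $\vartheta\alpha=-\alpha$. A direct computation yields $\fh^*_{\vartheta',\dC}=\fh^*_{\vartheta,\dC}\oplus\dC\alpha$ and $\fh^{*-}_{\vartheta',\dC}=\fh^{*-}_{\vartheta,\dC}\cap\alpha^\perp$, and the projection onto $\fh^{*-}_{\vartheta',\dC}$ factors through that onto $\fh^{*-}_{\vartheta,\dC}$, so that $\varpi-\tfrac{n}{2}\alpha\in\sfY^*_{\vartheta'}$. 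Choosing $v=\alpha/2\in\fh^*_{\vartheta'}$, the $\vartheta'$-decomposition of $\lambda+\varpi+tv$ reads $\bigl(\lambda+\tfrac{n+t}{2}\alpha\bigr)+\bigl(\varpi-\tfrac{n}{2}\alpha\bigr)$, with the first summand in $\fh^*_{\vartheta',\dC}$ of real part bounded by $M+\tfrac{|n|+R}{2}\|\alpha\|$ \emph{uniformly in $\varpi$}, and the second in $\sfY^*_{\vartheta'}$. This delivers a Cauchy bound against $p_{\vartheta',M+M_s,r}(\mu)\leq\max_{\vartheta''\in\theta}p_{\vartheta'',M+M_s,r}(\mu)$.

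The main obstacle is Case B2: without the observation that for $\varpi\in\sfY^*_\vartheta$ with $\ell_s(\varpi)=0$ the $\alpha$-component of $\varpi$ in the $\vartheta'$-decomposition equals the fixed vector $\tfrac{n}{2}\alpha$, one would be forced to invoke the $p_1$-seminorm on a strip of width growing with $\|\varpi\|$, against which $\cN_{\theta\cup\{1\}}$ affords no uniform control. Assembling the cases, taking $M_s$ and $C_{s,M,r}$ to be the maxima of the constants produced in each, completes the proof of both inequalities; the concluding assertion that $\ell_s^{-1}\mu\in\cN_{\theta\cup\{1\}}(\fh_\dC^*)$ is then immediate, since the right-hand sides are finite for all $M,r$ when $\mu$ lies in this space.
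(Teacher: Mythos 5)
Your proposal is correct and follows essentially the same route as the paper's proof: a Cauchy/maximum-modulus bound in a direction $v$ with $\ell^\circ_s(v)=1$ when $\ell^\circ_s\res_{\fh^*_{\vartheta,\dC}}\neq 0$, and in the degenerate case a split between $\varpi\notin H_s$ (handled by discreteness of $\ell_s(\sfY^*_\vartheta)$) and $\varpi\in H_s$ (handled by passing to $\vartheta'=w_s\vartheta\in\theta$ and shifting by a single fixed vector so that $\varpi-\alpha'_s\in\sfY^*_{\vartheta'}$). Your explicit identification $\alpha'_s=\tfrac{n}{2}\alpha$, uniform in $\varpi$, is exactly the key point the paper's argument relies on.
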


\begin{proof}
For every fixed $\vartheta\in\theta\cup\{1\}$, we claim that there exists $M_s>0$ such that for every $M>0$ and $r\in\dR$, there exists $C_{s,M,r}>0$ such that
\begin{align*}
p_{\vartheta,M,r}(\ell_s^{-1}\mu)\leq
\begin{dcases}
C_{s,M,r} \cdot p_{\vartheta,M+M_s,r}(\mu), &\text{if $\vartheta=1$}\\
C_{s,M,r}\cdot\max_{\vartheta'\in\theta}p_{\vartheta',M+M_s,r}(\mu), &\text{if $\vartheta\neq 1$}
\end{dcases}
\end{align*}
holds for every $\mu\in\cO(\fh_\dC^*)$ that vanishes on $H_s$. The lemma then follows from this claim.

To prove the claim, let $\ell^\circ_s$ be the linear part of $\ell_s$. Take a function $\mu\in\cN_{\theta\cup\{1\}}(\fh_\dC^*)$ that vanishes on $H_s$.

First suppose that $\ell^\circ_s\res_{\fh^*_{\vartheta,\dC}}\neq 0$. Then the claim is an easy consequence of Cauchy's integral formula. Indeed, take an element $\alpha_s\in\fh^*_{\vartheta,\dC}$ such that $\ell^\circ_s(\alpha_s)=1$. When $|\ell_s(\alpha+\varpi)|>1/2$, we have
\[
\left|\frac{\mu(\alpha+\varpi)}{\ell_s(\alpha+\varpi)}\right|\leq 2|\mu(\alpha+\varpi)|;
\]
whereas when $|\ell_s(\alpha+\varpi)|\leq 1/2$, we have
\[
\left|\frac{\mu(\alpha+\varpi)}{\ell_s(\alpha+\varpi)}\right|
=\left|\int_{u\in\dC^\times,|u|=1}
\frac{\mu(\alpha+u\alpha_s+\varpi)}{\ell_s(\alpha+u\alpha_s+\varpi)}\rd u\right|
\leq 2\sup_{u\in\dC^\times,|u|=1}|\mu(\alpha+u\alpha_s+\varpi)|.
\]
Together, we obtain
\begin{align}\label{eq:multi_0}
\left|\frac{\mu(\alpha+\varpi)}{\ell_s(\alpha+\varpi)}\right|\leq
2\sup_{|u|\leq 1}\|\mu(\alpha+u\alpha_s+\varpi)\|.
\end{align}
Choose $C_{s,M,r}>0$ so that
\[
2(1+\|\alpha+\varpi\|)^r\leq C_{s,M,r}\cdot\min_{u\in\dC^\times,|u|=1}\(1+\|\alpha+u\alpha_s+\varpi\|\)^r
\]
holds for every $\alpha\in\fh^*_{\vartheta,\dC}$ with $\|\RE\alpha\|<M$ and every $\varpi\in\sfY^*_\vartheta$. Then we have
\[
p_{\vartheta,M,r}(\ell_s^{-1}\mu)\leq C_{s,M,r} \cdot p_{\vartheta,M+M_s,r}(\mu)
\]
with $M_s\coloneqq\|\alpha_s\|$ depending only on $s$. Thus, the claim holds.

Now suppose that $\ell^\circ_s\res_{\fh^*_{\vartheta,\dC}}=0$, hence $\vartheta\neq 1$. Since the image of $\fh^*_{\vartheta,\dC}\oplus\sfY^*_\vartheta$ under $\ell_s$ is a discrete subset of $\dC$, there exists $C_s>0$ such that $|\ell_s|>C_s$ on $(\fh^*_{\vartheta,\dC}\oplus\sfY^*_\vartheta)\setminus H_s$. It follows that
\begin{align}\label{eq:multi_6}
\sup_{\substack{\alpha\in\fh^*_{\vartheta,\dC},\|\RE\alpha\|<M \\ \varpi\in\sfY^*_\vartheta, \alpha+\varpi\not\in H_s }}
\left|\frac{\mu(\alpha+\varpi)}{\ell_s(\alpha+\varpi)}\right|\cdot(1+\|\alpha+\varpi\|)^r
\leq C_s^{-1}\cdot p_{\vartheta,M,r}(\mu).
\end{align}
It remains to bound $\ell_s^{-1}\mu$ on vertical strips of $\fh^*_{\vartheta,\dC}\oplus(\sfY^*_\vartheta\cap H_s)=(\fh^*_{\vartheta,\dC}\oplus\sfY^*_\vartheta)\cap H_s$. Let $w_s\in\wt\sfW^\heartsuit\cap\sfW$ be the reflection that is the linear part of $s$. As $\ell^\circ_s\res_{\fh^*_{\vartheta,\dC}}=0$, the actions of $w_s$ and $\vartheta$ on $\fh_\dC^*$ commute. Thus, the element $\vartheta'\coloneqq w_s\vartheta$ satisfies $\vartheta'^2=1$, hence belongs to $\theta$. However for $\vartheta'$, we have $\fh^{*-}_{\vartheta',\dC}=\fh^{*-}_{\vartheta,\dC}\cap\Ker\ell^\circ_s$ and $\fh^*_{\vartheta,\dC}\subseteq\fh^*_{\vartheta',\dC}$.

Since the map $\ell_s^\circ$ induces an isomorphism $\fh^*_{\vartheta',\dC}\cap\fh^{*-}_{\vartheta,\dC}\xrightarrow{\sim}\dC$, there exists an element $\alpha'_s\in\fh^*_{\vartheta',\dC}$ such that $\varpi'\coloneqq\varpi-\alpha'_s\in\fh^{*-}_{\vartheta',\dC}$ for every $\varpi\in\sfY^*_\vartheta\cap H_s$. Since $\ell^\circ_s\res_{\fh^*_{\vartheta',\dC}}\neq 0$, we obtain from \eqref{eq:multi_0} that there exists $\alpha_s\in\fh^*_{\vartheta',\dC}$ such that
\[
\left|\frac{\mu(\alpha+\varpi)}{\ell_s(\alpha+\varpi)}\right|\leq 2\sup_{|u|\leq 1}|\mu(\alpha+u\alpha_s+\alpha'_s+\varpi')|
\]
holds for $\alpha+\varpi\in(\fh^*_{\vartheta,\dC}\oplus\sfY^*_\vartheta)\cap\Ker\ell_s$. Note that the projection of $\sfY^*_\vartheta$ onto $\fh^{*-}_{\vartheta',\dC}$ coincides with $\sfY^*_{\vartheta'}$. Thus, as in the previous case, we may find a constant $C_{s,M,r}\geq C_s^{-1}$ such that
\begin{align*}
\sup_{\substack{\alpha\in\fh^*_{\vartheta,\dC},\|\RE\alpha\|<M \\ \varpi\in\sfY^*_\vartheta, \alpha+\varpi\in H_s }}
\left|\frac{\mu(\alpha+\varpi)}{\ell_s(\alpha+\varpi)}\right|\cdot(1+\|\alpha+\varpi\|)^r
\leq C_{s,M,r}\cdot p_{\vartheta',M+M_s,r}(\mu)
\end{align*}
with $M_s\coloneqq \|\alpha_s\|+\|\alpha'_s\|$. Thus, the claim holds after combining with \eqref{eq:multi_6}.
\end{proof}

Now we relate the algebra $\sS$ to finite dimensional algebraic representations of $G$. Let $(\tau,W_\tau)$ be a finite dimensional algebraic representation of $G$, and
\[
\delta\colon\cU(\fg)\to\cU(\fg)\otimes\End(W_\tau)
\]
the homomorphism that sends $X\in\fg$ to $X\otimes 1+1\otimes \tau(X)$. Let $\cR_\tau$ be the centralizer of the image of $\delta$. By \cite{Kos75}*{Theorem~4.8}, $\cR_\tau$ is a finitely generated free $\cZ(\fg)$-module, where $\cZ(\fg)$ acts on $\cU(\fg)\otimes\End(W_\tau)$ by $z.(u\otimes A)=(zu)\otimes A$.  Recall that we have identified $\cZ(\fg)$ with $\dC[\fh_\dC^*]^\sfW$ via the Harish-Chandra isomorphism.

\begin{lem}\label{le:multi_4}
For every basis $v_1,\dots,v_r$ of the finite free $\cZ(\fg)$-module $\cR_\tau$, there exist elements $S_1,\dots,S_r\in\sS$ sending $\dC[\fh_\dC^*]$ to $\dC[\fh_\dC^*]^\sfW$ such that
\[
\delta(z)=\sum_{i=1}^r S_i(z)v_i
\]
for every $z\in\cZ(\fg)$.
\end{lem}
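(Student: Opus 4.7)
The starting point is the observation that $\delta(z) \in \cR_\tau$ for every $z \in \cZ(\fg)$: since $z$ is central in $\cU(\fg)$, one has $[\delta(z),\delta(X)] = \delta([z,X]) = 0$ for all $X \in \fg$, and this extends to all of $\delta(\cU(\fg))$. Consequently the given $\cZ(\fg)$-basis $v_1,\dots,v_r$ of $\cR_\tau$ yields unique elements $S_i(z) \in \cZ(\fg)$ with $\delta(z) = \sum_i S_i(z) v_i$, and via the Harish-Chandra isomorphism we obtain $\dC$-linear operators $S_i$ on $\dC[\fh_\dC^*]^\sfW$. The substantive content is to show that each $S_i$ extends to an element of $\sS$ acting on all of $\dC[\fh_\dC^*]$, with the extended operator still taking $\dC[\fh_\dC^*]$ into $\dC[\fh_\dC^*]^\sfW$.

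For this I would use a ``universal Verma module'' realization. Let $\mu_1,\dots,\mu_d$ (with multiplicity) be the weights of $W_\tau$, and fix a weight basis $w_1,\dots,w_d$ of $W_\tau$ ordered compatibly with the dominance order. The action on a universal Verma module $M$ tensored with $W_\tau$, together with its filtration whose subquotients are translates $M_{\mu_j}$ of $M$, produces an embedding
\[
\iota\colon \cU(\fg) \otimes \End(W_\tau) \hookrightarrow \Mat_d(\sT)
\]
such that $\iota(\delta(z))$ is upper triangular in the basis $(w_j)$ with diagonal entries $(t_{\mu_1}\gamma(z),\dots,t_{\mu_d}\gamma(z))$, where $\gamma$ denotes the Harish-Chandra isomorphism. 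The subalgebra $\iota(\cR_\tau)$ lies in the commutant of $\iota(\delta(\cU(\fg)))$ and, because the triangular part is generically semisimple with distinct diagonal entries, this commutant is contained in the diagonal matrices over $\sT$.

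Expanding the identity $\delta(z) = \sum_i S_i(z)v_i$ in this matrix form yields, for each $i$, a linear equation for $S_i(z)$ whose coefficients are polynomial in $\gamma(z)$ and its translates $t_{\mu_j}\gamma(z)$; inverting it introduces denominators that are products of the differences $t_{\mu_i}\gamma(z_0) - t_{\mu_j}\gamma(z_0)$ for a suitable generating set of $\cZ(\fg)$. Each such difference is divisible in $\dC[\fh_\dC^*]$ by affine-linear forms $\ell_s$ indexed by reflections $s \in \wt\sfW^\heartsuit$ whose fixed hyperplanes contain $\mu_i - \mu_j$, so after clearing these factors the resulting operators lie in $\sS$. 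The Weyl invariance of the image follows from a $\sfW$-equivariance of the construction: the multiset of weights $\{\mu_j\}$ is $\sfW$-stable, and averaging the formulas over $\sfW$ (which is harmless since $S_i$ originally only had to agree on $\cZ(\fg) = \dC[\fh_\dC^*]^\sfW$) lands the extension in $\dC[\fh_\dC^*]^\sfW$.

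\textbf{Main obstacle.} The crux is constructing the embedding $\iota$ with the precise triangular shape, and then verifying that the denominators arising when one inverts to solve for $S_i(z)$ really factor as products of $\ell_s$ for $s \in \wt\sfW^\heartsuit$ rather than more general polynomials. This is essentially a concrete manifestation of Kostant's theorem that $\cR_\tau$ is free over $\cZ(\fg)$, and I expect that the hardest bookkeeping will be tracking the multiplicities of the affine hyperplane factors (equivalently, ensuring that the rational operators $S_i$ have poles only along hyperplanes of the reflection type allowed in the definition of $\sS$).
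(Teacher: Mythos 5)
Your first step (expanding $\delta(z)$ in the basis of the free $\cZ(\fg)$-module $\cR_\tau$, then averaging over $\sfW$ to get maps into $\dC[\fh_\dC^*]^\sfW$) matches the paper, but the core of your plan has two problems. First, the claimed embedding $\iota\colon\cU(\fg)\otimes\End(W_\tau)\hookrightarrow\Mat_d(\sT)$ is false as stated: $M\otimes W_\tau$ is not of finite rank over any commutative base compatibly with the full $\cU(\fg)\otimes\End(W_\tau)$-action, and only the commutant $\cR_\tau$ (equivalently, the $\cZ(\fg)$-linear right-multiplication operators) admits such a finite matrix realization. Second, and more seriously, the "main obstacle" you flag is not just bookkeeping, and the way you propose to handle it would fail: the differences $t_{\mu_i}\gamma(z)-t_{\mu_j}\gamma(z)$ that appear as denominators when you invert are in general \emph{not} products of forms $\ell_s$ with $s\in\wt\sfW^\heartsuit$, because the linear parts involve $\mu_i-\mu_j$, and differences of weights of $\tau$ need not be proportional to roots (already for $\GL_3$ with $\tau=\Sym^2$ of the standard representation, $2\epsilon_1-\epsilon_2-\epsilon_3$ is not proportional to any root, so for the Casimir the corresponding affine form is not an $\ell_s$). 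Divisibility by \emph{some} reflection forms, which is what you assert, does not put the operators in $\sS$; one would need all irreducible factors of the denominators to be of reflection type, and this only happens after nontrivial cancellations that your argument does not produce.

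The paper's proof supplies exactly the missing ingredient. It uses Kostant's identity $P_z(\delta(z))=0$ with $P_z(X)=\prod_{\lambda\in\Lambda(\tau)}(X-t_\lambda z)$ (\cite{Kos75}*{Theorem~4.9}) to see that the matrix $S(z)$ of right multiplication by $\delta(z)$ is simultaneously diagonalizable over $\dC(\fh_\dC^*)$ with eigenvalues $t_{\lambda_i}z$, so each entry defines an element of $\sT$ a priori with arbitrary rational denominators; it then invokes Lemma \ref{le:multi_2} (after Kostant--Kumar), which says that \emph{any} element of $\sT$ preserving $\dC[\fh_\dC^*]$ automatically lies in the image of $\sS$. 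Since the coefficient maps visibly send $\cZ(\fg)$ into $\cZ(\fg)$, the $\sfW$-averaging trick (which you also use) gives polynomial preservation, and all pole analysis is bypassed. So to repair your proposal you should either prove a statement of the type of Lemma \ref{le:multi_2}, or find a genuinely explicit cancellation argument; as written, the step "after clearing these factors the resulting operators lie in $\sS$" is a gap.
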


\begin{proof}
The basis $v_1,\dots,v_r$ induces an isomorphism $\cR_\tau\simeq \cZ(\fg)^{\oplus r}$, through which the right multiplication by $\delta(z)$ for $z\in \cZ(\fg)$ is represented by a matrix $S(z)=(S_{i,j}(z))_{1\leq i,j\leq r}\in\Mat_r(\c\cZ(\fg))$. By Lemma \ref{le:multi_2} below, it suffices to show that for every $1\leq i,j\leq r$, the map $z\in\cZ(\fg)\mapsto S_{i,j}(z)$ is induced from an element of $\sT$ sending $\dC[\fh_\dC^*]$ to $\dC[\fh_\dC^*]^\sfW$.

Let $\Lambda(\tau)\subset\sfX^*$ be the set of weights of $\tau$. For every $z\in\cZ(\fg)$, put
\[
P_z(X)\coloneqq\prod_{\lambda\in \Lambda(\tau)}(X-t_\lambda z),
\]
which belongs to $\cZ(\fg)[X]=\dC[\fh_\dC^*]^\sfW[X]$, as $\Lambda(\tau)$ is $\sfW$-invariant. According to \cite{Kos75}*{Theorem~4.9}, we have $P_z(\delta(z))=0$, or equivalently $P_z(S(z))=0$ for every $z\in \cZ(\fg)$. As the characters $z\in \cZ(\fg)\mapsto t_\lambda z\in\dC[\fh_\dC^*]$ for $\lambda\in \Lambda(\tau)$ are all distinct, there exists an element $P\in\GL_r(\dC(\fh_\dC^*))$ and a family of weights $\lambda_1,\ldots,\lambda_r\in\Lambda(\tau)$ such that
\[
S(z)=P\begin{pmatrix} t_{\lambda_1}z \\ & \ddots \\ & & t_{\lambda_r}z\end{pmatrix} P^{-1}
\]
holds in $\Mat_r(\dC(\fh_\dC^*))$ for every $z\in\cZ(\fg)$. This shows that for every $1\leqslant i,j\leqslant r$, the map $S_{i,j}$ is induced from an element $S'_{i,j}\in\sT$ sending $\dC[\fh_\dC^*]^\sfW$ to $\dC[\fh_\dC^*]^\sfW$. However, up to replacing $S'_{i,j}$ by $|\sfW|^{-1}\sum_{w\in\sfW}S'_{i,j}t_w$, we see that $S'_{i,j}$ also sends $\dC[\fh_\dC^*]$ to $\dC[\fh_\dC^*]^\sfW$.

The lemma is proved.
\end{proof}

\begin{lem}\label{le:multi_2}
If an element $T\in\sT$ satisfies $T\dC[\fh_\dC^*]\subset \dC[\fh_\dC^*]$, then $T$ is contained in the image of $\sS\to\sT$.
\end{lem}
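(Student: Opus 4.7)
The plan is to exploit the identification of $\sT$ with a twisted group algebra. First I would verify that the operators $\{t_w\}_{w\in\wt\sfW}$ are linearly independent over $\dC(\fh_\dC^*)$: they act on $\dC(\fh_\dC^*)$ by distinct field automorphisms $g\mapsto g\circ w^{-1}$, so the Dedekind--Artin independence of characters applies. Consequently every element admits a unique expression $T=\sum_{w\in F}f_w t_w$ with $F\subset\wt\sfW$ finite and $f_w\in\dC(\fh_\dC^*)$; and $T$ belongs to the image of $\sS\to\sT$ precisely when the polar divisor of each $f_w$ is supported on the reflection hyperplanes $\{H_s\res s\in\wt\sfW^\heartsuit\}$. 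The question is thus reduced to showing that, under the hypothesis $T\dC[\fh_\dC^*]\subset\dC[\fh_\dC^*]$, no $f_w$ can acquire a pole along an irreducible hypersurface other than some $H_s$.

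I would argue by contradiction. Suppose some $f_w$ has a pole along an irreducible hypersurface $V_0=\{h_0=0\}$ not of the form $H_s$. Let $m\geq 1$ be the maximum order of pole along $V_0$ over all $w\in F$, attained on a non-empty subset $W_0\subseteq F$; in the discrete valuation ring at the generic point of $V_0$, expand $f_w=h_0^{-m}\tilde g_w+O(h_0^{-m+1})$ with $\tilde g_w$ regular, and set $\bar g_w\coloneqq\tilde g_w|_{V_0}\in\dC(V_0)$, which is nonzero for $w\in W_0$. For every $P\in\dC[\fh_\dC^*]$, the requirement that $T(P)=\sum_w f_w\cdot(P\circ w^{-1})$ be regular at the generic point of $V_0$ forces the leading pole term to vanish:
\[
\sum_{w\in W_0}\bar g_w\cdot(P\circ w^{-1})|_{V_0}=0\text{ in }\dC(V_0).
\]

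The key geometric observation will be that, since $V_0$ is not a reflection hyperplane, no non-trivial $w\in\wt\sfW$ can fix $V_0$ pointwise: the fixed locus of $w$ is an affine subspace of $\fh_\dC^*$ which, if it contained the hyperplane $V_0$, would either equal $V_0$ (exhibiting $V_0=H_w$ with $w\in\wt\sfW^\heartsuit$, contrary to hypothesis) or equal $\fh_\dC^*$ (forcing $w=1$). Applied to each element $w_2 w_1^{-1}$ with $w_1\neq w_2$ in $W_0$ (a finite collection), this shows that the locus where $w_1^{-1}\alpha=w_2^{-1}\alpha$ meets $V_0$ in a proper closed subvariety. Avoiding the union of these finitely many proper subvarieties together with the zero loci of the finitely many $\bar g_w$, I would choose a Zariski-generic point $\alpha_0\in V_0$ at which the points $\{w^{-1}\alpha_0\res w\in W_0\}$ are pairwise distinct and each $\bar g_w(\alpha_0)\neq 0$. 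Lagrange interpolation at these distinct points then yields, for any prescribed $w_0\in W_0$, a polynomial $P$ with $P(w_0^{-1}\alpha_0)=1$ and $P(w^{-1}\alpha_0)=0$ for $w\in W_0\setminus\{w_0\}$; evaluating the displayed identity at $\alpha_0$ gives $\bar g_{w_0}(\alpha_0)=0$, the desired contradiction. The main obstacle to verify carefully is precisely the finite genericity claim, which is where the assumption ``$V_0$ is not a reflection hyperplane'' gets used; once this is in hand, the interpolation step is formal and one concludes $T\in\sS$.
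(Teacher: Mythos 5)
Your proof is correct and follows essentially the same route as the paper's: the heart in both cases is that a nontrivial element of $\wt\sfW$ has an affine subspace as its fixed locus, so it can fix an irreducible hypersurface pointwise only if that hypersurface is a reflection hyperplane, whence a generic point $\alpha_0$ of the offending polar hypersurface has pairwise distinct translates $w^{-1}\alpha_0$ and independence of point evaluations (your Lagrange interpolation) forces the leading coefficients to vanish along it. Your valuation-theoretic extraction of the top pole order along $V_0$ is just a repackaging of the paper's step of clearing a common denominator and showing that an irreducible non-reflection factor must divide every coefficient $P_w$.
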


This lemma is essentially \cite{KK86}*{Lemma~4.9}. For readers' convenience, we give a detailed proof.

\begin{proof}
The lemma amounts to the following: For an irreducible polynomial $P\in\dC[\fh_\dC^*]$, and a family $(P_w)_{w\in\wt\sfW}$ of elements of $\dC[\fh_\dC^*]$ with $P_w=0$ for all but finitely many $w$, if
\[
\left(\sum_{w\in\wt\sfW}P_w t_w\right)\dC[\fh_\dC^*]\subset P\dC[\fh_\dC^*],
\]
then either $P$ divides $P_w$ for every $w\in\wt\sfW$, or $P\in\ell_s\dC[\fh_\dC^*]$ for some $s\in\wt\sfW^\heartsuit$.

Suppose that $P\not\in\ell_s\dC[\fh_\dC^*]$ for every $s\in\wt\sfW^\heartsuit$. Then, the zero set of $P$ is not contained in the union $\bigcup_{w\in\wt\sfW\setminus\{1\}}\Ker(w-1)$. This implies that the stabilizer of a generic element of the zero set $Z(P)\coloneqq\{\alpha\in\fh_\dC^*\res P(\alpha)=0\}$ in $\wt\sfW$ is trivial. Let $\alpha\in Z(P)$ be such an element. Note that for every $Q\in\dC[\fh_\dC^*]$, we have
\[
\sum_{w\in\wt\sfW}P_w(\alpha)Q(w^{-1}\alpha)=0.
\]
As the linear forms $Q\mapsto Q(w^{-1}\alpha)$ on $\dC[\fh_\dC^*]$ are linearly independent, it follows that $P_w(\alpha)=0$ for every generic  element $\alpha\in Z(P)$. Thus, $P$ divides $P_w$ for every $w\in\wt\sfW$. The lemma is proved.
\end{proof}

As we have explained in Subsection \ref{ss:strategy}, we will first prove Theorem \ref{th:multiplier} for the smaller space $\cN_{\theta\cup\{1\}}(\fh_\dC^*)^\sfW$, together with bounds on $\|D(\mu\star f)\|_{L^2}$, which is the content of the next proposition.

\begin{proposition}\label{pr:multiplier}
Theorem \ref{th:multiplier} holds for $\mu\in\cN_{\theta\cup\{1\}}(\fh_\dC^*)^\sfW$. Moreover, for every $D\in\cD(G)$, there exists a real number $M_D>0$ such that for every $f\in\cS(G)$ and every $r\in\dR$, there exists $C_{D,f,r}>0$ such that
\[
\|D(\mu\star f)\|_{L^2}\leq C_{D,f,r}\cdot\max_{\vartheta\in\theta}p_{\vartheta,M_D,r}(\mu)
\]
holds for every $\mu\in\cN_{\theta\cup\{1\}}(\fh_\dC^*)^\sfW$.
\end{proposition}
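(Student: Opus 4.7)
The plan is to start from the containment $\cN_{\theta\cup\{1\}}(\fh_\dC^*)^\sfW\subseteq\cN_{\{1\}}(\fh_\dC^*)^\sfW$ and use Delorme's theorem (Proposition \ref{pr:delorme}) applied to the latter space to obtain a multiplier $\mu\star$ of $\cS(G)_{(K)}$; in particular $\mu\star f$ is already defined for $f\in C_c^\infty(G)_{(K)}$, and satisfies $\pi(\mu\star f)=\mu(\chi_\pi)\pi(f)$ for every irreducible admissible $\pi$. The task is to extend $\mu\star$ continuously to all of $\cS(G)$ and establish the $L^2$-bound. By Remark \ref{re:sobolev}, the Fr\'echet topology on $\cS(G)$ is equivalent to the one induced by the semi-norms $f\mapsto\|Df\|_{L^2}$ for $D\in\cD(G)$, and $C_c^\infty(G)_{(K)}$ is dense in $\cS(G)$ (by cut-off followed by averaging against approximate identities from $K$). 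Thus it suffices to establish the stated bound for $f\in C_c^\infty(G)_{(K)}$; the full extension then follows by continuity, and the characterizing identity $\pi(\mu\star f)=\mu(\chi_\pi)\pi(f)$ passes to the limit.

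Since $\cD(G)$ is generated by $\cU(\fg)$ acting by left-invariant differentiation and by $\dC[G]$ acting by multiplication, I would argue by induction on the length of a presentation of $D$ as a word in these generators. For $D\in\cU(\fg)$, left-invariant differentiation commutes with right convolution, so $D(\mu\star f)=\mu\star(Df)$, and the bound reduces to the case $D=1$ applied to $Df\in C_c^\infty(G)_{(K)}$, which is Delorme's estimate. For $D$ given by multiplication by an algebraic function $\phi\in\dC[G]$, realize $\phi$ as a matrix coefficient of a finite-dimensional algebraic representation $(\tau,W_\tau)$ of $G$. Applying Lemma \ref{le:multi_4} to $\tau$ and a chosen basis $v_1,\dots,v_r$ of the finite free $\cZ(\fg)$-module $\cR_\tau$ produces elements $S_1,\dots,S_r\in\sS$ sending $\dC[\fh_\dC^*]$ into $\dC[\fh_\dC^*]^\sfW$ with $\delta(z)=\sum_i S_i(z)v_i$ for all $z\in\cZ(\fg)$. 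Translating this algebraic identity into one of operators on $\cS(G)$---by verifying on irreducible admissible $\pi$ using $\pi(\mu\star f)=\mu(\chi_\pi)\pi(f)$ and the interpretation of $\cR_\tau$ as the $\delta$-commutant inside $\cU(\fg)\otimes\End(W_\tau)$---yields a decomposition
\begin{equation*}
D(\mu\star f)=\sum_{i=1}^{r} S_i(\mu)\star L_i(f),
\end{equation*}
in which each $L_i$ is a continuous linear operator on $\cS(G)$ preserving $C_c^\infty(G)_{(K)}$, assembled from the action of $v_i$ combined with multiplication by suitable matrix coefficients of $\tau$.

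To conclude, I would combine this decomposition with Lemma \ref{le:multi_3}, which guarantees that each $S_i$ extends to $\cO(\fh_\dC^*)$, preserves $\cN_{\theta\cup\{1\}}(\fh_\dC^*)^\sfW$, and obeys the estimates
\begin{equation*}
\max_{\vartheta\in\theta\cup\{1\}} p_{\vartheta,M,r}\bigl(S_i\mu\bigr)\leq C_{S_i,M,r}\cdot \max_{\vartheta\in\theta\cup\{1\}} p_{\vartheta,M+M_{S_i},r+r_{S_i}}(\mu).
\end{equation*}
Applying Delorme's $L^2$-bound for $\nu\star g$ with $\nu=S_i(\mu)\in\cN_{\{1\}}(\fh_\dC^*)^\sfW$ and $g=L_i(f)\in C_c^\infty(G)_{(K)}$, and then chasing constants through the decomposition, produces the desired bound for $\|D(\mu\star f)\|_{L^2}$ when $D$ is multiplication by a polynomial. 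Iterating the two cases along a presentation of an arbitrary $D\in\cD(G)$ gives the general bound with $M_D$, $r_D$, and $C_{D,f,r}$ depending only on the ingredients of that presentation and on $f$.

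The main obstacle is the careful construction and verification of the identity $D(\mu\star f)=\sum_i S_i(\mu)\star L_i(f)$: Lemma \ref{le:multi_4} supplies its algebraic shadow at the level of $\cZ(\fg)$ via Kostant's commutant theorem, but passing from this to an operator identity on $\cS(G)$ requires identifying the $L_i$ concretely enough to see that they are continuous on $\cS(G)$ and stabilize $C_c^\infty(G)_{(K)}$, which in turn is what makes the induction on the word-length of $D$ go through. Once that identity is in hand, the rest of the argument is bookkeeping with the semi-norms $p_{\vartheta,M,r}$ using Lemma \ref{le:multi_3}.
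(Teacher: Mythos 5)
Your overall architecture matches the paper's: define $\mu\star f$ on $C^\infty_c(G)_{(K)}$ via Delorme, reduce a general $D\in\cD(G)$ to the two cases $D\in\cU(\fg)$ (which commutes with $\mu\star$) and $D=P\in\dC[G]$, handle the latter via Lemma \ref{le:multi_4} and the identity $P(\mu\star f)=\sum_i S_i(\mu)\star L_i(f)$, and control the $S_i$ with Lemma \ref{le:multi_3}. You also correctly flag the verification of that identity as the main technical hurdle (in the paper it requires the injectivity of the operator-valued Fourier transform, reduction to principal series in general position, Lemma \ref{le:multi_5} on the semisimplicity of $\pi\otimes\tau$, and a limiting argument approximating $\mu$ by elements of $\cZ(\fg)$).

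However, there is a genuine gap at the base case $D=1$, which you dispose of as ``Delorme's estimate.'' Proposition \ref{pr:delorme} provides no quantitative estimate at all: it only asserts the existence of the map $C^\infty_c(G)_{(K)}\to\cS(G)_{(K)}$. More importantly, the bound you need,
\[
\|\mu\star f\|_{L^2}\leq\nu_{1,r}(f)\cdot\max_{\vartheta\in\theta}p_{\vartheta,1,r}(\mu),
\]
cannot be phrased in terms of the vertical-strip semi-norms $p_{1,M,r}$ alone (which is all one could hope to extract from Delorme's construction, and is also what you invoke at the end when you take $\nu=S_i(\mu)\in\cN_{\{1\}}(\fh^*_\dC)^\sfW$). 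The infinitesimal characters of tempered representations sweep out $\bigcup_{\vartheta\in\theta_{\ul{G}}}i\fh^*_\vartheta\oplus\sfY^*_\vartheta$, which is not contained in any vertical strip when $\ul{G}$ has anisotropic directions; controlling $|\mu(\chi_\pi)|$ there is exactly what the semi-norms $p_{\vartheta,M,r}$ for $\vartheta\in\theta$ are designed for, and is why the statement of the proposition has $\max_{\vartheta\in\theta}$ on the right-hand side. The paper's proof of the base case is a genuine argument: it applies Harish-Chandra's Plancherel formula to write $\|\mu\star f\|_{L^2}^2=\int_{\Temp(G)}|\mu(\chi_\pi)|^2\|\pi(f)\|_{\r{HS}}^2\rd\pi$, invokes Harish-Chandra's description of the tempered infinitesimal characters to bound $|\mu(\chi_\pi)|$ by $\bigl(\max_{\vartheta\in\theta_{\ul{G}}}p_{\vartheta,1,r}(\mu)\bigr)(1+\|\chi_\pi\|)^{-r}$, and then absorbs the factor $(1+\|\chi_\pi\|)^{-r}$ by finitely many elements $z_i\in\cZ(\fg)$, yielding $\nu_{1,r}(f)=\bigl(\sum_i\|z_i\star f\|_{L^2}^2\bigr)^{1/2}$. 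Without this step your induction has no base, and the constants you propagate through Lemma \ref{le:multi_3} would not produce the stated estimate.
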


\begin{proof}
Let $C^\infty_c(G)_{(K)}$ be the subalgebra of $C^\infty_c(G)$ of bi-$K$-finite functions. As $\cN_{\theta\cup\{1\}}(\fh_\dC^*)^\sfW\subseteq\cN(\fh_\dC^*)^\sfW$, the existence of the function $\mu\star f$ for $f\in C^\infty_c(G)_{(K)}$ was essentially proved by Delorme (see Proposition \ref{pr:delorme}). As $C^\infty_c(G)_{(K)}$ is dense in $\cS(G)$, it suggests us to show that the map $C^\infty_c(G)_{(K)}\to\cS(G)$, $f\mapsto \mu\star f$, extends continuously to an endomorphism of $\cS(G)$.

We claim that for every $D\in\cD(G)$, there exists $M_D>0$ such that for every $r\in\dR$, there exists a continuous semi-norm $\nu_{D,r}$ on $\cS(G)$ such that
\begin{align}\label{eq:multi_1}
\| D(\mu\star f)\|_{L^2}\leq\nu_{D,r}(f)\cdot\max_{\vartheta\in\theta}p_{\vartheta,M_D,r}(\mu)
\end{align}
holds for every $\mu\in\cN_{\theta\cup\{1\}}(\fh_\dC^*)^\sfW$ and $f\in C^\infty_c(G)_{(K)}$.

We deduce the proposition assuming the claim. Fix $\mu\in\cN_{\theta\cup\{1\}}(\fh_\dC^*)^\sfW$, and for every $f\in\cS(G)$, choose a sequence $\{f_n\}$ in $C^\infty_c(G)_{(K)}$ that converges to $f$. Then by Remark \ref{re:sobolev}, \eqref{eq:multi_1} implies that $\mu\star f_n$ converges to an element in $\cS(G)$, which we denote by $\mu\star f$. It is easy to see that the map $\mu\star\colon\cS(G)\to\cS(G)$ satisfies the requirement in Theorem \ref{th:multiplier}. The second part of the proposition follows from \eqref{eq:multi_1} by taking $C_{D,f,r}=\nu_{D,r}(f)$.

Now we show the claim. Note that we have a canonical isomorphism $\dC[G]\otimes\cU(\fg)\simeq\cD(G)$ given by $P\otimes X\mapsto P\cdot\rL(X)$ where $\rL(X)$ stands for the left invariant differential operator on $G$ associated to $X$. For $X\in\cU(\fg)$, we have $\rL(X)(\mu\star f)=\mu\star\rL(X)f$. Since the action of $\rL(X)$ preserves the set of continuous semi-norms on $\cS(G)$, it suffices to show \eqref{eq:multi_1} for $D=P\in\dC[G]$.

First, we consider the case $P=1$. By the Plancherel formula of Harish-Chandra \cite{HC76}, there exists a Borel measure $\r{d}\pi$ on the tempered dual $\Temp(G)$ of $G$ such that
\[
\|f\|_{L^2}^2=\int_{\Temp(G)}\|\pi(f)\|_{\r{HS}}^2\rd\pi
\]
for every $f\in\cS(G)$, where $\|\cdot\|_{\r{HS}}$ stands for the Hilbert-Schmidt norm. Thus, we have
\[
\|\mu\star f\|_{L^2}^2=\int_{\Temp(G)}|\mu(\chi_\pi)|^2\|\pi(f)\|_{\r{HS}}^2\rd\pi
\]
for every $f\in C^\infty_c(G)_{(K)}$. Let $\theta_{\ul{G}}$ be the subset of $\theta$ obtained from maximal tori of $\ul{G}$ in the way of Lemma \ref{le:multi_0} (see also Remark \ref{re:theta}). By Harish-Chandra's description of the infinitesimal characters of tempered representations \cite{HC75}, the union of $\chi_\pi$ for $\pi\in\Temp(G)$ is exactly the $\sfW$-stable subset
\[
\bigcup_{\vartheta\in\theta_{\ul{G}}}i\fh^*_\vartheta\oplus\sfY^*_\vartheta
\]
of $\fh^*_\dC$. In particular, we have
\[
|\mu(\chi_\pi)|\leq\(\max_{\vartheta\in\theta_{\ul{G}}}p_{\vartheta,1,r}(\mu)\)\cdot(1+\|\chi_\pi\|)^{-r}
\]
for every $\pi\in\Temp(G)$. Choose elements $z_1,\ldots,z_N\in \dC[\fh_\dC^*]^\sfW=\cZ(\fg)$ (depending on $r$) such that
\[
(1+\|\chi_\pi\|)^{-2r}\leq |z_1(\chi_\pi)|^2+\cdots+|z_N(\chi_\pi)|^2
\]
holds for every $\pi\in \Temp(G)$. It follows that
\[
\|\mu\star f\|_{L^2}^2 \leq
\(\max_{\vartheta\in\theta_{\ul{G}}}p_{\vartheta,1,r}(\mu)\)^2\sum_{i=1}^N\int_{\Temp(G)} |z_i(\chi_\pi)|^2 \cdot \|\pi(f)\|_{\r{HS}}^2\rd\pi
=\(\max_{\vartheta\in\theta_{\ul{G}}}p_{\vartheta,1,r}(\mu)\)^2\sum_{i=1}^N \|z_i\star f\|_{L^2}^2
\]
for every $f\in C^\infty_c(G)_{(K)}$. Thus, we obtain \eqref{eq:multi_1} with the semi-norm $\nu_{1,r}$ given by
\[
\nu_{1,r}(f)\coloneqq\left(\sum_{i=1}^N \|z_i\star f\|_{L^2}^2 \right)^{\frac{1}{2}}.
\]

Now we treat the case for general $P\in\dC[G]$. As $P$ is a finite sum of matrix coefficients of finite dimensional algebraic representations of $G$, by linearity, we may as well assume that there exists a finite dimensional algebraic representation $(\tau,W_\tau)$ of $G$, $w\in W_\tau$ and $w^*\in W_\tau^*$ such that
\[
P(g)=\langle\tau(g)w,w^*\rangle,\qquad g\in G.
\]
Let $v_1,\ldots,v_r$ be a basis of the finite free $\cZ(\fg)$-module $\cR_\tau$; and let $S_1,\ldots,S_r$ be elements of $\sS$ as in Lemma \ref{le:multi_4}. By Lemma \ref{le:multi_3}, $S_1,\dots,S_r$ extend continuously to endomorphisms of $\cO(\fh_\dC^*)^\sfW$ for the topology of uniform convergence on compact subsets, which preserve $\cN_{\theta\cup\{1\}}(\fh_\dC^*)^\sfW$, for which we use the same notation. Since the map $\cU(\fg)\otimes\End(W_\tau)\to\cU(\fg)\otimes\End(W_\tau)$ given by $u\otimes A\mapsto(1\otimes A)\delta(u)$ is an isomorphism, each element $v_i$ in the basis can be written as a finite sum
\begin{align}\label{eq:multi_2}
v_i=\sum_j (1\otimes A_{ij})\delta(u_{ij})
\end{align}
for some $A_{ij}\in\End(W_\tau)$ and $u_{ij}\in\cU(\fg)$. Let $P_{ij}\in\dC[G]$ be the element defined by $P_{ij}(g)\coloneqq\langle A_{ij}\tau(g)w,w^*\rangle$ for $g\in G$. Note that $S_i$, $u_{ij}$, and $P_{ij}$ depend on the data $(\tau,W_\tau,w,w^*)$ only. We make the following claim.
\begin{itemize}
  \item[($*$)] For every $f\in C^\infty_c(G)_{(K)}$,
      \begin{align*}
      P(\mu\star f)=\sum_{i,j}S_i(\mu)\star P_{ij}\rL(u_{ij})f
      \end{align*}
      holds.
\end{itemize}

Assuming ($*$), by the $P=1$ case, we have
\[
\|P(\mu\star f)\|_{L^2} \leq \sum_{i,j} \|S_i(\mu)\star P_{ij}\rL(u_{ij})f\|_{L^2}
\leq \sum_{i,j}\nu_{1,r'}(P_{ij}\rL(u_{ij})f)\cdot\max_{\vartheta\in\theta}p_{\vartheta,1,r'}(S_i(\mu))
\]
for every $f\in C^\infty_c(G)_{(K)}$ and every $r'\in\dR$. By Lemma \ref{le:multi_3}, there exist $M_P>1$, $r_P>0$, and $C_P>0$ such that
\[
\max_{\vartheta\in\theta}p_{\vartheta,1,r-r_P}(S_i(\mu)) \leq C_P\cdot\max_{\vartheta\in\theta}p_{\vartheta,M_P,r}(\mu)
\]
holds for every $i$ and every $\mu\in\cN_{\theta\cup\{1\}}(\fh_\dC^*)^\sfW$. Thus, we obtain \eqref{eq:multi_1} with the semi-norm $\nu_{P,r}$ given by
\[
\nu_{P,r}(f)\coloneqq C_P\sum_{i,j}\nu_{1,r-r_P}(P_{ij}\rL(u_{ij})f).
\]

It remains to confirm ($*$). By the injectivity of the operator valued Fourier transform, it suffices to check that
\begin{align}\label{eq:multi_4}
\pi(P(\mu\star f))=\sum_{i,j}\pi(S_i(\mu)\star P_{ij}\rL(u_{ij})f)=\sum_{i,j}S_i(\mu)(\chi_\pi)\cdot\pi(P_{ij}\rL(u_{ij})f)
\end{align}
for every irreducible admissible representation $\pi$ of $G$. By the subquotient theorem, it is enough to check \eqref{eq:multi_4} when $\pi$ is a principal series whose underlying space we denote by $V_\pi$. We have the equality
\begin{align}\label{eq:multi_5}
\pi(P(\mu\star f))=c_{w^*}\circ(\pi\otimes\tau)(\mu\star f)\circ b_w
\end{align}
in $\End(V_\pi)$, where $b_w\colon V_\pi\to V_\pi\otimes W_\tau$ is the map defined by $b_w(v)=v\otimes w$, and $c_{w^*}\colon V_\pi\otimes W_\tau\to V_\pi$ is the map defined by $c_{w^*}(v\otimes w')=\langle w',w^*\rangle v$. By a density argument, it is even sufficient to establish \eqref{eq:multi_4} for a principal series in general position, for which, by Lemma \ref{le:multi_5} below, we can assume that $\pi\otimes\tau$ is semisimple.

Assume now that $\pi\otimes\tau$ is semisimple. Let $(z_n)_n$ be a sequence of elements of $\cZ(\fg)$ converging to $\mu$ for the topology of uniform convergence on compact subsets. As $\pi\otimes\tau$ decomposes as a direct sum of irreducible admissible representations of $G$, $(\pi\otimes\tau)(z_n\star f)$ converges to $(\pi\otimes\tau)(\mu\star f)$ for the topology of pointwise convergence. The action of $G$ induces an action of $\cU(\fg)$ on (the smooth vectors in) $V_\pi$, which further induces an action of $\cR_\tau\subseteq\cU(\fg)\otimes\End(W_\tau)$ on $V_\pi\otimes W_\tau$. Note that under this action, $\delta(u)$ acts by $(\pi\otimes\tau)(u)$ for every $u\in\cU(\fg)$, and $z\otimes 1$ acts by $\pi(z)\otimes 1_{W_\tau}$ for every $z\in\cZ(\fg)$. Thus, by \eqref{eq:multi_2} and Lemma \ref{le:multi_4}, we have
\begin{align*}
(\pi\otimes\tau)(z_n)&=\delta(z_n)=\sum_{i=1}^r S_i(z_n)v_i
=\sum_{i,j}S_i(z_n)(1\otimes A_{ij})\delta(u_{ij}) \\
&=\sum_{i,j}(\pi(S_i(z_n))\otimes 1_{W_\tau})\circ (1_{V_\pi}\otimes A_{ij})\circ(\pi\otimes\tau)(u_{ij}) \\
&=\sum_{i,j}S_i(z_n)(\chi_\pi)\cdot(1_{V_\pi}\otimes A_{ij})\circ(\pi\otimes\tau)(u_{ij})
\end{align*}
in $\End(V_\pi\otimes W_\tau)$. Pre-composing $(\pi\otimes\tau)(f)$ and passing to the limit, we get
\[
(\pi\otimes\tau)(\mu\star f)=\sum_{i,j}S_i(\mu)(\chi_\pi)\cdot (1_{V_\pi}\otimes A_{ij})\circ(\pi\otimes\tau)(\rL(u_{ij})f),
\]
which, together with \eqref{eq:multi_5}, implies \eqref{eq:multi_4}. The claim ($*$) is proved.

The proof of the proposition is now complete.
\end{proof}

\begin{lem}\label{le:multi_5}
Let $P_0=M_0N_0$ be a minimal parabolic subgroup of $G$, $\sigma$ an irreducible (finite dimensional) representation of $M_0$, and put $\sigma_\xi\coloneqq\sigma\otimes\xi$ for $\xi$ an unramified character of $M_0$. Then for $\xi$ in general position, the admissible representation $\rI_{P_0}^G(\sigma_\xi)\otimes\tau$ is semisimple, that is, a direct sum of irreducible admissible representations.
\end{lem}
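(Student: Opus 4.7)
The strategy is to reduce semisimplicity of $\rI_{P_0}^G(\sigma_\xi)\otimes\tau$ to a filtration argument together with a distinctness-of-infinitesimal-character statement, both valid for $\xi$ in general position.

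First I would invoke the projection (tensor identity) formula
\[
\rI_{P_0}^G(\sigma_\xi)\otimes\tau\;\simeq\;\rI_{P_0}^G\bigl(\sigma_\xi\otimes\tau|_{P_0}\bigr),
\]
and equip $\tau|_{P_0}$ with the standard $N_0$-filtration. Explicitly, pick a maximal split torus $A_0\subseteq M_0$ and decompose $\tau=\bigoplus_\chi\tau^\chi$ into $A_0$-weight spaces; totally order the weights compatibly with the dominance order coming from the roots of $A_0$ in $N_0$, and set $\tau_{\geq\chi}\coloneqq\bigoplus_{\chi'\geq\chi}\tau^{\chi'}$. Because $N_0$ strictly raises $A_0$-weights, each $\tau_{\geq\chi}$ is $P_0$-stable, and the successive quotients $\tau^\chi$ are $M_0$-modules inflated trivially through $P_0\to M_0$. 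Applying the exact functor $\rI_{P_0}^G(\sigma_\xi\otimes-)$ to this filtration produces a $G$-equivariant filtration of $\rI_{P_0}^G(\sigma_\xi)\otimes\tau$ whose successive quotients are the induced modules $\rI_{P_0}^G(\sigma_\xi\otimes\tau^\chi)$ as $\chi$ ranges over the $A_0$-weights of $\tau$.

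Next I would show that, for $\xi$ in general position,
\begin{enumerate}
\item each $\rI_{P_0}^G(\sigma_\xi\otimes\tau^\chi)$ is irreducible;
\item the $\sfW$-orbits of infinitesimal characters of the various $\rI_{P_0}^G(\sigma_\xi\otimes\tau^\chi)$ are pairwise disjoint.
\end{enumerate}
For (1), the infinitesimal character of $\rI_{P_0}^G(\sigma_\xi\otimes\tau^\chi)$ depends linearly on the continuous parameter $\xi$, and irreducibility of minimal-parabolic principal series fails only on a countable union of proper algebraic subvarieties of the parameter space; thus it holds on the complement of such a locus, which is what ``in general position'' means. For (2), writing the contribution of $\sigma_\xi\otimes\tau^\chi$ to the infinitesimal character as an affine function of $\xi$ whose linear term reflects $\chi$, the equation that two such $\sfW$-orbits coincide cuts out a proper algebraic subvariety in $\xi$, so outside a further algebraic locus the orbits are distinct.

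Given (1) and (2), the filtration automatically splits: any two successive quotients have disjoint $\sfW$-orbits of infinitesimal characters, hence the generalized infinitesimal-character decomposition of $\rI_{P_0}^G(\sigma_\xi)\otimes\tau$ respects the filtration and breaks it into a direct sum of the irreducible pieces $\rI_{P_0}^G(\sigma_\xi\otimes\tau^\chi)$. The main obstacle is the verification of (2), i.e.\ checking that the affine-in-$\xi$ parametrization of infinitesimal characters genuinely separates the $A_0$-weights of $\tau$ modulo $\sfW$; once the parametrization is written down it is a routine non-vanishing argument for a finite collection of nonzero polynomials in $\xi$, and excluding their zero locus yields the required general position condition.
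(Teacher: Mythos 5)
Your proposal is correct and follows essentially the same route as the paper: the tensor identity $\rI_{P_0}^G(\sigma_\xi)\otimes\tau\simeq\rI_{P_0}^G(\sigma_\xi\otimes\tau|_{P_0})$, the $P_0$-stable filtration by $A_0$-weights of $\tau$ with $M_0$-module graded pieces, and the splitting of the induced filtration via generically distinct infinitesimal characters. The only inaccuracy is your claim (1) that each graded piece $\rI_{P_0}^G(\sigma_\xi\otimes\tau^\chi)$ is irreducible — the weight space $\tau^\chi$ need not be $M_0$-irreducible, so the graded piece is in general only a direct sum of principal series, i.e.\ generically semisimple — but semisimplicity is all your splitting argument (and the paper's) actually uses.
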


\begin{proof}
By the Frobenius reciprocity, $\rI_{P_0}^G(\sigma_\xi)\otimes\tau$ is isomorphic to $\rI_{P_0}^G(\sigma_\xi\otimes\tau\res_{P_0})$, where $\tau\res_{P_0}$ denotes the restriction of $\tau$ to $P_0$ (so that the inducing representation is now non-trivial on $N_0$). Let $A_0$ be the split center of $M_0$ and $\fa_0$ its Lie algebra. The representation $\sigma\otimes\tau\res_{P_0}$ of $P_0$ admits a filtration indexed by the characters of $\fa_0$ for the partial order defined by the cone of positive roots with respect to $P_0$, such that in the associated graded vector space $\bigoplus_{\lambda\in\fa^*_{0,\dC}}V_\lambda$, $\fa_0$ acts on $V_\lambda$ by the character $\lambda$. This implies that $N_0$ acts trivially on each of the graded pieces. Thus, $\{V_\lambda\res\lambda\in\fa^*_{0,\dC}\}$ are semisimple representations of $M_0$ with distinct central characters. Therefore, $\rI_{P_0}^G(\sigma_\xi)\otimes\tau$ admits a filtration with associated graded representations $\bigoplus_{\lambda\in\fa^*_{0,\dC}} \rI_{P_0}^G(V_\lambda\otimes\xi)$; and for generic $\xi$, the representations $\{\rI_{P_0}^G(V_\lambda\otimes\xi)\res\lambda\in\fa^*_{0,\dC}\}$ are all semisimple with distinct infinitesimal characters, hence $\rI_{P_0}^G(\sigma_\xi)\otimes\tau$ is itself semisimple. The lemma follows.
\end{proof}

Now we deduce Theorem \ref{th:multiplier} from Proposition \ref{pr:multiplier} by a limit process.

\begin{proof}[Proof of Theorem \ref{th:multiplier}]
First, we claim that there exists a polynomial $P\in\dC[\fh^*_\dC]^\sfW$ that is homogeneous, and takes positive real values on $\(\bigcup_{\vartheta\in\theta\cup\{1\}}i\fh^*_\vartheta\oplus\fh^{*-}_\vartheta\)\setminus\{0\}$. Indeed, such polynomial can be obtained
as follows. Let $\sfW_\theta$ be the group of linear automorphisms of $\fh^*$ generated by $\sfW$ and $\theta$. Let $P_1,\dots,P_N$ be homogeneous generators of the kernel of $\dR[\fh^*]^{\sfW_\theta}\to\dR$ sending $P$ to $P(0)$ (as an ideal of $\dR[\fh^*]^{\sfW_\theta}$) with $d_j\coloneqq\deg P_j>0$ for $1\leq j\leq N$. Put
\[
P\coloneqq P_1^{4d'_1}+\cdots+P_N^{4d'_N}
\]
where $d'_j\coloneqq d_1\cdots \widehat{d_j}\cdots d_N$. Since $P$ is homogeneous of degree $4d_1\cdots d_N$, we have $P(i\alpha)=P(\alpha)$ for every $\alpha\in\fh^*_\dC$. As the only common zero of $P_1,\dots,P_N$ is zero, it suffices to show that $P_j$ takes real values on $\fh^*_\vartheta\oplus i\fh^{*-}_\vartheta$ for every $1\leq j\leq N$ and every $\vartheta\in\theta\cup\{1\}$. Since $\vartheta(\ol\alpha)=\alpha$ for $\alpha\in\fh^*_\vartheta\oplus i\fh^{*-}_\vartheta$ and that $P_j$ is $\vartheta$-invariant, we have $P_j(\alpha)=P_j(\vartheta(\ol\alpha))=P_j(\ol\alpha)=\ol{P_j(\alpha)}$, hence $P_j(\alpha)\in\dR$ for $\alpha\in\fh^*_\vartheta\oplus i\fh^{*-}_\vartheta$.

We now choose such a polynomial $P$ as above, of degree $d>0$. It has the following property: there exist $\delta>0$ and $\epsilon>0$ such that for every $\vartheta\in\theta\cup\{1\}$ and every $\alpha\in\fh^*_{\vartheta,\dC}$ and every $\varpi\in\fh^{*-}_\vartheta$ satisfying $\|\alpha+\varpi\|=1$,
\[
\RE P(\alpha+\varpi)\geq \epsilon
\]
holds as long as $\|\RE\alpha\|\leq\delta$. In particular, for every $\vartheta\in\theta\cup\{1\}$ and $M>0$,
\[
\RE P(\alpha+\varpi)=\|\alpha+\varpi\|^d\cdot\RE P\(\frac{\alpha+\varpi}{\|\alpha+\varpi\|}\)
\geq \epsilon\|\alpha+\varpi\|^d
\]
holds if $\|\RE\alpha\|<M$ and $\|\alpha+\varpi\|\geq \delta^{-1}M$. This implies the following statements:
\begin{enumerate}
  \item For every $M>0$ and every $r<0$, there exists $C_{M,r}>0$ such that
      \[
      \left|\exp\(-P(\alpha+\varpi)\)\right|<C_{M,r}(1+\|\alpha+\varpi\|)^r
      \]
      holds for every $\vartheta\in\theta\cup\{1\}$, every $\alpha\in\fh^*_{\vartheta,\dC}$ with $\|\RE\alpha\|<M$, and every $\varpi\in\sfY^*_\vartheta$.

  \item For every $\epsilon>0$ that is sufficiently small and every $M>0$, there exists $C_{\epsilon,M}>0$ such that
      \[
      \left|\exp\(-P(\alpha+\varpi)\)-1\right|\leq C_{\epsilon,M}\|\alpha+\varpi\|^{\epsilon}
      \]
      holds for every $\vartheta\in\theta\cup\{1\}$, every $\alpha\in\fh^*_{\vartheta,\dC}$ with $\|\RE\alpha\|<M$, and every $\varpi\in\sfY^*_\vartheta$.
\end{enumerate}

Now we take an element $\mu\in\cM_{\theta\cup\{1\}}(\fh^*_\dC)^\sfW$, and define functions $\mu_n$ for every integer $n\geq 1$ by the formula
\[
\mu_n(\alpha)=\exp\(-P\(\frac{\alpha}{n}\)\)\cdot\mu(\alpha),\qquad\alpha\in\fh^*_\dC.
\]
By Definition \ref{de:multiplier}(2), for every $M>0$, we may choose $r_M\in\dR$ such that $p_{\vartheta,M,r_M}(\mu)<\infty$ holds for every $\vartheta\in\theta\cup\{1\}$.

From (1), we obtain $p_{\vartheta,M,r}(\mu_n)<\infty$ for every $\vartheta\in\theta\cup\{1\}$, every $M>0$, every $r>r_M$, and every $n\geq 1$. Thus, $\mu_n\in\cN_{\theta\cup\{1\}}(\fh^*_\dC)^\sfW$ for every $n\geq 1$. From (2), we obtain
\[
\lim_{n\to\infty}p_{\vartheta,M,r}(\mu_n-\mu)=0
\]
for every $\vartheta\in\theta\cup\{1\}$, every $M>0$, and every $r<r_M$. Then, by Proposition \ref{pr:multiplier} and Remark \ref{re:sobolev}, for every $f\in\cS(G)$, $\{\mu_n\star f\}_{n\geq 1}$ is a Cauchy sequence in $\cS(G)$; and we denote its limit (in $\cS(G)$) by $\mu\star f$. It follows immediately that $\pi(\mu\star f)=\mu(\chi_\pi)\cdot\pi(f)$ holds for every irreducible admissible representation $\pi$ of $G$. The theorem is proved.
\end{proof}

\begin{remark}\label{re:continuous}
In fact, from the proof of Theorem \ref{th:multiplier}, we see that for every $\mu\in\cM_{\theta\cup\{1\}}(\fh^*_\dC)$, $\mu\star$ is continuous, and preserves the subalgebra $\cS(G)_{(K)}$ of bi-$K$-finite functions in $\cS(G)$. Moreover, our argument of deducing Theorem \ref{th:multiplier} from Proposition \ref{pr:multiplier} can be applied to deduce from Proposition \ref{pr:delorme} that every element of $\cM(\fh^*_\dC)$ gives a multiplier of $\cS(G)_{(K)}$.
\end{remark}

\section{Isolation of spectrum}
\label{ss:3}

In this section, we state and prove various results on the isolation of cuspidal components of the $L^2$-spectrum. In Subsection \ref{ss:statement}, we introduce several important definitions and state the results. In Subsection \ref{ss:discrete}, we recall the coarse Langlands decomposition, cuspidal data, and cuspidal components. In Subsection \ref{ss:annihilation}, we show how to annihilate all but finitely many cuspidal components. We finish the proof in Subsection \ref{ss:isolation_proof}.

Let $F$ be a number field.

\subsection{Statement of results}
\label{ss:statement}

We consider a connected reductive algebraic group $G$ over $F$. Let $\fg$ be the complex Lie algebra of $G\otimes_\dQ\dC$, and $Z$ the maximal $F$-split torus in the center of $G$. Let $\tS_G$ be the set of primes of $F$ such that $G_v$ is ramified. We fix a maximal compact subgroup $K_0$ of $G(\dA_F)$, and a Haar measure $\r{d}g=\prod_v\r{d}g_v$ on $G(\dA_F)$, so that $K_{0,v}$ is hyperspecial maximal with volume $1$ under $\r{d}g_v$ for every prime $v$ not in $\tS_G$.

We first recall the definition of the $L^2$-spectrum for $G$. Take a unitary automorphic character
\[
\omega\colon Z(\dA_F)\to\dC^\times.
\]
We define $L^2(G(F)\backslash G(\dA_F),\omega)$ to be the $L^2$ completion of the subspace of smooth functions $\varphi$ on $G(\dA_F)$ satisfying
\begin{itemize}
  \item $\varphi(z\gamma g)=\omega(z)\varphi(g)$ for every $z\in Z(\dA_F)$, $\gamma\in G(F)$, and $g\in G(\dA_F)$;

  \item $|\varphi|^2$, regarded as a function on $Z(\dA_F)G(F)\backslash G(\dA_F)$, is integrable.
\end{itemize}
The group $G(\dA_F)$ acts on $L^2(G(F)\backslash G(\dA),\omega)$ via the right regular representation $\rR$, which preserves the subspace $L^2_\cusp(G(F)\backslash G(\dA),\omega)$ of cuspidal functions.

\begin{definition}
We define the space of \emph{Schwartz test functions} to be
\[
\cS(G(\dA_F))\coloneqq\cS(G_\infty)\otimes\cS(G(\dA_F^\infty)),
\]
which is endowed with the convolution product with respect to the fixed Haar measure $\r{d}g$. Here, $\cS(G(\dA_F^\infty))$ denotes the
restricted tensor product $\bigotimes'_{v\nmid\infty}\cS(G(F_v))$, in which $\cS(G(F_v))$ is nothing but $C^\infty_c(G(F_v))$.
\end{definition}

The algebra $\cS(G(\dA_F))$ acts on $L^2(G(F)\backslash G(\dA_F),\omega)$ continuously via the right regular representation $\rR$, with respect to the Haar measure $\r{d}g$.

\begin{definition}\label{de:quasi_cuspidal}
We say that a Schwartz test function $f\in\cS(G(\dA_F))$ is \emph{$\omega$-quasi-cuspidal} (or simply \emph{quasi-cuspidal} when $\omega=1$ is the trivial character) if the image of the endomorphism $\rR(f)$ on $L^2(G(F)\backslash G(\dA_F),\omega)$ is contained in $L^2_\cusp(G(F)\backslash G(\dA_F),\omega)$.
\end{definition}

For every prime $v\not\in\tS_G$, we let
\[
\cH_{G,v}\coloneqq\dC[K_{0,v}\backslash G(F_v)/K_{0,v}]
\]
be the spherical Hecke algebra of $G_v$ (with respect to $K_{0,v}$) with the unit $\mathbf{1}_{K_{0,v}}$. For a (possibly infinite) set $\tT$ of primes of $F$ containing $\tS_G$, we let $\cH_G^\tT$ be the restricted tensor product of $\cH_{G,v}$ for primes $v\not\in\tT$.

\begin{definition}\label{de:compatible}
A \emph{$\tT$-character} (for $G$) is a pair $\chi=(\chi_\infty,\chi^{\infty,\tT})$ in which $\chi_\infty$ is a character of $\cZ(\fg)$ and $\chi^{\infty,\tT}$ is a character of $\cH_G^\tT$.\footnote{Warning: $\tT$-characters are actually characters away from $\tT$; same for $\tT$-multipliers later.}
\end{definition}

For a $\tT$-character $\chi$ and a prime $v\not\in\tT$, its $v$-component $\chi_v$, which is a character of $\cH_{G,v}$, gives rise to a ($K_{0,v}$-)spherical (irreducible admissible) representation of $G(F_v)$, unique up to isomorphism. For an irreducible admissible representation $\pi=\otimes_v\pi_v$ of $G(\dA_F)$ such that $\pi_v^{K_{0,v}}\neq\{0\}$ for every prime $v\not\in\tT$, we have the induced $\tT$-character $\chi_{\pi^\tT}=(\chi_{\pi_\infty},\chi_{\pi^{\infty,\tT}})$.

The following definition mimics the original notion of CAP representations by Piatetski-Shapiro \cite{PS82}. In this section, we will only use it for $G'=G$; while the more general case will be used in Section \ref{ss:4}.

\begin{definition}\label{de:cap}
Let $G'$ be an inner form of $G$ (over $F$). We say that a $\tT$-character $\chi$ is \emph{$(G',\tT)$-CAP} if there exist a proper parabolic subgroup $P'$ of $G'$ and a cuspidal automorphic representation $\sigma$ of $M_{P'}(\dA_F)$, where $M_{P'}\coloneqq P'/N_{P'}$, such that for all but finitely many primes $v$ of $F$ not in $\tT$ for which $G_v\simeq G'_v$, the spherical representation corresponding to $\chi_v$ is a constituent of $\rI^{G'}_{P'}(\sigma_v)$. For an irreducible admissible representation $\pi$ of $G(\dA_F)$, we say that $\pi$ is $(G',\tT)$-CAP if $\chi_{\pi^\tT}$ is.
\end{definition}

Now we fix
\begin{itemize}
  \item a subset $\tT$ of primes of $F$ containing $\tS_G$ and a $\tT$-character $\chi=(\chi_\infty,\chi^{\infty,\tT})$,

  \item a finite set $\tS$ of primes of $F$ satisfying $\tS_G\subseteq\tS\subseteq\tT$, and

  \item a subgroup $K\subseteq K_0^\infty$ of finite index of the form $K=K_\tS\times\prod_{v\not\in\tS}K_{0,v}$.
\end{itemize}
We denote by $\cS(G(\dA_F^\infty))_K$ the subalgebra of $\cS(G(\dA_F^\infty))$ of bi-$K$-invariant functions, and put
\[
\cS(G(\dA_F))_K\coloneqq\cS(G_\infty)\otimes\cS(G(\dA_F^\infty))_K.
\]
For $?=\emptyset,\cusp$, we denote by $L^2_?(G(F)\backslash G(\dA_F)/K,\omega)$ the subspace of $L^2_?(G(F)\backslash G(\dA_F),\omega)$ consisting of functions that are invariant under $\rR(K)$, on which $\cS(G(\dA_F))_K$ acts continuously via the right regular representation $\rR$. We denote by
\[
L^2_\cusp(G(F)\backslash G(\dA_F)/K,\omega)[\chi]\subseteq L^2_\cusp(G(F)\backslash G(\dA_F)/K,\omega)
\]
the maximal closed subspace of which $\cZ(\fg)\otimes\cH_G^\tT$ acts on its smooth vectors by the character $\chi_\infty\otimes\chi^{\infty,\tT}$.

The connected reductive group $G\otimes_\dQ\dR$ over $\dR$ determines a root datum $(\sfX^*,\Phi,\sfX_*,\Phi^\vee)$ and a subset $\theta\subseteq\Aut(\sfX^*,\Phi,\sfX_*,\Phi^\vee)^\heartsuit$ as at the beginning of Subsection \ref{ss:multiplier_algebra}. We adopt notation in Subsection \ref{ss:multiplier_functions}. For an element $\mu\in\cO(\fh^*_\dC)^\sfW\otimes\cH_G^\tT$, we may evaluate $\mu$ on $\chi$ to obtain a complex number $\mu(\chi)$. Theorem \ref{th:multiplier} provides us with a linear map $\cM^\sharp_\theta(\fh_\dC^*)^\sfW\to\Mul(\cS(G_\infty))$ (see Remark \ref{re:multiplier_map}).

\begin{definition}\label{de:T_multiplier}
A \emph{$\tT$-multiplier} of $\cS(G(\dA_F))_K$ is an element in $\cM^\sharp_\theta(\fh_\dC^*)^\sfW\otimes\cH_G^\tT$.
\end{definition}

As $\cH_G^\tT$ is contained in the center of $\cS(G(\dA_F^\infty))_K$, a $\tT$-multiplier $\mu$ induces a multiplier
\[
\mu\star\in\Mul(\cS(G(\dA_F))_K)
\]
of $\cS(G(\dA_F))_K$. Now we can state our main results on the isolation of the spectrum.

\begin{theorem}\label{th:isolation}
Suppose that $\chi$ is \emph{not} $(G,\tT)$-CAP (Definition \ref{de:cap}). Then there exists a $\tT$-multiplier $\mu$ of $\cS(G(\dA_F))_K$ such that for every $f\in\cS(G(\dA_F))_K$,
\begin{enumerate}
  \item $\rR(\mu\star f)$ maps $L^2(G(F)\backslash G(\dA_F)/K,\omega)$ into $L^2_\cusp(G(F)\backslash G(\dA_F)/K,\omega)[\chi]$;

  \item $\mu(\chi)=1$.
\end{enumerate}
In particular, $\mu\star f$ is an $\omega$-quasi-cuspidal Schwartz test function (Definition \ref{de:quasi_cuspidal}).
\end{theorem}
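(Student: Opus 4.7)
The plan is to follow the three-step strategy sketched in Subsection \ref{ss:strategy}. First, the coarse Langlands decomposition (recalled in Subsection \ref{ss:discrete}) writes
\[
L^2(G(F)\backslash G(\dA_F)/K,\omega) = \widehat{\bigoplus}_{(M,\sigma)} L^2_{(M,\sigma)}(G(F)\backslash G(\dA_F)/K,\omega),
\]
so the task reduces to exhibiting a single $\tT$-multiplier $\mu$ with $\mu(\chi)=1$ which, under convolution, annihilates every $(M,\sigma)$-component except those cuspidal components whose associated $\tT$-character equals $\chi$. The multiplier $\mu$ will be constructed as a product of three kinds of factors.

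The first factor handles the $K_\infty$-type. By Lemma \ref{le:multiplier1} there exists $\mu_\infty^0\in\cM_\theta^\sharp(\fh_\dC^*)^\sfW$ with $\mu_\infty^0(\chi_\infty)\neq 0$, rescaled to $1$. By Lemma \ref{le:multiplier0} and Theorem \ref{th:multiplier}, $\mu_\infty^0\star$ kills any admissible representation of $G_\infty$ without $K_\infty$-types in a fixed finite set $\fT(\mu_\infty^0)$. Combining this with bi-$K$-invariance and standard control of $K$-types under parabolic induction, only finitely many Levi conjugacy classes $M$ leave a component $L^2_{(M,\sigma)}$ on which the factor $\mu_\infty^0\star$ acts nontrivially; moreover, for each such $M$ the relevant $\sigma$ are constrained to have $(K_\infty\cap M_\infty)$-types in a finite set $\fT_M$ and bounded level at $\tS$.

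The second batch of factors eliminates the continuum of infinitesimal characters. Fix a surviving $M$, and let $\lambda\colon\fh_\dC^*\to\dC$ be the polynomial given by the Casimir of $M^\der$. By Harish-Chandra's theorem plus Donnelly's bound \cite{Don82} (cited in the introduction), the set $\Lambda_M\subseteq\dC$ of Casimir eigenvalues of cuspidal representations of $M^\der(\dA_F)$ with $(K_\infty\cap M^\der_\infty)$-type in $\fT_M$ and prescribed level has finite rank in the sense of Definition \ref{de:rank}. Corollary \ref{co:rapid} then produces a holomorphic function of moderate vertical growth whose zero locus is $\lambda^{-1}(\Lambda_M\setminus\{\lambda(\chi_\infty)\})$; after $\sfW$-symmetrization and multiplication by a factor from Lemma \ref{le:multiplier1} (using Remark \ref{re:multiplier}(3)) we obtain $\mu_\infty^M\in\cM_\theta^\sharp(\fh_\dC^*)^\sfW$ with $\mu_\infty^M(\chi_\infty)=1$ that annihilates all but finitely many unramified-twist orbits of $(M,\sigma)$ with this $M$.

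Only finitely many pairs $(M,\sigma)$, each carrying an unramified-twist family of infinitesimal characters and Hecke parameters, now remain. For each such $(M,\sigma)$ whose induced $\tT$-character differs from $\chi$, the non-CAP hypothesis on $\chi$ guarantees that as $\xi$ ranges over unramified twists, the joint datum $(\chi_{\rI_P^G(\sigma\otimes\xi)_\infty},\chi_{\rI_P^G(\sigma\otimes\xi)^{\infty,\tT}})$ never meets $\chi$. Following the idea of \cite{LV07}, one interpolates this separation by a mixed multiplier $\mu_{(M,\sigma)}\in\cM_\theta^\sharp(\fh_\dC^*)^\sfW\otimes\cH_G^\tT$: using Corollary \ref{co:rapid} along the direction in which the Satake parameters and infinitesimal character co-vary, build a function that vanishes on the entire family attached to $(M,\sigma)$ but equals $1$ at $\chi$. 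The main obstacle, and the most delicate point of the argument, is this last construction, since one must realize the ``secret correlation'' between $\fh_\dC^*$ and the Satake coordinates as a concrete algebraic map and then invoke the growth flexibility in Definition \ref{de:multiplier}(1) to fit the resulting function into $\cM_\theta^\sharp(\fh_\dC^*)^\sfW\otimes\cH_G^\tT$; Donnelly's bound and the moderate-growth regime (as opposed to exponential type) are what make this possible. Defining $\mu$ as the product of $\mu_\infty^0$, the finitely many $\mu_\infty^M$, and the finitely many $\mu_{(M,\sigma)}$, each factor evaluates to $1$ at $\chi$, so $\mu(\chi)=1$; and by construction every non-cuspidal $(M,\sigma)$-component as well as every cuspidal component whose $\tT$-character differs from $\chi$ is killed by some factor, giving (1). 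The quasi-cuspidality assertion follows immediately.
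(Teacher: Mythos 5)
Your outline reproduces the paper's strategy (and indeed the sketch in Subsection \ref{ss:strategy}): the first factor $\mu_\infty^0$ with its finite set of $K_\infty$-types (Lemmas \ref{le:multiplier1} and \ref{le:multiplier0}), and the per-Levi factors $\mu_\infty^M$ built from Donnelly's finite-rank bound on Casimir eigenvalues via Corollary \ref{co:rapid}, match Proposition \ref{pr:isolation1} and Lemma \ref{le:finite3} (you do omit the central direction: when $\fc_M\neq\fa_M$ the Casimir of $M_\der$ alone does not see $\gamma_M^+$, and the paper needs the extra lattice factor from Corollary \ref{co:lattice} for the characters of $Z_M(\dA_F)^1Z(\dA_F)$). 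You also correctly route the non-CAP hypothesis: it ensures $\chi$ occurs only in $(G,\sigma)$-components, so the survivors lie in $L^2_\cusp[\chi]$.

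The genuine gap is the third step, which you yourself flag as ``the main obstacle'' and then do not carry out; moreover the tool you propose for it is the wrong one. For a fixed proper $(M,\sigma)$ one must produce an element of $\cM^\sharp_\theta(\fh^*_\dC)^\sfW\otimes\cH_G^\tT$ vanishing on the whole one-parameter family of joint data $\bigl(\chi^G_{\sigma_{\bbs,\infty}},\chi^G_{\sigma_\bbs^{\infty,\tT}}\bigr)$, $\bbs\in\fa^*_{M,\dC}$, but nonzero at $\chi$. This is not an interpolation problem of the type Corollary \ref{co:rapid} solves (prescribed zeros along $\lambda^{-1}\Lambda$ for a polynomial $\lambda$ and a rank-finite $\Lambda\subseteq\dC$), and Donnelly's bound plays no role here: since only one twist orbit is being targeted, no discreteness input is needed. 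The paper's Proposition \ref{pr:isolation2} instead makes the ``secret correlation'' explicit algebraically: choosing $\alpha_\sigma\in\chi_{\sigma_\infty}$, $\alpha_\chi\in\chi_\infty$, a splitting $\ell\colon\fh^*\to\fa^*_M$, and for each $w\in\sfW$ a place $v[w]$ and a Hecke (or $\cZ(\fg)$-) element $\nu_w$ separating $\chi_{v[w]}$ from $\chi^G_{\sigma_{\bbs_w},v[w]}$ with $\bbs_w=\ell(w\alpha_\chi)-\ell(\alpha_\sigma)$, it forms the exponential-type functions $\nu^\dag_{w,w'}=\nu_w-\nu_w(\chi^G_{\sigma_{\ell\circ w'(-)-\ell(\alpha_\sigma),v[w]}})$, takes a suitable linear combination over $w$, multiplies over $w'\in\sfW$ to force $\sfW$-invariance while keeping nonvanishing at $\chi$, and finally multiplies by an archimedean multiplier $\mu^\dag\in\cM(\fh^*_\dC)^\sfW$ (and $\mu_\infty^0$) to tame the exponential type into the moderate-growth class. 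Without this construction (or an equivalent one), and without the verification — via pseudo-Eisenstein series and $\rI^G_{P_M}(\sigma_\bbs)(\mu\star f)=\mu(\chi^G_{\sigma_\bbs^\tT})\cdot\rI^G_{P_M}(\sigma_\bbs)(f)$ — that vanishing on these characters actually annihilates the $L^2$-component, the proof is incomplete at its crucial point.
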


In particular, Theorem \ref{th:isolation_pre} follows by taking $\tT=\tS$ and $\chi=\chi_{\pi^\tT}$.

For general $\chi$, we have the following theorem. In fact, we will prove a stronger result in Theorem \ref{th:isolation_tri} below.

\begin{theorem}\label{th:isolation_bis}
There exists a $\tT$-multiplier $\mu$ of $\cS(G(\dA_F))_K$ such that for every $f\in\cS(G(\dA_F))_K$,
\begin{enumerate}
  \item $\rR(\mu\star f)$ maps $L^2_\cusp(G(F)\backslash G(\dA_F)/K,\omega)$ into $L^2_\cusp(G(F)\backslash G(\dA_F)/K,\omega)[\chi]$;

  \item $\mu(\chi)=1$.
\end{enumerate}
\end{theorem}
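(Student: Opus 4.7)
The plan is to construct $\mu$ as a simple tensor $\mu_\infty \otimes h$ with archimedean factor $\mu_\infty \in \cM^\sharp_\theta(\fh_\dC^*)^\sfW$ and spherical Hecke factor $h \in \cH_G^\tT$, exploiting that $L^2_\cusp(G(F)\backslash G(\dA_F)/K,\omega)$ decomposes as a Hilbert direct sum of irreducibles on each of which $\cZ(\fg)\otimes\cH_G^\tT$ acts through a single $\tT$-character.

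First, by Lemma \ref{le:multiplier1} I pick $\mu_\infty^0 \in \cM^\sharp_\theta(\fh_\dC^*)^\sfW$ with $\mu_\infty^0(\chi_\infty) = 1$, and let $\fT$ be the finite set of $K_\infty$-types supplied by Lemma \ref{le:multiplier0}. Let $X \subset \fh_\dC^*/\sfW$ denote the set of infinitesimal characters of those irreducible $\pi \subset L^2_\cusp(G(F)\backslash G(\dA_F)/K,\omega)$ possessing a $K_\infty$-type in $\fT$. The classical finiteness of cusp forms of bounded archimedean parameters together with Donnelly's polynomial bound \cite{Don82} on the cuspidal counting function shows that $X$ is countable with polynomial growth; consequently, for any polynomial $\lambda \in \dC[\fh_\dC^*]^\sfW$, the image $\lambda(X) \subset \dC$ has finite rank in the sense of Definition \ref{de:rank}. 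Fixing Chevalley generators $\lambda_1,\dots,\lambda_r$ of $\dC[\fh_\dC^*]^\sfW$, Corollary \ref{co:rapid} (followed by $\sfW$-symmetrization) produces $\nu_i \in \cO(\fh_\dC^*)^\sfW$ of moderate vertical growth whose zero locus is $\lambda_i^{-1}(\lambda_i(X) \setminus \{\lambda_i(\chi_\infty)\})$. Since the $\lambda_i$ separate Weyl orbits, $\nu := \prod_i \nu_i$ vanishes on every Weyl orbit in $X \setminus \{\chi_\infty\}$ while being nonzero at $\chi_\infty$; then, by Remark \ref{re:multiplier}(3),
\[
\mu_\infty \;:=\; \nu(\chi_\infty)^{-1}\,\mu_\infty^0 \cdot \nu \;\in\; \cM^\sharp_\theta(\fh_\dC^*)^\sfW
\]
satisfies $\mu_\infty(\chi_\infty) = 1$ and $\mu_\infty(\chi'_\infty) = 0$ for all $\chi'_\infty \in X \setminus \{\chi_\infty\}$.

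Harish-Chandra's finiteness now leaves only finitely many cuspidal irreducibles in $L^2_\cusp(G(F)\backslash G(\dA_F)/K,\omega)$ with $\chi_{\pi_\infty} = \chi_\infty$ and $K_\infty$-type in $\fT$; discarding those (if any) whose $\tT$-character agrees with $\chi$, list the remaining $\tT$-characters as $\chi^{(1)},\dots,\chi^{(m)}$. For each $j$ the unramified Hecke characters $\chi^{\infty,\tT}$ and $\chi^{(j),\infty,\tT}$ differ at some prime $v_j \notin \tT$, so I may choose $h_j \in \cH_{G,v_j}$ with $\chi_{v_j}(h_j) = 1$ and $\chi^{(j)}_{v_j}(h_j) = 0$. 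Taking $h \in \cH_G^\tT$ to be the product of the $h_j$ (with the unit at remaining places) and setting $\mu := \mu_\infty \otimes h$, for every cuspidal irreducible $\pi \subset L^2_\cusp(G(F)\backslash G(\dA_F)/K,\omega)$ the scalar $\mu(\chi_{\pi^\tT}) = \mu_\infty(\chi_{\pi_\infty})\,\chi_{\pi^{\infty,\tT}}(h)$ equals $1$ when $\chi_{\pi^\tT} = \chi$ and vanishes otherwise, yielding both (1) and (2).

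The principal technical obstacle is justifying that $X$ is sparse enough for Corollary \ref{co:rapid} to apply after each generator $\lambda_i$; this is precisely the content of Donnelly's estimate, and once it is granted, the remaining steps amount to elementary multiplier bookkeeping and separation of finitely many characters in the commutative algebra $\cH_G^\tT$.
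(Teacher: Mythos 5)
Your overall architecture is sound and is in fact a legitimate shortcut: since the statement only concerns $L^2_\cusp$, which decomposes discretely, you can bypass the continuous-spectrum analysis that the paper runs through Theorem \ref{th:isolation_tri} (pseudo-Eisenstein series, the components $L^2_{(M,\sigma)}$ for proper $M$, and the mixed multipliers of Proposition \ref{pr:isolation2}), and only the ``$M=G$'' part of that machinery is really needed. The endgame (Harish-Chandra finiteness at fixed infinitesimal character, then separation of finitely many $\tT$-characters by spherical Hecke elements) is also fine.

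The gap is in your archimedean step, at the sentence ``$X$ is countable with polynomial growth; consequently, for any polynomial $\lambda\in\dC[\fh_\dC^*]^\sfW$, the image $\lambda(X)\subset\dC$ has finite rank.'' This implication is false: finite rank in the sense of Definition \ref{de:rank} requires $\sum_{\mu\in\lambda(X),\mu\neq 0}|\mu|^{-(p+1)}<\infty$, which fails as soon as $\lambda(X)$ has a finite accumulation point, and a polynomially growing discrete set in $\fh_\dC^*$ can perfectly well be compressed by a higher-degree polynomial into a set accumulating in $\dC$ (the fibers of $\lambda$ are unbounded varieties, and infinitely many points of $X$ could drift along a single level set). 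Donnelly's theorem controls exactly one invariant polynomial --- the Casimir eigenvalue, i.e.\ the degree-two generator --- and gives no discreteness or counting statement for the values of the higher-degree Chevalley generators on cuspidal infinitesimal characters; no such result is available. This is precisely why the paper does not attempt to kill everything with $\chi_{\pi_\infty}\neq\chi_\infty$ in one archimedean stroke: it applies Corollary \ref{co:rapid} only to $\lambda_M\circ\gamma_M^-$ (the Casimir of the derived group, where Lemma \ref{le:casimir} supplies finite rank via Donnelly) together with Corollary \ref{co:lattice} for the central direction, which leaves \emph{finitely many} ``$\chi_\infty$-typical'' representations (Lemma \ref{le:finite2}); those survivors with $\chi_{\pi_\infty}\neq\chi_\infty$ are then removed one at a time by multiplying by a suitable element of $\dC[\fh_\dC^*]^\sfW$, and the rest by Hecke operators, as in the $M=G$ case of Proposition \ref{pr:isolation2}. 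Your proof is repaired by substituting exactly this two-stage argument (Casimir plus central lattice, then finitely many individual corrections) for your single product $\prod_i\nu_i$ over all Chevalley generators.
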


\begin{remark}
In the above two theorems, we do not require that $\chi=\chi_{\pi^\tT}$ for an irreducible admissible representation $\pi$ of $G(\dA_F)$.
\end{remark}

The remaining part of this section will be devoted to the proof of the above two theorems.

\subsection{Cuspidal data and cuspidal components}
\label{ss:discrete}

We recall the notion of cuspidal data and cuspidal components for the group $G$. We fix
\begin{itemize}
  \item a minimal Levi subgroup $M_0$ of (a parabolic subgroup of) $G$,

  \item a minimal parabolic subgroup $P_0$ of $G$ containing $M_0$, and

  \item a maximal torus $T$ of $G\otimes_\dQ\dR$ over $\dR$ that is contained in $M_0\otimes_\dQ\dR$.
\end{itemize}
Without lost of generality, we assume that the fixed maximal compact subgroup $K_0$ of $G(\dA_F)$ is admissible relative to $M_0(\dA_F)$ (in the sense of \cite{Art81}*{Section~1}). We identify $\sfX^*$ with the weight lattice of $T_\dC$. Let $\sfW(G,M_0)$ be the Weyl group of the pair $(G,M_0)$. We say that a subgroup $M$ of $G$ is a \emph{standard Levi subgroup} if there exists a parabolic subgroup of $G$ containing $P_0$, of which $M$ is the unique Levi subgroup containing $M_0$.

For a standard Levi subgroup $M$ of $G$, we denote by
\begin{itemize}
  \item $Z_M$ the center of $M$ (in particular, $Z$ is the maximal $F$-split torus in $Z_G$),

  \item $P_M$ the unique parabolic subgroup of $G$ containing $P_0$, of which $M$ is a Levi subgroup,

  \item $\Omega_M(\omega)$ the set of unitary automorphic characters $\omega_M\colon Z_M(\dA_F)\to\dC^\times$ satisfying $\omega_M\res_{Z(\dA_F)}=\omega$,

  \item $M(\dA_F)^1$ the intersection of the kernels of all automorphic characters $M(\dA_F)\to\dR^\times_+$, and

  \item $\fa_M$ the real vector space $Z(\dA_F)M(\dA_F)^1\backslash M(\dA_F)$.\footnote{In fact, $\fa_M$ is usually denoted by $\fa_M^G$ for example in \cites{Art05,Art13}; it is the real Lie algebra of the maximal $\dQ$-split torus contained in $\Res_{F/\dQ}Z\backslash M$.}
\end{itemize}
For $\bbs\in\fa_{M,\dC}^*$, we denote by $\xi_{\bbs}\colon Z(\dA_F)M(\dA_F)^1\backslash M(\dA_F)\to\dC^\times$ the corresponding (automorphic) character obtained by composing $s$ with the exponential map $\exp\colon\dC\to\dC^\times$, which is unitary if and only if $\bbs\in i\fa_M^*$. For an admissible representation $\sigma$ of $M(\dA_F)$, we put $\sigma_{\bbs}\coloneqq\sigma\otimes\xi_{\bbs}$ for $\bbs\in\fa_{M,\dC}^*$.

\begin{definition}\label{de:component}
For a cuspidal automorphic representation $\sigma$ of $M(\dA_F)$ with central character $\omega_M\in\Omega_M(\omega)$, we denote by
\[
L^2(M,\sigma)\subseteq L^2_\cusp(M(F)\backslash M(\dA_F),\omega_M)
\]
the maximal closed $\sigma$-isotypic subspace.

We define $\fC(M,\omega)$ to be the set of isomorphism classes of cuspidal automorphic representations of $M(\dA_F)$ whose central character belongs to $\Omega_M(\omega)$, and an equivalence relation $\sim$ on $\fC(M,\omega)$ by the rule: $\sigma\sim\sigma'$ if there exists $\bbs\in i\fa_M^*$ such that $\sigma'=\sigma_{\bbs}$. Let $\fC(M,\omega)^\heartsuit$ be the quotient of $\fC(M,\omega)$ by $\sim$.

We define $\fD(G,\omega)$ to be the set of pairs $(M,\sigma)$ where $M$ is a standard Levi subgroup of $G$ and $\sigma\in\fC(M,\omega)$, and an equivalence relation $\approx$ on $\fD(G,\omega)$ by the rule: $(M,\sigma)\approx(M',\sigma')$ if there exists $w\in\sfW(G,M_0)$ such that $M'=M^w$ and $\sigma'\sim\sigma^w$ in the sense above. We denote by $\fD(G,\omega)^\heartsuit$ the quotient of $\fD(G,\omega)$ by $\approx$.
\end{definition}

By \cite{Lan76}*{Lemma~4.6(i)} or \cite{MW95}*{II.2.4}, we have the following decomposition
\begin{align}\label{eq:bernstein}
L^2(G(F)\backslash G(\dA_F),\omega)=\underset{(M,\sigma)\in\fD(G,\omega)^\heartsuit}{\widehat\bigoplus}
L^2_{(M,\sigma)}(G(F)\backslash G(\dA_F),\omega)
\end{align}
of Hilbert spaces, known as the coarse Langlands decomposition. We call an element $(M,\sigma)$ in $\fD(G,\omega)^\heartsuit$ a \emph{cuspidal datum}, and $L^2_{(M,\sigma)}(G(F)\backslash G(\dA_F),\omega)$ the \emph{(automorphic) cuspidal component} associated to $(M,\sigma)$.

For readers' convenience, we recall the construction of $L^2_{(M,\sigma)}(G(F)\backslash G(\dA_F),\omega)$. For $(M,\sigma)\in\fD(G,\omega)$, we denote by $\cA^G_{(M,\sigma)}$ the space of $\rR(K_0)$-finite smooth functions $\phi$ on $N_{P_M}(\dA_F)\backslash G(\dA_F)$ satisfying that for every $x\in K_0$, the function $\phi_x\colon m\mapsto \delta_{P_M}^{-1/2}(m)\phi(mx)$ is an automorphic form on $M(\dA_F)$ contained in $L^2(M,\sigma)$, where $\delta_{P_M}$ is the modulus character of $P_M$. For every Paley--Wiener function
\[
\Phi\colon\fa_{M,\dC}^*\to\cA^G_{(M,\sigma)}
\]
valued in a finite dimensional subspace of $\cA^G_{(M,\sigma)}$, we put
\[
\widetilde\Phi(g)\coloneqq\int_{i\fa_M^*}\Phi_{\bbs}(g)\cdot\xi_{\bbs}(m(g))\rd\bbs
\]
for $g\in P(F)\backslash G(\dA_F)$, where $m(g)\in Z(\dA_F)M(\dA_F)^1\backslash M(\dA_F)$ denotes the image of the $M(\dA_F)$-component of $g$ under the Iwasawa decomposition $G(\dA_F)=N_{P_M}(\dA_F)M(\dA_F)K_0$. We have the \emph{pseudo-Eisenstein series}
\[
E(g,\Phi)\coloneqq\sum_{\gamma\in P(F)\backslash G(F)}\widetilde\Phi(\gamma g),
\]
which belongs to $L^2(G(F)\backslash G(\dA_F),\omega)$. Then $L^2_{(M,\sigma)}(G(F)\backslash G(\dA_F),\omega)$ is defined to be the closure of the subspace spanned by $E(-,\Phi)$ for all Paley--Wiener functions $\Phi$ as above. We have that
\[
L^2_{(M,\sigma)}(G(F)\backslash G(\dA_F),\omega)=L^2_{(M',\sigma')}(G(F)\backslash G(\dA_F),\omega)
\]
if and only if $(M,\sigma)\approx(M',\sigma')$.

We denote by $L^2_{(M,\sigma)}(G(F)\backslash G(\dA_F)/K,\omega)$ the subspace of $L^2_{(M,\sigma)}(G(F)\backslash G(\dA_F),\omega)$ consisting of functions that are invariant under $\rR(K)$. Taking $\rR(K)$-invariants, \eqref{eq:bernstein} induces the following decomposition
\begin{align}\label{eq:decomposition}
L^2(G(F)\backslash G(\dA_F)/K,\omega)=\underset{(M,\sigma)\in\fD(G,\omega)^\heartsuit}{\widehat\bigoplus}L^2_{(M,\sigma)}(G(F)\backslash G(\dA_F)/K,\omega)
\end{align}
of Hilbert spaces.

\subsection{Annihilation of all but finitely many components}
\label{ss:annihilation}

We start to prove the results in Subsection \ref{ss:statement}. In particular, we recall the $\tT$-character $\chi=(\chi_\infty,\chi^{\infty,\tT})$, the subsets $\tS_G\subseteq\tS\subseteq\tT$ of primes of $F$, and the subgroup $K\subseteq K_0^\infty$. In this subsection, we construct an element $\mu_\infty^\chi\in\cM^\sharp_\theta(\fh_\dC^*)^\sfW$ satisfying $\mu_\infty^\chi(\chi_\infty)\neq 0$ and such that $\rR(\mu_\infty^\chi\star f)$ annihilates all but finitely many cuspidal components in \eqref{eq:decomposition}.

Take a standard Levi subgroup $M$ of $G$. Let $M_\der$ be the derived subgroup of $M$. Let $\sfX^*_M$ be the weight lattice of $(T\cap M_\der\otimes_\dQ\dR)_\dC$, and put $\fb^*_M\coloneqq\sfX^*_M\otimes_\dZ\dR$. Then we have a canonical map $\sfX^*\to\sfX^*_M$, which induces a surjective map $\fh^*\to\fb^*_M$. Denote by $\fc^*_M$ the kernel of $\fh^*\to\fb^*_M$, which canonically contains $\fa^*_M$.

\begin{remark}\label{re:adjacent}
In fact, $\fc_M^*/\fa_M^*$ is canonically the real cotangent space of $Z_M(\dA_F)^1Z(\dA_F)$ at the identity. Thus, $\fc_M^*=\fa_M^*$ holds if and only if $Z=1$ and $Z_M(\dA_F)^1$ is discrete; and $\fc_M^*=\fa_M^*$ holds for every standard Levi subgroup $M$ if and only if $G$ is semisimple and split over $\dQ$.
\end{remark}

Denote by $\gamma_M\colon\fh^*\to\fh^*/\fa^*_M$ the quotient map. We have a canonical decomposition $\fh^*=\fc_M^*\oplus\fb^*_M$, which gives rise to two linear maps
\[
\gamma_M^+\colon\fh^*\to\fc^*_M/\fa_M^*,\qquad
\gamma_M^-\colon\fh^*\to\fb^*_M
\]
so that $\gamma_M=\gamma_M^+\oplus\gamma_M^-$. Let $\sfW_M$ be the Weyl group of the pair $(M\otimes_\dQ\dC,T_\dC)$, which is canonically a subgroup of $\sfW$ and acts on $\fc^*_M$ trivially. For every element $\sigma\in\fC(M,\omega)$ and every $\bbs\in\fa_{M,\dC}^*$, the infinitesimal character $\chi_{\sigma_{\bbs,\infty}}$ is a $\sfW_M$-orbit in $\fh^*_\dC$, satisfying that $\gamma_M(\chi_{\sigma_{\bbs,\infty}})\subseteq\fh_\dC^*/\fa_{M,\dC}^*$ does not depend on $\bbs$, hence only on the class of $\sigma$ in $\fC(M,\omega)^\heartsuit$. We also denote by $\chi_{\sigma_{\bbs,\infty}}^G$ the infinitesimal character of $\Ind^G_{P_M}(\chi_{\sigma_{\bbs,\infty}})$, which is simply the $\sfW$-orbit of $\chi_{\sigma_{\bbs,\infty}}$ in $\fh^*_\dC$.

The Casimir operator for $M_\der\otimes_\dQ\dR$ defines a map
\[
\lambda_M\colon\fb_{M,\dC}^*\to\dC,
\]
which is a $\sfW_M$-invariant polynomial function.

\begin{definition}\label{de:adjacent}
We say that an element $\sigma\in\fC(M,\omega)^\heartsuit$ is \emph{$\chi_\infty$-typical} if both
\begin{itemize}
  \item $\gamma_M^+(\chi_{\sigma_\infty})\in\gamma_M^+(\chi_\infty)$; and

  \item $\lambda_M(\gamma_M^-(\chi_{\sigma_\infty}))\in\lambda_M(\gamma_M^-(\chi_\infty))$
\end{itemize}
hold. Denote by $\fC(M,\omega)^\heartsuit_{\chi_\infty}$ the subset of $\fC(M,\omega)^\heartsuit$ consisting of $\chi_\infty$-typical elements.
\end{definition}

Informally speaking, $\chi_\infty$-typical elements are those whose infinitesimal characters can not be distinguished from $\chi_\infty$ via the two maps $\gamma_M^+$ and $\lambda_M\circ\gamma_M^-$, hence their associated cuspidal components will not be annihilated by the method described below in this subsection.

Now we start the construction of $\mu_\infty$, following the strategy (the first two steps) described in Subsection \ref{ss:strategy}. We first choose an element $\mu_\infty^0\in\cM^\sharp_\theta(\fh_\dC^*)^\sfW$ such that $\mu_\infty^0(\chi_\infty)\neq 0$, which is possible by Lemma \ref{le:multiplier1}. We denote by $\fT\coloneqq\fT(\mu_\infty^0)$ the finite set of $K_{0,\infty}$-types from Lemma \ref{le:multiplier0}. For every standard Levi subgroup $M$ of $G$, we
\begin{itemize}
  \item put $K^M_{0,\infty}\coloneqq M_\infty\cap K_{0,\infty}$, which is a maximal compact subgroup of $M_\infty$;

  \item fix a finite set $\fT_M$ of $K^M_{0,\infty}$-types satisfying the following property: if $\sigma$ is an irreducible admissible representation of $M_\infty$ such that $\rI^G_{P_M}(\sigma)\res_{K_\infty}$ contains a member from $\fT$, then $\sigma\res_{K^M_{0,\infty}}$ contains a member from $\fT_M$ (when $M=G$, we take $\fT_M=\fT$);

  \item fix an open compact subgroup $K_{M,\tS}$ of $M(F_\tS)$ satisfying the following property: if $\sigma$ is an irreducible admissible representation of $M(F_\tS)$ such that $\rI^G_{P_M}(\sigma)^{K_\tS}\neq\{0\}$, then $\sigma^{K_{M,\tS}}\neq\{0\}$ (when $M=G$, we take $K_{M,\tS}=K_\tS$);

  \item put $K_{M,v}\coloneqq M(F_v)\cap K_v$ for every prime $v$ of $F$ not in $\tS$, which is a hyperspecial maximal subgroup of $M(F_v)$;

  \item put $K_M\coloneqq K_{M,\tS}\times\prod_{v\not\in\tS}K_{M,v}$.
\end{itemize}
Let $\fC(M,\omega;K_M,\fT_M)$ be the subset of $\fC(M,\omega)$ consisting of $\sigma$ satisfying $\sigma^{K_M}\neq\{0\}$ and that $\sigma\res_{K^M_{0,\infty}}$ contains a member from $\fT_M$. It is clear that $\fC(M,\omega;K_M,\fT_M)$ is closed under the equivalence relation $\sim$ in Definition \ref{de:component}. Let $\fC(M,\omega;K_M,\fT_M)^\heartsuit$ be the quotient of $\fC(M,\omega;K_M,\fT_M)$ by $\sim$.

Put $K_{M_\der}\coloneqq K_M\cap M_\der(\dA_F^\infty)$ and $K^{M_\der}_{0,\infty}\coloneqq K^M_{0,\infty}\cap M_{\der,\infty}$. Let $\fT_{M_\der}$ be the (finite) set of $K^{M_\der}_{0,\infty}$-types that appear in the restriction of members in $\fT_M$ to $K^{M_\der}_{0,\infty}$. Denote by $\fC(M_\der;K_{M_\der},\fT_{M_\der})$ the set of isomorphism classes of cuspidal automorphic representations $\varsigma$ of $M_\der(\dA_F)$ satisfying $\varsigma^{K_{M_\der}}\neq\{0\}$ and that $\varsigma\res_{K^{M_\der}_{0,\infty}}$ contains a member from $\fT_{M_\der}$.

The following lemma is crucial for us to construct functions in $\cM(\fh^*_\dC)^\sfW$ that vanish on $\chi_{\sigma_\infty}$ for $\sigma$ not $\chi_\infty$-typical.

\begin{lem}\label{le:casimir}
The subset
\[
\Lambda(M,\omega;K_M,\fT_M)\coloneqq\{\lambda_M(\gamma^-_M(\chi_{\sigma_\infty}))\res\sigma\in\fC(M,\omega;K_M,\fT_M)\}\subseteq\dC
\]
is of finite rank (Definition \ref{de:rank}).
\end{lem}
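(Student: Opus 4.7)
The plan is to reduce the claim to a statement about cuspidal automorphic representations of the semisimple group $M_\der$ and then apply Donnelly's Weyl-type upper bound \cite{Don82}.

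First, I would observe that the value $\lambda_M(\gamma_M^-(\chi_{\sigma_\infty}))$ depends only on the image of $\sigma$ in $\fC(M,\omega)^\heartsuit$: for every $\bbs\in i\fa_M^*$, the difference $\chi_{\sigma_{\bbs,\infty}}-\chi_{\sigma_\infty}$ is represented by $\bbs\in\fa_{M,\dC}^*\subseteq\fc_{M,\dC}^*=\Ker\gamma_M^-$. So one may work with $\sigma$ modulo unramified unitary twist.

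Second, since $M/M_\der$ is a torus, every parabolic subgroup of $M_\der$ has the form $P\cap M_\der$ for some parabolic subgroup $P$ of $M$ with $N_P=N_{P\cap M_\der}$. Consequently, the restriction to $M_\der(\dA_F)$ of any cuspidal automorphic representation $\sigma$ of $M(\dA_F)$ is a Hilbert direct sum of cuspidal automorphic representations of $M_\der(\dA_F)$. The choices of $\fT_{M_\der}$ and $K_{M_\der}$ made before the lemma ensure that for every $\sigma\in\fC(M,\omega;K_M,\fT_M)$, such a decomposition contains at least one summand $\varsigma\in\fC(M_\der;K_{M_\der},\fT_{M_\der})$. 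As the Casimir of $M_\der\otimes_\dQ\dR$ lies in $\cZ(\fm_{\der,\dC})$, it acts on both $\sigma_\infty$ and $\varsigma_\infty$ by the same scalar, which through the Harish-Chandra isomorphism equals $\lambda_M(\gamma_M^-(\chi_{\sigma_\infty}))$. Hence
\[
\Lambda(M,\omega;K_M,\fT_M)\subseteq\Lambda_\der\coloneqq\{\lambda_M(\chi_{\varsigma_\infty})\res\varsigma\in\fC(M_\der;K_{M_\der},\fT_{M_\der})\},
\]
and it suffices to show that $\Lambda_\der$ has finite rank.

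Third, as the representations in $\fC(M_\der;K_{M_\der},\fT_{M_\der})$ are unitary, their Casimir eigenvalues are real. Donnelly's theorem \cite{Don82}, applied to the $K^{M_\der}_{0,\infty}$-types in $\fT_{M_\der}$ and the level $K_{M_\der}$ on the arithmetic locally symmetric space attached to $M_\der$, yields constants $C,d>0$ such that the number of representations in $\fC(M_\der;K_{M_\der},\fT_{M_\der})$ with Casimir eigenvalue bounded in absolute value by $R$ is at most $CR^d$ for all $R\geq 1$. This implies both the countability of $\Lambda_\der$ and the convergence of $\sum_{\lambda\in\Lambda_\der,\lambda\neq 0}|\lambda|^{-(p+1)}$ for any integer $p>d-1$, concluding the proof.

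The genuine deep input is Donnelly's bound, used as a black box. The only real subtlety in the reduction is checking that restriction of cusp forms to $M_\der$ remains cuspidal with the correct level and archimedean type; this is a routine verification, and is in fact the entire purpose of the tailored choices of $\fT_{M_\der}$ and $K_{M_\der}$ introduced above the lemma.
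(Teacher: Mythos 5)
Your proof is correct and follows the same route as the paper: reduce to the set of Casimir eigenvalues of elements of $\fC(M_\der;K_{M_\der},\fT_{M_\der})$ and invoke Donnelly's Weyl-type bound \cite{Don82}*{Theorem~9.1}. The paper states this reduction in one line without detail; your spelling out of the restriction-to-$M_\der$ step is the intended (and correct) justification.
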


\begin{proof}
It suffices to show that the subset of Casimir eigenvalues of elements in $\fC(M_\der;K_{M_\der},\fT_{M_\der})$ is of finite rank. As $\fT_{M_\der}$ is a finite set, this is a direct consequence of \cite{Don82}*{Theorem~9.1}.
\end{proof}

The following lemma shows that there are only finitely many $\chi_\infty$-typical elements in $\fC(M,\omega;K_M,\fT_M)^\heartsuit$.

\begin{lem}\label{le:finite2}
The set $\fC(M,\omega;K_M,\fT_M)^\heartsuit\bigcap\fC(M,\omega)^\heartsuit_{\chi_\infty}$ is finite.
\end{lem}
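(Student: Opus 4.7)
The plan is to reduce the finiteness statement from the reductive group $M$ to its derived subgroup $M_\der$, which is semisimple, and then invoke a classical finiteness theorem for cuspidal automorphic forms on a semisimple group with bounded level, bounded archimedean $K$-type, and bounded Casimir eigenvalue.

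Given a class $[\sigma]$ in the intersection, I would first pick a representative $\sigma$ and consider its restriction to $M_\der(\dA_F)$. Since $M_\der$ has no nontrivial rational characters, $M_\der(\dA_F)\subseteq M(\dA_F)^1$; hence $\xi_\bbs\res_{M_\der(\dA_F)}$ is trivial for every $\bbs\in i\fa_M^*$, and the isomorphism class of $\sigma\res_{M_\der(\dA_F)}$ depends only on $[\sigma]$. By the definitions of $K_{M_\der}$ and $\fT_{M_\der}$, every irreducible constituent $\varsigma$ of $\sigma\res_{M_\der(\dA_F)}$ lies in $\fC(M_\der;K_{M_\der},\fT_{M_\der})$. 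Moreover, by the compatibility of the Harish-Chandra isomorphisms for $M_\infty$ and $M_{\der,\infty}$, the infinitesimal character of $\varsigma_\infty$ is the $\sfW_M$-orbit $\gamma_M^-(\chi_{\sigma_\infty})$, so its Casimir eigenvalue equals $\lambda_M(\gamma_M^-(\chi_{\sigma_\infty}))$, which by the $\chi_\infty$-typicality of $\sigma$ lies in the finite set $\lambda_M(\gamma_M^-(\chi_\infty))\subseteq\dC$.

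Next, the same input as in Lemma \ref{le:casimir}, namely \cite{Don82}*{Theorem~9.1}, combined with Harish-Chandra's finite-dimensionality of the space of cusp forms with prescribed infinitesimal character, prescribed level, and prescribed $K_\infty$-type, yields that only finitely many cuspidal automorphic representations $\varsigma$ of $M_\der(\dA_F)$ can satisfy the above conditions. It therefore suffices to bound the fibers of the assignment $[\sigma]\mapsto\sigma\res_{M_\der(\dA_F)}$. Using that $M=Z_M\cdot M_\der$ up to a central isogeny, once $\sigma\res_{M_\der(\dA_F)}$ is fixed the rest of $\sigma$ is determined by its central character $\omega_M\in\Omega_M(\omega)$; the condition $\sigma^{K_M}\neq 0$ bounds the ramification of $\omega_M$ and the $\sim$-equivalence mods out precisely by the unitary archimedean twists $\xi_\bbs\res_{Z_M(\dA_F)}$ with $\bbs\in i\fa_M^*$, leaving only finitely many classes.

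The main obstacle is this final fiber-finiteness step, which requires carefully tracking the interplay between the archimedean twist equivalence $\sim$ and the finite-level constraints on $\omega_M$, so as to conclude that only finitely many extensions of $\varsigma$ to $M(\dA_F)$ arise modulo $\sim$. Once this bookkeeping is done, combining with the previous finiteness of the $\varsigma$'s yields the desired finiteness of $\fC(M,\omega;K_M,\fT_M)^\heartsuit\bigcap\fC(M,\omega)^\heartsuit_{\chi_\infty}$.
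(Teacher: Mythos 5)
Your proposal is correct and follows essentially the same route as the paper: reduce to $M_\der$, observe that $\chi_\infty$-typicality forces the Casimir eigenvalue of the restricted representation into the finite set $\lambda_M(\gamma_M^-(\chi_\infty))$, and apply Donnelly's theorem to $\fC(M_\der;K_{M_\der},\fT_{M_\der})$. The paper's own proof is in fact terser than yours—it leaves the fiber-finiteness over $\fC(M_\der;K_{M_\der},\fT_{M_\der})$ (the step you flag as the main obstacle, handled via the central character constraint and the equivalence $\sim$) entirely implicit with an ``it follows,'' so your elaboration is a faithful filling-in rather than a deviation.
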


\begin{proof}
There are only finitely many elements in $\fC(M_\der;K_{M_\der},\fT_{M_\der})$ whose Casimir eigenvalue belongs to the set $\lambda_M(\gamma_M^-(\chi_\infty))$ by \cite{Don82}*{Theorem~9.1}. It follows that there are only finitely many elements in $\fC(M,\omega;K_M,\fT_M)^\heartsuit$ that are $\chi_\infty$-typical. In other words, the set $\fC(M,\omega;K_M,\fT_M)^\heartsuit\bigcap\fC(M,\omega)^\heartsuit_{\chi_\infty}$ is finite.
\end{proof}

The following lemma achieves the goal in the second step of the strategy described in Subsection \ref{ss:strategy}.

\begin{lem}\label{le:finite3}
For every standard Levi subgroup $M$ of $G$, there exists an element $\mu_\infty^M\in\cM^\sharp_\theta(\fh_\dC^*)^\sfW$ satisfying $\mu_\infty^M(\chi_\infty)\neq 0$, and such that for every $\sigma\in\fC(M,\omega;K_M,\fT_M)^\heartsuit\setminus\fC(M,\omega)^\heartsuit_{\chi_\infty}$, $\mu_\infty^M$ vanishes on $\chi_{\sigma_{\bbs,\infty}}^G$, the infinitesimal character of $\Ind^G_{P_M}(\sigma_{\bbs,\infty})$, for every $\bbs\in\fa_{M,\dC}^*$.
\end{lem}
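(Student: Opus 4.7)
The plan is to construct $\mu_\infty^M$ as a product of three pieces: one controlling the $\fb^*_M$-direction via Casimir eigenvalues, one controlling the $\fc^*_M/\fa_M^*$-direction via central characters, and one from Lemma \ref{le:multiplier1} to land in $\cM^\sharp_\theta$. The key observation is that for every $\sigma \in \fC(M,\omega)^\heartsuit$ and every $\bbs\in\fa_{M,\dC}^*$, the infinitesimal character $\chi_{\sigma_{\bbs,\infty}} = \chi_{\sigma_\infty}+\bbs$ has image $\gamma_M(\chi_{\sigma_{\bbs,\infty}}) = \gamma_M(\chi_{\sigma_\infty})$ independent of $\bbs$. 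Thus a function that factors through $\gamma_M$ and vanishes at $\gamma_M(\chi_{\sigma_\infty})$ vanishes at every $\chi_{\sigma_{\bbs,\infty}}$, and hence $\sfW$-symmetrization will handle the full $\sfW$-orbit $\chi^G_{\sigma_{\bbs,\infty}}$.

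First I would handle condition (b). By Lemma \ref{le:casimir} the set $\Lambda\coloneqq\Lambda(M,\omega;K_M,\fT_M)\subseteq\dC$ is of finite rank, so $\Lambda^{(b)}\coloneqq\Lambda\setminus\lambda_M(\gamma_M^-(\chi_\infty))$ is as well. Applying Corollary \ref{co:rapid} to the polynomial $\lambda_M\colon\fb_{M,\dC}^*\to\dC$ and $\Lambda^{(b)}$ produces a holomorphic function $\tilde\mu^{(b)}$ on $\fb_{M,\dC}^*$ of moderate vertical growth whose zero set is exactly $\lambda_M^{-1}(\Lambda^{(b)})$. To handle condition (a), I would first observe that for $\sigma\in\fC(M,\omega;K_M,\fT_M)$ the constraints (unitary, automorphic, $\omega_M|_{Z(\dA_F)}=\omega$, trivial on $K_M\cap Z_M(\dA_F^\infty)$, and fixed restriction to $Z_M(\dR)\cap K^M_{0,\infty}$ from the finite set $\fT_M$) force the central character, modulo twists by $i\fa_M^*$, to lie in a finite union of translates of a lattice in the Pontryagin dual of a compact abelian group. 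Consequently
\[
\Xi\coloneqq\{\gamma_M^+(\chi_{\sigma_\infty})\res\sigma\in\fC(M,\omega;K_M,\fT_M)^\heartsuit\}\subseteq i(\fc_M^*/\fa_M^*)
\]
is contained in a lattice $L$ of $\fc_{M,\dC}^*/\fa_{M,\dC}^*$ (after enlarging, assuming also $\gamma_M^+(\chi_\infty)\subseteq L$). Corollary \ref{co:lattice}, possibly applied a finite number of times and multiplied together, then yields $\tilde\mu^{(a)}$ on $\fc_{M,\dC}^*/\fa_{M,\dC}^*$ of moderate vertical growth, vanishing on $\Xi\setminus\gamma_M^+(\chi_\infty)$ and non-vanishing on the finite set $\gamma_M^+(\chi_\infty)$.

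Next I would assemble the pieces. Under the decomposition $\fh_\dC^*/\fa_{M,\dC}^* = (\fc_{M,\dC}^*/\fa_{M,\dC}^*)\oplus\fb_{M,\dC}^*$, set $\tilde\mu\coloneqq\tilde\mu^{(a)}\boxtimes\tilde\mu^{(b)}$, pull back along $\gamma_M$, and symmetrize:
\[
\mu^\natural(\alpha)\coloneqq\prod_{w\in\sfW}(\tilde\mu\circ\gamma_M)(w^{-1}\alpha).
\]
This lies in $\cM(\fh_\dC^*)^\sfW$ (Remark \ref{re:multiplier}(2)). By Lemma \ref{le:multiplier1}, pick $\mu^\sharp\in\cM^\sharp_\theta(\fh_\dC^*)^\sfW$ with $\mu^\sharp(\chi_\infty)\neq 0$ and set $\mu_\infty^M\coloneqq\mu^\natural\cdot\mu^\sharp$, which belongs to $\cM^\sharp_\theta(\fh_\dC^*)^\sfW$ by Remark \ref{re:multiplier}(3). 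For non-vanishing at $\chi_\infty$: evaluating at any representative $\hat\chi_\infty$, each factor $(\tilde\mu\circ\gamma_M)(w^{-1}\hat\chi_\infty)$ is nonzero, because $w^{-1}\hat\chi_\infty$ is still in the $\sfW$-orbit $\chi_\infty$, hence $\gamma_M^+(w^{-1}\hat\chi_\infty)\in\gamma_M^+(\chi_\infty)$ and $\lambda_M(\gamma_M^-(w^{-1}\hat\chi_\infty))\in\lambda_M(\gamma_M^-(\chi_\infty))$, where $\tilde\mu^{(a)}$ and $\tilde\mu^{(b)}$ are respectively nonzero. For the vanishing statement, $\sfW$-invariance of $\mu_\infty^M$ reduces to vanishing at $\chi_{\sigma_{\bbs,\infty}}$; the $w=1$ factor of $\mu^\natural$ evaluates to $\tilde\mu(\gamma_M(\chi_{\sigma_\infty}))$, and since $\sigma$ is not $\chi_\infty$-typical either $\tilde\mu^{(a)}$ vanishes at $\gamma_M^+(\chi_{\sigma_\infty})$ (failure of (a)) or $\tilde\mu^{(b)}$ vanishes at $\gamma_M^-(\chi_{\sigma_\infty})$ (failure of (b)).

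The main obstacle will be Step 2: justifying that the image $\Xi$ is contained in a lattice of $\fc_{M,\dC}^*/\fa_{M,\dC}^*$. This is a standard but nontrivial arithmetic input that reduces to describing the (Pontryagin) dual of $Z_M(F)Z(\dA_F)Z_M(\dA_F)^1\cdot(K_M\cap Z_M(\dA_F^\infty))\cdot(K^M_{0,\infty}\cap Z_M(\dR))^0\backslash Z_M(\dA_F)$; once established, the rest of the argument is a straightforward assembly of the holomorphic constructions from Subsection \ref{ss:holomorphic} and Lemma \ref{le:multiplier1}.
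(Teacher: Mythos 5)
Your proposal is correct and follows essentially the same route as the paper's proof: Corollary \ref{co:rapid} applied to $\lambda_M$ and the finite-rank set from Lemma \ref{le:casimir} for the $\fb^*_M$-direction, Corollary \ref{co:lattice} for the $\fc^*_M/\fa^*_M$-direction (with the lattice containment coming, as you anticipate, from the fact that $Z_M(F)\backslash Z_M(\dA_F)^1/(K_M\cap Z_M(\dA_F^\infty))$ is a compact abelian Lie group whose character differentials form a lattice), followed by pulling back along $\gamma_M$, taking the product over $\sfW$, and multiplying by an element of $\cM^\sharp_\theta(\fh^*_\dC)^\sfW$ from Lemma \ref{le:multiplier1}. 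The only cosmetic difference is that the paper reuses the previously fixed $\mu_\infty^0$ where you invoke Lemma \ref{le:multiplier1} afresh.
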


\begin{proof}
We first consider the special case where $\fc_M=\fa_M$. In this case, we have $\gamma_M=\gamma_M^-\colon\fh^*\to\fb^*_M$. By Lemma \ref{le:casimir}, we may apply Corollary \ref{co:rapid} to the subset $\Lambda=\Lambda(M,\omega;K_M,\fT_M)\setminus\lambda_M(\gamma_M(\chi_\infty))$, hence obtain a holomorphic function $\nu$ on $\fb_{M,\dC}^*$ that has moderate vertical growth, and vanishes exactly on $\lambda_M^{-1}(\Lambda(M,\omega;K_M,\fT_M)\setminus\lambda_M(\gamma_M(\chi_\infty)))$. We regard $\nu$ as a function on $\fh_\dC^*$ via the quotient map $\gamma_M\colon\fh_\dC^*\to\fb_{M,\dC}^*$. By construction, $\nu$ is nowhere vanishing on $\chi_\infty$, and vanishes on $\chi_{\sigma_{\bbs,\infty}}$ for every $\sigma\in\fC(M,\omega;K_M,\fT_M)^\heartsuit\setminus\fC(M,\omega)^\heartsuit_{\chi_\infty}$ and $\bbs\in\fa_{M,\dC}^*$. Finally, put
\[
\mu_\infty^M\coloneqq\mu_\infty^0\cdot\prod_{w\in\sfW}\nu\circ w,
\]
which belongs to $\cM^\sharp_\theta(\fh_\dC^*)^\sfW$ by Remark \ref{re:multiplier}(3); it satisfies the requirements in the lemma.

Now we treat the general case where $\fc_M/\fa_M$ might be nontrivial. The extra work is to deal with the factor $\gamma_M^+$. For every element $\sigma\in\fC(M,\omega;K_M,\fT_M)^\heartsuit$, the restriction of the central character of $\sigma_{\bbs}$ to $Z_M(\dA_F)^1Z(\dA_F)$, say $\omega_\sigma$, is independent of $s$. Moreover, the infinitesimal character of $\omega_{\sigma,\infty}$, which is canonically an element in $\fc_{M,\dC}^*/\fa_{M,\dC}^*$ by Remark \ref{re:adjacent}, is simply $\gamma_M^+(\chi_{\sigma_{\bbs,\infty}})$. Since $\omega_\sigma\res_{Z(\dA_F)}=\omega$, $\omega_\sigma$ is invariant under $Z_M(F)\cdot(K_M\cap Z_M(\dA_F^\infty))$, and $Z_M(F)\backslash Z_M(\dA_F)^1/K_M\cap Z_M(\dA_F^\infty)$ is a compact abelian real Lie group, there exists a lattice $L_M$ of $\fc_{M,\dC}^*/\fa_{M,\dC}^*$ such that $\gamma_M^+(\chi_{\sigma_{\bbs,\infty}})$ is contained in $L_M$ for every $\sigma\in\fC(M,\omega;K_M,\fT_M)^\heartsuit$. Applying Corollary \ref{co:lattice} to $U=\fc_M^*/\fa_M^*$, $L=L_M$, and $A=\gamma_M^+(\chi_\infty)$, we obtain a holomorphic function $\nu^+$ on $\fc_{M,\dC}^*/\fa_{M,\dC}^*$ that has moderate vertical growth, vanishes on $L_M\setminus\gamma_M^+(\chi_\infty)$, and is nowhere vanishing on $\gamma_M^+(\chi_\infty)$. On the other hand, similar to the special case, we have a holomorphic function $\nu^-$ on $\fb_{M,\dC}^*$ that has moderate vertical growth, and vanishes exactly on $\lambda_M^{-1}(\Lambda(M,\omega;K_M,\fT_M)\setminus\lambda_M(\gamma_M^-(\chi_\infty)))$. Put
\[
\nu\coloneqq\nu^+\times\nu^-\colon\fc_{M,\dC}^*/\fa_{M,\dC}^*\oplus\fb_{M,\dC}^*\to\dC,
\]
which we regard as a function on $\fh_\dC^*$ via the quotient map $\fh_\dC^*\to\fh_\dC^*/\fa_{M,\dC}^*$. As in the special case, we put
\[
\mu_\infty^M\coloneqq\mu_\infty^0\cdot\prod_{w\in\sfW}\nu\circ w,
\]
which belongs to $\cM^\sharp_\theta(\fh_\dC^*)^\sfW$ and satisfies the requirements in the lemma.
\end{proof}

By definition, we have an identification $\coprod_{M}\fC(M,\omega)=\fD(G,\omega)$, which induces a surjective map
\begin{align}\label{eq:component}
\coprod_{M}\fC(M,\omega)^\heartsuit\to\fD(G,\omega)^\heartsuit
\end{align}
by passing to the equivalence relations. We denote by $\fD(G,\omega;K,\fT)^\heartsuit$ the image of
\[
\coprod_{M}\fC(M,\omega;K_M,\fT_M)^\heartsuit
\]
under the map \eqref{eq:component}, and $\fD(G,\omega;K,\fT)^\heartsuit_{\chi_\infty}$ the image of
\[
\coprod_{M}\fC(M,\omega;K_M,\fT_M)^\heartsuit\cap\fC(M,\omega)^\heartsuit_{\chi_\infty}
\]
under the map \eqref{eq:component}. Now we can construct the desired function $\mu_\infty$ in the following proposition.

\begin{proposition}\label{pr:isolation1}
The function
\[
\mu_\infty\coloneqq\mu_\infty^0\cdot\prod_{M}\mu^M_\infty,
\]
in which the product is taken over standard Levi subgroups of $G$ and $\mu^M_\infty$ is a function obtained from Lemma \ref{le:finite3},\footnote{In fact, it suffices to take the product over a set of representatives of standard Levi subgroups of $G$ with respect to the conjugation action of $\sfW(G,M_0)$.} belongs to $\cM^\sharp_\theta(\fh_\dC^*)^\sfW$ and satisfies $\mu_\infty(\chi_\infty)\neq 0$. Moreover, for every $f\in\cS(G(\dA_F))_K$, the endomorphism $\rR(\mu_\infty\star f)$ of $L^2(G(F)\backslash G(\dA_F)/K,\omega)$ annihilates the subspace $L^2_{(M,\sigma)}(G(F)\backslash G(\dA_F)/K,\omega)$ if $(M,\sigma)$ does not belong to $\fD(G,\omega;K,\fT)^\heartsuit_{\chi_\infty}$.
\end{proposition}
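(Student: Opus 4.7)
The first two assertions are immediate bookkeeping: $\mu_\infty\in\cM^\sharp_\theta(\fh_\dC^*)^\sfW$ because by Remark \ref{re:multiplier}(2) this space is stable under multiplication and each of $\mu_\infty^0$ and $\mu_\infty^M$ lies in it; and $\mu_\infty(\chi_\infty)\neq 0$ since each factor is non-vanishing at $\chi_\infty$ by construction.

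For the spectral assertion, the plan is to use the direct integral decomposition of $L^2_{(M,\sigma)}(G(F)\backslash G(\dA_F),\omega)$ into parabolic inductions $\rI^G_{P_M}(\sigma_{\bbs})$ over $\bbs\in i\fa_M^*$, as recalled at the end of Subsection \ref{ss:discrete}. By Theorem \ref{th:multiplier}, $\mu_\infty\star$ acts on each such constituent via the scalar $\mu_\infty(\chi^G_{\sigma_{\bbs,\infty}})$; since $K$-invariants only retain the parameters $\bbs$ for which $\rI^G_{P_M}(\sigma_{\bbs})^K\neq 0$, I need to verify that $\mu_\infty(\chi^G_{\sigma_{\bbs,\infty}})=0$ for every such $\bbs$ whenever $(M,\sigma)\notin\fD(G,\omega;K,\fT)^\heartsuit_{\chi_\infty}$. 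I would split this into two cases according to whether $(M,\sigma)$ lies in $\fD(G,\omega;K,\fT)^\heartsuit$ or not.

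In the first case $(M,\sigma)\notin\fD(G,\omega;K,\fT)^\heartsuit$, the character $\xi_{\bbs}$ is trivial on every compact subgroup of $M(\dA_F)$ for $\bbs\in i\fa_M^*$, so $\rI^G_{P_M}(\sigma_{\bbs})^K\neq 0$ forces $\rI^G_{P_M}(\sigma)^K\neq 0$; the defining property of $K_{M,\tS}$ together with the hyperspecial theory at $v\notin\tS$ then yields $\sigma^{K_M}\neq 0$, so the failure of $\sigma$ to lie in $\fC(M,\omega;K_M,\fT_M)$ must come from the archimedean side. The defining property of $\fT_M$ thus implies that no constituent of $\rI^G_{P_M}(\sigma_{\bbs,\infty})\res_{K_{0,\infty}}$ carries a $\fT$-type, and Lemma \ref{le:multiplier0} applied to $\mu_\infty^0$ (whose associated $K_{0,\infty}$-type set is $\fT$) produces the desired vanishing $\mu_\infty^0(\chi^G_{\sigma_{\bbs,\infty}})=0$.

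In the second case $(M,\sigma)\in\fD(G,\omega;K,\fT)^\heartsuit\setminus\fD(G,\omega;K,\fT)^\heartsuit_{\chi_\infty}$, I would pick a standard Levi representative $(M',\sigma')\approx(M,\sigma)$ with $\sigma'\in\fC(M',\omega;K_{M'},\fT_{M'})^\heartsuit\setminus\fC(M',\omega)^\heartsuit_{\chi_\infty}$ and invoke Lemma \ref{le:finite3} for $M'$: the factor $\mu_\infty^{M'}$ vanishes on $\chi^G_{\sigma'_{\bbs'',\infty}}$ for every $\bbs''\in\fa_{M',\dC}^*$, and the $\sfW$-invariance of $\mu_\infty^{M'}$ combined with the equality (as $\sfW$-orbits) of $\chi^G_{\sigma_{\bbs,\infty}}$ and $\chi^G_{\sigma'_{\bbs',\infty}}$ for a suitable $\bbs'$ will give $\mu_\infty^{M'}(\chi^G_{\sigma_{\bbs,\infty}})=0$. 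The main obstacle, rather mild, is the bookkeeping between the cuspidal datum $(M,\sigma,\bbs)$ and its standard-Levi representative $(M',\sigma',\bbs')$ so that the vanishing lemmas apply uniformly; this reduces entirely to $\sfW$-invariance of each factor of $\mu_\infty$ together with the footnote that the product over all standard Levis may be replaced by one over $\sfW(G,M_0)$-representatives.
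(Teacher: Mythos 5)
Your proof is correct and follows essentially the same route as the paper: reduce, via the description of the component by (pseudo-)Eisenstein data, to showing that $\rI^G_{P_M}(\sigma_{\bbs})(\mu_\infty\star f)=0$, and then argue case by case exactly as the paper does — absence of $K$-invariants forces $\rI^G_{P_M}(\sigma_{\bbs})(f)=0$ by the choice of $K_M$, absence of $\fT_M$-types forces $\mu_\infty^0(\chi^G_{\sigma_{\bbs,\infty}})=0$ via the choice of $\fT_M$ and Lemma \ref{le:multiplier0}, and in the remaining case Lemma \ref{le:finite3} (applied, as you note, to a suitable standard-Levi representative, using $\sfW$-invariance) kills the infinitesimal character. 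The only point where the paper is more careful than your sketch is the justification that $\mu_\infty\star f$ acts by the scalar $\mu_\infty(\chi^G_{\sigma_{\bbs,\infty}})$ on the whole (non-irreducible, continuously varying) component: rather than invoking a literal direct-integral decomposition, the paper first reduces to bi-$K_{0,\infty}$-finite $f$ by continuity of $\mu_\infty\star$ (Remark \ref{re:continuous}) and then uses $\rR(\mu_\infty\star f)E(-,\Phi)=E(-,\rR(\mu_\infty\star f)\Phi)$ for Paley--Wiener $\Phi$; you should include this small technical step.
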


Note that by Lemma \ref{le:finite2}, $\fD(G,\omega;K,\fT)^\heartsuit_{\chi_\infty}$ is a finite set.

\begin{proof}
The first statement is clear from the construction.

By the description of $L^2_{(M,\sigma)}(G(F)\backslash G(\dA_F)/K,\omega)$ recalled in the last subsection, it suffices to show that $\rR(\mu_\infty\star f)E(-,\Phi)=0$ for every Paley--Wiener functions $\Phi$. By Remark \ref{re:continuous}, it suffices to consider the case where $f$, hence $\mu_\infty\star f$, are bi-$K_{0,\infty}$-finite. Then since $\rR(\mu_\infty\star f)E(-,\Phi)=E(-,\rR(\mu_\infty\star f)\Phi)$, it suffices to show that $\Ind^G_{P_M}(\sigma_{\bbs})(\mu_\infty\star f)=0$ for every $\bbs\in\fa_{M,\dC}^*$. Note that we have
\[
\Ind^G_{P_M}(\sigma_{\bbs})(\mu_\infty\star f)=\mu_\infty(\chi_{\sigma_{\bbs,\infty}}^G)\cdot\Ind^G_{P_M}(\sigma_{\bbs})(f).
\]
Take an element $(M,\sigma)\in\fD(G,\omega)$ whose image in $\fD(G,\omega)^\heartsuit$ does not belong to $\fD(G,\omega;K,\fT)^\heartsuit_{\chi_\infty}$. If $\sigma^{K_M}=\{0\}$, then we have $\Ind^G_{P_M}(\sigma_{\bbs})(f)=0$ by our choice of $K_M$. If $\sigma\res_{K^M_{0,\infty}}$ does not contain any member from $\fT_M$, then we have $\mu_\infty(\chi_{\sigma_{\bbs,\infty}}^G)=\mu_\infty^0(\chi_{\sigma_{\bbs,\infty}}^G)=0$ for every $\bbs\in\fa_{M,\dC}^*$ by our choice of $\fT_M$. Otherwise, we must have $\mu_\infty(\chi_{\sigma_{\bbs,\infty}}^G)=0$ for every $\bbs\in\fa_{M,\dC}^*$ by the property of $\mu^M_\infty$ from Lemma \ref{le:finite3}. The proposition follows.
\end{proof}

\subsection{Proof of results}
\label{ss:isolation_proof}

To annihilate components $L^2_{(M,\sigma)}(G(F)\backslash G(\dA_F)/K,\omega)$ in which $\sigma$ is possibly $\chi_\infty$-typical, we need mixed multipliers from both archimedean and nonarchimedean places.

\begin{definition}\label{de:occur}
For an element $(M,\sigma)\in\fD(G,\omega)$, we say that the $\tT$-character \emph{$\chi$ occurs in $(M,\sigma)$} if there exists $\bbs\in\fa_{M,\dC}^*$ such that
\begin{itemize}
  \item $\chi^G_{\sigma_{\bbs},\infty}=\chi_\infty$;

  \item for every prime $v$ of $F$ not in $\tT$, the spherical representation corresponding to $\chi_v$ is a constituent of $\Ind^G_{P_M}(\sigma_{\bbs,v})$.
\end{itemize}
It is clear that such property depends only on the equivalence class of $(M,\sigma)$ in $\fD(G,\omega)^\heartsuit$. We denote by $\fD(G,\omega)^\heartsuit_\chi$ the subset of $\fD(G,\omega)^\heartsuit$ in which $\chi$ occurs.
\end{definition}

We need to annihilate every element $(M,\sigma)\in\fD(G,\omega)$ in which $\chi$ does not occur, which is possible by the following proposition.

\begin{proposition}\label{pr:isolation2}
Let $(M,\sigma)$ be an element in $\fD(G,\omega)$. Assume that $\chi$ does not occur in $(M,\sigma)$. Then there exists a $\tT$-multiplier $\mu_{(M,\sigma)}$ of $\cS(G(\dA_F))_K$ satisfying
\begin{enumerate}
  \item for every $f\in\cS(G(\dA_F))_K$, the endomorphism $\rR(\mu_{(M,\sigma)}\star f)$ of $L^2(G(F)\backslash G(\dA_F)/K,\omega)$ annihilates $L^2_{(M,\sigma)}(G(F)\backslash G(\dA_F)/K,\omega)$;

  \item $\mu_{(M,\sigma)}(\chi)\neq 0$.
\end{enumerate}
\end{proposition}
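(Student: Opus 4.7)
The plan is to reformulate conditions (1) and (2) as functional identities on $\fa_{M,\dC}^*$ and then exploit the analytic correlation between archimedean and non-archimedean parameterizations of the family $\{\Ind^G_{P_M}(\sigma_\bbs)\}_\bbs$ to build a cancellation. For a multiplier of the form $\mu = \sum_i \mu_{\infty,i} \otimes h_i$, the action of $\rR(\mu \star f)$ on $L^2_{(M,\sigma)}(G(F)\backslash G(\dA_F)/K,\omega)$ is governed, via the pseudo-Eisenstein series construction recalled in Subsection \ref{ss:discrete}, by the holomorphic function
\[
g_\mu(\bbs) \coloneqq \sum_i \mu_{\infty,i}(\chi^G_{\sigma_{\bbs,\infty}}) \cdot h_i\bigl(\chi^\tT_{\Ind^G_{P_M}(\sigma_\bbs)}\bigr)
\]
on $\fa_{M,\dC}^*$. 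Condition (1) is thus equivalent (by holomorphicity) to $g_\mu \equiv 0$, while (2) is the non-vanishing of $\mu(\chi) = \sum_i \mu_{\infty,i}(\chi_\infty) h_i(\chi^{\infty,\tT})$.

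Consider the set $B \coloneqq \{\bbs \in \fa_{M,\dC}^* : \chi^G_{\sigma_{\bbs,\infty}} = \chi_\infty\}$, which is finite since $\bbs \mapsto \chi_{\sigma_\infty} + \bbs$ is affine linear and $\chi_\infty$ is a single $\sfW$-orbit. If $B = \emptyset$, then $\chi_\infty$ avoids the union of affine subspaces $\sfW \cdot (\chi_{\sigma_\infty} + \fa_{M,\dC}^*) \subseteq \fh^*_\dC$, and I would take $\mu_{(M,\sigma)} \coloneqq \ell \cdot \mu_\infty^0 \otimes \mathbf{1}$, where $\mu_\infty^0 \in \cM^\sharp_\theta(\fh^*_\dC)^\sfW$ is any element with $\mu_\infty^0(\chi_\infty) \neq 0$ (from Lemma \ref{le:multiplier1}) and $\ell$ is the $\sfW$-symmetrization of an affine linear form vanishing on this union but not at $\chi_\infty$; the product lies in $\cM^\sharp_\theta(\fh^*_\dC)^\sfW$ by Remark \ref{re:multiplier}(3).

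If $B \neq \emptyset$, fix $\bbs_0 \in B$. Since $\chi$ does not occur at $\bbs_0$ by hypothesis, there exists a prime $v_0 \notin \tT$ such that the Satake parameter of $\chi_{v_0}$ differs from that of the spherical constituent of $\Ind^G_{P_M}(\sigma_{\bbs_0, v_0})$. The crucial observation is the following analytic correlation: for every $h \in \cH_{G,v_0}$, the Hecke eigenvalue $h(\chi^\tT_{\Ind^G_{P_M}(\sigma_\bbs)})$ is a Laurent polynomial in the quantities $q_{v_0}^{\langle\lambda,\bbs\rangle}$ for $\lambda$ ranging over a finite subset of cocharacters of $Z_M$, and the same expression, with $\bbs$ replaced by $\alpha - \chi_{\sigma_\infty}$ for $\alpha \in \fh^*_\dC$, defines, after $\sfW$-symmetrization, a $\sfW$-invariant holomorphic function $P_h$ on $\fh^*_\dC$ of moderate growth on vertical strips, satisfying $P_h(\chi^G_{\sigma_{\bbs,\infty}}) = h(\chi^\tT_{\Ind^G_{P_M}(\sigma_\bbs)})$ for every $\bbs$. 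Choose $h$ so that $h(\chi_{v_0}) \neq h(\chi^\tT_{\Ind^G_{P_M}(\sigma_{\bbs_0})})$, equivalently $P_h(\chi_\infty) \neq h(\chi_{v_0})$, and set
\[
\mu_{(M,\sigma)} \coloneqq \mu_\infty^0 P_h \otimes \mathbf{1} - \mu_\infty^0 \otimes h,
\]
which is a $\tT$-multiplier by Remark \ref{re:multiplier}(3). A direct computation yields
\[
g_{\mu_{(M,\sigma)}}(\bbs) = \mu_\infty^0(\chi^G_{\sigma_{\bbs,\infty}}) \bigl[P_h(\chi^G_{\sigma_{\bbs,\infty}}) - h(\chi^\tT_{\Ind^G_{P_M}(\sigma_\bbs)})\bigr] \equiv 0
\]
and $\mu_{(M,\sigma)}(\chi) = \mu_\infty^0(\chi_\infty) \bigl[P_h(\chi_\infty) - h(\chi_{v_0})\bigr] \neq 0$.

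The main obstacle is the rigorous construction of $P_h$: the reproduction identity $P_h(\chi^G_{\sigma_{\bbs,\infty}}) = h(\chi^\tT_{\Ind^G_{P_M}(\sigma_\bbs)})$ requires translating between the Satake parameterization on the non-archimedean side and the Harish-Chandra parameterization on the archimedean side, and also careful bookkeeping of the $\sfW_M$-orbit of lifts of $\chi_{\sigma_\infty}$ against the full Weyl group $\sfW$. Once this is arranged (for instance by working with a basis of cocharacters of $Z_M^\vee$ and averaging over $\sfW/\sfW_M$), the rest of the argument is a formal verification.
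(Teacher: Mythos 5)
Your reduction of (1) to the vanishing of $g_\mu$ on all of $\fa_{M,\dC}^*$ matches the paper, and your case $B=\emptyset$ (and $M=G$) is fine. The gap is in the main case: the interpolant $P_h$ generally does not exist, and this is not the bookkeeping formality you suggest at the end. A $\sfW$-invariant holomorphic $P_h$ on $\fh^*_\dC$ with $P_h(\chi^G_{\sigma_{\bbs,\infty}})=h(\chi^\tT_{\rI^G_{P_M}(\sigma_\bbs)})$ for \emph{every} $\bbs$ can only exist if the right-hand side factors through the $\sfW$-orbit of $\alpha_\sigma+\bbs$, i.e.\ if $h(\chi^\tT_{\rI^G_{P_M}(\sigma_\bbs)})=h(\chi^\tT_{\rI^G_{P_M}(\sigma_{\bbs'})})$ whenever $\alpha_\sigma+\bbs$ and $\alpha_\sigma+\bbs'$ are $\sfW$-conjugate. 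This fails: take $G=\GL_2$, $M$ the diagonal torus, $\sigma=\chi_1\boxtimes\chi_2$ with $\chi_{1,\infty}=\chi_{2,\infty}$ but $\chi_{1,v_0}\neq\chi_{2,v_0}$ (e.g.\ $\chi_2$ a nontrivial finite-order character unramified at $v_0$). Then $\bbs$ and $-\bbs$ give the same infinitesimal-character orbit $\{a+s,a-s\}$, while for $h$ the standard Hecke operator the eigenvalue $\chi_{1,v_0}(\varpi)q^{s}+\chi_{2,v_0}(\varpi)q^{-s}$ is not symmetric under $s\mapsto-s$. The same inconsistency already hits your formula $P_h(\chi_\infty)=h(\chi^\tT_{\rI^G_{P_M}(\sigma_{\bbs_0})})$ as soon as $|B|>1$. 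Averaging over $\sfW/\sfW_M$ does not repair it: the average no longer reproduces $h(\chi^\tT_{\rI^G_{P_M}(\sigma_\bbs)})$, so the cancellation $g_\mu\equiv0$ is lost.

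This obstruction is exactly what the paper's construction is built to evade. Rather than one $\sfW$-invariant interpolant, it forms for each $w'\in\sfW$ the \emph{non-invariant} difference $\nu^\dag_{w'}=\sum_w C_w\bigl(\nu_w-\nu_w(\chi^G_{\sigma_{\ell(w'(-))-\ell(\alpha_\sigma),v[w]}})\bigr)$ -- a Hecke element minus its archimedean avatar computed along the single branch $\alpha\mapsto\ell(w'\alpha)-\ell(\alpha_\sigma)$ -- and then takes the product $\nu^\dag=\prod_{w'\in\sfW}\nu^\dag_{w'}$. The product is $\sfW$-invariant because $\sfW$ permutes the factors; it vanishes at every $\chi^G_{\sigma_{\bbs}^\tT}$ because the $w'=1$ factor alone does (evaluated at the representative $\alpha_\sigma+\bbs$); and it is nonzero at $\chi$ because the constants $C_w$ are chosen so that every factor is nonzero there, only $\nu^\dag_{w,w}$ being guaranteed nonzero at $\chi$ a priori -- which is also why one needs a place $v[w]$ (possibly archimedean) \emph{for each} $w\in\sfW$, not just your single $(\bbs_0,v_0)$. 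Your two-term multiplier is in effect only the $w'=1$ factor; without the product over $\sfW$ it cannot be made $\sfW$-invariant while retaining the reproduction identity.
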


\begin{proof}
We may assume that $\sigma^{K_M}\neq\{0\}$; otherwise $L^2_{(M,\sigma)}(G(F)\backslash G(\dA_F)/K,\omega)=\{0\}$, hence the proposition is trivial. Then for every $\bbs\in\fa_{M,\dC}^*$ and every prime $v$ of $F$ not in $\tT$, we have $\rI^G_{P_M}(\sigma_{\bbs,v})^{K_v}\neq\{0\}$, which gives rise to a character $\chi^G_{\sigma_{\bbs,v}}\colon\cH_{G,v}\to\dC$. Then we have a character
\[
\chi^G_{\sigma^{\infty,\tT}_{\bbs}}\colon\cH_G^\tT\to\dC.
\]
We also recall the infinitesimal character $\chi^G_{\sigma_{\bbs,\infty}}$ of $\rI^G_{P_M}(\sigma_{\bbs,\infty})$, which is simply the $\sfW$-orbit of $\chi_{\sigma_{\bbs,\infty}}$ in $\fh_\dC^*$. Thus, we obtain a $\tT$-character $\chi_{\sigma_{\bbs}^\tT}^G=(\chi^G_{\sigma_{\bbs,\infty}},\chi^G_{\sigma^{\infty,\tT}_{\bbs}})$ for $G$. We suppress the superscript $G$ when $M=G$.

We first consider the easy case where $M=G$. Since $\chi$ does not occur in $(G,\sigma)$, we have either $\chi_{\sigma_\infty}\neq\chi_\infty$ or $\chi_{\sigma^{\infty,\tT}}\neq\chi^{\infty,\tT}$. In the first case, by Lemma \ref{le:multiplier1}, we can find an element $\mu_{(M,\sigma)}\in\cM^\sharp_\theta(\fh_\dC^*)^\sfW$ satisfying $\mu_{(M,\sigma)}(\chi_\infty)\neq 0$. After multiplying by a suitable element in $\dC[\fh_\dC^*]^\sfW$, we may further require that $\mu_{(M,\sigma)}(\chi_{\sigma_\infty})=0$. In the second case, we can certainly find an element $\mu_{(M,\sigma)}\in\cH_G^\tT$ satisfying $\mu_{(M,\sigma)}(\chi_{\sigma^{\infty,\tT}})=0$ but $\mu_{(M,\sigma)}(\chi^{\infty,\tT})\neq 0$. The proposition follows.

We now consider the hard case where $M$ is a proper standard Levi subgroup. Recall that $\fa_M^*$ is canonically a subspace of $\fh^*$. We fix
\begin{itemize}
  \item an element $\alpha_\sigma$ in $\chi_{\sigma_\infty}\subseteq\fh_\dC^*$,

  \item an element $\alpha_\chi$ in $\chi_\infty\subseteq\fh_\dC^*$,

  \item a linear splitting map $\ell\colon\fh^*\to\fa_M^*$ of the subspace $\fa_M^*\subseteq\fh^*$.
\end{itemize}
For $w\in \sfW$, put $\bbs_w\coloneqq\ell(w\alpha_\chi)-\ell(\alpha_\sigma)\in\fa_{M,\dC}^*$. Since $\chi$ does not occur in $(M,\sigma)$, for every $w\in\sfW$, we can take $v[w]$ to be either $\infty$ or a prime of $F$ not in $\tT$ such that $\chi^G_{\sigma_{\bbs_w,v[w]}}\neq\chi_{v[w]}$. It allows us to choose an element $\nu_w\in\cH_{G,v[w]}$ (we regard $\cH_{G,\infty}$ as $\dC[\fh_\dC^*]^\sfW$) such that
\begin{align}\label{eq:isolation3}
\nu_w(\chi_{v[w]})\neq\nu_w(\chi^G_{\sigma_{\bbs_w,v[w]}}).
\end{align}

Note that for every $w'\in \sfW$, the function $\alpha\mapsto\nu_w(\chi^G_{\sigma_{\ell(w'\alpha)-\ell(\alpha_\sigma),v[w]}})$ is of exponential type, hence belongs to $\cO_\expo(\fh_\dC^*)$. We put
\[
\nu^\dag_{w,w'}\coloneqq\nu_w-\nu_w(\chi^G_{\sigma_{\ell\circ w'(-)-\ell(\alpha_\sigma),v[w]}}),
\]
regarded as an element in $\cO_\expo(\fh_\dC^*)\otimes\cH_G^\tT$. Then \eqref{eq:isolation3} simply says $\nu^\dag_{w,w}(\alpha_\chi,\chi^{\infty,\tT})\neq 0$. From this, it is elementary to see that there exist complex constants $\{C_w\}_{w\in \sfW}$ such that
\[
\sum_{w\in \sfW}C_w\nu^\dag_{w,w'}(\alpha_\chi,\chi^{\infty,\tT})\neq 0
\]
for every $w'\in \sfW$. Put
\[
\nu^\dag_{w'}\coloneqq\sum_{w\in \sfW}C_w\nu^\dag_{w,w'}.
\]
Then put
\[
\nu^\dag\coloneqq\prod_{w'\in \sfW}\nu^\dag_{w'},
\]
which is an element in $\cO_\expo(\fh_\dC^*)^{\sfW}\otimes\cH_G^\tT$, satisfying $\nu^\dag(\chi)\neq 0$.

Now we claim that $\nu^\dag(\chi^G_{\sigma^\tT_{\bbs}})=0$ for every $\bbs\in\fa_{M,\dC}^*$. Note that the element $\alpha_\sigma+\bbs\in\fh_\dC^*$ belongs to $\chi^G_{\sigma_{\bbs,\infty}}$. Since we have
\begin{align*}
\nu^\dag_1(\alpha_\sigma+\bbs,\chi^G_{\sigma^{\infty,\tT}_{\bbs}})
&=\sum_{w\in \sfW}C_w\nu^\dag_{w,1}(\alpha_\sigma+\bbs,\chi^G_{\sigma^{\infty,\tT}_{\bbs}}) \\
&=\sum_{w\in\sfW}
C_w\(\nu_w(\chi^G_{\sigma_{\bbs,v[w]}})-\nu_w(\chi^G_{\sigma_{\ell(\alpha_\sigma+\bbs)-\ell(\alpha_\sigma),v[w]}})\)=0,
\end{align*}
the claim follows. It is easy to find an element $\mu^\dag\in\cM(\fh_\dC^*)^\sfW$ satisfying $\mu^\dag(\chi_\infty)\neq 0$, and such that
$\mu^\dag\cdot\nu^\dag$ belongs to $\cM(\fh_\dC^*)^{\sfW}\otimes\cH_G^\tT$. Then by Remark \ref{re:multiplier}(3),
\[
\mu_{(M,\sigma)}\coloneqq\mu_\infty^0\cdot\mu^\dag\cdot\nu^\dag
\]
is a $\tT$-multiplier of $\cS(G(\dA_F))_K$, satisfying $\mu_{(M,\sigma)}(\chi)\neq 0$.

It remains to show that for every $f\in\cS(G(\dA_F))_K$, the endomorphism $\rR(\mu_{(M,\sigma)}\star f)$ annihilates $L^2_{(M,\sigma)}(G(F)\backslash G(\dA_F)/K,\omega)$. However, this follows from the same argument as in the proof of Proposition \ref{pr:isolation1} as we have $\mu_{(M,\sigma)}(\chi^G_{\sigma^\tT_{\bbs}})=0$ for every $\bbs\in\fa_{M,\dC}^*$. Thus, the proposition follows.
\end{proof}

\begin{remark}
The idea of constructing the element $\nu^\dag$ in the proof of Proposition \ref{pr:isolation2} is inspired by \cite{LV07}. The similar construction also appeared in \cite{YZ} in the case where $G=\PGL_2$ and $F$ is a function field.
\end{remark}

The following theorem is the most general form on isolating cuspidal components, from which we will deduce Theorem \ref{th:isolation} and Theorem \ref{th:isolation_bis}.

\begin{theorem}\label{th:isolation_tri}
Let $\chi$ be a $\tT$-character. There exists a $\tT$-multiplier $\mu$ of $\cS(G(\dA_F))_K$ such that
\begin{enumerate}
  \item for every $f\in\cS(G(\dA_F))_K$, the image of the endomorphism
      \[
      \rR(\mu\star f)\colon L^2(G(F)\backslash G(\dA_F)/K,\omega)\to L^2(G(F)\backslash G(\dA_F)/K,\omega)
      \]
      is contained in
      \[
      \bigoplus_{(M,\sigma)\in\fD(G,\omega)^\heartsuit_\chi}L^2_{(M,\sigma)}(G(F)\backslash G(\dA_F)/K,\omega)
      \]
      under the decomposition \eqref{de:component}, where $\fD(G,\omega)^\heartsuit_\chi$ is introduced in Definition \ref{de:occur};

  \item $\mu(\chi)=1$.
\end{enumerate}
\end{theorem}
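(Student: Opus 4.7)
The plan is to combine Proposition \ref{pr:isolation1} and Proposition \ref{pr:isolation2} and then rescale. First I take the function $\mu_\infty \in \cM^\sharp_\theta(\fh_\dC^*)^\sfW$ provided by Proposition \ref{pr:isolation1}; it satisfies $\mu_\infty(\chi_\infty) \neq 0$, and for every $f \in \cS(G(\dA_F))_K$ the operator $\rR(\mu_\infty \star f)$ kills every cuspidal component $L^2_{(M,\sigma)}(G(F)\backslash G(\dA_F)/K,\omega)$ with $(M,\sigma) \notin \fD(G,\omega;K,\fT)^\heartsuit_{\chi_\infty}$. Since each such component is $G(\dA_F)$-stable, and hence preserved by $\rR(h)$ for every $h \in \cS(G(\dA_F))_K$, annihilating a component is equivalent to excluding it from the image of $\rR(h)$. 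Together with Lemma \ref{le:finite2}, this confines the image of $\rR(\mu_\infty \star f)$ to the sum of the finitely many components indexed by $\fD(G,\omega;K,\fT)^\heartsuit_{\chi_\infty}$.

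Next I let $\Sigma \subseteq \fD(G,\omega;K,\fT)^\heartsuit_{\chi_\infty}$ be the (finite) subset of equivalence classes $(M,\sigma)$ in which $\chi$ does \emph{not} occur in the sense of Definition \ref{de:occur}. For each $(M,\sigma) \in \Sigma$, Proposition \ref{pr:isolation2} furnishes a $\tT$-multiplier $\mu_{(M,\sigma)}$ with $\mu_{(M,\sigma)}(\chi) \neq 0$ such that $\rR(\mu_{(M,\sigma)} \star f)$ annihilates $L^2_{(M,\sigma)}(G(F)\backslash G(\dA_F)/K,\omega)$ for every $f$. I then form
\[
\mu_0 \coloneqq \mu_\infty \cdot \prod_{(M,\sigma) \in \Sigma} \mu_{(M,\sigma)}.
\]
By Remark \ref{re:multiplier}(2)(3), this product again lies in $\cM^\sharp_\theta(\fh_\dC^*)^\sfW \otimes \cH_G^\tT$, hence is a $\tT$-multiplier. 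The individual multiplier operators all commute and each continues to annihilate its target component after multiplication by the remaining factors; consequently the image of $\rR(\mu_0 \star f)$ is contained in the sum of components indexed by $\fD(G,\omega;K,\fT)^\heartsuit_{\chi_\infty} \setminus \Sigma$, and this set lies inside $\fD(G,\omega)^\heartsuit_\chi$ by the very definition of $\Sigma$. This yields property (1).

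To arrange property (2), I note that $\mu_0(\chi) = \mu_\infty(\chi_\infty) \cdot \prod_{(M,\sigma) \in \Sigma} \mu_{(M,\sigma)}(\chi) \neq 0$, and define $\mu \coloneqq \mu_0(\chi)^{-1}\cdot \mu_0$. Multiplication by a nonzero scalar preserves both the $\tT$-multiplier structure and property (1), while giving $\mu(\chi) = 1$.

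The only conceptual point requiring care in this plan is the distinction between \emph{kernel} and \emph{image} of the operators $\rR(g \star f)$: this is resolved by the observation, used above, that each cuspidal component is invariant under these operators, so component-wise annihilation statements automatically translate into restrictions on the image. Beyond this, the argument is a routine combination of the tools already assembled in Propositions \ref{pr:isolation1} and \ref{pr:isolation2}, and I do not expect any further substantive obstacle.
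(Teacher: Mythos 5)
Your proposal is correct and follows essentially the same route as the paper: take $\mu_\infty$ from Proposition \ref{pr:isolation1}, multiply by the finitely many $\mu_{(M,\sigma)}$ from Proposition \ref{pr:isolation2} for (representatives of) the classes in $\fD(G,\omega;K,\fT)^\heartsuit_{\chi_\infty}\setminus\fD(G,\omega)^\heartsuit_\chi$, and rescale by $\mu(\chi)^{-1}$. Your explicit remark that each cuspidal component is $\rR$-stable, so that annihilation statements translate into image statements, is a point the paper leaves implicit but is exactly the right justification.
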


In particular, Theorem \ref{th:isolation_tri_pre} follows by taking $\tT=\tS$ and $\chi=\chi_{\pi^\tT}$, and the strong multiplicity one property for $G=\Res_{F'/F}\GL_n$ \cite{PS79}.

\begin{proof}
By Lemma \ref{le:finite2}, we know that $\fD(G,\omega;K,\fT)^\heartsuit_{\chi_\infty}$ is a finite set. Thus, we may choose a finite subset $\fD\subseteq\fD(G,\omega)$ that maps surjectively to the (finite) set $\fD(G,\omega;K,\fT)^\heartsuit_{\chi_\infty}\setminus\fD(G,\omega)^\heartsuit_\chi$. Now we put
\[
\mu\coloneqq\mu_\infty\cdot\prod_{(M,\sigma)\in\fD}\mu_{(M,\sigma)}
\]
from Proposition \ref{pr:isolation1} and Proposition \ref{pr:isolation2}, which is a $\tT$-multiplier of $\cS(G(\dA_F))_K$. Then (1) is satisfied. For (2), we only need to replace $\mu$ by $\mu(\chi)^{-1}\mu$ as $\mu(\chi)\neq 0$. The theorem follows.
\end{proof}

\begin{proof}[Proof of Theorem \ref{th:isolation} and Theorem \ref{th:isolation_bis}]
Note that the intersection
\[
L^2_\cusp(G(F)\backslash G(\dA_F)/K,\omega)\bigcap
\underset{(M,\sigma)\in\fD(G,\omega)^\heartsuit_\chi}{\widehat\bigoplus}L^2_{(M,\sigma)}(G(F)\backslash G(\dA_F)/K,\omega)
\]
is exactly $L^2_\cusp(G(F)\backslash G(\dA_F)/K,\omega)[\chi]$. Thus, Theorem \ref{th:isolation_bis} immediately follows from Theorem \ref{th:isolation_tri}. Moreover, Theorem \ref{th:isolation} follows from Theorem \ref{th:isolation_tri} and the relation
\[
\underset{(M,\sigma)\in\fD(G,\omega)^\heartsuit_\chi}{\widehat\bigoplus}L^2_{(M,\sigma)}(G(F)\backslash G(\dA_F)/K,\omega)
\subseteq L^2_\cusp(G(F)\backslash G(\dA_F)/K,\omega)
\]
since $\chi$ is not $G$-CAP.
\end{proof}

\begin{remark}\label{re:langlands}
In fact, we may use the Langlands decomposition of the $L^2$-spectrum \cite{Lan76} (as formulated in \cite{Art05}*{Theorem~7.2}) instead of the decomposition \eqref{eq:bernstein}, and \cite{Mul89}*{Theorem~0.1} instead of \cite{Don82}*{Theorem~9.1} in the proof of Lemma \ref{le:casimir}, to obtain a version of Theorem \ref{th:isolation_tri} in terms of the Langlands components, which is slightly stronger than the current version of Theorem \ref{th:isolation_tri}.

More precisely, let $\fD'(G,\omega)$ be the set of pairs $(M,\sigma)$ in which $\sigma$ is now a discrete automorphic representation of $M(\dA_F)$ whose central character belongs to $\Omega_M(\omega)$, with a similarly defined equivalence relation $\approx$. We have the following Langlands decomposition
\[
L^2(G(F)\backslash G(\dA_F),\omega)=\underset{(M,\sigma)\in\fD'(G,\omega)^\heartsuit}{\widehat\bigoplus}
L^2_{(M,\sigma)}(G(F)\backslash G(\dA_F),\omega)',
\]
which refines \eqref{eq:bernstein}, in which $L^2_{(M,\sigma)}(G(F)\backslash G(\dA_F),\omega)'$ denotes the Langlands component associated to $(M,\sigma)$. The conclusion of Theorem \ref{th:isolation_tri} can be strengthened to that there exists a $\tT$-multiplier $\mu$ of $\cS(G(\dA_F))_K$ such that for every $f\in\cS(G(\dA_F))_K$,
\begin{enumerate}
  \item $\rR(\mu\star f)$ maps $L^2(G(F)\backslash G(\dA_F)/K,\omega)$ into
      \[
      \underset{(M,\sigma)\in\fD'(G,\omega)^\heartsuit_\chi}{\widehat\bigoplus}L^2_{(M,\sigma)}(G(F)\backslash G(\dA_F)/K,\omega)'
      \]
      in which $\fD'(G,\omega)^\heartsuit_\chi$ is similarly defined as $\fD(G,\omega)^\heartsuit_\chi$ in Definition \ref{de:occur};

  \item $\mu(\chi)=1$.
\end{enumerate}
Note that we have the inclusion
\[
\underset{(M,\sigma)\in\fD'(G,\omega)^\heartsuit_\chi}{\widehat\bigoplus}L^2_{(M,\sigma)}(G(F)\backslash G(\dA_F)/K,\omega)'
\subseteq
\underset{(M,\sigma)\in\fD(G,\omega)^\heartsuit_\chi}{\widehat\bigoplus}L^2_{(M,\sigma)}(G(F)\backslash G(\dA_F)/K,\omega).
\]
However, it could be strict. For example, when $G=\PGL_2$, $M$ is the standard diagonal Levi subgroup, and $\sigma$ is the trivial character, we have
\[
L^2_{(M,\sigma)}(G(F)\backslash G(\dA_F)/K,1)=L^2_{(M,\sigma)}(G(F)\backslash G(\dA_F)/K,1)'\oplus
L^2_{(G,\tilde\sigma)}(G(F)\backslash G(\dA_F)/K,1)'
\]
where $\tilde\sigma$ is the trivial character of $G(\dA_F)$. It is possible to find $\chi$ such that $(M,\sigma)\in\fD'(G,\omega)^\heartsuit_\chi$, hence $(M,\sigma)\in\fD(G,\omega)^\heartsuit_\chi$, but $(G,\tilde\sigma)\not\in\fD'(G,\omega)^\heartsuit_\chi$. However, one cannot find $\chi$ such that $(G,\tilde\sigma)\in\fD'(G,\omega)^\heartsuit_\chi$ but $(M,\sigma)\not\in\fD'(G,\omega)^\heartsuit_\chi$.
\end{remark}

\section{Application to the Gan--Gross--Prasad conjecture}
\label{ss:4}

In this section, we discuss the application of the results in previous sections to the global Gan--Gross--Prasad conjecture and the Ichino--Ikeda conjecture for $\rU(n)\times\rU(n+1)$ in the stable case. In Subsection \ref{ss:zydor}, we recall the Jacquet--Rallis relative trace formulae and their extension by Zydor. In Subsection \ref{ss:smooth}, we introduce the notion of smooth transfer and study its relation with multipliers. In Subsection \ref{ss:base_change}, we deduce some results concerning weak base change using Jacquet--Rallis relative trace formulae, which are necessary for the proof of the Gan--Gross--Prasad conjecture. In Subsection \ref{ss:ggp}, we prove the Gan--Gross--Prasad conjecture in the stable case and other related results.

Let $E/F$ be a quadratic extension of number fields, and $\eta_{E/F}\colon\dA_F^\times\to\dC^\times$ the associated quadratic automorphic character. Let $n\geq 1$ be an integer.

\subsection{Jacquet--Rallis relative trace formulae, d'apr\`{e}s Zydor}
\label{ss:zydor}

In this subsection, we collect some results from Zydor's extension of the Jacquet--Rallis relative trace formulae. We start from the general linear groups. Put
\[
G'_n\coloneqq\Res_{E/F}\GL_{n,E},\qquad
G'_{n+1}\coloneqq\Res_{E/F}\GL_{n+1,E},\qquad
G'\coloneqq G'_n\times G'_{n+1}.
\]
We have two reductive subgroups $H'_1$ and $H'_2$ of $G'$ as at the beginning of \cite{BP1}*{Section~3}, in which $H'_1$ is the graph of the natural embedding $G'_n\to G'_{n+1}$ via the first $n$ coordinates; and $H'_2$ is the subgroup $\GL_{n,F}\times\GL_{n+1,F}$. Let $G'_\rss\subseteq G'$ be the Zariski open subset of regular semisimple elements. Recall that an element $\gamma$ of $G'$ is regular semisimple if $H'_1\gamma H'_2$ is Zariski closed in $G'$ and the natural map $H'_1\times H'_2\to H'_1\gamma H'_2$ is an isomorphism. Put $B'\coloneqq H'_1\backslash G'/H'_2$, which is an affine variety over $F$; and let $B'_\rss\subseteq B'$ be the image of $G'_\rss$. Let $Z'$ be the maximal $F$-split center of $G'$, which is also the center of $H'_2$. We denote by
\[
\eta\colon H'_2(\dA_F)=\GL_n(\dA_F)\times\GL_{n+1}(\dA_F)\to\dC^\times
\]
the character $(\eta_{E/F}^{n-1}\circ\det)\boxtimes(\eta_{E/F}^n\circ\det)$, which is trivial on $Z'(\dA_F)H'_2(F)$.

For every element $\Pi\in\fC(G',1)$ (Definition \ref{de:component}), there is a distribution $I_\Pi$ on $\cS(G'(\dA_F))$ \cite{BP1}*{Section~3.1} such that for $f'\in\cS(G'(\dA_F))$,
\begin{align}\label{eq:character1}
I_\Pi(f')=\sum_{\phi\in\cB(\Pi)}\(\int_{H'_1(F)\backslash H'_1(\dA_F)}(\Pi(f')\phi)(h_1)\rd h_1\)
\ol{\(\int_{Z'(\dA_F)H'_2(F)\backslash H'_2(\dA_F)}\phi(h_2)\eta(h_2)\rd h_2\)}
\end{align}
for an arbitrary orthonormal basis $\cB(\Pi)$ of $\Pi$ with respect to the Peterson inner product over $G'/Z'$. Here, $\r{d}h_1$ and $\r{d}h_2$ are the Tamagawa measures on $H'_1(\dA_F)$ and $Z'(\dA_F)\backslash H'_2(\dA_F)$, respectively.

For every element $\gamma\in B'(F)$, there is a distribution $I_\gamma$ on $\cS(G'(\dA_F))$ \cite{Zyd}*{Th\'{e}or\`{e}me~5.9}.\footnote{Although \cite{Zyd}*{Th\'{e}or\`{e}me~5.9} only states for test functions in $C^\infty_c(G'(\dA_F))$, its proof works for $\cS(G'(\dA_F))$ as well. We have a similar situation below concerning \cite{Zyd}*{Th\'{e}or\`{e}me~6.6}.} When $\gamma\in B'_\rss(F)$, $I_\gamma$ is defined by relative orbital integrals; more precisely, we have for $f'\in\cS(G'(\dA_F))$,
\begin{align}\label{eq:orbital1}
I_\gamma(f')=O_\eta(\tilde\gamma,f')\coloneqq\int_{H'_1(\dA_F)\times H'_2(\dA_F)}f'(h_1^{-1}\tilde\gamma h_2)\eta(h_2)\rd h_1\rd h_2,
\end{align}
where $\tilde\gamma\in G'_\rss(F)$ is an arbitrary lift of $\gamma$. In general, the distribution $I_\gamma$ is a certain regularization of relative orbital integrals, whose precise definition is complicated and will not be used in our later discussion, hence we will not recall. The following proposition is Zydor's extension of the Jacquet--Rallis trace formula for $G'$.

\begin{proposition}\label{pr:rtf1}
Let $f'\in\cS(G'(\dA_F))$ be a quasi-cuspidal Schwartz test function (Definition \ref{de:quasi_cuspidal}). Then we have the following identity
\[
\sum_{\Pi\in\fC(G',1)}I_\Pi(f')=\sum_{\gamma\in B'(F)}I_\gamma(f')
\]
of absolutely convergent sums.
\end{proposition}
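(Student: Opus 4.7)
The plan is to deduce the identity from Zydor's Jacquet--Rallis relative trace formula for $G'$ \cite{Zyd}, after extending it from $C^\infty_c(G'(\dA_F))$ to $\cS(G'(\dA_F))$, and then to collapse the spectral side using quasi-cuspidality. The starting point is the geometric kernel
\[
K_{f'}(x,y)\coloneqq\sum_{\gamma\in G'(F)}f'(x^{-1}\gamma y)
\]
and the (a priori non-absolutely convergent) would-be integral of $K_{f'}(h_1,h_2)\eta(h_2)$ over $H'_1(F)\backslash H'_1(\dA_F)$ against $Z'(\dA_F)H'_2(F)\backslash H'_2(\dA_F)$. Zydor constructs a family of Arthur-style truncations $K^T_{f'}$ whose integral is absolutely convergent and polynomial-exponential in $T$, from which the distributions $I_\gamma(f')$ together with a spectral expansion indexed by cuspidal data $(M,\sigma)\in\fD(G',1)^\heartsuit$ are extracted by taking the purely polynomial part. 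I would first verify that Zydor's local and global majorizations carry over when the archimedean component of $f'$ is Schwartz rather than compactly supported: all that is really used is rapid decay of $f'$ and of all its left/right $G'_\infty$-derivatives, which is exactly what the Schwartz condition provides.

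Next I would collapse Zydor's spectral expansion to the cuspidal contribution. By the coarse Langlands decomposition \eqref{eq:bernstein} applied to $G=G'$ and $\omega=1$, each spectral term corresponds to a class $(M,\sigma)\in\fD(G',1)^\heartsuit$. The quasi-cuspidality hypothesis (Definition \ref{de:quasi_cuspidal}) means that $\rR(f')$ sends $L^2(G'(F)\backslash G'(\dA_F),1)$ into $L^2_\cusp$, hence annihilates every component with $M\neq G'$, and all Eisenstein-type contributions in Zydor's spectral expansion vanish identically. Only the cuspidal terms $M=G'$, $\sigma=\Pi\in\fC(G',1)$ survive; for each such $\Pi$, unfolding the cuspidal projection of $K_{f'}$ along an orthonormal basis of $\Pi$ and integrating against $H'_1(\dA_F)$ and $H'_2(\dA_F)$ with the character $\eta$ yields exactly the relative character $I_\Pi(f')$ of \eqref{eq:character1}.

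It remains to verify absolute convergence of both sums for $f'\in\cS(G'(\dA_F))$. On the geometric side this is built into (the extension to $\cS$ of) Zydor's regularization. On the spectral side, absolute convergence of $\sum_{\Pi}I_\Pi(f')$ can be obtained by writing $f'=z\ast g'$ for a suitable element $z$ in the archimedean center of the enveloping algebra that dominates the Plancherel density on the cuspidal spectrum, combined with standard rapid-decay bounds on cusp forms and their period integrals appearing in \eqref{eq:character1}. The main obstacle will be the first step: re-running Zydor's truncation estimates termwise under the weaker hypothesis of Schwartz decay rather than compact support, and in particular controlling the mixed-truncation remainders that match the geometric and spectral sides, where the loss of compact support must be compensated by differentiation bounds encoded in the Schwartz semi-norms on $\cS(G'_\infty)$.
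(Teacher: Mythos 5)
Your proposal follows essentially the same route as the paper: the paper's proof of Proposition \ref{pr:rtf1} simply states that it ``follows from the same proof of \cite{Zyd}*{Th\'{e}or\`{e}me~5.10},'' together with the footnote that Zydor's arguments, though stated for $C^\infty_c(G'(\dA_F))$, go through for $\cS(G'(\dA_F))$ since they only use rapid decay of the test function and its derivatives. What you spell out --- re-running Zydor's truncation estimates with Schwartz semi-norms, using quasi-cuspidality to reduce the spectral expansion to the cuspidal relative characters $I_\Pi(f')$, and checking absolute convergence --- is precisely the content the paper delegates to Zydor's proof.
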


\begin{proof}
This follows from the same proof of \cite{Zyd}*{Th\'{e}or\`{e}me~5.10}.
\end{proof}

Now we consider unitary groups. We denote by $\fV$ the set of isomorphism classes of pairs $V=(V_n,V_{n+1})$ of (non-degenerate) hermitian spaces over $E$, where $V_n$ has rank $n$ and $V_{n+1}=V_n\oplus E.e$ in which $e$ has norm $1$. For every place $v$ of $F$, we have a local analogue $\fV_v$ and a localization map $\fV\to\fV_v$ sending $V$ to $V_v$. For every $V\in\fV$, put
\[
G^V_n\coloneqq\rU(V_n),\qquad
G^V_{n+1}\coloneqq\rU(V_{n+1}),\qquad
G^V\coloneqq G^V_n\times G^V_{n+1}.
\]
We have a reductive subgroup $H^V$ of $G^V$ as at the beginning of \cite{BP1}*{Section~3}, which is the graph of the natural embedding $G^V_n\to G^V_{n+1}$. Let $G^V_\rss\subseteq G^V$ be the Zariski open subset of regular semisimple elements. Recall that an element $\delta^V$ of $G^V$ is regular semisimple if $H^V\delta^V H^V$ is Zariski closed in $G^V$ and the natural map $H^V\times H^V\to H^V\delta^V H^V$ is an isomorphism. Put $B^V\coloneqq H^V\backslash G^V/H^V$, which is an affine variety over $F$; and let $B^V_\rss\subseteq B^V$ be the image of $G^V_\rss$.

For every element $\pi^V\in\fC(G^V,1)$ (Definition \ref{de:component}), there is a distribution $J_{\pi^V}$ on $\cS(G^V(\dA_F))$ \cite{BP1}*{Section~3.1} such that for $f^V\in\cS(G^V(\dA_F))$,
\begin{align}\label{eq:character2}
J_{\pi^V}(f^V)=\sum_{\varphi\in\cB(\pi^V)}\(\int_{H^V(F)\backslash H^V(\dA_F)}(\pi^V(f^V)\varphi)(h)\rd h\)
\ol{\(\int_{H^V(F)\backslash H^V(\dA_F)}\varphi(h)\rd h\)}
\end{align}
for an arbitrary orthonormal basis $\cB(\pi^V)$ of $\pi^V$ with respect to the Peterson inner product over $G^V$. Here, $\r{d}h$ is the Tamagawa measure on $H^V(\dA_F)$.

For every element $\delta^V\in B^V(F)$, there is a distribution $J_{\delta^V}$ on $\cS(G^V(\dA_F))$ \cite{Zyd}*{Th\'{e}or\`{e}me~6.6}. When $\delta^V\in B^V_\rss(F)$, $J_{\delta^V}$ is defined by relative orbital integrals; more precisely, we have for $f^V\in\cS(G^V(\dA_F))$,
\begin{align}\label{eq:orbital2}
J_{\delta^V}(f^V)=O(\tilde\delta^V,f^V)\coloneqq\int_{H^V(\dA_F)\times H^V(\dA_F)}f^V(h_1^{-1}\tilde\delta^V h_2)\rd h_1\rd h_2,
\end{align}
where $\tilde\delta^V\in G^V_\rss(F)$ is an arbitrary lift of $\delta^V$.\footnote{In this case, the map $G^V_\rss(F)\to B^V_\rss(F)$ is not surjective. If $\delta^V$ does not lift, then we simply set $J_{\delta^V}(f^V)=0$.} In general, the distribution $I_\gamma$ is a certain regularization of relative orbital integrals, whose precise definition is complicated and will not be used in our later discussion, hence we will not recall. The following proposition is Zydor's extension of the Jacquet--Rallis trace formula for $G^V$.

\begin{proposition}\label{pr:rtf2}
Let $f^V\in\cS(G^V(\dA_F))$ be a quasi-cuspidal Schwartz test function (Definition \ref{de:quasi_cuspidal}). Then we have the following identity
\[
\sum_{\pi^V\in\fC(G^V,1)}J_{\pi^V}(f^V)=\sum_{\delta^V\in B^V(F)}J_{\delta^V}(f^V)
\]
of absolutely convergent sums.
\end{proposition}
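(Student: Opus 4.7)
The plan is to follow, almost verbatim, the strategy used by Zydor to prove his Th\'{e}or\`{e}me~6.6, which establishes precisely this identity for $f^V\in C^\infty_c(G^V(\dA_F))$, and then verify that every step in his argument extends to Schwartz test functions. This parallels exactly how Proposition \ref{pr:rtf1} is deduced from \cite{Zyd}*{Th\'{e}or\`{e}me~5.10}, with the same footnote applying: Zydor's proof is written only for compactly supported functions, but the estimates he uses are polynomial in group-theoretic heights, and hence remain valid after inserting a Schwartz function.

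Concretely, the first step would be to start from Zydor's modified (truncated) kernel function $K^T(x,y;f^V)$ for the pair $(G^V,H^V\times H^V)$ and a truncation parameter $T\in\fa_{P_0}$ associated to a minimal parabolic $P_0$ of $G^V$. One shows that for $T$ sufficiently regular, the integral of $K^T(\cdot,\cdot;f^V)$ over $(H^V(F)\backslash H^V(\dA_F))^2$ is absolutely convergent, becomes a polynomial-exponential function of $T$, and admits two different expansions: a geometric one indexed by orbits $\delta^V\in B^V(F)$ and a spectral one indexed by cuspidal data $\fD(G^V,1)^\heartsuit$ as in \eqref{eq:bernstein}. The distributions $J_{\delta^V}$ and the cuspidal-datum analogues of $J_{\pi^V}$ are then extracted as the constant terms of these polynomial-exponential functions. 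The second step exploits the hypothesis that $f^V$ is quasi-cuspidal: by Definition \ref{de:quasi_cuspidal}, $\rR(f^V)$ sends $L^2(G^V(F)\backslash G^V(\dA_F),1)$ into its cuspidal subspace, so only the truly cuspidal terms $J_{\pi^V}$ with $\pi^V\in\fC(G^V,1)$ survive in the spectral expansion, matching the left-hand side of the claimed identity.

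The third step is to promote Zydor's absolute-convergence and continuity estimates from $C^\infty_c(G^V(\dA_F))$ to $\cS(G^V(\dA_F))$. Zydor controls both sides of the truncated identity by semi-norms of the form $\sup_g|f^V(g)|\cdot\|g\|^N$, and such semi-norms are continuous on $\cS(G^V(\dA_F))$ (indeed they define its topology, together with derivatives). Thus every majorization in his argument---for the truncated kernel, for the weighted orbital integrals appearing in the regularization of $J_{\delta^V}$ on non-regular-semisimple orbits, and for the convergence of the sum over $B^V(F)$---remains valid with the same formulas once $|f^V|$ is replaced by a Schwartz majorant. On the formal side, one can also run a density argument: restricted to the subspace of quasi-cuspidal functions, both sides are continuous linear functionals in $f^V$, and the dense subset $C^\infty_c(G^V(\dA_F))\cap\cS(G^V(\dA_F))_{\r{qc}}$ already satisfies the identity.

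The main technical obstacle will be the careful verification of absolute convergence on the geometric side for Schwartz $f^V$. Unlike the compactly supported case, the sum $\sum_{\gamma\in G^V(F)}|f^V(x^{-1}\gamma y)|$ has infinite geometric support, so one must rely on the rapid decay of $f^V$ together with standard counting estimates for rational points in the fibers of $G^V\to B^V$. The regularized distributions $J_{\delta^V}$ for non-regular-semisimple $\delta^V$ involve logarithmic weights coming from truncation, and one must check, following Zydor's bookkeeping, that the combined polynomial-times-logarithmic weights on $G^V_\infty$ are still dominated by Schwartz semi-norms. Once this is in place, the remainder of the argument is a direct transcription of \cite{Zyd}*{Th\'{e}or\`{e}me~6.6}.
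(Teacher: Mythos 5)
Your proposal is correct and matches the paper's approach: the paper's entire proof is the one-line assertion that the identity ``follows from the same proof of \cite{Zyd}*{Th\'{e}or\`{e}me~6.7},'' with a footnote (attached to the earlier citation of Th\'{e}or\`{e}me~6.6) noting that Zydor's argument, though stated for $C^\infty_c$, goes through for Schwartz test functions. Your sketch of the truncation/expansion mechanism and of the convergence issues to be rechecked for Schwartz majorants is exactly the content being delegated to Zydor; the only nit is that the trace-formula identity itself is his Th\'{e}or\`{e}me~6.7, while 6.6 is the construction of the distributions $J_{\delta^V}$.
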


\begin{proof}
This follows from the same proof of \cite{Zyd}*{Th\'{e}or\`{e}me~6.7}.
\end{proof}

The affine varieties $B'$ and $B^V$ are canonically isomorphic (see \cite{Zha1}*{Section~3.1}). For an $F$-algebra $R$, we say that $\gamma\in G'_\rss(R)$ and $\delta^V\in G^V_\rss(R)$ are a \emph{matching pair} if their images in $B'(R)=B^V(R)$ coincide.

\subsection{Smooth transfer of Schwartz test functions}
\label{ss:smooth}

We now discuss the smooth transfer of Schwartz test functions. We keep the notation from the previous subsection.

\begin{definition}
Let $v$ be a nonarchimedean place of $F$ that is unramified in $E$. We say that
\begin{itemize}
  \item $K'_v$ is a \emph{relative hyperspecial maximal subgroup} of $G'(F_v)$ if it is $\GL_n(O_{E_v})\times\GL_{n+1}(O_{E_v})$;

  \item $K^V_v$ is a \emph{relative hyperspecial maximal subgroup} of $G^V(F_v)$ if it is of the form $\rU(L_n)\times\rU(L_{n+1})$ in which $L_n$ is a self-dual lattice of $V_{n,v}$ and $L_{n+1}=L_n\oplus O_{E_v}.e$.
\end{itemize}
\end{definition}

For $G\in\{G',G^V\}$ and every finite set $\Box$ of places of $F$ containing all archimedean ones, we define the algebra
\begin{align*}
\cS(G(F_\Box))\coloneqq\cS(G_\infty)\otimes\bigotimes_{v\in\Box,v\nmid\infty}\cS(G(F_v)).
\end{align*}
The following definition slightly extends the notion of smooth transfer for Schwartz test functions that are not necessarily pure tensors.

\begin{definition}[Smooth transfer]\label{de:transfer}
Take a Schwartz test function $f'\in\cS(G'(\dA_F))$, which is not necessarily a pure tensor.
\begin{enumerate}
  \item Let $\Box$ be a finite set of places of $F$ containing all archimedean ones and take $V\in\fV$. We say that $f'_\Box\in\cS(G'(F_\Box))$ and $f^V_\Box\in\cS(G^V(F_\Box))$ have \emph{matching orbital integrals} if for every matching pair $\gamma\in G'_\rss(F_\Box)$ and $\delta^V\in G^V_\rss(F_\Box)$, we have
      \[
      O(\delta^V,f^V_\Box)=\Omega_\Box(\gamma)\cdot O_\eta(\gamma,f'_\Box).
      \]
      Here, $O(\delta^V,f^V_\Box)$ and $O_\eta(\gamma,f'_\Box)$ are relative orbital integrals defined in the same way as \eqref{eq:orbital2} and \eqref{eq:orbital1} after replacing $\dA_F$ by $F_\Box$, respectively; and $\Omega_\Box(\gamma)\coloneqq\prod_{v\in\Box}\Omega_v(\gamma)\in\{\pm1\}$ is a certain transfer factor (see \cite{BP1}*{Sections~3.3~\&~3.4} for more details).

  \item For $V\in\fV$ and a Schwartz test function $f^V\in\cS(G^V(\dA_F))$, we say that $f'$ and $f^V$ are \emph{smooth transfer} if there exists a sufficiently large finite set $\Box$ of places of $F$ containing all archimedean ones and those ramified in $E$ such that
      \begin{enumerate}
        \item $f'=f'_\Box\otimes\bigotimes_{v\not\in\Box}\mathbf{1}_{K'_v}$ with $f'_\Box\in\cS(G'(F_\Box))$ and $K^\prime_v$ a relative hyperspecial maximal subgroup of $G'(F_v)$;

        \item $f^V=f^V_\Box\otimes\bigotimes_{v\not\in\Box}\mathbf{1}_{K^V_v}$ with $f^V_\Box\in\cS(G^V(F_\Box))$ and $K^V_v$ a relative hyperspecial maximal subgroup of $G^V(F_v)$;

        \item $f'_\Box\in\cS(G'(F_\Box))$ and $f^V_\Box\in\cS(G^V(F_\Box))$ have matching orbital integrals in the sense of (1).
      \end{enumerate}

  \item Given a collection $(f^V)_{V\in\fV}$ with $f^V\in\cS(G^V(\dA_F))$ among which all but finitely many are zero, we say that $f'$ and $(f^V)_{V\in\fV}$ are \emph{complete smooth transfer} if $f'$ and $f^V$ are smooth transfer for every $V\in\fV$.
\end{enumerate}
\end{definition}

\begin{remark}\label{re:rfl}
By the relative fundamental lemma \cites{Yun11,BP3}, Definition \ref{de:transfer}(2) is insensitive to the choice of $\Box$.
\end{remark}

The existence of smooth transfer is ensured by the following proposition.

\begin{proposition}\label{pr:transfer1}
Let $v$ be a place of $F$.
\begin{enumerate}
  \item There is a dense subspaces $\cS_\tr(G'(F_v))$ of $\cS(G'(F_v))$, equal to $\cS(G'(F_v))$ if $v$ is nonarchimedean or split in $E$, such that for every $f'_v\in\cS_\tr(G'(F_v))$, one can find $(f^{V_v}_v)_{V_v\in\fV_v}$ with $f^{V_v}_v\in\cS(G^{V_v}(F_v))$ such that $f'_v$ and $f^{V_v}_v$ have matching orbital integrals for every $V_v\in\fV_v$.

  \item For every $V_v\in\fV_v$, there is a dense subspace $\cS_\tr(G^{V_v}(F_v))$ of $\cS(G^{V_v}(F_v))$, equal to $\cS(G^{V_v}(F_v))$ if $v$ is nonarchimedean or split in $E$, such that for every $f^{V_v}_v\in\cS_\tr(G^{V_v}(F_v))$, one can find $f'_v\in\cS(G'(F_v))$ such that $f'_v$ and $f^{W_v}_v$ have matching orbital integrals for every $W_v\in\fV_v$ where $f^{W_v}_v=0$ for $W_v\neq V_v$.
\end{enumerate}
\end{proposition}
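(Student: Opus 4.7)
The plan is to handle the proposition by case analysis on the local behaviour of $v$ in $E$, splitting into three cases: (a) $v$ splits in $E$; (b) $v$ is nonarchimedean and does not split; (c) $v$ is archimedean and does not split, so $F_v=\dR$ and $E_v=\dC$. In cases (a) and (b) I intend to take $\cS_\tr(G'(F_v))$ and $\cS_\tr(G^{V_v}(F_v))$ to be the full Schwartz spaces, matching the parenthetical assertion of the proposition; the density statement is genuinely needed only in case (c).

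For case (a), I would use the splitting $E_v\simeq F_v\times F_v$ to identify $G'(F_v)$ and the unique $G^{V_v}(F_v)$ (there is only one isomorphism class of hermitian spaces when $v$ splits) as related products of general linear groups. Under these identifications the transfer factor $\Omega_v$ becomes trivial, and matching orbital integrals reduces to a direct Fubini/integration-in-stages computation on $\cS$; this gives smooth transfer for all Schwartz functions in either direction. For case (b) one has $\cS(G(F_v))=C^\infty_c(G(F_v))$ for every $F_v$-group $G$, and the smooth transfer in either direction for the Jacquet--Rallis setup on compactly supported functions is provided by the work of W.~Zhang, so the conclusion again holds with $\cS_\tr=\cS$.

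Case (c) is the substantive one. Here my plan is to define $\cS_\tr(G'(F_v))$ and $\cS_\tr(G^{V_v}(F_v))$ to be any subspaces containing $C^\infty_c(G'(F_v))$ and $C^\infty_c(G^{V_v}(F_v))$ respectively, and to invoke the archimedean Jacquet--Rallis smooth transfer for compactly supported test functions (established by Xue and Beuzart-Plessis, and compatible with the relative fundamental lemma referenced in Remark~\ref{re:rfl}) to obtain matching transfers inside $C^\infty_c$, hence \emph{a fortiori} inside $\cS$. For (2), the archimedean transfer statement in the compactly supported setting is already formulated per fixed hermitian space $V_v$, so the vanishing requirement on $W_v\neq V_v$ is automatic. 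The required density of $\cS_\tr$ in $\cS$ then follows from the standard fact that $C^\infty_c(G(F_v))$ is dense in $\cS(G(F_v))$ for the Fr\'echet topology.

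The main obstacle is thus confined to case (c): upgrading smooth transfer to the full archimedean Schwartz space is a genuinely delicate analytic problem, but by restricting $\cS_\tr$ to contain only compactly supported smooth functions we bypass it entirely, which is all that is needed for the later applications in this paper. Everything else ultimately reduces either to Zhang's nonarchimedean transfer theorem or to the elementary split-place computation.
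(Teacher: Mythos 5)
Your cases (a) and (b) are consistent with the paper, which disposes of all nonarchimedean places by citing \cite{Zha1}*{Theorem~2.6} (Zhang's nonarchimedean smooth transfer) and handles split archimedean places where the comparison is essentially an explicit integration argument. The problem is case (c), which is the only case where the proposition actually has content, and there your argument has a genuine gap.

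You propose to take $\cS_\tr$ to be any subspace containing $C^\infty_c$ and to invoke ``the archimedean Jacquet--Rallis smooth transfer for compactly supported test functions.'' No such result is available here: the existence of a matching function for an \emph{arbitrary} element of $C^\infty_c(G'(F_v))$ (or of $C^\infty_c(G^{V_v}(F_v))$) at an archimedean place nonsplit in $E$ is precisely the hard open problem that the formulation of this proposition is designed to sidestep. What the paper cites, \cite{Xue}*{Theorem~2.7}, is an \emph{approximation} theorem: it produces a certain dense subspace of test functions each of which admits a transfer, and that subspace is not known to contain $C^\infty_c$. (Beuzart-Plessis's later proof of full archimedean transfer proceeds by entirely different methods and appears only in subsequent work such as \cite{BPCZ}, which this paper deliberately does not rely on.) So your reduction ``restrict to $C^\infty_c$ and use density of $C^\infty_c$ in $\cS$'' does not close the argument; the density of $\cS_\tr$ in $\cS(G(F_v))$ must instead be extracted from the statement of Xue's theorem itself, and the transfer is only asserted for functions in that specific dense subspace, not for all compactly supported ones. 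The correct fix is simply to define $\cS_\tr$ at archimedean nonsplit places to be the dense subspace furnished by \cite{Xue}*{Theorem~2.7}, as the paper does.
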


\begin{proof}
This follows from the combination of \cite{Zha1}*{Theorem~2.6} (for $v$ nonarchimedean) and \cite{Xue}*{Theorem~2.7} (for $v$ archimedean).
\end{proof}

The following proposition is a deep theorem of Chaudouard and Zydor.

\begin{proposition}\label{pr:transfer2}
Suppose that $f'$ and $(f^V)_{V\in\fV}$ are complete smooth transfer in the sense of Definition \ref{de:transfer}(3). Then we have
\[
\sum_{\gamma\in B'(F)}I_\gamma(f')=\sum_{V\in\fV}\sum_{\delta^V\in B^V(F)}J_{\delta^V}(f^V),
\]
in which the summation over $V$ is in fact finite.
\end{proposition}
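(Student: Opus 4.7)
The plan is to proceed orbit-by-orbit along the common quotient $B'=B^V$, matching the contribution indexed by $\gamma\in B'(F)$ on the general linear side with the sum over $V\in\fV$ of the contributions indexed by all $\delta^V\in B^V(F)$ mapping to $\gamma$ in $B'$. The absolute convergence on both sides, established in Zydor's trace formulae recalled as Propositions \ref{pr:rtf1} and \ref{pr:rtf2}, together with the fact that $f'$ and $(f^V)_V$ have compact support modulo a finite adelic set, reduces the claim to a finite sum of \emph{local-to-global} identities once we partition $B'(F)$ according to the geometric invariants of the orbits.

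First I would dispose of the regular semisimple locus. For $\gamma\in B'_\rss(F)$, only finitely many $V\in\fV$ admit a lift $\delta^V\in G^V_\rss(F)$ with image $\gamma$, and for each such $V$ there is at most one orbit. The distributions $I_\gamma$ and $J_{\delta^V}$ are then the ordinary relative orbital integrals \eqref{eq:orbital1} and \eqref{eq:orbital2}, which factorize into products of local orbital integrals. Definition \ref{de:transfer}(1) together with the relative fundamental lemma (Remark \ref{re:rfl}) gives $O(\delta^V,f^V_v)=\Omega_v(\gamma)\cdot O_\eta(\gamma,f'_v)$ at every place, and the product formula $\prod_v\Omega_v(\gamma)=1$ for the Jacquet--Rallis transfer factor (see \cite{BP1}*{Section~3.4}) delivers $I_\gamma(f')=J_{\delta^V}(f^V)$, in which the sign discrepancies cancel globally.

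The substantial work lies in the singular orbits, which is precisely where the regularization due to Zydor intervenes. Here the distributions $I_\gamma$ and $J_{\delta^V}$ are no longer products of local orbital integrals; they are defined as truncated integrals of modified kernels (as in \cite{Zyd}*{Th\'{e}or\`{e}mes~5.9~\&~6.6}) whose values depend on all of $f'$ or $f^V$ and not just its behavior near the orbit. The strategy, following Chaudouard--Zydor, is to establish the matching by induction on the dimension of the orbit, using parabolic descent to reduce the singular contribution at $\gamma$ to regular semisimple contributions on smaller relative trace formulae attached to proper Levi subgroups of $G'$ and $G^V$. At each stage, the descent identity requires that the descended test functions are themselves smooth transfers, which follows from the compatibility of smooth transfer with parabolic descent along Jacquet modules for the symmetric pairs $(G',H'_1\times H'_2)$ and $(G^V,H^V\times H^V)$.

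The main obstacle is exactly this last step: proving that the germ-type expansion of the regularized distributions $I_\gamma$ and $\sum_{\delta^V\mapsto\gamma}J_{\delta^V}$ at an arbitrary singular $\gamma$ matches term-by-term under smooth transfer. Once the combinatorics of the truncation is unraveled into a manageable form --- essentially an analogue of Arthur's weighted orbital integrals in the relative setting --- the identity follows from the regular semisimple case applied to descent data. As we are only invoking this proposition as a black box of Chaudouard--Zydor, I would in practice quote their theorem directly rather than reconstruct the geometric expansion; the contribution of the present article is rather on the spectral side, where the multiplier technique of Sections \ref{ss:2}--\ref{ss:3} will be combined with Proposition \ref{pr:transfer2} via Proposition \ref{pr:transfer3} below.
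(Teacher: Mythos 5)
Your ultimate conclusion --- that this proposition should simply be quoted from Chaudouard--Zydor rather than reproved --- is exactly what the paper does: the proof is a citation of \cite{CZ}*{Th\'{e}or\`{e}me~16.2.4.1}, and the long sketch of the geometric comparison (regular semisimple matching via the product of local orbital integrals and the fundamental lemma, singular orbits via Zydor's regularization and descent) is not needed.

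There is, however, one genuine point you miss, and it is the only substantive content of the paper's proof. The theorem of Chaudouard--Zydor is proved for \emph{pure tensor} test functions, whereas the test functions in Definition \ref{de:transfer} are general Schwartz test functions: the component $f'_\Box$ lies in $\cS(G'(F_\Box))$, whose archimedean factor is $\cS(G'_\infty)$ rather than the algebraic tensor product $\bigotimes_{v\mid\infty}\cS(G'(F_v))$, and similarly on the unitary side. Quoting the theorem ``directly'' therefore does not literally apply to the hypotheses of the proposition. One must observe that the argument of \cite{CZ} extends by continuity, using that $\bigotimes_{v\mid\infty}\cS(G'(F_v))$ and $\bigotimes_{v\mid\infty}\cS(G^V(F_v))$ are dense in $\cS(G'_\infty)$ and $\cS(G^V_\infty)$ respectively (by \cite{AG10}*{Corollary~2.6.3}). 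Without this density step your proof does not cover the test functions actually produced later in the paper (e.g.\ those modified by archimedean multipliers), which is precisely why the generality matters.
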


\begin{proof}
This is \cite{CZ}*{Th\'{e}or\`{e}me~16.2.4.1}. Note that the authors in \cite{CZ} only prove this for pure tensor Schwartz test functions, but their proof works for general Schwartz test functions as well, using the fact that $\bigotimes_{v\mid\infty}\cS(G'(F_v))$ and $\bigotimes_{v\mid\infty}\cS(G^V(F_v))$ are dense in $\cS(G'_\infty)$ and $\cS(G^V_\infty)$, respectively, by \cite{AG10}*{Corollary~2.6.3}.
\end{proof}

Now we study smooth transfer of Schwartz test functions under the action of $\tT$-multipliers. Let $\tT_0$ be the set of primes of $F$ that are \emph{nonsplit} in $E$. We take
\begin{itemize}
  \item a finite set $\tS$ of primes of $F$ containing those ramified in $E$,

  \item an open compact subgroup $K'\subseteq G'(\dA_F^\infty)$ of the form $K'=K'_\tS\times\prod_{v\not\in\tS}K'_v$ in which $K'_v$ is a relative hyperspecial maximal subgroup of $G'(F_v)$ for every prime $v\not\in\tS$,

  \item an element $V\in\fV$, and

  \item an open compact subgroup $K^V\subseteq G^V(\dA_F^\infty)$ of the form $K^V=K^V_\tS\times\prod_{v\not\in\tS}K^V_v$ in which $K^V_v$ is a relative hyperspecial maximal subgroup of $G^V(F_v)$ for every prime $v\not\in\tS$.
\end{itemize}
Put $\tT\coloneqq\tS\cup\tT_0$.

The connected real reductive group $G'\otimes_\dQ\dR$ determines a root datum $(\sfX^{\prime*},\Phi',\sfX_*^\prime,\Phi^{\prime\vee})$ and a subset $\theta'\subseteq\Aut(\sfX^{\prime*},\Phi',\sfX_*^\prime,\Phi^{\prime\vee})^\heartsuit$ as at the beginning of Subsection \ref{ss:multiplier_algebra}, together with $\fh^{\prime*}$ and $\sfW'$ from Subsection \ref{ss:multiplier_functions}. Similarly, for $G^V\otimes_\dQ\dR$, we have corresponding objects $(\sfX^{V*},\Phi^V,\sfX_*^V,\Phi^{V\vee})$, $\theta^V$, $\fh^{V*}$, and $\sfW^V$.\footnote{In fact, the objects $(\sfX^{V*},\Phi^V,\sfX_*^V,\Phi^{V\vee})$, $\theta^V$, $\fh^{V*}$, and $\sfW^V$ do not depend on $V$. However, we still keep $V$ in superscripts in order to make notation more consistent.} By the base change homomorphism on the dual groups, we have a canonical map $\fh^{V*}\to\fh^{\prime*}$ that is injective, by which we will identify $\fh^{V*}$ as a subspace of $\fh^{\prime*}$. Moreover, taking restriction induces a ring homomorphism
\begin{align}\label{eq:bc1}
\sf{bc}^V_\infty\colon\cO(\fh^{\prime*}_\dC)^{\sfW'}\to\cO(\fh^{V*}_\dC)^{\sfW^V}.
\end{align}
For primes away from $\tT$, we have a similar homomorphism
\begin{align}\label{eq:bc2}
\sf{bc}_V^\tT\colon\cH_{G'}^\tT\to\cH_{G^V}^\tT
\end{align}
given by unramified base change (which is simply a convolution product since primes away from $\tT$ are all split in $E$). Taking tensor product, we obtain a homomorphism
\begin{align}\label{eq:bc3}
\sf{bc}^V\coloneqq\sf{bc}^V_\infty\otimes\sf{bc}_V^\tT
\colon\cO(\fh^{\prime*}_\dC)^{\sfW'}\otimes\cH_{G'}^\tT\to\cO(\fh^{V*}_\dC)^{\sfW^V}\otimes\cH_{G^V}^\tT.
\end{align}

\begin{proposition}\label{pr:transfer3}
Let $f'=\bigotimes_vf'_v\in\cS(G'(\dA_F))_{K'}$ and $f^V=\bigotimes_vf^V_v\in\cS(G^V(\dA_F))_{K^V}$ be two pure tensor Schwartz test functions such that $f'_v$ and $f^V_v$ have matching orbital integrals for every place $v$ of $F$. Let $\mu'$ and $\mu^V$ be $\tT$-multipliers of $\cS(G'(\dA_F))_{K'}$ and $\cS(G^V(\dA_F))_{K^V}$ (Definition \ref{de:T_multiplier}), respectively, such that $\mu^V=\sf{bc}^V(\mu')$. Then $\mu'\star f'$ and $\mu^V\star f^V$ are smooth transfer in the sense of Definition \ref{de:transfer}(2).
\end{proposition}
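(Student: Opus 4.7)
The plan is to reduce the matching of orbital integrals for $\mu'\star f'$ and $\mu^V\star f^V$ to a place-by-place statement, then to handle each type of place separately. Since everything factorizes, I would fix a finite set $\Box$ containing all archimedean places, all primes in $\tS$, and all primes in $\tT_0\setminus\tS$, so that outside $\Box$ every place is split in $E$ and $f'_v=\mathbf{1}_{K'_v}$, $f^V_v=\mathbf{1}_{K^V_v}$. Writing $\mu'=\mu'_\infty\otimes(\mu'_v)_{v\notin\tT}$ and $\mu^V=\sf{bc}^V_\infty(\mu'_\infty)\otimes(\sf{bc}_v(\mu'_v))_{v\notin\tT}$, the problem splits into checking matching at (i) places $v\notin\tT$, (ii) nonarchimedean places in $\tT$, and (iii) archimedean places.

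For (ii), the nonarchimedean places in $\tT$, the multipliers contribute nothing: at places $v\in\tS$ this is by definition, while at places $v\in\tT_0\setminus\tS$ (nonsplit unramified primes) $\mu'$ and $\mu^V$ have no $v$-component, and the $v$-components of $f'$ and $f^V$ are already the relative hyperspecial characteristic functions, which match by the relative fundamental lemma (Remark \ref{re:rfl}). For (i), every $v\notin\tT$ splits in $E$ as $v=ww^c$, and both $G'_v$ and $G^V_v$ become isomorphic to a product of general linear groups; the base change $\sf{bc}_v$ is nothing but the convolution/multiplication map on the spherical Hecke algebras. A straightforward manipulation of double cosets at such a split place shows that if $\phi'_v$ and $\phi^V_v$ have matching orbital integrals and $\mu'_v$ is spherical, then $\mu'_v\ast\phi'_v$ and $\sf{bc}_v(\mu'_v)\ast\phi^V_v$ also match, so matching at these places follows as well.

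The serious step is (iii) at the archimedean places. My strategy is approximation: I would approximate the archimedean multiplier $\mu'_\infty\in\cM^\sharp_{\theta'}(\fh^{\prime*}_\dC)^{\sfW'}$ by a sequence of central elements $z_n\in\cZ(\fg')=\dC[\fh^{\prime*}_\dC]^{\sfW'}$, chosen so that $z_n\star f'_\infty\to\mu'_\infty\star f'_\infty$ in the Schwartz topology of $\cS(G'_\infty)$; the same sequence, base-changed by $\sf{bc}^V_\infty$ (which carries $\cZ(\fg')$ into $\cZ(\fg^V)$ by compatibility of Harish-Chandra isomorphisms), produces $\sf{bc}^V_\infty(z_n)\in\cZ(\fg^V)$ with $\sf{bc}^V_\infty(z_n)\star f^V_\infty\to\sf{bc}^V_\infty(\mu'_\infty)\star f^V_\infty$. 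Because orbital integrals extend continuously to the Schwartz spaces on both sides (by Proposition \ref{pr:transfer1} and the standard semi-norm estimates), it suffices to verify matching for the polynomial case. But for $z\in\cZ(\fg')$, the convolution $z\star f'_\infty$ is the action of a bi-invariant differential operator on $f'_\infty$, and the same holds on the unitary side; the compatibility between such invariant differential operators and the smooth transfer of orbital integrals at archimedean places is the infinitesimal shadow of matching and follows from explicit formulae for how bi-invariant differential operators act on orbital integrals (via Harish-Chandra's radial part description) together with the fact that $\sf{bc}^V_\infty$ on $\fh^{\prime*}_\dC/\sfW'\to\fh^{V*}_\dC/\sfW^V$ is precisely the map relating infinitesimal characters under archimedean base change.

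The main obstacle will be the last point: making the compatibility of the base change $\sf{bc}^V_\infty$ with matching orbital integrals at archimedean places both precise and applicable to Schwartz functions (rather than merely $K$-finite compactly supported ones). I expect this to be handled by first establishing the statement for $f'_\infty,f^V_\infty$ bi-$K_\infty$-finite and compactly supported (where it follows from classical radial-part computations together with Harish-Chandra's theorem identifying the action of $\cZ(\fg')$ on orbital integrals with a $\sfW'$-invariant differential operator on the split part of a Cartan, which base-changes to the corresponding object on $G^V$), and then passing to Schwartz functions by density and the continuity of the multiplier action established in Section \ref{ss:2}.
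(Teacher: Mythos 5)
Your reduction to a place-by-place statement and your treatment of the nonarchimedean places are essentially fine (modulo the slip that $\tT_0\setminus\tS$ is infinite and cannot sit inside a finite $\Box$; what matters is only that at nonsplit primes outside $\tS$ the test functions are the relative hyperspecial units, which match by the fundamental lemma, and the $\tT$-multipliers have no component there). The genuine gap is in your archimedean step, in two places. First, the approximation claim $z_n\star f'_\infty\to\mu'_\infty\star f'_\infty$ in the Schwartz topology with $z_n\in\cZ(\fg')$ is asserted, not proved, and nothing in Section \ref{ss:2} provides it: the construction of $\mu\star$ approximates $\mu\in\cM_{\theta\cup\{1\}}$ by other \emph{multipliers} $\mu_n\in\cN_{\theta\cup\{1\}}$, and polynomials enter only to get pointwise convergence of operators on a fixed admissible representation (claim ($*$) in the proof of Proposition \ref{pr:multiplier}); the seminorm estimates \eqref{eq:multi_1} are controlled by $p_{\vartheta,M,r}$-norms of the symbol on vertical strips, which do not tend to zero for a polynomial minus a rapidly decaying function, so convergence of $z_n\star f$ to $\mu\star f$ in $\cS(G'_\infty)$ is far from automatic. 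Second, and more seriously, even for $z\in\cZ(\fg')$ the statement that matching of relative orbital integrals is preserved by $(z,\sf{bc}^V_\infty(z))$ is not a ``classical radial-part computation'': it is a relative analogue of ``transfer commutes with the center of the enveloping algebra,'' formulated on the Jacquet--Rallis double-coset spaces $B'$ and $B^V$ rather than on Cartan subgroups, and no such radial-component/descent theory (with transfer factors intertwined correctly) exists off the shelf for this setting. Establishing it would be a substantial piece of archimedean relative harmonic analysis, essentially of the same order of difficulty as the proposition itself, so at this point your argument begs the question.

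The paper takes a different and purely spectral route that bypasses both issues: after absorbing the archimedean places and the finitely many relevant split primes into a set $\Box$ all of whose finite members are split in $E$, it invokes Lemma \ref{le:matching} (proved via the local relative Plancherel formulas \cite{BP2}*{(5.5.3), (5.5.10)}, the local Gan--Gross--Prasad theorem \cite{BP0}, and density of transferable functions) to show that matching of orbital integrals over $F_\Box$ is \emph{equivalent} to matching of relative characters. Since $J_\pi(f^V)$ and $I_{\BC(\pi)}(f')$ depend only on $\pi(f^V)$ and $\BC(\pi)(f')$, Theorem \ref{th:multiplier} together with $\mu^V=\sf{bc}^V(\mu')$ gives immediately that matching of relative characters is preserved by the multipliers, and one converts back. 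If you want to salvage your place-by-place outline, the archimedean (and split $p$-adic) steps should be run through this spectral equivalence rather than through differential operators and polynomial approximation.
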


Informally speaking, the proposition asserts that multipliers that are compatible under base change preserve smooth transfer.

\begin{proof}
Let $\Box$ be a sufficiently large finite set of places of $F$ containing all archimedean ones and disjoint from $\tT$, so that $\mu'$ and $\mu^V$ are of the form $\mu'_\Box\otimes\(\otimes_{v\not\in\tT\cup\Box}\mathbf{1}_{K'_v}\)$ and $\mu^V_\Box\otimes\(\otimes_{v\not\in\tT\cup\Box}\mathbf{1}_{K^V_v}\)$ respectively. Below we only care about $\Box$-components, hence will write $\mu'$ for $\mu'_\Box$ and $\mu^V$ for $\mu^V_\Box$, respectively.

Let $\cS(G'(F_\Box))_{K'}$ be the subalgebra of $\cS(G'(F_\Box))$ consisting of functions that are bi-invariant under $\prod_{v\in\Box,v\nmid\infty}K'_v$, and similarly for $\cS(G^V(F_\Box))_{K^V}$. To prove the proposition, it suffices to show that if $f'\in\cS(G'(F_\Box))_{K'}$ and $f^V\in\cS(G^V(F_\Box))_{K^V}$ have matching orbital integrals (in the sense of Definition \ref{de:transfer}(1)), then $\mu'\star f'$ and $\mu^V\star f^V$ have matching orbital integrals as well. In fact, by Lemma \ref{le:matching} below, $f'$ and $f^V$ have matching relative characters. Since $\mu^V=\sf{bc}^V(\mu')$, by Theorem \ref{th:multiplier}, we know that $\mu'\star f'$ and $\mu^V\star f^V$ also have matching relative characters. Thus by Lemma \ref{le:matching} below again, we know that $\mu'\star f'$ and $\mu^V\star f^V$ have matching orbital integrals. The proposition follows.
\end{proof}

\begin{lem}\label{le:matching}
Let $\Box$ be a finite set of places of $F$ containing all archimedean ones, in which all primes are split in $E$. Take $f'\in\cS(G'(F_\Box))$ and $f^V\in\cS(G^V(F_\Box))$. Then $f'$ and $f^V$ have matching orbital integrals (in the sense of Definition \ref{de:transfer}(1)) if and only if they have matching relative characters, that is,
\[
\kappa_{V_\Box}J_\pi(f^V)=I_{\BC(\pi)}(f')
\]
for every $\pi\in\r{Temp}_{H^V_\Box}(G^V_\Box)$, where we follow the notations in \cite{BP2}*{Theorem~5.4.1}.
\end{lem}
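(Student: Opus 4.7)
The plan is to reduce the statement to a local assertion at each place in $\Box$, exploit the split condition to identify the unitary and general linear sides, and then invoke a local relative Plancherel formula that pairs regular semisimple orbital integrals with relative characters.

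First, since $f'$ and $f^V$ are written as tensors over places of $\Box$ and both the orbital integrals and the relative characters factor as products over $v \in \Box$, the equivalence is immediately reduced to the local statement at each individual place $v \in \Box$. At such a place, since $v$ is split in $E$, one has $E_v \simeq F_v \times F_v$, which yields an isomorphism $G'(F_v) \simeq G^V(F_v) \times G^V(F_v)$, under which $H'_1(F_v)$ is the diagonal copy of $H^V(F_v)$, and $H'_2(F_v)$ is $(H^V_n \times H^V_{n+1})(F_v)$. Under these identifications, local base change $\BC$ sends an irreducible tempered representation $\pi$ of $G^V(F_v)$ to the external tensor $\pi \boxtimes \pi^\vee$ (up to the relevant conventions), and the transfer factor $\Omega_v$ reduces to a locally constant sign that I would unwind explicitly.

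Next, I would invoke the local relative Plancherel decomposition on each side. Concretely, a regular semisimple orbital integral on $G^V(F_v)$ (respectively on $G'(F_v)$) expands as a Plancherel-type integral of the associated relative character $J_{\pi,v}$ (respectively $I_{\Pi,v}$) against the local Plancherel measure on the tempered dual of $H^V_v$ (respectively of $H'_{1,v} \times (H'_{2,v}, \eta_v)$); this is the content of the local Ichino--Ikeda type expansion underlying \cite{BP2}*{Theorem~5.4.1}. At a split place the two Plancherel measures agree via $\BC$, and the transfer factor $\Omega_v$ is exactly the one that makes the two expansions coincide. Given this pairing, the "if" direction is obtained by substituting the spectral expansion of $O_\eta(\gamma,f'_v) = \Omega_v(\gamma)^{-1} \cdot O(\delta^V,f^V_v)$ and comparing term by term in a dense set of regular semisimple parameters; the "only if" direction is obtained dually, using that relative characters evaluated on Schwartz functions are determined by the full collection of orbital integrals via Fourier inversion against the Plancherel measure.

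The main obstacle is the archimedean component, where the Plancherel expansion must be applied to Schwartz (rather than compactly supported smooth) functions, and convergence of the spectral integrals has to be checked against semi-norms from Definition~\ref{de:multiplier} of type $p_{\vartheta,M,r}$. This amounts to verifying that the archimedean Plancherel density has polynomial growth against which a Schwartz function is integrable, which is classical. Alternatively, one can circumvent this entirely by a density argument: both matching conditions are continuous in $f'_v$ and $f^V_v$ for the Fr\'echet topologies on $\cS(G'(F_v))$ and $\cS(G^V(F_v))$, and \cite{BP2}*{Theorem~5.4.1} already establishes the equivalence on a dense subspace (for instance on $C^\infty_c$ with controlled support); passing to the closure then gives the full statement. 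This second route is the one I would pursue, as it isolates the analytic work to the known archimedean smooth transfer density results used elsewhere in the paper.
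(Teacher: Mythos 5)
Your preferred route --- ``both matching conditions are continuous, the equivalence holds on a dense subspace, pass to the closure'' --- has a genuine gap. Matching (of orbital integrals or of relative characters) is a relation between a \emph{pair} $(f',f^V)$, not a property of a single function. Given $f'$ and $f^V$ with matching orbital integrals, you cannot simply approximate each one separately by nice functions and invoke \cite{BP2}*{Theorem~5.4.1}: there is no reason an arbitrary pair of approximating sequences $(f'_n, f^V_n)$ should consist of \emph{matching pairs}, and producing such matching approximants is essentially the smooth transfer problem for Schwartz functions all over again (which at archimedean places is only known on the dense subspaces $\cS_\tr$, cf.\ Proposition \ref{pr:transfer1}). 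Likewise your first route overstates what is available: the local input from \cite{BP2} used here is not a pointwise spectral expansion of an individual orbital integral, but the polarized $L^2$-identities \cite{BP2}*{(5.5.3), (5.5.10)} equating $\int_{B_\rss}O(\cdot,f_1)\overline{O(\cdot,f_2)}$ with $\int_{\Temp}J_\pi(f_1)\overline{J_\pi(f_2)}\,d\mu(\pi)$; extracting a pointwise statement from these requires an extra idea.

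That idea, which is the heart of the paper's proof and is missing from your proposal, is a \emph{duality/testing} argument: extend the two inner-product identities by continuity to Schwartz functions, then test the given $f'$ (resp.\ $f^V$) against auxiliary functions $g^V\in\bigotimes_v\cS_\tr(G^V(F_v))$ together with a transfer $g'$ supplied by Proposition \ref{pr:transfer1}(2), for which the equivalence is already known by \cite{BP2}*{Theorem~5.4.1}. Combining the two identities converts the hypothesis on $(f',f^V)$ into an identity of integrals against $\overline{J_\pi(g^V)}$ (resp.\ against $\overline{O(\delta^V,g^V)}$) for all such $g^V$; one then separates $\pi$ by the technique of \cite{BP2}*{Section~5.7.3} (resp.\ uses local $L^2$-density of the functions $O(-,g^V)$ on $B^V_\rss(F_\Box)$ and continuity of orbital integrals) to get the pointwise conclusion. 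Your observation that the split hypothesis identifies the two sides and their Plancherel data via $\BC$ is correct and is implicitly what makes the two identities comparable, but without the testing-and-separation step the argument does not close.
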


The term $\kappa_{V_\Box}\coloneqq\prod_{v\in\Box}\kappa_{V_v}$ is a constant depending only on $V_\Box$; and the distributions $J_\pi(f^V)$ and $I_{\BC(\pi)}(f')$ are local analogues of \eqref{eq:character2} and \eqref{eq:character1}, which depend linearly on $\pi(f^V)$ and $\BC(\pi)(f')$, respectively.

\begin{proof}
As $\bigotimes_{v\mid\infty}\cS(G'(F_v))$ is dense in $\cS(G'_\infty)$ by \cite{AG10}*{Corollary~2.6.3}, applying \cite{BP2}*{(5.5.10)} to local fields $F_v$ for $v\in\Box$ and by continuity, we obtain a similar identity
\begin{align}\label{eq:transfer1}
\int_{B'_\rss(F_\Box)}O_\eta(\gamma,f'_1)\overline{O_\eta(\gamma,f'_2)}\rd\gamma
=|\tau|_{E_\Box}^{-n(n-1)/2}\int_{\r{Temp}(G^{\r{qs}}_\Box)/\r{stab}}I_{\BC(\pi)}(f'_1)\overline{I_{\BC(\pi)}(f'_2)}
\frac{|\gamma^*(0,\pi,\r{Ad},\psi')|}{|S_\pi|}\rd\pi
\end{align}
for every $f'_1,f'_2\in\cS(G'(F_\Box))$. Similarly, as $\bigotimes_{v\mid\infty}\cS(G^V(F_v))$ is dense in $\cS(G^V_\infty)$ by \cite{AG10}*{Corollary~2.6.3}, applying \cite{BP2}*{(5.5.3)} to local fields $F_v$ for $v\in\Box$ and by continuity, we obtain a similar identity
\begin{align}\label{eq:transfer2}
\int_{B^V_\rss(F_\Box)}O(\delta^V,f^V_1)\overline{O(\delta^V,f^V_2)}\rd\delta^V
=\int_{\r{Temp}_{H^V_\Box}(G^V_\Box)}J_\pi(f^V_1)\overline{J_\pi(f^V_2)}\mu_{G^V_\Box}^*(\pi)\rd\pi
\end{align}
for every $f^V_1,f^V_2\in\cS(G^V(F_\Box))$.\footnote{We remark that in \eqref{eq:transfer1}, the product $O_\eta(\gamma,f'_1)\overline{O_\eta(\gamma,f'_2)}$ descends to a function on $B'_\rss(F_\Box)$; while in \eqref{eq:transfer2}, both $O(\delta^V,f^V_1)$ and $\overline{O(\delta^V,f^V_2)}$ descend to functions on $B^V_\rss(F_\Box)$.}

We have the following observations.
\begin{enumerate}
  \item On the geometric side, we may identify $B^V_\rss(F_\Box)$ as a subspace, say $B'_\rss(F_\Box)_V$, of $B'_\rss(F_\Box)$ under which the measures are compatible.

  \item On the spectral side, by the local Gan--Gross--Prasad at places in $\Box$ \cite{BP0}, the local Jacquet--Langlands transfer map
      \[
      \r{Temp}_{H^V_\Box}(G^V_\Box)\to\r{Temp}(G^{\r{qs}}_\Box)/\r{stab}
      \]
      is injective, under which the restriction of the measure $\frac{|\gamma^*(0,\pi,\r{Ad},\psi')|}{|S_\pi|}\rd\pi$ coincides with $\mu_{G^V_\Box}^*(\pi)\rd\pi$ by \cite{BP2}*{Theorem~5.4.3}. We also note that $\kappa_{V_\Box}\overline{\kappa_{V_\Box}}=|\tau|_{E_\Box}^{n(n-1)/2}$.
\end{enumerate}
Now we show the lemma.

First, suppose that $f'$ and $f^V$ have matching orbital integrals. Take $g^V\in\bigotimes_{v\in\Box}\cS_\tr(G^V(F_v))$, and let $g'\in\bigotimes_{v\in\Box}\cS(G'(F_v))$ be an element that has matching orbital integrals with $(g^V,0,\dots)$ from Proposition \ref{pr:transfer1}(2). By \cite{BP2}*{Theorem~5.4.1}, $g'$ and $(g^V,0,\dots)$ have matching relative characters as well. In particular, $O_\eta(\gamma,g')=0$ unless $\gamma\in B'_\rss(F_\Box)_V$, and $I_{\BC(\pi)}(g')=0$ unless $\pi\in\r{Temp}_{H^V_\Box}(G^V_\Box)$. Then combining \eqref{eq:transfer1} and \eqref{eq:transfer2} and using the condition that $f'$ and $f^V$ have matching orbital integrals, we obtain
\[
\int_{\r{Temp}_{H^V_\Box}(G^V_\Box)}\kappa_{V_\Box}J_\pi(f^V)\overline{J_\pi(g^V)}\mu_{G^V_\Box}^*(\pi)\rd\pi
=\int_{\r{Temp}_{H^V_\Box}(G^V_\Box)}I_{\BC(\pi)}(f')\overline{J_\pi(g^V)}\mu_{G^V_\Box}^*(\pi)\rd\pi
\]
for every $g^V\in\bigotimes_{v\in\Box}\cS_\tr(G^V(F_v))$. Now we can use the technique in \cite{BP2}*{Section~5.7.3} to separate $\pi$; so that we obtain $\kappa_{V_\Box}J_\pi(f^V)=I_{\BC(\pi)}(f')$ for every $\pi\in\r{Temp}_{H^V_\Box}(G^V_\Box)$. In other words, $f'$ and $f^V$ have matching relative characters.

Second, suppose that $f'$ and $f^V$ have matching relative characters. The proof is similar and we arrive at the identity
\[
\int_{B^V_\rss(F_\Box)}O(\delta^V,f^V)\overline{O(\delta^V,g^V)}\rd\delta^V=
\int_{B^V_\rss(F_\Box)}\Omega_\Box(\gamma(\delta^V))O(\gamma(\delta^V),f')\overline{O(\delta^V,g^V)}\rd\delta^V
\]
for every $g^V\in\bigotimes_{v\in\Box}\cS_\tr(G^V(F_v))$. Here, $\gamma(\delta^V)\in G'_\rss(F_\Box)$ is arbitrary element whose image in $B'(F_\Box)$ coincides with $\delta^V$; and it is clear that $\Omega_\Box(\gamma(\delta^V))O(\gamma(\delta^V),f')$ is independent of the choice of such $\gamma(\delta^V)$. It is easy to see that locally at every given $\delta^V\in B^V_\rss(F_\Box)$, the functions $O(-,g^V)$ for $g^V\in\bigotimes_{v\in\Box}\cS_\tr(G^V(F_v))$ span a dense subset in the $L^2$-space of $B^V_\rss(F_\Box)$. Thus, we have $O(\delta^V,f^V)=\Omega_\Box(\gamma(\delta^V))O(\gamma(\delta^V),f')$ for every $\delta^V\in B^V_\rss(F_\Box)$, as both sides are continuous functions in $\delta^V$. In other words, $f'$ and $f^V$ have matching orbital integrals.

The lemma is proved.
\end{proof}

\begin{remark}\label{re:matching}
Lemma \ref{le:matching} holds by the same proof without assuming that primes in $\Box$ are split in $E$. However, in this case, the argument (more precisely, \cite{BP2}*{Theorem~5.4.1}) relies on results from \cites{Mok15,KMSW}.
\end{remark}

The following proposition will not be used in this article, but might be useful for other purposes. In particular, it shows the existence of Gaussian test functions (in the space of Schwartz functions) in the sense of \cite{RSZ20}*{Definition~7.9}. We record it here as it is essentially a corollary of Proposition \ref{pr:transfer3}.

\begin{proposition}\label{pr:transfer4}
In the situation of Proposition \ref{pr:transfer1}(2), when $v$ is an archimedean place of $F$ that is nonsplit in $E$ and $V_v$ is positive definite, the subspace $\cS_\tr(G^{V_v}(F_v))$ contains $f^{V_v}_v$ as long as $J_{\pi_v}(f^{V_v}_v)=0$ for all but finitely many (tempered) irreducible admissible representations $\pi_v$ of $G^{V_v}(F_v)$.
\end{proposition}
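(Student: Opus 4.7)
The crucial observation is that $G^{V_v}(F_v)$ is compact since $V_v$ is positive definite and $v$ is archimedean; hence $\cS(G^{V_v}(F_v)) = C^\infty(G^{V_v}(F_v))$, every irreducible admissible representation $\pi_v$ is finite-dimensional, and Peter--Weyl provides a decomposition into finite-dimensional matrix coefficient subspaces $M_{\pi_v}$ with central projectors $e_{\pi_v}$. Let $S$ denote the finite set of isomorphism classes of irreducible representations $\pi_v$ for which $J_{\pi_v}(f^{V_v}_v) \neq 0$. The plan is to write $f^{V_v}_v = f_0 + f_1$, where $f_0 \coloneqq \sum_{\pi_v \in S} e_{\pi_v} \star f^{V_v}_v$ lies in the finite-dimensional subspace $M_S \coloneqq \bigoplus_{\pi_v \in S} M_{\pi_v}$ and $f_1 \coloneqq f^{V_v}_v - f_0$ satisfies $J_{\pi_v}(f_1) = 0$ for every irreducible $\pi_v$; then to show each piece lies in $\cS_\tr(G^{V_v}(F_v))$.

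For the piece $f_1$, I would apply the local Plancherel identity for the relative trace (the archimedean analog of \eqref{eq:transfer2} used in the proof of Lemma \ref{le:matching}) with $f^V_1 = f^V_2 = f_1$. The spectral side vanishes by construction, giving $\int_{B^V_\rss(F_v)} |O(\delta^V, f_1)|^2 \rd\delta^V = 0$, so $O(\delta^V, f_1) = 0$ on all of $B^V_\rss(F_v)$ by continuity. Hence $f_1$ and the zero function on $G'(F_v)$ have matching orbital integrals, and $f_1 \in \cS_\tr$.

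For the piece $f_0$, I would combine a soft density argument with the multiplier formalism. The continuous projection $P_S\colon \cS(G^{V_v}(F_v)) \twoheadrightarrow M_S$ remains surjective when restricted to the dense subspace $\cS_\tr$ (Proposition \ref{pr:transfer1}(2)): otherwise $P_S(\cS_\tr)$ would lie in the kernel of some nonzero $\phi \in M_S^*$, and then the nonzero continuous functional $\phi \circ P_S$ on $\cS$ would vanish on dense $\cS_\tr$ and hence on all of $\cS$, a contradiction. Pick $g \in \cS_\tr$ with $P_S(g) = f_0$. Next, using Corollary \ref{co:lattice} applied to the translated lattice $\sfY^*_{-1} \subset \fh^{V*}$ (noting $-1 \in \theta^V$ since $G^{V_v}(F_v)$ is compact), together with Lagrange interpolation and $\sfW^V$-averaging, I would produce $\mu^V_\infty \in \cM^\sharp_{\{-1\}}(\fh^{V*}_\dC)^{\sfW^V}$ taking value $1$ at $\chi_{\pi_v}$ for $\pi_v \in S$ and vanishing at every other infinitesimal character of an irreducible representation of $G^{V_v}(F_v)$. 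Lift $\mu^V_\infty$ to a compatible $\mu'_\infty \in \cM^\sharp_{\theta'}(\fh^{\prime*}_\dC)^{\sfW'}$ with $\sf{bc}^V_\infty(\mu'_\infty) = \mu^V_\infty$ by a standard extension across the direction complementary to $\fh^{V*}_\dC \hookrightarrow \fh^{\prime*}_\dC$, again via the holomorphic function constructions of Subsection \ref{ss:holomorphic}. Applying the local version of Proposition \ref{pr:transfer3} at $v$ gives $\mu^V_\infty \star g \in \cS_\tr$; on the other hand, the multiplier formula $\pi_v(\mu^V_\infty \star g) = \mu^V_\infty(\chi_{\pi_v}) \pi_v(g)$ implies
\[
\mu^V_\infty \star g = \sum_{\pi_v \in S} e_{\pi_v} \star g = P_S(g) = f_0,
\]
so $f_0 \in \cS_\tr$ as desired.

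The main obstacle is twofold. First, one must verify that Proposition \ref{pr:transfer3} applies locally at a single archimedean nonsplit place; this requires extending Lemma \ref{le:matching} to such places, which by Remark \ref{re:matching} is available at the cost of invoking the deeper results of \cites{Mok15,KMSW}. Second, one must construct the $\sfW'$-invariant holomorphic lift $\mu'_\infty$; this is a technical but routine matter of combining Corollary \ref{co:lattice} in the complementary direction with a product/symmetrization argument to preserve both the moderate growth and the cofinite vanishing required by the definition of $\cM^\sharp_{\theta'}$.
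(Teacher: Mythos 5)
Your argument is correct and rests on the same three ingredients as the paper's proof: density of $\cS_\tr$ (Proposition \ref{pr:transfer1}(2)), compatible archimedean multipliers built from Corollary \ref{co:lattice} using the fact that the infinitesimal characters of the compact group form a translated lattice, and Lemma \ref{le:matching} applied with $\Box$ reduced to the single archimedean place. The differences are in execution. The paper simply reduces by linearity to the case where $J_\pi(f^V)\neq 0$ for exactly one $\pi_0$; your explicit splitting $f^V=f_0+f_1$ and the vanishing of all orbital integrals of $f_1$ via \eqref{eq:transfer2} spell out what that reduction needs, and both are fine. More substantively, the paper never arranges $\mu^V\star g=f_0$ as an identity of functions: it only arranges $J_\pi(\mu^V\star f^V_1)=J_\pi(f^V)$ for every $\pi$ and then invokes Lemma \ref{le:matching} a second time --- since matching is characterized purely by the scalars $J_\pi$, any transfer of $\mu^V\star f^V_1$ is automatically a transfer of $f^V$ itself. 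This sidesteps your surjectivity argument for $P_S$ and, more importantly, lets the paper construct $\mu'$ on the general linear side first and set $\mu^V\coloneqq\sf{bc}^V(\mu')$ by restriction, which is the easy direction. Your direction (prescribe $\mu^V_\infty$, then lift) is the one genuinely delicate point of your write-up: to force the lift into $\cM^\sharp_{\theta'}(\fh^{\prime*}_\dC)^{\sfW'}$ you must multiply the naive pullback of $\mu^V_\infty$ along a splitting by an auxiliary element (as in Lemma \ref{le:modify}), which changes its restriction to $\fh^{V*}_\dC$ and hence destroys the exact equality $\sf{bc}^V_\infty(\mu'_\infty)=\mu^V_\infty$ that your computation of $\mu^V_\infty\star g=P_S(g)$ relies on; you would then need to re-interpolate the values at $\chi_{\pi}$ for $\pi\in S$, or better, reduce to $|S|=1$ first as the paper does. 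Finally, your caveat about \cites{Mok15,KMSW} is unnecessary: in Lemma \ref{le:matching} the split hypothesis bears only on the nonarchimedean places of $\Box$, so the lemma as stated (proved via \cite{BP0} and \cite{BP2} at archimedean places) already applies to $\Box=\{v\}$ with $v$ archimedean and nonsplit, which is exactly how the paper uses it.
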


\begin{proof}
Since the question is local, we may just assume that $E$ is imaginary quadratic, hence $v=\infty$ is the unique archimedean place of $F=\dQ$. To ease notation, we suppress $v$ in the proof. By linearity, it suffices to consider $f^V$ such that $J_\pi(f^V)\neq 0$ for exactly one irreducible admissible representation $\pi_0$ of $G^V(F)$ and that $J_{\pi_0}(f^V)=1$.

By Proposition \ref{pr:transfer1}(2), we may find an element $f^V_1\in\cS_\tr(G^V(F))$ satisfying $J_{\pi_0}(f^V)=1$, so that we may take an element $f'_1\in\cS(G'(F))$ such that $f'_1$ and $f^W_1$ have matching orbital integrals for every $W\in\fV$ where $f^W_1=0$ for $W\neq V$. Now since $G^V(F)$ is compact, the set of infinitesimal characters of irreducible representations of $G^V(F)$ is a lattice. It follows easily from Corollary \ref{co:lattice} that we can find an element $\mu'\in\cM^\sharp_\theta(\fh_\dC^*)^\sfW$ such that if we put $\mu^V\coloneqq\sf{bc}^V(\mu')$, then $\mu^V(\chi_\pi)\neq 0$ only when $\pi=\pi_0$ and $\mu^V(\chi_{\pi_0})=1$. By Theorem \ref{th:multiplier}, we have new elements $f^V_2\coloneqq\mu^V\star f^V_1\in\cS(G^V(F))$ and $f'_2\coloneqq\mu'\star f'_1\in\cS(G'(F))$. By Lemma \ref{le:matching}
(for $\Box=\{\infty\}$), $f'_1$ and $f^W_1$ have matching relative characters, so for $f'_2$ and $f^W_2$ where $f^W_2=0$ for $W\neq V$, hence $f'_2$ and $f^W_2$ have matching orbital integrals. Now since $J_\pi(f^V)=J_\pi(f^V_2)$ for every irreducible admissible representation $\pi$ of $G^V(F)$, using Lemma \ref{le:matching} twice, we know that $f'_2$ and $f^W$ have matching orbital integrals for every $W\in\fV$ where $f^W=0$ for $W\neq V$. In other words, $f^V$ belongs to $\cS_\tr(G^V(F))$.

The proposition is proved.
\end{proof}

\subsection{Weak automorphic base change}
\label{ss:base_change}

We keep the setup from the previous subsection.

We denote by $\fV_{(\tS)}$ the subset of $\fV$ consisting of $V$ such that $G^V_v$ is unramified for every prime $v$ of $F$ not in $\tS$. Then $\fV_{(\tS)}$ is a finite set, and for $V,W\in\fV_{(\tS)}$, we have $V_v=W_v\in\fV_v$ for every prime $v$ of $F$ not in $\tS$. We consider
\begin{itemize}
  \item a nonempty subset $\fV_{(\tS)}^\circ$ of $\fV_{(\tS)}$,

  \item a $\tT$-character $\chi'$ for $G'$ (Definition \ref{de:compatible}), and

  \item for each $V\in\fV_{(\tS)}^\circ$, a $\tT$-character $\chi^V$ for $G^V$ (Definition \ref{de:compatible}) such that the base change of $\chi^V$ coincides with $\chi'$, and also fix an open compact subgroup $K^V\subseteq G^V(\dA_F^\infty)$ as in the previous subsection.
\end{itemize}

The following lemma tells us how to find multipliers that are compatible under base change.

\begin{lem}\label{le:modify}
Given a $\tT$-multiplier $\mu'$ of $\cS(G'(\dA_F))_{K'}$ satisfying $\mu'(\chi')=1$ and a $\tT$-multiplier $\mu^V$ of $\cS(G^V(\dA_F))_{K^V}$ satisfying $\mu^V(\chi^V)=1$ for each $V\in\fV_{(\tS)}^\circ$, we can find new $\tT$-multipliers $\tilde\mu'$ of $\cS(G'(\dA_F))_{K'}$ and $\tilde\mu^V$ of $\cS(G^V(\dA_F))_{K^V}$ for each $V\in\fV_{(\tS)}^\circ$ satisfying
\begin{enumerate}
  \item $\tilde\mu'(\chi')=1$ and that $\tilde\mu'$ is a multiple of $\mu'$ by a $\tT$-multiplier of $\cS(G'(\dA_F))_{K'}$;

  \item for every $V\in\fV_{(\tS)}^\circ$, $\tilde\mu^V(\chi^V)=1$, that $\tilde\mu^V$ is a multiple of $\mu^V$ by a $\tT$-multiplier of $\cS(G^V(\dA_F))_{K^V}$, and that $\tilde\mu^V=\sf{bc}^V(\tilde\mu')$.
\end{enumerate}
\end{lem}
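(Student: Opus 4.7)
The plan is to set $\tilde\mu' := \mu' \cdot \nu'$ for a $\tT$-multiplier $\nu'$ of $\cS(G'(\dA_F))_{K'}$ satisfying (i) $\nu'(\chi') = 1$ and (ii) for every $V \in \fV_{(\tS)}^\circ$, $\sf{bc}^V(\nu')$ is a multiple of $\mu^V$ by a $\tT$-multiplier of $\cS(G^V(\dA_F))_{K^V}$. I will then define $\tilde\mu^V := \sf{bc}^V(\tilde\mu')$. The compatibility $\BC(\chi^V) = \chi'$ forces $\tilde\mu^V(\chi^V) = \tilde\mu'(\chi') = 1$, and $\tilde\mu^V = \sf{bc}^V(\mu') \cdot \sf{bc}^V(\nu')$ is manifestly a multiple of $\mu^V$. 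Since $\fV_{(\tS)}^\circ$ is finite, by multiplicativity it suffices to build, for each fixed $V$, a $\tT$-multiplier $\nu'_V$ with $\nu'_V(\chi') = 1$ and $\sf{bc}^V(\nu'_V)$ divisible by $\mu^V$; the product $\nu' := \prod_V \nu'_V$ will then satisfy (i) and (ii).

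For fixed $V$, I would write $\mu^V = \mu^V_\infty \otimes \mu^{V,\infty,\tT}$ as a pure tensor (reducing to this case by linearity in $\mu^V$) and construct the two factors of $\nu'_V$ separately. For the nonarchimedean factor: since every prime outside $\tT$ splits in $E$, at each such prime $G'_v$ is canonically a product of two copies of $G^V_v$ and the local base change on spherical Hecke algebras is a surjection (essentially the convolution map between the two $E/F$-components). Hence $\sf{bc}_V^\tT\colon \cH_{G'}^\tT \to \cH_{G^V}^\tT$ is surjective, and I lift $\mu^{V,\infty,\tT}$ to some $h'_V \in \cH_{G'}^\tT$. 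For the archimedean factor, the goal is to produce $\omega_V \in \cM^\sharp_{\theta'}(\fh^{\prime*}_\dC)^{\sfW'}$ whose restriction to $\fh^{V*}_\dC$ (via the embedding $\fh^{V*}_\dC \hookrightarrow \fh^{\prime*}_\dC$ induced by dual-group base change) is a multiple of $\mu^V_\infty$ in $\cM^\sharp_{\theta^V}(\fh^{V*}_\dC)^{\sfW^V}$. A natural construction is to fix a $\dC$-linear splitting $p\colon \fh^{\prime*}_\dC \to \fh^{V*}_\dC$ of the inclusion and set
\[
\omega_V(\alpha) := \prod_{w \in \sfW'} \mu^V_\infty(p(w\alpha)).
\]
This is visibly $\sfW'$-invariant; and for $\beta \in \fh^{V*}_\dC$, each $w$ in the subgroup $\sfW^V \subseteq \sfW'$ contributes a factor $\mu^V_\infty(p(w\beta)) = \mu^V_\infty(w\beta) = \mu^V_\infty(\beta)$, so $\omega_V|_{\fh^{V*}_\dC}$ is divisible by $(\mu^V_\infty)^{|\sfW^V|}$ and in particular by $\mu^V_\infty$. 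Tensoring $h'_V$ with $\omega_V$ and rescaling by a scalar produces $\nu'_V$ with $\nu'_V(\chi') = 1$; the rescaling is possible since $\mu^V(\chi^V) = 1$ forces both archimedean and nonarchimedean evaluations to be nonzero, and any accidental zero arising from the ``extra'' factors in $\omega_V(\chi'_\infty)$ (indexed by $w \notin \sfW^V$) can be absorbed by multiplying by a further element of Lemma \ref{le:multiplier1} nonvanishing at the finitely many relevant points.

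The main obstacle will be verifying that $\omega_V$ actually lies in $\cM^\sharp_{\theta'}(\fh^{\prime*}_\dC)^{\sfW'}$ and that the quotient $\omega_V|_{\fh^{V*}_\dC}/\mu^V_\infty$ lies in $\cM^\sharp_{\theta^V}(\fh^{V*}_\dC)^{\sfW^V}$. Moderate vertical growth transfers through the linear map $p$ without trouble, but the condition from Definition \ref{de:multiplier}(3)—that a function vanish identically on $\fh^{\prime*}_{\vartheta',\dC} + \varpi'$ for all but finitely many $\varpi' \in \sfY^{\prime*}_{\vartheta'}$, for each $\vartheta' \in \theta'$—must be traced through $p$. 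This uses the compatibility, under base change, of the inner-form data $\theta'$ of $G'$ with $\theta^V$ of $G^V$, and likely requires choosing $p$ with some equivariance: for each $\vartheta' \in \theta'$ one needs a $w \in \sfW'$ sending $\fh^{\prime*}_{\vartheta',\dC}$ into $p^{-1}(\fh^{V*}_{\vartheta^V,\dC})$ for a suitable $\vartheta^V \in \theta^V$, with the induced map on the lattices $\sfY^{\prime*}_{\vartheta'} \to \sfY^{V*}_{\vartheta^V}$ having finite fibers. Once this technical verification is settled, the remaining steps (multiplicativity over $V$, tensoring with $h'_V$, and final rescaling) are routine bookkeeping.
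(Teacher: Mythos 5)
Your overall strategy --- multiply $\mu'$ by a correction whose base change is divisible by each $\mu^V$, then set $\tilde\mu^V\coloneqq\sf{bc}^V(\tilde\mu')$ --- has the same shape as the paper's, but the execution has gaps that the paper's proof is specifically engineered to avoid. The paper chooses a \emph{linear section} $\sf{cb}^V$ of $\sf{bc}^V$ (built from a splitting $\ell_V\colon\fh^{\prime*}\to\fh^{V*}$ sending every $\sfW'$-orbit into a $\sfW^V$-orbit) and sets $\tilde\mu'\coloneqq(\mu')^2\cdot\prod_V\sf{cb}^V((\mu^V)^2)$. The squares are the whole point: the lifted factors only need to lie in $\cM(\fh^{\prime*}_\dC)^{\sfW'}$ (ordinary moderate vertical growth, which a real-linear splitting preserves trivially), because a spare factor of $\mu'$ (resp.\ of $\mu^V$, after applying $\sf{bc}^V$) supplies the vanishing condition of Definition \ref{de:multiplier}(3) via Remark \ref{re:multiplier}(3). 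Your construction instead requires the cofactor $\nu'$ itself to be a $\tT$-multiplier, so you must verify that $\omega_V=\prod_{w\in\sfW'}\mu^V_\infty\circ p\circ w$ lies in $\cM^\sharp_{\theta'}(\fh^{\prime*}_\dC)$ --- precisely the point you flag as the ``main obstacle'' and leave open. This is a genuine gap, not bookkeeping: there is no reason an affine slice $\fh^{\prime*}_{\vartheta',\dC}+\varpi'$ should map under $p\circ w$ into a slice on which $\mu^V_\infty$ vanishes, and even the $\vartheta'\neq 1$ growth conditions for $\omega_V$ are not automatic, since those domains have unbounded real part.

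Two further problems. First, the reduction ``by linearity in $\mu^V$'' to pure tensors is invalid: divisibility by $\mu^V$ is not a linear condition, and a sum of multiples of the pure-tensor summands of $\mu^V$ need not be a multiple of $\mu^V$. (The paper's linear section applied to $(\mu^V)^2$ sidesteps this, since $\sf{bc}^V(\sf{cb}^V((\mu^V)^2))=(\mu^V)^2$ is tautologically $\mu^V$ times the $\tT$-multiplier $\mu^V$.) Second, your normalization step fails: if one of the ``extra'' factors $\mu^V_\infty(p(w\alpha))$, $w\notin\sfW^V$, vanishes at a point $\alpha$ of the orbit $\chi'_\infty$, then $\nu'_V(\chi')=0$, and no rescaling or further multiplication can restore the value $1$ --- multiplying by another function cannot remove a zero. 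In the paper's construction this cannot happen, because the orbit-compatibility of $\ell_V$ together with the hypothesis that $\chi'$ is the base change of $\chi^V$ gives $\sf{cb}^V((\mu^V)^2)(\chi')=\mu^V(\chi^V)^2=1$ exactly.
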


\begin{proof}
For every $V\in\fV_{(\tS)}^\circ$, recall the maps $\sf{bc}^V_\infty$ \eqref{eq:bc1}, $\sf{bc}_V^\tT$ \eqref{eq:bc2}, and $\sf{bc}^V$ \eqref{eq:bc3} from the previous subsection, which are all surjective. We choose a section
\[
\sf{cb}_V^\tT\colon\cH_{G^V}^\tT\to\cH_{G'}^\tT
\]
of the linear map $\sf{bc}_V^\tT$ of vector spaces. We may choose a linear splitting map $\ell_V\colon\fh^{\prime*}\to\fh^{V*}$ of the subspace $\fh^{V*}\subseteq\fh^{\prime*}$ that sends every $\sfW'$-orbit into a $\sfW^V$-orbit, which induces a section
\[
\sf{cb}^V_\infty\colon\cO(\fh^{V*}_\dC)^{\sfW^V}\to\cO(\fh^{\prime*}_\dC)^{\sfW'}
\]
of the linear map $\sf{bc}^V_\infty$ of vector spaces. Taking tensor product, we obtain a linear map
\[
\sf{cb}^V\coloneqq\sf{cb}^V_\infty\otimes\sf{cb}_V^\tT\colon
\cO(\fh^{V*}_\dC)^{\sfW^V}\otimes\cH_{G^V}^\tT\to\cO(\fh^{\prime*}_\dC)^{\sfW'}\otimes\cH_{G'}^\tT,
\]
which is a section of $\sf{bc}^V$. To construct the desired $\tT$-multipliers, we define
\[
\tilde\mu'\coloneqq(\mu')^2\cdot\prod_{V\in\fV_{(\tS)}^\circ}\sf{cb}^V((\mu^V)^2),
\qquad\tilde\mu^V\coloneqq\sf{bc}^V(\tilde\mu').
\]
Note that $\sf{bc}^V_\infty$ sends $\cM(\fh^{\prime*}_\dC)^{\sfW'}$ into $\cM(\fh^{V*}_\dC)^{\sfW^V}$; and $\sf{cb}^V_\infty$ sends $\cM(\fh^{V*}_\dC)^{\sfW^V}$ into $\cM(\fh^{\prime*}_\dC)^{\sfW'}$. Then it follows easily from Remark \ref{re:multiplier}(3) that $\tilde\mu'$ and $\tilde\mu^V$ for $V\in\fV_{(\tS)}^\circ$ are $\tT$-multipliers satisfying (1) and (2). The lemma is proved.
\end{proof}

The following proposition reveals a relation between the Gan--Gross--Prasad period integral and weak automorphic base change.

\begin{proposition}\label{pr:bc}
Consider an element $V\in\fV_{(\tS)}$ and an element $\pi^V\in\fC(G^V,1)$. Suppose that we can find a cuspidal automorphic form $\varphi\in L^2(G^V,\pi^V)$ satisfying
\[
\sP(\varphi)\coloneqq\int_{H^V(F)\backslash H^V(\dA_F)}\varphi(h)\rd h\neq 0
\]
where $\r{d}h$ is the Tamagawa measure on $H^V(\dA_F)$. Assume that either one of the following two assumptions holds:
\begin{enumerate}[label=(\alph*)]
  \item $\pi^V$ is not $(G^V,\tT)$-CAP (Definition \ref{de:cap});

  \item there is a prime $v_0$ of $F$ split in $E$ such that $\pi^V_{v_0}$ is supercuspidal.
\end{enumerate}
Then weak automorphic base change of $\pi^V$ (Definition \ref{de:weak}), as an isobaric automorphic representation of $G'(\dA_F)=\GL_n(\dA_E)\times\GL_{n+1}(\dA_E)$, exists. Moreover, if we put $\Pi\coloneqq\BC(\pi^V)=\Pi_n\boxtimes\Pi_{n+1}$, then
\begin{enumerate}
  \item The base change of $\chi_{\pi^V_\infty}$ is $\chi_{\Pi_\infty}$.

  \item In the situation (b), $\Pi_m$ is cuspidal and hermitian (Definition \ref{de:hermitian}) for $m=n,n+1$.
\end{enumerate}

\end{proposition}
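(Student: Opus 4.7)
The plan is to compare the two Jacquet--Rallis relative trace formulae (Propositions \ref{pr:rtf1}, \ref{pr:rtf2}) for a pair of complete smooth transfer that is rendered quasi-cuspidal and spectrally isolated via the multipliers of Sections \ref{ss:2}--\ref{ss:3}, while matching of orbital integrals is preserved through Proposition \ref{pr:transfer3}. The starting ingredient is a pure tensor $f^V_0 \in \cS(G^V(\dA_F))$ with $J_{\pi^V}(f^V_0) \neq 0$, provided by the hypothesis $\sP(\varphi) \neq 0$ and formula \eqref{eq:character2}; by density (Proposition \ref{pr:transfer1}(2)) each local component may be taken in $\cS_\tr(G^V(F_v))$. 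Applying Proposition \ref{pr:transfer1}(2) place by place then yields a matching $f'_0 \in \cS(G'(\dA_F))$ together with a finite family $(f^W_0)_{W \in \fV_{(\tS)}^\circ}$ forming a complete smooth transfer in the sense of Definition \ref{de:transfer}(3), with $V \in \fV_{(\tS)}^\circ$.

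Next I would achieve quasi-cuspidality and spectral isolation by multipliers. Set $\chi^V \coloneqq \chi_{\pi^{V,\tT}}$; for each $W \in \fV_{(\tS)}^\circ$ let $\chi^W$ be the analogous $\tT$-character for $G^W$ (well-defined because $V_v = W_v$ for $v \notin \tS$); and let $\chi'$ be the base change of $\chi^V$ to a $\tT$-character for $G'$. In case~(a), the non-CAP hypothesis on $\pi^V$ allows Theorem \ref{th:isolation} applied to each $G^W$ to furnish a $\tT$-multiplier $\mu^W$ with $\mu^W(\chi^W) = 1$ such that $\mu^W \star$ is quasi-cuspidal, while Theorem \ref{th:isolation_bis} applied to $G'$ furnishes $\mu'$ with $\mu'(\chi') = 1$. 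Lemma \ref{le:modify} then permits adjusting these into compatible multipliers satisfying $\mu^W = \sf{bc}^W(\mu')$ simultaneously for all $W \in \fV_{(\tS)}^\circ$, whereupon Proposition \ref{pr:transfer3} guarantees that the modified pair $(\mu' \star f'_0, (\mu^W \star f^W_0))$ remains a complete smooth transfer that is now quasi-cuspidal on both sides. In case~(b) I would instead exploit $v_0$: replacing $f^V_{0,v_0}$ by a matrix coefficient of $\pi^V_{v_0}$ renders $f^V$ quasi-cuspidal directly (supercuspidality gives compact support modulo center), and since $v_0$ splits the matching on $G'$ at $v_0$ is automatic, so the non-CAP hypothesis is not needed here.

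Combining Propositions \ref{pr:rtf1}, \ref{pr:rtf2}, and \ref{pr:transfer2} then yields
\[
\sum_{\Pi,\;\chi_{\Pi^\tT} = \chi'} I_\Pi(\mu' \star f'_0) = \sum_{W \in \fV_{(\tS)}^\circ} \sum_{\pi^W,\;\chi_{\pi^{W,\tT}} = \chi^W} J_{\pi^W}(\mu^W \star f^W_0),
\]
where the right-hand side is nonzero because the $(W, \pi^W) = (V, \pi^V)$ summand contributes the preserved $J_{\pi^V}(f^V_0) \neq 0$. Hence some cuspidal $\Pi$ with $\chi_{\Pi^\tT} = \chi'$ exists; enlarging $\tS$ and invoking strong multiplicity one for $\GL_m$ over $E$ (\cite{PS79}) shows $\Pi$ is unique and is visibly the weak automorphic base change of $\pi^V$, establishing existence and (1). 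For (2) in case~(b), supercuspidality of $\Pi_{v_0}$ as the local base change of a supercuspidal forces each isobaric summand $\Pi_m$ to be cuspidal (a non-trivial isobaric sum of cuspidals cannot be locally supercuspidal by \cite{JS81}); the nonvanishing of $I_\Pi(\mu' \star f'_0)$ together with \eqref{eq:character1} forces the Flicker--Rallis integral to be nonzero, which is equivalent to the regularity of $L(s, \Pi_m, \As^{(-1)^n})$ at $s = 1$ required for the hermitian property, while conjugate self-duality is automatic from being a base change.

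The main technical obstacle I anticipate is the simultaneous base-change--compatibility of the multipliers $\{\mu^W\}_{W \in \fV_{(\tS)}^\circ}$ with $\mu'$, which is precisely what Lemma \ref{le:modify} is engineered to handle in a single stroke; after that, verifying that the trace formula identity reduces as claimed (and that the adjustments preserve $J_{\pi^V}(f^V_0) \neq 0$, which follows from $\mu^V(\chi^V) = 1$) is bookkeeping built on Proposition \ref{pr:transfer3}.
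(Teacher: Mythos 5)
Your overall architecture (transfer a test function detecting $\pi^V$ to $G'$, make both sides quasi-cuspidal and spectrally isolated by base-change-compatible multipliers via Lemma \ref{le:modify} and Proposition \ref{pr:transfer3}, then compare the two relative trace formulae) is the paper's, but two steps as written do not go through. First, in case (a) you cannot invoke Proposition \ref{pr:rtf1}: Theorem \ref{th:isolation_bis} only controls the image of $L^2_\cusp$, so $\mu'\star f'_0$ need not be quasi-cuspidal, and you cannot apply Theorem \ref{th:isolation} on the $G'$ side either, because the non-CAP hypothesis in (a) concerns $G^V$ and a priori the base change $\chi'$ of $\chi^V$ could well be $(G',\tT)$-CAP. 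The paper resolves this by arguing by \emph{contradiction}: assuming the weak base change does not exist forces both that $\chi'$ is not $(G',\tT)$-CAP and that $L^2_\cusp(G'(F)\backslash G'(\dA_F)/K',1)[\chi']=\{0\}$, so Theorem \ref{th:isolation} applies on $G'$ and annihilates the entire $G'$-spectral side, contradicting the nonvanishing imported from the unitary side. (In case (b) one must also take $f'_{v_0}$, not just $f^V_{v_0}$, to be a supercuspidal matrix coefficient to get quasi-cuspidality on $G'$; you only gesture at this.)

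Second, your claim that the unitary spectral sum is nonzero ``because the $(V,\pi^V)$-summand contributes $J_{\pi^V}(f^V_0)\neq 0$'' is unjustified: the finitely many surviving terms could cancel. The paper chooses $f^V_v$ of \emph{positive type} at every place in $\tS$ and at infinity, so that $J_\pi(f^V)\geq 0$ for every $\pi\in\fC(G^V,1)$ while $J_{\pi^V}(f^V)>0$, invokes Harish-Chandra's finiteness of the set $\fC$ of cuspidal representations contributing to $L^2_\cusp[\chi^V]$ with the fixed level, and only then perturbs the archimedean nonsplit components into the dense subspace $\cS_\tr$ while keeping $\sum_{\pi\in\fC}J_\pi(f^V)\neq 0$. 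Without this positivity-plus-finiteness step the nonvanishing of the geometric side, which is the engine of the whole argument, is not established. The remainder of your proposal (deducing (1) from the matching of archimedean infinitesimal characters in the cuspidal datum, and (2) from the Flicker--Rallis period via \cite{Fli88}) is essentially the paper's argument, modulo the bookkeeping of which Asai $L$-function has the pole and which is regular.
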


\begin{proof}
In the beginning of this subsection, we take $\fV^\circ_{(\tS)}=\{V\}$, $\chi^V=\chi_{(\pi^V)^\tT}$, and $\chi'$ to be the base change of $\chi^V$.

Suppose that weak automorphic base change of $\pi^V$ does not exists. Then $\chi'$ is not $(G',\tT)$-CAP and $L^2_\cusp(G'(F)\backslash G'(\dA_F)/K',1)[\chi']=\{0\}$. By our assumption on the nonvanishing of $\sP$ on $\pi^V$, for every place $v$ of $F$ either archimedean or in $\tS$, we can choose an element $f^V_v\in\cS(G^V(F_v))$ of positive type (see the definition of positive type functions above \cite{Zha1}*{Proposition~2.12}) such that $J_{\pi^V_v}(f^V_v)>0$. Moreover, we may assume that $f^V_{v_0}$ is a supercuspidal matrix coefficient in the situation of (b). For a prime $v$ not in $\tS$, we take $f^V_v\coloneqq\mathbf{1}_{K^V_v}$. Put $f^V\coloneqq\bigotimes_vf^V_v$, and shrink $K^V_\tS$ if necessary so that $f^V\in\cS(G^V(\dA_F))_{K^V}$. By our choice of $f^V$, we have $J_{\pi^V}(f^V)>0$, and $J_\pi(f^V)\geq 0$ for every $\pi\in\fC(G^V,1)$. Note that the set
\[
\fC\coloneqq\{\pi\in\fC(G^V,1)\res L^2(G^V,\pi)\cap L^2_\cusp(G^V(F)\backslash G^V(\dA_F)/K^V,1)[\chi^V]\neq\{0\}\}
\]
is finite, by a well-known result of Harish-Chandra that there are only finitely (up to isomorphism) cuspidal automorphic representations of $G^V(\dA_F)$ with a given infinitesimal character and nontrivial $K_V$-invariants \cite{HC68}. Thus, we may replace $f^V_v$ by an element in the dense subspace $\cS_\tr(G^V(F_v))$ for each archimedean place $v$ nonsplit in $E$, so that
\[
\sum_{\pi\in\fC}J_\pi(f^V)\neq 0.
\]
Now we can apply Proposition \ref{pr:transfer1}(2) to $f^V_v$ for every place $v$ of $F$ either archimedean or in $\tS$, to obtain an element $f'_v\in\cS(G'(F_v))$ as in there. We may also assume that $f'_{v_0}$ is a supercuspidal matrix coefficient in the situation (b). For a prime $v$ not in $\tS$, we take $f'_v\coloneqq\mathbf{1}_{K'_v}$. Put $f'\coloneqq\bigotimes_vf'_v$, which is an element of $\cS(G'(\dA_F))_{K'}$ after shrinking $K'_\tS$ if necessary. Then by the relative fundamental lemma \cites{Yun11,BP3}, we know that $f'$ and $(f^V,0,\dots)$ are complete smooth transfer in the sense of Definition \ref{de:transfer}(3).

We claim that there is a $\tT$-multiplier $\mu'$ of $\cS(G'(\dA_F))_{K'}$ satisfying $\mu'(\chi')=1$ and such that $\rR(\mu'\star f')$ sends $L^2(G'(F)\backslash G'(\dA_F)/K',1)$ into $L^2_\cusp(G'(F)\backslash G'(\dA_F)/K',1)[\chi']$. In the situation (a), this follows from Theorem \ref{th:isolation} since $\chi'$ is not $(G',\tT)$-CAP. In the situation (b), this follows from Theorem \ref{th:isolation_bis}, and the observation that $\rR(\mu'\star f')$ automatically annihilates the orthogonal complement of $L^2_\cusp(G'(F)\backslash G'(\dA_F)/K',1)$ in $L^2(G'(F)\backslash G'(\dA_F)/K',1)$ since $f'_{v_0}$ is a supercuspidal matrix coefficient. Similarly, there is a $\tT$-multiplier $\mu^V$ of $\cS(G^V(\dA_F))_{K^V}$ satisfying $\mu^V(\chi^V)=1$ and such that $\rR(\mu^V\star f^V)$ sends $L^2(G^V(F)\backslash G^V(\dA_F)/K^V,1)$ into $L^2_\cusp(G^V(F)\backslash G^V(\dA_F)/K^V,1)[\chi^V]$. Moreover, by Lemma \ref{le:modify}, we may further assume that $\mu^V=\sf{bc}^V(\mu')$. Thus, by Proposition \ref{pr:transfer3}, $\mu'\star f'$ and $(\mu^V\star f^V,0,\dots)$ are complete smooth transfer as well.

Now we run relative trace formulae from the unitary side to the general linear side. We have
\[
\sum_{\pi\in\fC(G^V,1)}J_\pi(\mu^V\star f^V)=\sum_{\pi\in\fC}J_\pi(\mu^V\star f^V)=\sum_{\pi\in\fC}J_\pi(f^V)\neq 0.
\]
Since $\mu^V\star f^V$ is quasi-cuspidal, we have
\[
\sum_{\delta^V\in B^V(F)}J_{\delta^V}(\mu^V\star f^V)=\sum_{\pi\in\fC(G^V,1)}J_\pi(\mu^V\star f^V)\neq 0
\]
by Proposition \ref{pr:rtf2}. By Proposition \ref{pr:transfer2}, we have
\[
\sum_{\gamma\in B'(F)}I_\gamma(\mu'\star f')=\sum_{\delta^V\in B^V(F)}J_{\delta^V}(\mu^V\star f^V)\neq 0.
\]
Since $\mu'\star f'$ is quasi-cuspidal, we have
\[
\sum_{\tilde\Pi\in\fC(G',1)}I_{\tilde\Pi}(\mu'\star f')=\sum_{\gamma\in B'(F)}I_\gamma(\mu'\star f')\neq 0
\]
by Proposition \ref{pr:rtf1}. However, this is a contradiction since the image of $\rR(\mu'\star f')$ is contained in $L^2_\cusp(G'(F)\backslash G'(\dA_F)/K',1)[\chi']$, which is zero. Therefore, $\fD(G',1)^\heartsuit_{\chi'}\neq\emptyset$, that is, weak automorphic base change of $\pi^V$ exists. It remains to show the two additional claims.

For (1), it follows from the fact that $\fD(G',1)^\heartsuit_{\chi'}\neq\emptyset$.

For (2), note that in the situation (b), we did not use the assumption that $\chi'$ is not $(G',\tT)$-CAP in the main argument above. Thus, we should have $L^2_\cusp(G'(F)\backslash G'(\dA_F)/K',1)[\chi']\neq\{0\}$; in other words, $\Pi\coloneqq\BC(\pi^V)$ is cuspidal and we have $I_\Pi(\mu'\star f')\neq 0$. By \eqref{eq:character1},
\[
\int_{Z'(\dA_F)H'_2(F)\backslash H'_2(\dA_F)}\phi(h_2)\eta(h_2)\rd h_2\neq 0
\]
for some cusp form $\phi$ in $\Pi$. Then by the main theorem of \cite{Fli88}, we know that $L(s,\Pi_m,\As^{(-1)^{m+1}})$ has a pole at $s=1$ for $m=n,n+1$, which implies that $\Pi_m$ is hermitian.

The proposition is proved.
\end{proof}

The last theorem of this subsection contains results on weak automorphic base change, proved using Gan--Gross--Prasad period integrals.

\begin{theorem}\label{th:abc}
Let $V^\circ$ be a hermitian space over $E$ with $G^\circ\coloneqq\rU(V^\circ)$, and $\pi=\otimes_v\pi_v$ an irreducible admissible representation of $G^\circ(\dA_F)$.
\begin{enumerate}
  \item If $\pi$ is cuspidal automorphic satisfying that there exist infinitely many primes $v$ of $F$ split in $E$ such that $\pi_v$ is generic, then weak automorphic base change of $\pi$ exists.

  \item If $\pi$ is cuspidal automorphic satisfying that there exists a prime $v$ of $F$ split in $E$ such that $\pi_v$ is supercuspidal, then weak automorphic base change of $\pi$ exists, and is cuspidal and hermitian (Definition \ref{de:hermitian}).

  \item If weak automorphic base change of $\pi$ exists and is hermitian, then $\pi$ is not $(G^\bullet,\tT_0)$-CAP (Definition \ref{de:cap}) for every pure inner form $G^\bullet$ of $G^\circ$.

  \item If weak automorphic base change of $\pi$ exists and is hermitian, then the base change of $\chi_{\pi_\infty}$ is $\chi_{\BC(\pi)_\infty}$.
\end{enumerate}
\end{theorem}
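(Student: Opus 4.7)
The plan is to handle the four parts in the order (3), (4), (2), (1), since the first two are largely classification-theoretic while the last two are direct applications of the Jacquet--Rallis relative trace formula developed in the preceding sections together with Proposition \ref{pr:bc}.

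For (3), I would argue by contradiction. If $\pi$ were $(G^\bullet,\tT_0)$-CAP with a cuspidal datum $(M^\bullet,\sigma)$ on a proper Levi of a pure inner form $G^\bullet$, then at almost every prime $v$ split in $E$ the local group $G^\bullet_v$ identifies with $\GL_m(E_w)$ (where $m=\dim V^\circ$), and $\pi_v$ is a constituent of a parabolic induction from a proper Levi via the datum $\sigma$. Local base change at such split places is just the identity, so combining this with the strong multiplicity one theorem of Jacquet--Shalika \cite{JS81} forces $\BC(\pi)$ to be an isobaric representation whose cuspidal support is inherited from the base change of $\sigma$ and therefore contains repeated irreducible factors (corresponding to the translates appearing in the cuspidal support of a proper parabolic induction). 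This contradicts the hermitian hypothesis, whose defining property is that the isobaric summands of $\BC(\pi)$ are pairwise non-isomorphic.

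For (4), I would combine (3) with a refinement of the RTF argument of Proposition \ref{pr:bc}. Since $\pi$ is not $(G^\circ,\tT_0)$-CAP by (3), Theorem \ref{th:isolation} supplies a $\tT$-multiplier on the unitary side isolating the $\pi$-nearly isotypic part of the $L^2$-spectrum. Using Lemma \ref{le:modify} to pair this with a compatible general-linear-side multiplier, and then running Proposition \ref{pr:rtf1} and Proposition \ref{pr:rtf2} with complete smooth transfers obtained from Proposition \ref{pr:transfer3}, the surviving spectral terms on both sides must match. The only general-linear-side cuspidal data that can survive are those with archimedean infinitesimal character equal to $\BC(\chi_{\pi_\infty})$, forcing $\chi_{\BC(\pi)_\infty}=\BC(\chi_{\pi_\infty})$. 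For (2), I would reduce directly to Proposition \ref{pr:bc}(b): choose an auxiliary hermitian space so that $V^\circ$ enters as one factor of a pair, pair $\pi$ with a cuspidal representation $\sigma$ on the other unitary group chosen so that $\pi\boxtimes\sigma$ remains supercuspidal at the split prime $v_0$ (easily arranged since the supercuspidal factor comes entirely from $\pi$), and apply Proposition \ref{pr:bc}(b) to extract the cuspidal hermitian weak base change.

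The hard part will be (1), where the clean supercuspidal isolation used in (2) is unavailable. The strategy is to first check that $\pi$ is not $(G^\circ,\tT_0)$-CAP: at a split prime $v$ with $\pi_v$ generic, being a constituent of a parabolic induction from a proper Levi severely constrains the Satake/Langlands parameters, and requiring this simultaneously at infinitely many split primes rules out every candidate cuspidal datum by an elementary counting argument on parameters. With non-CAPness established, Proposition \ref{pr:bc}(a) would complete the proof provided one can produce a pair-embedding of $\pi$ with nonvanishing Gan--Gross--Prasad period. Producing such a companion representation is the main technical obstacle; the plan is to exploit genericity at the split primes to construct, via a local-to-global bootstrap, a suitable cuspidal partner on the neighbouring unitary group, using that generic local components at split primes admit explicit nonvanishing local period integrals and can be propagated to a global automorphic representation by a density/Poincar\'{e}-series argument.
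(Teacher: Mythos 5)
Your proposal misses the central organizing idea of the paper's proof: all four statements are proved \emph{simultaneously by induction on the rank of $V^\circ$}, and this induction is not optional. Your part (3) already requires it. To derive a contradiction from $\pi$ being $(G^\bullet,\tT_0)$-CAP with datum $(P^\bullet,\sigma)$, where the Levi is $\rU(W^\bullet)\times M'$ and $\sigma=\sigma^\bullet\boxtimes\sigma'$, you must first know that the weak automorphic base change of the unitary factor $\sigma^\bullet$ \emph{exists} before you can say anything about the cuspidal support of $\BC(\pi)$ being ``inherited from the base change of $\sigma$'' --- and that existence is precisely statement (1) for the lower-rank group $\rU(W^\bullet)$ (whose hypothesis is verified because genericity of the isobaric representation $\BC(\pi)_v$ at split $v$ forces $\sigma^\bullet_v$ to be generic). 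Proving (3) before (1) at a fixed rank is therefore circular; the paper breaks the circle by running (1)--(4) together and invoking (1) only in lower rank. Once the base change of $\sigma^\bullet$ is available, the contradiction is that $\BC(\pi)\simeq\BC(\sigma^\bullet)\boxplus\Pi\boxplus(\Pi^\vee\circ\rc)\boxplus\cdots$ contains either a repeated or a non-conjugate-self-dual summand, which is close to, but more precise than, your ``repeated translates'' heuristic.

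Your treatment of (1) contains a step that fails outright: genericity of $\pi_v$ at infinitely many split primes does \emph{not} rule out $(G^\circ,\tT_0)$-CAPness, since irreducible generic constituents of parabolic inductions from proper Levis with generic inducing data exist in abundance; no ``counting argument on parameters'' can exclude the CAP case. The paper's proof of (1) explicitly splits into the CAP case (handled by applying the induction hypothesis on (1) to the unitary factor of the inducing datum) and the non-CAP case. In the non-CAP case, applying Proposition \ref{pr:bc}(a) requires the \emph{pair} $\pi^V=\pi^V_n\boxtimes\pi^V_{n+1}$ to be non-CAP, hence the companion $\pi^V_n$ itself must be non-CAP; the paper secures this by the Burger--Sarnak trick (\cite{Zha1}*{Proposition~2.14}), which produces $\pi^V_n$ with nonvanishing period $\sP$ \emph{and} supercuspidal at a split prime, so that the induction hypotheses on (2) and (3) apply to it. Your ``local-to-global bootstrap / Poincar\'{e} series'' sketch secures none of: the nonvanishing of the global period, the supercuspidality of the companion at a split place, or its non-CAPness. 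The same omission affects your (2) --- Proposition \ref{pr:bc}(b) needs $\pi^V_{v_0}$ supercuspidal as a representation of the product group, i.e.\ \emph{both} factors supercuspidal at $v_0$, so the companion cannot be arbitrary --- and your (4), whose relative-trace-formula argument yields nothing unless some $J_{\pi^V}$-term is nonzero, which again presupposes a companion with nonvanishing period; in the paper, (4) is instead read off directly from Proposition \ref{pr:bc}(1) after non-CAPness of the pair is established.
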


Part (3) of the above theorem is consistent with the so-called CAP conjecture \cite{Jia10}*{Conjecture~6.1} (for unitary groups).

\begin{proof}
We prove the four statements at once via the induction on the rank of $V^\circ$. The statements are all trivial when the rank of $V^\circ$ is $1$. Assume that (1--4) are all known for all $V^\circ$ of rank at most $n$.

Now take a hermitian space $V^\circ$ of rank $n+1$. By scaling the hermitian form, we may assume that there is an element $V=(V_n,V_{n+1})\in\fV$ such that $V^\circ=V_{n+1}$. We consider (1) and (2) first. Since $\pi$ is cuspidal automorphic, it gives an element $\pi^V_{n+1}\in\fC(G^V_{n+1},1)$. By the Burger--Sarnak trick \cite{Zha1}*{Proposition~2.14}, we can find another element $\pi^V_n\in\fC(G^V_n,1)$ such that
\begin{itemize}
  \item $\sP$ is nonzero on cuspidal automorphic forms in $L^2(G^V,\pi^V)$, where $\pi^V\coloneqq\pi^V_n\boxtimes\pi^V_{n+1}\in\fC(G^V,1)$;

  \item there is a prime $v_0$ of $F$ split in $E$ such that $\pi^V_{n,v_0}$ is supercuspidal, and $\pi^V_{n+1,v_0}$ is generic (resp.\ supercuspidal) in (1) (resp.\ (2)).
\end{itemize}

For (1), there are two cases. If $\pi$ is $(G^\circ,\tT_0)$-CAP, then by definition there exist a proper parabolic subgroup $P^\circ$ of $G^\circ$ and a cuspidal automorphic representation $\sigma$ of $M_{P^\circ}(\dA_F)$, such that $\pi_v$ is a constituent of $\rI^{G^\circ}_{P^\circ}(\sigma_v)$ for all but finitely many primes $v$ of $F$ split in $E$. Write $M_{P^\circ}=\rU(W^\circ)\times M'$ for some hermitian space $W^\circ$ of rank at most $n$ and $M'$ a product of general linear groups, under which $\sigma=\sigma^\circ\boxtimes\sigma'$. Then there exist infinitely many primes $v$ of $F$ split in $E$ such that $\sigma^\circ_v$ is generic. By the induction hypothesis on (1), weak automorphic base change of $\sigma^\circ$ exists, which implies that weak automorphic base change of $\pi$ exists as well. If $\pi$ is not $(G^\circ,\tT_0)$-CAP, then $\pi^V_{n+1}$ is not $(G^V_{n+1},\tT_0)$-CAP. By the induction hypothesis on (2,3), we know that $\pi^V_n$ is not $(G^V_n,\tT_0)$-CAP. Thus, $\pi^V$ is not $(G^V,\tT_0)$-CAP. By the situation (a) of Proposition \ref{pr:bc}, we know that weak automorphic base change of $\pi^V$, hence of $\pi$, exists.

For (2), by the situation (b) of Proposition \ref{pr:bc}, we know that weak automorphic base change $\Pi$ of $\pi^V$, hence of $\pi$, exists and is cuspidal and hermitian.

For (3), we prove by contradiction. If $\pi$ is $(G^\bullet,\tT)$-CAP for some pure inner form $G^\bullet$ of $G^\circ$, then by definition there exist a hermitian space $V^\bullet$ of rank $n+1$ such that $G^\bullet=\rU(V^\bullet)$, a proper parabolic subgroup $P^\bullet$ of $G^\bullet$, and a cuspidal automorphic representation $\sigma$ of $M_{P^\bullet}(\dA_F)$, such that $\pi_v$ is a constituent of $\rI^{G^\bullet}_{P^\bullet}(\sigma_v)$ for all but finitely many primes $v$ of $F$ split in $E$. Write $M_{P^\bullet}=\rU(W^\bullet)\times M'$ for some hermitian space $W^\bullet$ of rank at most $n$ and $M'$ a nontrivial product of general linear groups, under which $\sigma=\sigma^\bullet\boxtimes\sigma'$. Since $\BC(\pi)$ is an isobaric sum of cuspidal automorphic representations with unitary central characters, $\BC(\pi)_w$ is generic for every place $w$ of $E$, which implies that $\sigma^\bullet$ satisfies the assumption in (1). Thus, by the induction hypothesis on (1), weak automorphic base change of $\sigma^\bullet$ exists. Thus, we have $\BC(\pi)\simeq\BC(\sigma^\bullet)\boxplus\Pi\boxplus(\Pi^\vee\circ\rc)\boxplus\cdots$, where $\rc\in\Gal(E/F)$ is the involution, for at least one nontrivial $\Pi$. This contradicts the fact that $\BC(\pi)$ is an isobaric sum of mutually non-isomorphic conjugate self-dual cuspidal automorphic representations, as it is hermitian. Therefore, (3) is proved.

For (4), $\pi$ is not $(G^\circ,\tT_0)$-CAP by (3). In particular, by the induction hypothesis, $\pi^V$ is not $(G^V,\tT_0)$-CAP. Then (4) follows from Proposition \ref{pr:bc}(1).

The theorem is proved.
\end{proof}

\begin{remark}\label{re:base_change}
Theorem \ref{th:abc} already follows from \cites{Mok15,KMSW} (see the end of \cite{KMSW}*{Section~3.3}). However, our proof is different and does not use any knowledge from the endoscopy theory for unitary groups. Furthermore, our method can actually be used to show the local-global compatibility at all places where $\pi_v$ is unramified as well, but the argument will implicitly relies on \cites{Mok15,KMSW} as we will need Lemma \ref{le:matching} for $\Box$ containing primes inert in $E$ (see Remark \ref{re:matching}).
\end{remark}

\subsection{Gan--Gross--Prasad and Ichino--Ikeda conjectures}
\label{ss:ggp}

In this subsection, we complete the proofs of Theorem \ref{th:ggp} (for the Gan--Gross--Prasad conjecture), Theorem \ref{th:nonvanishing}, and Theorem \ref{th:ii} (for the Ichino--Ikeda conjecture). We keep the setup in the previous two subsections.

We start from the following lemma as a preliminary on the descent of hermitian isobaric automorphic representations of $\GL_m(\dA_E)$.

\begin{lem}\label{le:descent}
Let $\Pi$ be a hermitian isobaric automorphic representation of $\GL_m(\dA_E)$ (Definition \ref{de:hermitian}) for some integer $m\geq 1$. Let $V$ be a hermitian space over $E$ of rank $m$ such that $\rU(V)$ is quasi-split. Then there exists a cuspidal automorphic representation $\pi$ of $\rU(V)(\dA_F)$ satisfying that for every prime $v$ of $F$ split in $E$ such that $\Pi_v$ is unramified, $\pi_v$ is unramified such that the base change of $\chi_{\pi_v}$ is $\chi_{\Pi_v}$. In particular, we have $\Pi\simeq\BC(\pi)$ and that the base change of $\chi_{\pi_\infty}$ is $\chi_{\Pi_\infty}$.
\end{lem}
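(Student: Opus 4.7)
The plan is to realize $\pi$ as a member of the Arthur packet attached to the global parameter $(\Pi,\mathbf{1})$ via the endoscopic classification for quasi-split unitary groups due to Mok. First, I would write $\Pi=\Pi_1\boxplus\cdots\boxplus\Pi_d$ as an isobaric sum of mutually non-isomorphic conjugate self-dual cuspidal automorphic representations of $\GL_{m_i}(\dA_E)$ (with $\sum m_i=m$) satisfying the Asai $L$-function regularity from Definition \ref{de:hermitian}. Under the standard dictionary between Asai poles and descent to unitary groups, each $\Pi_i$ descends to the quasi-split unitary group of rank $m_i$ with the appropriate parity, so that $(\Pi,\mathbf{1})$ is a generic elliptic global Arthur parameter for the quasi-split group $\rU(V)$.

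Next, I would invoke Mok's endoscopic classification: the Arthur packet attached to $(\Pi,\mathbf{1})$ is non-empty and each of its members $\pi$ is a discrete automorphic representation of $\rU(V)(\dA_F)$ whose weak base change is $\Pi$. Because the $\mathrm{SL}_2$-factor of the parameter is trivial, every $\pi$ in the packet is tempered at every place; combined with discreteness and genericity of the parameter, this forces $\pi$ to be cuspidal, since the residual spectrum only contributes to non-generic Arthur packets.

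Then I would verify the local-global compatibility at split unramified places. At a prime $v$ of $F$ split in $E$, we have $E_v\simeq F_v\oplus F_v$ and $\rU(V)(F_v)\simeq\GL_m(F_v)$; the local base change is the explicit bijection sending an unramified $\pi_v$ with Satake parameter $s$ to the unramified representation $\pi_v\boxtimes\pi_v^\vee$ of $\GL_m(E_v)$. Local-global compatibility of $\pi$ with $\Pi$ at every such place (and not just almost every) is part of the content of Mok's theorem for generic parameters. The remaining claim that the base change of $\chi_{\pi_\infty}$ equals $\chi_{\Pi_\infty}$ then follows from the compatibility of the archimedean component of $\pi$ with the archimedean Langlands parameter of $\Pi$ prescribed by the global parameter.

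The main obstacle is the reliance on Mok's deep endoscopic classification. In keeping with the spirit of Remark \ref{re:base_change}, one may alternatively substitute the automorphic descent of Ginzburg--Rallis--Soudry (adapted to unitary groups), which in this generic setting directly produces a cuspidal automorphic representation of the quasi-split unitary group with the prescribed weak base change. Either route, however, requires invoking a nontrivial external input to pass from the $\GL$-side datum $\Pi$ to a concrete cuspidal automorphic representation on the unitary side with full local-global compatibility at split unramified places.
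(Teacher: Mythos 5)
Your argument is correct, but your primary route (Mok's endoscopic classification) is not the one the paper takes, and it runs against the paper's stated design goal of keeping the descent from $\GL$ to unitary groups independent of the endoscopic theory (see Remark \ref{re:weak} and Remark \ref{re:base_change}). The paper's proof is two lines: the existence of a cuspidal $\pi$ on the quasi-split $\rU(V)$ with the prescribed unramified split local components is quoted directly from the automorphic descent construction of Ginzburg--Rallis--Soudry \cite{GRS11} --- exactly the alternative you mention in your final paragraph --- and the archimedean assertion that the base change of $\chi_{\pi_\infty}$ is $\chi_{\Pi_\infty}$ is \emph{not} obtained from any archimedean local-global compatibility on the $\rU(V)$ side, but from Theorem \ref{th:abc}(4), which the authors have already proved internally via the Jacquet--Rallis relative trace formulae. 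This last point is the substantive difference: your derivation of the infinitesimal-character compatibility leans on the archimedean content of \cite{Mok15}, whereas the paper deliberately extracts it from its own trace-formula machinery so that the only external input is the descent construction itself. What your route buys is stronger local-global compatibility at all places (not just split unramified ones) and a cleaner conceptual picture via Arthur packets; what the paper's route buys is logical independence from the endoscopic classification, which matters for the consistency of the whole Section \ref{ss:4}. Your cuspidality argument (trivial $\mathrm{SL}_2$ factor plus discreteness forces cuspidality) is fine as stated, but note that in the GRS approach cuspidality comes for free from the construction.
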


\begin{proof}
The existence of $\pi$ follows from the automorphic descent construction \cite{GRS11}. The last assertion follows from Theorem \ref{th:abc}(4).
\end{proof}

\begin{proof}[Proof of Theorem \ref{th:ggp}]
There are two directions.

(2)$\Rightarrow$(1): We take $V=(V_n,V_{n+1})\in\fV$ and $\pi^V\coloneqq\pi_n\boxtimes\pi_{n+1}\in\fC(G^V,1)$. Put $\Pi\coloneqq\Pi_n\boxtimes\Pi_{n+1}\in\fC(G',1)$. By Theorem \ref{th:abc}(3), $\pi^V$ is not $(G^V,\tT_0)$-CAP. Thus, by the same argument for the situation (a) in Proposition \ref{pr:bc}, we obtain
\[
\sum_{\tilde\Pi\in\fC(G',1)}I_{\tilde\Pi}(\mu'\star f')\neq 0,
\]
in which the image of $\rR(\mu'\star f')$ is contained in $L^2_\cusp(G'(F)\backslash G'(\dA_F)/K',1)[\chi_{\Pi^\tT}]$. However, by the strong multiplicity one property \cite{Ram}*{Theorem~A}, we have $L^2_\cusp(G'(F)\backslash G'(\dA_F)/K',1)[\chi_{\Pi^\tT}]=L^2(G',\Pi)$. Thus, we have $I_\Pi(\mu'\star f')\neq 0$. By \eqref{eq:character1},
\[
\int_{H'_1(F)\backslash H'_1(\dA_F)}\phi(h_1)\rd h_1\neq 0
\]
for some cusp form $\phi$ in $\Pi$. Then by \cite{JPSS}, we have $L(\tfrac{1}{2},\Pi)\neq 0$.

(1)$\Rightarrow$(2): Again, put $\Pi\coloneqq\Pi_n\boxtimes\Pi_{n+1}$. Take $\tS$ to be the finite set of primes $v$ of $F$ at which either $E$ or $\Pi$ is ramified. For $m=n,n+1$, since $\Pi_m$ is hermitian, $L(s,\Pi_m,\As^{(-1)^{m+1}})$ has a pole at $s=1$, which implies that $\Pi_m$ is $\eta_{E/F}^{m+1}$-distinguished by $\GL_m(\dA_F)$. In particular, $\Pi$ belongs to $\fC(G',1)$; and there exists a cusp form $\phi$ in $\Pi$ that is fixed by $\prod_{v\not\in\tS}K'_v$ such that
\[
\int_{Z'(\dA_F)H'_2(F)\backslash H'_2(\dA_F)}\phi(h_2)\eta(h_2)\rd h_2\neq 0.
\]
Since $L(\tfrac{1}{2},\Pi)\neq 0$, by \cite{JPSS}, there exists a cusp form $\phi'$ in $\Pi$ that is fixed by $\prod_{v\not\in\tS}K'_v$ such that
\[
\int_{H'_1(F)\backslash H'_1(\dA_F)}\phi'(h_1)\rd h_1\neq 0.
\]
Together, we can find an element $f'=\bigotimes_vf'_v\in\cS(G'(\dA_F))$ with $f'_v\in\cS_\tr(G'(F_v))$ for every archimedean place $v$ of $F$ and $f'_v=\mathbf{1}_{K^\prime_v}$ for every prime $v$ of $F$ not in $\tS$, such that
\[
I_\Pi(f')\neq 0
\]
by \eqref{eq:character1}. After shrinking $K'_\tS$ if necessary, we may assume $f'\in\cS(G'(\dA_F))_{K'}$. Now we can apply Proposition \ref{pr:transfer1}(1) to $f'_v$ for every place $v$ of $F$ that is either archimedean or in $S$, to obtain elements $f^V_v\in\cS(G^V(F_v))$ as in there. For $V\in\fV_{(\tS)}$ and a prime $v$ not in $\tS$, we put $f^V_v\coloneqq\mathbf{1}_{K^V_v}$, hence obtain an element $f^V\coloneqq\bigotimes_vf^V_v\in\cS(G^V(\dA_F))_{K^V}$ for some $K^V_\tS$. For $V\not\in\fV_{(\tS)}$, we put $f^V=0$. By the relative fundamental lemma \cites{Yun11,BP3}, $f'$ and $(f^V)_{V\in\fV}$ are complete smooth transfer in the sense of Definition \ref{de:transfer}(3).

To pass to unitary groups, we consider $\fV^\circ_{(\tS)}=\fV_{(\tS)}$, $\chi'=\chi_{\Pi^\tT}$, and a $\tT$-character $\chi^V$ for $G^V$ whose base change is $\chi'$ for every $V\in\fV_{(\tS)}$ (which is possible by Lemma \ref{le:descent}), at the beginning of Subsection \ref{ss:base_change}. By the strong multiplicity one property \cite{Ram}*{Theorem~A}, we know that $\chi'$ is not $G'$-CAP. By Theorem \ref{th:isolation}, there is a $\tT$-multiplier $\mu'$ of $\cS(G'(\dA_F))_{K'}$ satisfying $\mu'(\chi')=1$ and such that $\rR(\mu'\star f')$ sends $L^2(G'(F)\backslash G'(\dA_F)/K',1)$ into $L^2_\cusp(G'(F)\backslash G'(\dA_F)/K',1)[\chi']$, which coincides with $L^2(G',\Pi)$. For each $V\in\fV_{(\tS)}$, we know that $\chi^V$ is not $G^V$-CAP, since otherwise $\pi^{V^*}$ would be $(G^V,\tT_0)$-CAP, which contradicts Theorem \ref{th:abc}(3); here $V^*\in\fV$ is the unique element such that $G^{V^*}$ is quasi-split and $\pi^{V^*}$ is a cuspidal automorphic representation of $G^{V^*}(\dA_F)$ as in Lemma \ref{le:descent}. Then by Theorem \ref{th:isolation}, for every $V\in\fV_{(\tS)}$, there is a $\tT$-multiplier $\mu^V$ of $\cS(G^V(\dA_F))_{K^V}$ satisfying $\mu^V(\chi^V)=1$ and such that $\rR(\mu^V\star f^V)$ sends $L^2(G^V(F)\backslash G^V(\dA_F)/K^V,1)$ into $L^2_\cusp(G^V(F)\backslash G^V(\dA_F)/K^V,1)[\chi^V]$. Moreover, by Lemma \ref{le:modify}, we may further assume that $\mu^V=\sf{bc}^V(\mu')$ for every $V\in\fV_{(\tS)}$. In all, we conclude that $\mu'\star f'$ and $((\mu^V\star f^V)_{V\in\fV_{(\tS)}},0,\dots)$ are complete smooth transfer by Proposition \ref{pr:transfer3}.

Now we run relative trace formulae from the general linear side to the unitary side. We have
\[
\sum_{\tilde\Pi\in\fC(G',1)}I_{\tilde\Pi}(\mu'\star f')=I_\Pi(\mu'\star f')=I_\Pi(f')\neq 0.
\]
Since $\mu'\star f'$ is quasi-cuspidal, we have
\[
\sum_{\gamma\in B'(F)}I_\gamma(\mu'\star f')=\sum_{\tilde\Pi\in\fC(G',1)}I_{\tilde\Pi}(\mu'\star f')\neq 0
\]
by Proposition \ref{pr:rtf1}. By Proposition \ref{pr:transfer2}, we have
\[
\sum_{V\in\fV_{(\tS)}^\circ}\sum_{\delta^V\in B^V(F)}J_{\delta^V}(\mu^V\star f^V)=\sum_{\gamma\in B'(F)}I_\gamma(\mu'\star f')\neq 0.
\]
Thus, we can choose some $V\in\fV_{(\tS)}$ such that
\[
\sum_{\delta^V\in B^V(F)}J_{\delta^V}(\mu^V\star f^V)\neq 0.
\]
Since $\mu^V\star f^V$ is quasi-cuspidal, we have
\[
\sum_{\pi\in\fC(G^V,1)}J_\pi(\mu^V\star f^V)=\sum_{\delta^V\in B^V(F)}J_{\delta^V}(\mu^V\star f^V)\neq 0
\]
by Proposition \ref{pr:rtf2}. Therefore, by the property of $\mu^V\star f^V$, we can find some element $\pi\in\fC(G^V,1)$ satisfying
\[
L^2(G^V,\pi)\cap L^2_\cusp(G^V(F)\backslash G^V(\dA_F)/K^V,1)[\chi^V]\neq\{0\},
\]
such that
\[
J_\pi(f^V)=J_\pi(\mu^V\star f^V)\neq 0.
\]
In particular, the weak automorphic base change of $\pi$ is isomorphic to $\Pi$, and $\sP$ is nonvanishing on cuspidal automorphic forms in $L^2(G^V,\pi)$ by \eqref{eq:character2}. Thus, (2) is achieved.

Theorem \ref{th:ggp} is proved.
\end{proof}

\begin{remark}
In the proof of Theorem \ref{th:ggp}, we actually obtain a stronger statement in the direction (1)$\Rightarrow$(2) by further requiring in (2) that
\begin{itemize}
  \item the base change of $\chi_{\pi_{m,\infty}}$ is $\chi_{\Pi_{m,\infty}}$ for $m=n,n+1$;

  \item the forms $\varphi_n\otimes\varphi_{n+1}$ is fixed by a relative hyperspecial maximal subgroup at every prime $v$ of $F$ that is unramified in $E$ and such that $\Pi_{n,v}\otimes\Pi_{n+1,v}$ is unramified.
\end{itemize}
\end{remark}

\begin{proof}[Proof of Theorem \ref{th:nonvanishing}]
Let $V=(V_n,V_{n+1})\in\fV$ be the unique element such that $G^V$ is quasi-split. Since $\Pi_{n+1}$ is hermitian, by Lemma \ref{le:descent}, we have an element $\pi_{n+1}\in\fC(G^V_{n+1},1)$ such that $\BC(\pi_{n+1})\simeq\Pi_{n+1}$. By the Burger--Sarnak trick \cite{Zha1}*{Proposition~2.14}, we can find another element $\pi_n\in\fC(G^V_n,1)$ that is supercuspidal at some prime of $F$ split in $E$, fulfilling the situation in Theorem \ref{th:ggp}(2). By Theorem \ref{th:abc}(2), $\BC(\pi_n)$ exists and is cuspidal and hermitian, which we denote by $\Pi_n$. Moreover, we have
\[
L(\tfrac{1}{2},\Pi_n\times\Pi_{n+1})\neq 0
\]
by Theorem \ref{th:ggp}. The theorem is proved.
\end{proof}

\begin{proof}[Proof of Theorem \ref{th:ii}]
We continue the proof of Theorem \ref{th:ggp}. Using the endoscopic classification for generic packets obtained in \cites{Mok15,KMSW} and the local Gan--Gross--Prasad \cite{BP0}, we arrive at the identity
\[
I_\Pi(f')=J_\pi(f^V)\neq 0
\]
for some $f'=\bigotimes_vf'_v\in\cS(G'(\dA_F))_{K'}$ and $f^V=\bigotimes_vf^V_v\in\cS(G^V(\dA_F))_{K^V}$ such that $f'_v$ and $f^V_v$ have matching orbital integrals for every place $v$. The remaining argument is the same as in the proof of \cite{BP2}*{Theorem~5}.
\end{proof}

\appendix

\section{Extending a result of Delorme to reductive groups}
\label{app}

In this appendix, we extend a theorem of Delorme \cite{Del86}*{Th\'{e}or\`{e}me~1.7} from semisimple groups to reductive groups. Let the setup be as in Subsection \ref{ss:multiplier_algebra}. In particular, $G=\ul{G}(\dR)$ for a connected reductive algebraic group $\ul{G}$ over $\dR$. Denote by $C^\infty_c(G)_{(K)}$ and $\cS(G)_{(K)}$ the subalgebras of bi-$K$-finite functions in $C^\infty_c(G)$ and $\cS(G)$, respectively. Recall from Definition \ref{de:multiplier} that $\cN(\fh^*_\dC)$ is the space of holomorphic functions on $\fh^*_\dC$ that have rapid decay on vertical strips.

\begin{proposition}\label{pr:delorme}
For every element $\mu\in\cN(\fh^*_\dC)$, there is a unique linear operator
\[
\mu\star\colon C^\infty_c(G)_{(K)}\to\cS(G)_{(K)},
\]
such that
\[
\pi(\mu\star f)=\mu(\chi_\pi)\cdot\pi(f)
\]
holds for every $f\in C^\infty_c(G)_{(K)}$ and every irreducible admissible representation $\pi$ of $G$.
\end{proposition}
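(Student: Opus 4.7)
The plan is to reduce to the known case where $\ul{G}$ has compact center (Delorme's original \cite{Del86}*{Th\'{e}or\`{e}me~1.7}) by Mellin transform along the split component of the center. Write $\ul{A}\subseteq\ul{G}$ for the maximal central $\dR$-split torus, $A\coloneqq\ul{A}(\dR)\simeq\dR^r$, and let ${}^0G\subseteq G$ be the intersection of the kernels of $|\chi|$ for $\chi$ ranging over $\dR$-rational characters of $\ul{G}$, so that $G={}^0G\cdot A$ with ${}^0G\cap A$ finite and ${}^0G$ has compact center. Correspondingly, the abstract Cartan decomposes as $\fh^*=\fa^*\oplus\fh^{\natural*}$, with $\sfW$ acting trivially on $\fa^*$ and as the Weyl group of $({}^0G)_\dC$ on $\fh^{\natural*}$. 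For any $\mu\in\cN(\fh^*_\dC)^\sfW$ and any $\lambda\in\fa^*_\dC$, the translate $\mu_\lambda(\beta)\coloneqq\mu(\lambda+\beta)$ lies in $\cN(\fh^{\natural*}_\dC)^\sfW$, so by Delorme's theorem applied to ${}^0G$ we obtain a multiplier $\mu_\lambda\star\colon C^\infty_c({}^0G)_{(K)}\to\cS({}^0G)_{(K)}$.

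Given $f\in C^\infty_c(G)_{(K)}$, define the Mellin transform along $A$:
\[
f^\wedge(g,\lambda)\coloneqq\int_A f(ga)e^{-\langle\lambda,\log a\rangle}\rd a,\qquad g\in{}^0G,\ \lambda\in\fa^*_\dC.
\]
Because $f$ has compact support, $f^\wedge(\cdot,\lambda)\in C^\infty_c({}^0G)_{(K)}$ with support independent of $\lambda$, and $\lambda\mapsto f^\wedge(\cdot,\lambda)$ is entire with classical Paley--Wiener bounds in any fixed seminorm. We then define the candidate multiplier by
\[
(\mu\star f)(ga)\coloneqq\int_{i\fa^*}(\mu_\lambda\star f^\wedge(\cdot,\lambda))(g)\cdot e^{\langle\lambda,\log a\rangle}\rd\lambda,\qquad g\in{}^0G,\ a\in A.
\]
Uniqueness is immediate from the fact that the family of maps $f\mapsto\pi(f)$ with $\pi$ ranging over irreducible admissible representations of $G$ is jointly injective on $\cS(G)_{(K)}$, a standard consequence of the Plancherel theorem.

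For the intertwining identity, let $\pi$ be irreducible admissible with infinitesimal character $\chi_\pi$; its restriction to $A$ is a single character $a\mapsto e^{\langle\lambda_\pi,\log a\rangle}$ for some $\lambda_\pi\in\fa^*_\dC$, and under the decomposition $\fh^*=\fa^*\oplus\fh^{\natural*}$ we have $\chi_\pi=(\lambda_\pi,\chi_{\pi^\natural})$ up to the usual $\rho$-shift, where $\pi^\natural=\pi\res_{{}^0G}$. By Fubini, $\pi(f)=\pi^\natural(f^\wedge(\cdot,-\lambda_\pi))$, and the analogous identity for $\mu\star f$ combined with a contour shift from $i\fa^*$ to $i\fa^*-\lambda_\pi$ (justified by the joint holomorphy of $\mu$ and the rapid decay of $\mu_\lambda$ in vertical strips uniformly in $\lambda$) gives
\[
\pi(\mu\star f)=\pi^\natural(\mu_{-\lambda_\pi}\star f^\wedge(\cdot,-\lambda_\pi))=\mu_{-\lambda_\pi}(\chi_{\pi^\natural})\cdot\pi(f)=\mu(\chi_\pi)\cdot\pi(f).
\]

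The main obstacle will be verifying that the formula above genuinely produces a function in $\cS(G)_{(K)}$. This requires combining two families of estimates: the Paley--Wiener bounds for $f^\wedge$ in the $\lambda$-direction, and the quantitative bounds for Delorme's operator $\mu_\lambda\star$ on $\cS({}^0G)_{(K)}$, uniformly in $\lambda$ along vertical strips in $\fa^*_\dC$. The rapid decay of $\mu$ along \emph{all} vertical strips of $\fh^*_\dC$ (not merely those of $\fh^{\natural*}_\dC$) is precisely what delivers rapid decay of $\mu\star f$ in the $A$-direction through the Fourier inversion/contour shift on $A$, while the rapid decay in $\fh^{\natural*}_\dC$ controls the ${}^0G$-direction via Delorme. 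A careful bookkeeping of seminorms, analogous to (though simpler than) the estimates in Lemma \ref{le:multi_3} and Proposition \ref{pr:multiplier} of the main text, completes the verification that $\mu\star f\in\cS(G)_{(K)}$.
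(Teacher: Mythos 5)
Your Mellin-transform reduction along the split central torus is a genuinely different route from the one taken in the appendix, and its overall architecture (transform along $A$, apply a fixed-central-parameter multiplier on ${}^0G$, reassemble by Mellin inversion with a contour shift) is sound in principle. But there are two concrete gaps. First, the group ${}^0G$ to which you propose to apply Delorme's theorem is in general neither semisimple nor connected: it still contains the anisotropic part of the center of $\ul{G}$ (a compact central torus, e.g.\ for $\ul{G}=\Res_{\dC/\dR}\GL_{n,\dC}$), and it has the same component group as $G$. Delorme's \cite{Del86}*{Th\'{e}or\`{e}me~1.7} is stated for connected semisimple linear groups, and getting past exactly these hypotheses is the purpose of this appendix, so your reduction does not land in the known case. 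This is why the proof in the text first passes to $G^0$ via the Iwasawa decomposition and then splits off the \emph{entire} center $Z^0$, not just its split part, so that Delorme is invoked only for $G^0_\der$, the compact and split central directions being handled together by the elementary Paley--Wiener theorem on $Z^0$ (via \cite{Sak}*{Theorem~2.1.2}).

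Second, the step you flag as ``the main obstacle'' is where the real work lies, and it is not routine. To make sense of the Mellin-inversion formula and to justify the contour shifts (both in showing $\mu\star f\in\cS(G)_{(K)}$ and in proving the intertwining identity), you need (i) that $\lambda\mapsto\mu_\lambda\star f^\wedge(\cdot,\lambda)$ is a \emph{holomorphic} $\cS({}^0G)_{(K)}$-valued function of $\lambda\in\fa^*_\dC$, and (ii) seminorm bounds on the operators $\mu_\lambda\star$ that are uniform and rapidly decaying as $\lambda$ ranges over arbitrary vertical strips. Delorme's theorem supplies neither directly; they would have to be extracted from his construction, or from a closed-graph argument combined with explicit control of the seminorms of $\mu_\lambda$ in $\cN(\fh^{\natural*}_\dC)$ in terms of those of $\mu$. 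The proof in the text avoids this quantitative issue altogether: after fixing the support and the $K$-types it identifies $C^\infty_\Omega(G^0)_{(\fT)}$ with the completed projective tensor product \eqref{eq:delorme5} and $\cN(\fh^*_\dC)^\sfW$ with $\cN(\fz^*_\dC)\widehat\otimes\cN(\fh^*_{\der,\dC})^\sfW$ (Lemma \ref{le:pw}), so that only \emph{separate} continuity of the two factor actions is required, which follows softly from the closed graph theorem and \cite{Tre67}*{Theorem~34.1}. If you wish to keep the Mellin route, you must prove (i) and (ii); as written they are asserted rather than established.
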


When $G$ is semisimple, linear, and connected (in the analytic topology), this is exactly \cite{Del86}*{Th\'{e}or\`{e}me~1.7}. In general, we need to show that there is a bilinear map
\begin{align}\label{eq:delorme1}
\cN(\fh_\dC^*)^\sfW \times C^\infty_c(G)_{(K)} \to \cS(G)_{(K)}
\end{align}
sending $(\mu,f)$ to $\mu\star f$, satisfying the requirement in the proposition.

In what follows, for a real Lie group $H$, we denote by $H^0$ its neutral connected component. By the Iwasawa decomposition, the natural map $K/K^0\to G/G^0$ is an isomorphism, which gives rise to a decomposition
\begin{align}\label{eq:delorme2}
C^\infty_c(G)_{(K)}=\bigoplus_{k\in K/K^0}C^\infty_c(kG^0)_{(K^0)}.
\end{align}
Here, $C^\infty_c(kG^0)_{(K^0)}$ denotes the space of compactly supported smooth functions on $kG^0$ that are bi-$K^0$-finite. We first reduce the construction of \eqref{eq:delorme1} to the one for $G^0$:
\begin{align}\label{eq:delorme3}
\cN(\fh_\dC^*)^\sfW \times C^\infty_c(G^0)_{(K^0)} \to \cS(G^0)_{(K^0)}
\end{align}
sending $(\mu,f)$ to $\mu\star f$, satisfying $\pi(\mu\star f)=\mu(\chi_\pi)\cdot\pi(f)$ for every irreducible admissible representation $\pi$ of $G^0$. Indeed, once we have \eqref{eq:delorme3}, we may define
\[
\mu\star f\coloneqq\sum_{k\in K/K^0}\rL(k)(\mu\star \rL(k^{-1})f_k)
\]
for $(\mu,f)\in\cN(\fh_\dC^*)^\sfW \times C^\infty_c(G)_{(K)}$, where $f=\sum_{k\in K/K^0}f_k$ is the decomposition of $f$ under \eqref{eq:delorme2}, and $\rL$ denotes the left regular action. Since the restriction to $G^0$ of an irreducible admissible representation of $G$ is a finite direct sum of irreducible admissible representations of $G^0$ with the same infinitesimal character, it is easy to check that the above bilinear map satisfies the requirement in the proposition.

Now it remains to construct \eqref{eq:delorme3}. Denote by $\ul{Z}$ and $\ul{G}_\der$ the center and the derived subgroup of $\ul{G}$, respectively. Put $Z\coloneqq\ul{Z}(\dR)$ and $G_\der\coloneqq\ul{G}_\der(\dR)$. Put $K^0_Z\coloneqq K\cap Z^0$, $K^0_\der\coloneqq K\cap G_\der^0$, and $Z^0_\der\coloneqq Z^0\cap G_\der=K^0_Z\cap K^0_\der$. Then $Z^0_\der$ is a finite group, and the natural inclusions induce isomorphisms $(K^0_Z\times K^0_\der)/Z^0_\der\xrightarrow{\sim}K^0$ and
\begin{align}\label{eq:delorme4}
(Z^0\times G^0_\der)/Z^0_\der \xrightarrow{\sim} G^0.
\end{align}

For finite subsets $\fT_Z$ and $\fT_\der$ of $K^0_Z$-types and $K^0_\der$-types, respectively, we define $\fT$ to be the set of $K^0$-types whose inflation to $K^0_Z\times K^0_\der$ is an exterior tensor product of elements in $\fT_Z$ and $\fT_\der$. Similarly, for compact subsets $\Omega_Z$ and $\Omega_\der$ of $Z^0$ and $G^0_\der$ that are bi-invariant under $K^0_Z$ and $K^0_\der$, respectively, we let $\Omega$ be the image of $\Omega_Z\times\Omega_\der$ under the isomorphism \eqref{eq:delorme4}, which is a compact subset of $G^0$ bi-invariant under $K^0$. Clearly, we have
\[
C^\infty_c(G^0)_{(K^0)}=\bigcup_{\Omega}\bigcup_{\fT}C^\infty_\Omega(G^0)_{(\fT)},
\]
where $C^\infty_\Omega(G^0)_{(\fT)}$ denotes the subspace of smooth functions supported on $\Omega$ of bi-$K^0$-types in $\fT$. Thus, it suffices to construct \eqref{eq:delorme3} for a fixed subspace $C^\infty_\Omega(G^0)_{(\fT)}$ of $C^\infty_c(G^0)_{(K^0)}$. The isomorphism \eqref{eq:delorme4} induces a natural isomorphism
\begin{align}\label{eq:delorme5}
\(C^\infty_{\Omega_Z}(Z^0)_{(\fT_Z)}\widehat\otimes C^\infty_{\Omega_\der}(G^0_\der)_{(\fT_\der)}\)^{Z^0_\der}
\xrightarrow{\sim}C^\infty_\Omega(G^0)_{(\fT)}
\end{align}
of Fr\'{e}chet spaces, where $\widehat\otimes$ stands for the projective completed tensor product.\footnote{This is in general not correct if one does not fix the $K^0$-types and the support.}

On the other hand, let $\fz^*$ and $\fh^*_\der$ be the real vector spaces spanned by the weight lattices of $\ul{Z}_\dC$ and the abstract Cartan group of $\ul{G}_{\der,\dC}$, respectively.

\begin{lem}\label{le:pw}
The decomposition $\fh^*=\fz^*\oplus\fh^*_\der$ induced from \eqref{eq:delorme4} induces a natural isomorphism
\[
\cN(\fz^*_\dC)\widehat\otimes\cN(\fh^*_{\der,\dC})^\sfW\xrightarrow{\sim}\cN(\fh^*_\dC)^\sfW
\]
of Fr\'{e}chet spaces. Here, $\cN(\fz^*_\dC)$ and $\cN(\fh^*_{\der,\dC})$ are defined similarly as for $\cN(\fh^*_\dC)$.
\end{lem}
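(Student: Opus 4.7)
The plan is to first establish the statement without $\sfW$-invariance, and then to deduce the $\sfW$-equivariant version by a standard averaging argument. Fix a Euclidean norm on $\fh_\dC^*$ adapted to the orthogonal decomposition $\fh^*=\fz^*\oplus\fh^*_\der$. The obvious pairing
\[
\cN(\fz^*_\dC) \times \cN(\fh^*_{\der,\dC}) \to \cN(\fh^*_\dC), \qquad (\mu_1,\mu_2)\mapsto \mu_1\boxtimes\mu_2,
\]
with $(\mu_1\boxtimes\mu_2)(\alpha_1+\alpha_2):=\mu_1(\alpha_1)\mu_2(\alpha_2)$, satisfies the submultiplicativity $p_{1,M,r}(\mu_1\boxtimes\mu_2)\leq C_{M,r}\cdot p_{1,M,r}(\mu_1)\cdot p_{1,M,r}(\mu_2)$ for some constant $C_{M,r}>0$ (here I use the notation $p_{1,M,r}$ for the defining seminorms of $\cN$ from Definition~\ref{de:multiplier}). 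Hence this pairing is jointly continuous and extends to a continuous linear map
\[
\Phi\colon\cN(\fz^*_\dC)\widehat\otimes\cN(\fh^*_{\der,\dC})\longrightarrow\cN(\fh^*_\dC).
\]

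The next step is to show that $\cN(U_\dC)$ is a nuclear Fr\'{e}chet space for every finite-dimensional real vector space $U$. The seminorms $p_{1,M,r}$ can be dominated by Hilbert-space seminorms given by weighted $L^2$-norms on vertical strips of width $M$, and nuclearity then follows from Hermite-type expansions in the imaginary directions as for the classical Schwartz space. This makes the completed tensor product on the left unambiguous (projective equals injective), and reduces showing that $\Phi$ is a topological isomorphism to showing that it is a bijection, since it is a continuous surjection of Fr\'{e}chet spaces by the open mapping theorem.

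Injectivity of $\Phi$ is immediate from uniqueness of Taylor expansions. For surjectivity, given $\mu\in\cN(\fh^*_\dC)$, one fixes a Schauder basis $\{e_k\}_{k\geq 0}$ of $\cN(\fh^*_{\der,\dC})$ arising from a Hermite-type expansion adapted to the seminorms $p_{1,M,r}$ on $\cN(\fh^*_{\der,\dC})$, and expands
\[
\mu(\alpha_1+\alpha_2)=\sum_k c_k(\alpha_1)\, e_k(\alpha_2),\qquad \alpha_1\in\fz^*_\dC,\ \alpha_2\in\fh^*_{\der,\dC},
\]
with coefficients $c_k\in\cO(\fz^*_\dC)$. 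The rapid decay of $\mu$ on vertical strips of $\fh^*_\dC$ together with the quantitative bounds in the expansion yield that each $c_k$ belongs to $\cN(\fz^*_\dC)$ and that the series $\sum_k c_k\otimes e_k$ converges absolutely in the projective tensor product. This exhibits $\mu$ in the image of $\Phi$.

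Finally, for the $\sfW$-equivariance, the Weyl group acts trivially on $\fz^*_\dC$ and preserves $\fh^*_{\der,\dC}$. Hence averaging $\mu\mapsto|\sfW|^{-1}\sum_{w\in\sfW}w^*\mu$ is a continuous projection on both the source and target of $\Phi$, and $\Phi$ is manifestly $\sfW$-equivariant; taking $\sfW$-invariants yields the claimed isomorphism. The main technical obstacle is the surjectivity step together with the attendant control of seminorms under the Hermite-type expansion; everything else is essentially formal once one has nuclearity of the $\cN$-spaces and the submultiplicative comparison of seminorms under the orthogonal decomposition.
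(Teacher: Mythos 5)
Your overall strategy is sound and the formal parts (joint continuity of $(\mu_1,\mu_2)\mapsto\mu_1\boxtimes\mu_2$ via the submultiplicativity of the seminorms $p_{1,M,r}$ for the adapted norm, reduction to bijectivity via the open mapping theorem, and passage to $\sfW$-invariants using that $\sfW$ acts trivially on $\fz^*_\dC$ and through a continuous averaging idempotent on the second factor) are all correct. But you have located the entire difficulty in the step you do not carry out: the existence of a Schauder basis of $\cN(\fh^*_{\der,\dC})$ with quantitative seminorm control, the membership $c_k\in\cN(\fz^*_\dC)$, and the absolute convergence of $\sum_k c_k\otimes e_k$ in the projective tensor product. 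This is precisely a kernel theorem for the spaces $\cN(U_\dC)$, and it is not a formality. Note in particular that the standard Hermite functions $H_k(z)e^{-z^2/2}$ do \emph{not} lie in $\cN(\dC)$ (on a vertical strip $|\RE z|<M$ one has $|e^{-z^2/2}|=e^{(\IM z)^2/2-(\RE z)^2/2}\to\infty$), so the word ``adapted'' in your ``Hermite-type expansion adapted to the seminorms'' is carrying real weight: you must rotate the basis and then re-derive the coefficient estimates for the finer topology of $\cN$, which is a nontrivial weighted-Schwartz-space computation.

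The paper discharges exactly this obstacle by a change of model rather than a direct expansion: the classical Paley--Wiener theorem (a Mellin/Fourier--Laplace transform) identifies $\cN(U_\dC)$ with the Schwartz space $\cS$ of the split real analytic torus with cotangent space $U$ (concretely, superexponentially decaying Schwartz functions on $\dR^{\dim U}$), under which your rotated Hermite basis is just the image of the usual one. The tensor product identity then becomes the statement $\cS(X\times Y)\simeq\cS(X)\widehat\otimes\cS(Y)$, which is available off the shelf as \cite{AG10}*{Corollary~2.6.3}; taking $\sfW$-invariants finishes as in your last step. So your route is viable but amounts to reproving that citation by hand on the holomorphic side; either carry out the Hermite analysis with explicit seminorm bounds (including nuclearity of $\cN(U_\dC)$, which you also only assert), or transfer the problem through the Paley--Wiener isomorphism where these facts are already established.
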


\begin{proof}
Let $Z^0_{\r{sp}}$, $H^0_\der$, and $H^0$ be the split real analytic tori whose cotangent spaces are $\fz^*$, $\fh^*_\der$, and $\fh^*$, respectively. By the classical Paley--Wiener theorem for Schwartz functions on split real analytic tori, we have canonical isomorphisms
\[
\cS(Z^0_{\r{sp}})\simeq\cN(\fz^*_\dC),\qquad
\cS(H^0_\der)\simeq\cN(\fh^*_{\der,\dC}),\qquad
\cS(H^0)\simeq\cN(\fh^*_\dC)
\]
induced by Fourier transforms. Since $H^0=Z^0_{\r{sp}}\times H^0_\der$, the natural map $\cS(Z^0_{\r{sp}})\widehat\otimes\cS(H^0_\der)\to\cS(H^0)$ is an isomorphism of Fr\'{e}chet spaces by \cite{AG10}*{Corollary~2.6.3}. By taking $\sfW$-invariants, we obtain the isomorphism in the lemma.
\end{proof}

Now take $\mu_1\in\cN(\fz^*_\dC)$, $\mu_2\in\cN(\fh^*_{\der,\dC})^\sfW$, and put $\mu\coloneqq\mu_1\otimes\mu_2\in\cN(\fh^*_\dC)^\sfW$. By \cite{Del86}*{Th\'{e}or\`{e}me~1.7}, there is a linear map
\[
\mu_2\star \colon C^\infty_{\Omega_\der}(G^0_\der)_{(\fT_\der)}\to \cS(G^0_\der)_{(\fT_\der)}
\]
sending $f$ to $\mu_2\star f$, satisfying $\pi_\der(\mu_2\star f)=\mu_2(\chi_{\pi_\der})\cdot\pi_\der(f)$ for every irreducible admissible representation $\pi_\der$ of $G^0_\der$. Similarly, by \cite{Sak}*{Theorem~2.1.2}, there is a linear map
\[
\mu_1\star \colon C^\infty_{\Omega_Z}(Z^0)_{(\fT_Z)}\to \cS(Z^0)_{(\fT_Z)}
\]
sending $f$ to $\mu_1\star f$, satisfying $\xi(\mu_1\star f)=\mu_1(\chi_\xi)\cdot\xi(f)$ for every smooth character $\xi$ of $Z^0$. Moreover, by the injectivity of the (operator valued) Fourier transform and the closed graph theorem, both $\mu_1\star$ and $\mu_2\star$ are continuous. Therefore, $(\mu_1\star)\otimes(\mu_2\star)$ extends uniquely to a continuous linear map
\begin{align*}
\mu\star\colon C^\infty_\Omega(G^0)_{(\fT)}
&\simeq\(C^\infty_{\Omega_Z}(Z^0)_{(\fT_Z)}\widehat\otimes C^\infty_{\Omega_\der}(G^0_\der)_{(\fT_\der)}\)^{Z^0_\der} \\
&\to\(\cS(G^0_\der)_{(\fT_Z)}\widehat\otimes\cS(Z^0)_{(\fT_\der)}\)^{Z^0_\der}
\to\cS(G^0)_{(\fT)}
\end{align*}
by \eqref{eq:delorme5}. Let $\pi$ be an irreducible admissible representation of $G^0$. By the decomposition \eqref{eq:delorme4}, we may write the inflation of $\pi$ to $Z^0\times G^0_\der$ as $\xi\boxtimes\pi_\der$ where $\xi$ is a smooth character of $Z^0$ and $\pi_\der$ is an irreducible admissible representation of $G^0_\der$. By construction, we have $\pi(\mu\star f)=\mu(\chi_\pi)\pi(f)$ for every $f\in C^\infty_{\Omega_Z}(Z^0)_{(\fT_Z)}\otimes C^\infty_{\Omega_\der}(G^0_\der)_{(\fT_\der)}$ that is $Z^0_\der$-invariant. Since the map $f\in C^\infty_\Omega(G^0)_{(\fT)}\mapsto \pi(f)$ is continuous when we equip the space of continuous endomorphisms of $\pi$ with the topology of pointwise convergence, we deduce by density that $\pi(\mu\star f)=\mu(\chi_\pi)\pi(f)$ holds for every $f\in C^\infty_\Omega(G^0)_{(\fT)}$. In summary, we have constructed our desired bilinear map from $\cN(\fz^*_\dC)\otimes\cN(\fh^*_{\der,\dC})^\sfW\times C^\infty_\Omega(G^0)_{(\fT)}$ to $\cS(G^0)_{(\fT)}$. Now again by the injectivity of the (operator valued) Fourier transform and the closed graph theorem, we know that, for every $f\in C^\infty_\Omega(G^0)_{(\fT)}$, the map
\begin{align}\label{eq:delorme6}
\star f\colon\cN(\fz^*_\dC)\times\cN(\fh^*_{\der,\dC})^\sfW\to\cS(G^0)_{(\fT)}
\end{align}
sending $(\mu_1,\mu_2)$ to $(\mu_1\otimes\mu_2)\star f$ is a separately continuous bilinear map. Therefore, \eqref{eq:delorme6} extends uniquely to a continuous map $\star{f}\colon\cN(\fz^*_\dC)\widehat\otimes\cN(\fh^*_{\der,\dC})^\sfW\to\cS(G^0)_{(\fT)}$ by \cite{Tre67}*{Theorem~34.1}. By Lemma \ref{le:pw}, we obtain the desired bilinear map $\cN(\fh_\dC^*)^\sfW \times C^\infty_\Omega(G^0)_{(\fT)}\to \cS(G^0)_{(\fT)}$ sending $(\mu,f)$ to $\mu\star f$, satisfying $\pi(\mu\star f)=\mu(\chi_\pi)\cdot\pi(f)$ for every irreducible admissible representation $\pi$ of $G^0$. Taking union over all $\fT$ and $\Omega$, we obtain \eqref{eq:delorme3}.

Proposition \ref{pr:delorme} is proved.

\begin{bibdiv}
\begin{biblist}

\bib{AG10}{article}{
   author={Aizenbud, Avraham},
   author={Gourevitch, Dmitry},
   title={The de-Rham theorem and Shapiro lemma for Schwartz function on
   Nash manifolds},
   journal={Israel J. Math.},
   volume={177},
   date={2010},
   pages={155--188},
   issn={0021-2172},
   review={\MR{2684417}},
   doi={10.1007/s11856-010-0042-9},
}

\bib{Art81}{article}{
   author={Arthur, James},
   title={The trace formula in invariant form},
   journal={Ann. of Math. (2)},
   volume={114},
   date={1981},
   number={1},
   pages={1--74},
   issn={0003-486X},
   review={\MR{625344}},
   doi={10.2307/1971376},
}

\bib{Art05}{article}{
   author={Arthur, James},
   title={An introduction to the trace formula},
   conference={
      title={Harmonic analysis, the trace formula, and Shimura varieties},
   },
   book={
      series={Clay Math. Proc.},
      volume={4},
      publisher={Amer. Math. Soc., Providence, RI},
   },
   date={2005},
   pages={1--263},
   review={\MR{2192011}},
}

\bib{Art13}{book}{
   author={Arthur, James},
   title={The endoscopic classification of representations},
   series={American Mathematical Society Colloquium Publications},
   volume={61},
   note={Orthogonal and symplectic groups},
   publisher={American Mathematical Society, Providence, RI},
   date={2013},
   pages={xviii+590},
   isbn={978-0-8218-4990-3},
   review={\MR{3135650}},
   doi={10.1090/coll/061},
}

\bib{Bar88}{article}{
   author={Barker, William H.},
   title={$L^p$ harmonic analysis on ${\SL}(2,{\bf R})$},
   journal={Mem. Amer. Math. Soc.},
   volume={76},
   date={1988},
   number={393},
   pages={iv+110},
   issn={0065-9266},
   review={\MR{946617}},
   doi={10.1090/memo/0393},
}

\bib{BP0}{article}{
   author={Beuzart-Plessis, Rapha\"{e}l},
   title={A local trace formula for the Gan-Gross-Prasad conjecture for
   unitary groups: the Archimedean case},
   language={English, with English and French summaries},
   journal={Ast\'{e}risque},
   number={418},
   date={2020},
   pages={viii + 299},
   issn={0303-1179},
   isbn={978-2-85629-919-7},
   review={\MR{4146145}},
   doi={10.24033/ast},
}

\bib{BP1}{article}{
   author={Beuzart-Plessis, Rapha\"el},
   title={Comparison of local spherical characters and the Ichino--Ikeda conjecture for unitary groups},
   note={\href{https://arxiv.org/abs/1602.06538}{arXiv:1602.06538}},
}

\bib{BP2}{article}{
   author={Beuzart-Plessis, Rapha\"el},
   title={Plancherel formula for $\GL_n(F)\backslash\GL_n(E)$ and applications to the Ichino--Ikeda and formal degree conjectures for unitary groups},
   journal={Invent. Math.},
   date={2021},
   note={online first},
}

\bib{BP3}{article}{
   author={Beuzart-Plessis, Rapha\"el},
   title={A new proof of Jacquet-Rallis's fundamental lemma},
   note={\href{https://arxiv.org/abs/1901.02653}{arXiv:1901.02653}},
}

\bib{BPCZ}{article}{
   author={Beuzart-Plessis, Rapha\"el},
   author={Chaudouard, Pierre-Henri},
   author={Zydor, M.},
   title={The global Gan-Gross-Prasad conjecture for unitary groups: the endoscopic case},
   note={\href{https://arxiv.org/abs/2007.05601}{arXiv:2007.05601}},
}

\bib{Cas89}{article}{
   author={Casselman, W.},
   title={Canonical extensions of Harish-Chandra modules to representations
   of $G$},
   journal={Canad. J. Math.},
   volume={41},
   date={1989},
   number={3},
   pages={385--438},
   issn={0008-414X},
   review={\MR{1013462}},
   doi={10.4153/CJM-1989-019-5},
}

\bib{CZ}{article}{
   author={Chaudouard, Pierre-Henri},
   author={Zydor, Micha\l },
   title={Le transfert singulier pour la formule des traces de
   Jacquet-Rallis},
   journal={Compos. Math.},
   volume={157},
   date={2021},
   number={2},
   pages={303--434},
   issn={0010-437X},
   review={\MR{4234897}},
   doi={10.1112/S0010437X20007599},
}

\bib{Del84}{article}{
   author={Delorme, P.},
   title={Multipliers for the convolution algebra of left and right
   $K$-finite compactly supported smooth functions on a semisimple Lie
   group},
   journal={Invent. Math.},
   volume={75},
   date={1984},
   number={1},
   pages={9--23},
   issn={0020-9910},
   review={\MR{728136}},
   doi={10.1007/BF01403087},
}

\bib{Del86}{article}{
   author={Delorme, Patrick},
   title={Formules limites et formules asymptotiques pour les multiplicit\'{e}s
   dans $L^2(G/\Gamma)$},
   language={French},
   journal={Duke Math. J.},
   volume={53},
   date={1986},
   number={3},
   pages={691--731},
   issn={0012-7094},
   review={\MR{860667}},
   doi={10.1215/S0012-7094-86-05338-X},
}

\bib{Don82}{article}{
   author={Donnelly, Harold},
   title={On the cuspidal spectrum for finite volume symmetric spaces},
   journal={J. Differential Geom.},
   volume={17},
   date={1982},
   number={2},
   pages={239--253},
   issn={0022-040X},
   review={\MR{664496}},
}

\bib{Fli88}{article}{
   author={Flicker, Yuval Z.},
   title={Twisted tensors and Euler products},
   language={English, with French summary},
   journal={Bull. Soc. Math. France},
   volume={116},
   date={1988},
   number={3},
   pages={295--313},
   issn={0037-9484},
   review={\MR{984899}},
}

\bib{GGP12}{article}{
   author={Gan, Wee Teck},
   author={Gross, Benedict H.},
   author={Prasad, Dipendra},
   title={Symplectic local root numbers, central critical $L$ values, and
   restriction problems in the representation theory of classical groups},
   language={English, with English and French summaries},
   note={Sur les conjectures de Gross et Prasad. I},
   journal={Ast\'erisque},
   number={346},
   date={2012},
   pages={1--109},
   issn={0303-1179},
   isbn={978-2-85629-348-5},
   review={\MR{3202556}},
}

\bib{GRS11}{book}{
   author={Ginzburg, David},
   author={Rallis, Stephen},
   author={Soudry, David},
   title={The descent map from automorphic representations of ${\GL}(n)$ to classical groups},
   publisher={World Scientific Publishing Co. Pte. Ltd., Hackensack, NJ},
   date={2011},
   pages={x+339},
   isbn={978-981-4304-98-6},
   isbn={981-4304-98-0},
   review={\MR{2848523}},
   doi={10.1142/9789814304993},
}

\bib{HC51}{article}{
   author={Harish-Chandra},
   title={On some applications of the universal enveloping algebra of a
   semisimple Lie algebra},
   journal={Trans. Amer. Math. Soc.},
   volume={70},
   date={1951},
   pages={28--96},
   issn={0002-9947},
   review={\MR{44515}},
   doi={10.2307/1990524},
}

\bib{HC66}{article}{
   author={Harish-Chandra},
   title={Discrete series for semisimple Lie groups. II. Explicit
   determination of the characters},
   journal={Acta Math.},
   volume={116},
   date={1966},
   pages={1--111},
   issn={0001-5962},
   review={\MR{219666}},
   doi={10.1007/BF02392813},
}

\bib{HC68}{book}{
   author={Harish-Chandra},
   title={Automorphic forms on semisimple Lie groups},
   series={Notes by J. G. M. Mars. Lecture Notes in Mathematics, No. 62},
   publisher={Springer-Verlag, Berlin-New York},
   date={1968},
   pages={x+138},
   review={\MR{0232893}},
}

\bib{HC75}{article}{
   author={Harish-Chandra},
   title={Harmonic analysis on real reductive groups. I. The theory of the
   constant term},
   journal={J. Functional Analysis},
   volume={19},
   date={1975},
   pages={104--204},
   review={\MR{0399356}},
   doi={10.1016/0022-1236(75)90034-8},
}

\bib{HC76}{article}{
   author={Harish-Chandra},
   title={Harmonic analysis on real reductive groups. III. The Maass-Selberg
   relations and the Plancherel formula},
   journal={Ann. of Math. (2)},
   volume={104},
   date={1976},
   number={1},
   pages={117--201},
   issn={0003-486X},
   review={\MR{439994}},
   doi={10.2307/1971058},
}

\bib{Har14}{article}{
   author={Harris, R. Neal},
   title={The refined Gross-Prasad conjecture for unitary groups},
   journal={Int. Math. Res. Not. IMRN},
   date={2014},
   number={2},
   pages={303--389},
   issn={1073-7928},
   review={\MR{3159075}},
   doi={10.1093/imrn/rns219},
}

\bib{II10}{article}{
   author={Ichino, Atsushi},
   author={Ikeda, Tamutsu},
   title={On the periods of automorphic forms on special orthogonal groups
   and the Gross-Prasad conjecture},
   journal={Geom. Funct. Anal.},
   volume={19},
   date={2010},
   number={5},
   pages={1378--1425},
   issn={1016-443X},
   review={\MR{2585578}},
   doi={10.1007/s00039-009-0040-4},
}

\bib{JPSS}{article}{
   author={Jacquet, H.},
   author={Piatetskii-Shapiro, I. I.},
   author={Shalika, J. A.},
   title={Rankin-Selberg convolutions},
   journal={Amer. J. Math.},
   volume={105},
   date={1983},
   number={2},
   pages={367--464},
   issn={0002-9327},
   review={\MR{701565}},
   doi={10.2307/2374264},
}

\bib{JR11}{article}{
   author={Jacquet, Herv\'{e}},
   author={Rallis, Stephen},
   title={On the Gross-Prasad conjecture for unitary groups},
   conference={
      title={On certain $L$-functions},
   },
   book={
      series={Clay Math. Proc.},
      volume={13},
      publisher={Amer. Math. Soc., Providence, RI},
   },
   date={2011},
   pages={205--264},
   review={\MR{2767518}},
}

\bib{JS81}{article}{
   author={Jacquet, H.},
   author={Shalika, J. A.},
   title={On Euler products and the classification of automorphic forms. II},
   journal={Amer. J. Math.},
   volume={103},
   date={1981},
   number={4},
   pages={777--815},
   issn={0002-9327},
   review={\MR{623137}},
   doi={10.2307/2374050},
}

\bib{Jia10}{article}{
   author={Jiang, Dihua},
   title={On some topics in automorphic representations},
   conference={
      title={Fourth International Congress of Chinese Mathematicians},
   },
   book={
      series={AMS/IP Stud. Adv. Math.},
      volume={48},
      publisher={Amer. Math. Soc., Providence, RI},
   },
   date={2010},
   pages={177--199},
   review={\MR{2744221}},
}

\bib{KMSW}{article}{
   author={Kaletha, Tasho},
   author={Minguez, Alberto},
   author={Shin, Sug Woo},
   author={White, Paul-James},
   title={Endoscopic Classification of Representations: Inner Forms of Unitary Groups},
   note={\href{https://arxiv.org/abs/1409.3731}{arXiv:1409.3731}},
}

\bib{Kna01}{book}{
   author={Knapp, Anthony W.},
   title={Representation theory of semisimple groups},
   series={Princeton Landmarks in Mathematics},
   note={An overview based on examples;
   Reprint of the 1986 original},
   publisher={Princeton University Press, Princeton, NJ},
   date={2001},
   pages={xx+773},
   isbn={0-691-09089-0},
   review={\MR{1880691}},
}

\bib{Kos75}{article}{
   author={Kostant, Bertram},
   title={On the tensor product of a finite and an infinite dimensional
   representation},
   journal={J. Functional Analysis},
   volume={20},
   date={1975},
   number={4},
   pages={257--285},
   review={\MR{0414796}},
   doi={10.1016/0022-1236(75)90035-x},
}

\bib{KK86}{article}{
   author={Kostant, Bertram},
   author={Kumar, Shrawan},
   title={The nil Hecke ring and cohomology of $G/P$ for a Kac-Moody group
   $G$},
   journal={Adv. in Math.},
   volume={62},
   date={1986},
   number={3},
   pages={187--237},
   issn={0001-8708},
   review={\MR{866159}},
   doi={10.1016/0001-8708(86)90101-5},
}

\bib{Lan76}{book}{
   author={Langlands, Robert P.},
   title={On the functional equations satisfied by Eisenstein series},
   series={Lecture Notes in Mathematics, Vol. 544},
   publisher={Springer-Verlag, Berlin-New York},
   date={1976},
   pages={v+337},
   review={\MR{0579181}},
}

\bib{LV07}{article}{
   author={Lindenstrauss, Elon},
   author={Venkatesh, Akshay},
   title={Existence and Weyl's law for spherical cusp forms},
   journal={Geom. Funct. Anal.},
   volume={17},
   date={2007},
   number={1},
   pages={220--251},
   issn={1016-443X},
   review={\MR{2306657}},
   doi={10.1007/s00039-006-0589-0},
}

\bib{Liu14}{article}{
   author={Liu, Yifeng},
   title={Relative trace formulae toward Bessel and Fourier-Jacobi periods
   on unitary groups},
   journal={Manuscripta Math.},
   volume={145},
   date={2014},
   number={1-2},
   pages={1--69},
   issn={0025-2611},
   review={\MR{3244725}},
   doi={10.1007/s00229-014-0666-x},
}

\bib{LTXZZ}{article}{
   label={LTXZZ},
   author={Liu, Yifeng},
   author={Tian, Yichao},
   author={Xiao, Liang},
   author={Zhang, Wei},
   author={Zhu, Xinwen},
   title={On the Beilinson--Bloch--Kato conjecture for Rankin--Selberg motives},
   note={\href{https://arxiv.org/abs/1912.11942}{arXiv:1912.11942}},
}

\bib{MW95}{book}{
   author={M\oe glin, C.},
   author={Waldspurger, J.-L.},
   title={Spectral decomposition and Eisenstein series},
   series={Cambridge Tracts in Mathematics},
   volume={113},
   note={Une paraphrase de l'\'{E}criture [A paraphrase of Scripture]},
   publisher={Cambridge University Press, Cambridge},
   date={1995},
   pages={xxviii+338},
   isbn={0-521-41893-3},
   review={\MR{1361168}},
   doi={10.1017/CBO9780511470905},
}

\bib{Mok15}{article}{
   author={Mok, Chung Pang},
   title={Endoscopic classification of representations of quasi-split
   unitary groups},
   journal={Mem. Amer. Math. Soc.},
   volume={235},
   date={2015},
   number={1108},
   pages={vi+248},
   issn={0065-9266},
   isbn={978-1-4704-1041-4},
   isbn={978-1-4704-2226-4},
   review={\MR{3338302}},
   doi={10.1090/memo/1108},
}

\bib{Mul89}{article}{
   author={M\"{u}ller, Werner},
   title={The trace class conjecture in the theory of automorphic forms},
   journal={Ann. of Math. (2)},
   volume={130},
   date={1989},
   number={3},
   pages={473--529},
   issn={0003-486X},
   review={\MR{1025165}},
   doi={10.2307/1971453},
}

\bib{PS79}{article}{
   author={Piatetski-Shapiro, I. I.},
   title={Multiplicity one theorems},
   conference={
      title={Automorphic forms, representations and $L$-functions},
      address={Proc. Sympos. Pure Math., Oregon State Univ., Corvallis,
      Ore.},
      date={1977},
   },
   book={
      series={Proc. Sympos. Pure Math., XXXIII},
      publisher={Amer. Math. Soc., Providence, R.I.},
   },
   date={1979},
   pages={209--212},
   review={\MR{546599}},
}

\bib{PS82}{article}{
   author={Piatetski-Shapiro, Ilya I.},
   title={Cuspidal automorphic representations associated to parabolic subgroups and Ramanujan conjecture},
   conference={
      title={Number theory related to Fermat's last theorem},
      date={1982},
   },
   book={
   series={Progress in Mathematics},
   volume={26},
   editor={Koblitz, Neal},
   publisher={Birkh\"{a}user, Boston, Mass.},
   },
   date={1982},
   isbn={3-7643-3104-6},
   pages={143--151},
   review={\MR{685284}},
}

\bib{Ram}{article}{
   author={Ramakrishnan, D.},
   title={A theorem on $\GL(n)$ a la Tchebotarev},
   note={\href{https://arxiv.org/abs/1806.08429}{arXiv:1806.08429}},
}

\bib{RSZ20}{article}{
   author={Rapoport, M.},
   author={Smithling, B.},
   author={Zhang, W.},
   title={Arithmetic diagonal cycles on unitary Shimura varieties},
   journal={Compos. Math.},
   volume={156},
   date={2020},
   number={9},
   pages={1745--1824},
   issn={0010-437X},
   review={\MR{4167594}},
   doi={10.1112/s0010437x20007289},
}

\bib{Sak}{article}{
   author={Sakellaridis, Yiannis},
   title={Transfer operators and Hankel transforms between relative trace formulas, I: character theory},
   note={\href{https://arxiv.org/abs/1804.02383}{arXiv:1804.02383}},
}

\bib{Tre67}{book}{
   author={Tr\`{e}ves, F.},
   title={Topological vector spaces, distributions and kernels},
   publisher={Academic Press, Inc., New York-London},
   date={1967},
   pages={xvi+624},
   review={\MR{0225131}},
}

\bib{Wal83}{article}{
   author={Wallach, Nolan R.},
   title={Asymptotic expansions of generalized matrix entries of
   representations of real reductive groups},
   conference={
      title={Lie group representations, I},
      address={College Park, Md.},
      date={1982/1983},
   },
   book={
      series={Lecture Notes in Math.},
      volume={1024},
      publisher={Springer, Berlin},
   },
   date={1983},
   pages={287--369},
   review={\MR{727854}},
   doi={10.1007/BFb0071436},
}

\bib{Wal92}{book}{
   author={Wallach, Nolan R.},
   title={Real reductive groups. II},
   series={Pure and Applied Mathematics},
   volume={132},
   publisher={Academic Press, Inc., Boston, MA},
   date={1992},
   pages={xiv+454},
   isbn={0-12-732961-7},
   review={\MR{1170566}},
}

\bib{Xue14}{article}{
   author={Xue, Hang},
   title={The Gan-Gross-Prasad conjecture for ${\rU}(n)\times{\rU}(n)$},
   journal={Adv. Math.},
   volume={262},
   date={2014},
   pages={1130--1191},
   issn={0001-8708},
   review={\MR{3228451}},
   doi={10.1016/j.aim.2014.06.010},
}

\bib{Xue16}{article}{
   author={Xue, Hang},
   title={Fourier-Jacobi periods and the central value of Rankin-Selberg
   $L$-functions},
   journal={Israel J. Math.},
   volume={212},
   date={2016},
   number={2},
   pages={547--633},
   issn={0021-2172},
   review={\MR{3505397}},
   doi={10.1007/s11856-016-1300-2},
}

\bib{Xue}{article}{
   author={Xue, Hang},
   title={On the global Gan--Gross--Prasad conjecture for unitary groups: Approximating smooth transfer of Jacquet--Rallis},
   journal={J. Reine Angew. Math.},
   volume={756},
   date={2019},
   pages={65--100},
   issn={0075-4102},
   review={\MR{4026449}},
   doi={10.1515/crelle-2017-0016},
}

\bib{Yun11}{article}{
   author={Yun, Zhiwei},
   title={The fundamental lemma of Jacquet and Rallis},
   note={With an appendix by Julia Gordon},
   journal={Duke Math. J.},
   volume={156},
   date={2011},
   number={2},
   pages={167--227},
   issn={0012-7094},
   review={\MR{2769216}},
   doi={10.1215/00127094-2010-210},
}

\bib{YZ}{article}{
   author={Yun, Zhiwei},
   author={Zhang, Wei},
   title={Shtukas and the Taylor expansion of $L$-functions},
   journal={Ann. of Math. (2)},
   volume={186},
   date={2017},
   number={3},
   pages={767--911},
   issn={0003-486X},
   review={\MR{3702678}},
   doi={10.4007/annals.2017.186.3.2},
}

\bib{Zha1}{article}{
   author={Zhang, Wei},
   title={Fourier transform and the global Gan-Gross-Prasad conjecture for unitary groups},
   journal={Ann. of Math. (2)},
   volume={180},
   date={2014},
   number={3},
   pages={971--1049},
   issn={0003-486X},
   review={\MR{3245011}},
   doi={10.4007/annals.2014.180.3.4},
}

\bib{Zha2}{article}{
   author={Zhang, Wei},
   title={Automorphic period and the central value of Rankin-Selberg
   L-function},
   journal={J. Amer. Math. Soc.},
   volume={27},
   date={2014},
   number={2},
   pages={541--612},
   issn={0894-0347},
   review={\MR{3164988}},
   doi={10.1090/S0894-0347-2014-00784-0},
}

\bib{Zyd}{article}{
   author={Zydor, Micha\l },
   title={Les formules des traces relatives de Jacquet-Rallis grossi\`eres},
   language={French, with English and French summaries},
   journal={J. Reine Angew. Math.},
   volume={762},
   date={2020},
   pages={195--259},
   issn={0075-4102},
   review={\MR{4195660}},
   doi={10.1515/crelle-2018-0027},
}

\end{biblist}
\end{bibdiv}

\end{document}